\newtheorem{theorem}{Theorem}[section]
\newtheorem{lemma}{Lemma}[section]
\newtheorem{remark}{Remark}[section]
\newtheorem{assumption}{Assumption}
\newtheorem{proposition}{Proposition}[section]
\newtheorem*{theorem*}{Theorem}
\begin{document}
\title{Approximation of the Exit Probability of a Stable Markov Modulated Constrained Random Walk}
\author{Fatma Ba\c{s}o\u{g}lu Kabran\footnote{Middle East Technical University, Institute of Applied Mathematics, Ankara, Turkey and
İzmir Kavram Vocational School, Department of Finance, Banking and Insurance, İzmir, Turkey}, Ali Devin Sezer
\footnote{Middle East Technical University, Institute of Applied Mathematics, Ankara, Turkey}}
\providecommand{\keywords}[1]{\textbf{Keywords:} #1}
\maketitle
\begin{abstract}
Let $X$ be the constrained random walk on ${\mathbb Z}_+^2$ having increments $(1,0)$, $(-1,1)$, $(0,-1)$ with jump probabilities $\lambda(M_k)$, $\mu_1(M_k)$, and $\mu_2(M_k)$ where $M$ is an irreducible aperiodic finite state Markov chain.  The process $X$ represents the lengths of two tandem queues with arrival rate $\lambda(M_k)$, and service rates $\mu_1(M_k)$, and $\mu_2(M_k)$.  We assume that the average arrival rate with respect to the stationary measure of $M$ is less than the average service rates, i.e., $X$ is assumed stable.  Let $\tau_n$ be the first time when the sum of the components of $X$ equals $n$ for the first time.  Let $Y$ be the random walk on ${\mathbb Z} \times {\mathbb Z}_+$ having increments $(-1,0)$, $(1,1)$, $(0,-1)$ with probabilities $\lambda(M_k)$, $\mu_1(M_k)$, and $\mu_2(M_k)$.  Let $\tau$ be the first time the components of $Y$ are equal.  For $x \in {\mathbb R}_+^2$, $x(1) + x(2) < 1$, $x(1) > 0$, and $x_n = \lfloor nx \rfloor$, we show that $P_{(n-x_n(1),x_n(2)),m)}( \tau <  \infty)$ approximates $P_{(x_n,m)}(\tau_n < \tau_0)$ with exponentially vanishing relative error as $n\rightarrow \infty$.  For the analysis we define a characteristic matrix in terms of the jump probabilities of $(X,M).$ The $0$-level set of the characteristic polynomial of this matrix defines the characteristic surface; conjugate points on this surface and the associated eigenvectors of the characteristic matrix are used to define (sub/super) harmonic functions which play a fundamental role both in our analysis and the computation / approximation of $P_{(y,m)}(\tau < \infty).$\\

\noindent \keywords{Markov modulation, regime switch, multidimensional constrained random walks, exit probabilities, rare events, queueing systems, characteristic surface, superharmonic functions, affine transformation}
~\\
~\\
\noindent
2010 Mathematics Subject Classification: Primary 60G50
Secondary 60G40;60F10;60J45
\end{abstract}

\section{Introduction and Definitions}
A stochastic processes $X$ is said to be Markov modulated if
its dynamics depend on the state of  a secondary Markov process $M$
modeling the environment within which $X$ operates \cite{prabhu1989markov}.
Markov modulation/regime switch is one of the most popular methods of building richer models for a wide range of applications from finance to computer networks
to queueing theory. 
This paper studies the approximation
of the probability of a large excursion in the busy cycle of
a constrained random walk $X$ 
whose dynamics are modulated by a Markov process $M$.
We assume $M$ to be external, i.e, the transition probabilities of $M$
do not depend on $X$.
Constrained random walks arise naturally when there are barriers that
keep a process within a domain, for example: computers/ algorithms 
sharing resources on a system,
financial positions that have shortselling constraints, or queueing
systems.
If $X$ represents a queueing system, a large excursion corresponds
to a buffer overflow event; the
analysis, simulation and approximation 
of probabilities of such events for ordinary (non-modulated) constrained
random walks have received considerable attention at least since
\cite{ParWal,glasserman1995analysis}; for further references
and a literature review we refer the reader to \cite{sezer2018approximation}.
To the best of our knowledge,
there is hardly 
any study on
the same probability
for modulated constrained random walks: we are aware of
only \cite{sezer2009importance} treating the development of 
asymptotically optimal
importance sampling algorithms for the approximation of the buffer overflow event.
For this reason, this work will focus on one of the simplest
multidimensional constrained random walks, the tandem walk, arising
from the modeling of two servers working in tandem. Next we 
describe the dynamics of this process and
give a precise definition of the buffer overflow probability of interest.

\nomenclature{$X$}{the constrained random walk}

Our main process is a random walk $X$ with
increments
$\{I_1,I_2,I_3,...\}$,
\nomenclature{$I_1,I_2,..$}{Increments of $X$}
constrained to remain in ${\mathbb Z}_+^2$:

\begin{align*}
X_0 &=x \in {\mathbb Z}_+^2, ~~~X_{k+1} \doteq X_k + \pi ( X_k,I_{k} ), k=1,2,3,...\\
\pi(x,v) &\doteq \begin{cases} v, &\text{ if } x +v  \in {\mathbb Z}^2_+, \\
0      , &\text{otherwise.}
\end{cases}
\end{align*}
\nomenclature{$\pi$}{Constraining map for $X$}
The map $\pi$ ensures that when $X$ is on the constraining boundaries
\[
\partial_i \doteq \{ x \in {\mathbb Z}^2: x(i) = 0 \},~~ i=1,2,
\]
\nomenclature{$\partial_i$}{Constraining boundaries}
it cannot jump out of ${\mathbb Z}_+^2.$
We assume the distribution of the increments $I_k$ to be modulated
by a Markov Chain $M$ with state space ${\mathcal M}$
\nomenclature{$M$}{Modulating finite state Markov chain}
\nomenclature{${\mathcal M}$}{State space of $M$}
(with finite size $|{\mathcal M}|$)
and with transition matrix 
${\bm P} \in {\mathbb R}_+^{|{\mathcal M}| \times |{\mathcal M}|}$. 
\nomenclature{${\bm P}$}{Transition matrix of $M$}
To ease analysis and notation we will assume ${\bm P}$ to be irreducible and aperiodic,
which implies that it has a unique
stationary measure $\bm\pi$ on ${\mathcal M}$, i.e., $\bm\pi = \bm\pi \bm P.$
\nomenclature{$\bm\pi$}{Stationary measure of $M$}
Let ${\mathscr F}_k \doteq \sigma(\{M_j, j \le k + 1\}, \{X_j, j\le  k \})$, i.e., the $\sigma$-algebra generated by $M$ and $X$. 
The increments $I$ form an independent sequence given $M$ and the
increment $I_k$ has the following distribution given ${\mathscr F}_{k-1}$:
\begin{align*}
&I_k \in \{(0,0),(1,0), (-1,1), (0,-1)\},\\
&{\mathbb P}(I_k = (0,0) | {\mathscr F}_{k-1} ) = 1_{\{M_k \neq M_{k-1}\}}\\
&{\mathbb P}(I_k = (1,0) | {\mathscr F}_{k-1}) = \lambda(M_k) 1_{\{M_k = M_{k-1}\}}\\
&{\mathbb P}(I_k = (-1,1) | {\mathscr F}_{k-1}) = \mu_1(M_k) 1_{\{M_k = M_{k-1}\}}\\
&{\mathbb P}(I_k = (0,-1) | {\mathscr F}_{k-1}) = \mu_2(M_k) 1_{\{M_k = M_{k-1}\}}.
\end{align*}
\nomenclature{$\mathcal{F}_k$}{sigma-algebra generated by $M$ and $X$}
The dynamics of $X$ are shown in Figure \ref{f:dynamics}.

\begin{figure}[h]
\begin{center}
\scalebox{0.9}{
\centerline{\input{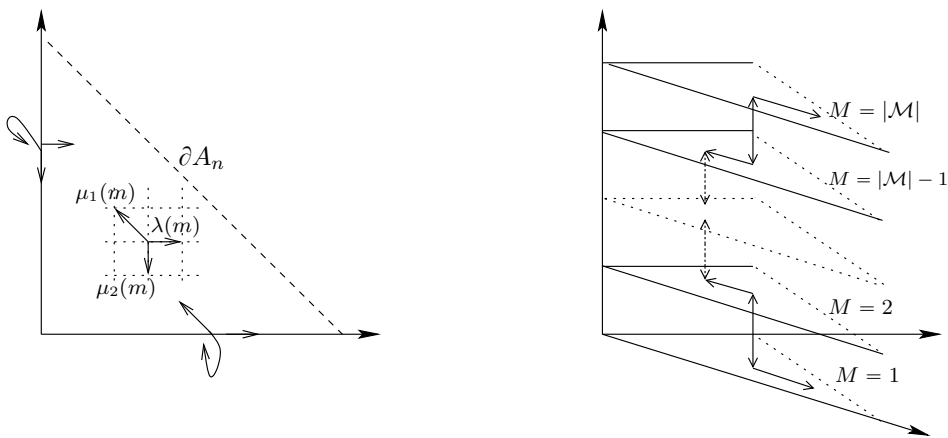}}}
\end{center}

\vspace{-0.65cm}
\caption{\hspace{0.25cm}Markov modulated constrained random walk $(X, M)$; the left figure shows dynamics in a given layer, the right figure shows jumps
between layers representing regime switches\label{f:dynamics}}
\end{figure}

\nomenclature{$(X, M)$}{Markov modulated constrained random walk}
The process $(X,M)$ is the embedded random walk of
a continuous time queueing system
consisting of two tandem queues
whose arrival and service rates are determined by a finite state Markov process
$M$.

We assume $(X,M)$ to be stable:
\begin{equation}\label{e:stabilityas}
\sum_{m \in {\mathcal M}} ( \lambda(m) - \mu_i(m)) \bm\pi(m) {\bm P}(m,m) < 0,~
i=1,2.
\end{equation}
In addition to \eqref{e:stabilityas}, 
we need two further technical assumptions for our analysis
see \eqref{as:conj} and \eqref{as:mu1neqmu2}.
Stability means that the queueing system represented by $(X,M)$ serves
customers faster, on average, than the customer arrival rate; this
keeps the lengths
of both queues close to $0$ at all times with high probability; but
$(X,M)$ being a random process, components of $X$ can grow arbitrarily
large if one waits long enough.
For a stable constrained random walk such as $(X,M)$ it is natural to measure
time in cycles that restart each time $X$ hits $0$. If the system represented by this walk has
a shared buffer where all customers wait 
(or where packets are stored, if, e.g., $(X,M)$ represents
a network of two computers / processes)
then a natural question is the following: what is the probability that
the shared buffer overflows in a given cycle? 
To express this problem mathematically we introduce the
following notation:
the region
\begin{equation}\label{e:defA}
A_n = \left\{ x \in {\mathbb Z}_+^2: x(1) + x(2) \le n \right\}
\end{equation}
\nomenclature{$A_n$}{Domain of $X$}
and the exit boundary
\begin{equation}\label{e:defBoundaryA}
\partial A_n =
\left\{ x \in {\mathbb Z}_+^2: x(1) + x(2) = n \right\}.
\end{equation}
\nomenclature{$\partial A_n$}{Exit boundary of $A_n$}
$A_n^o$ denotes the interior $A_n -\partial_1 \cup \partial_2.$
\nomenclature{$A_n^o$}{Interior of $A_n$, }
Similarly, ${\mathbb Z}_+^{2,o}$ denotes ${\mathbb Z}_+^{2} -
\partial_1 \cup \partial_2.$
Let $\tau_n$ be the first time $X$ hits $\partial A_n$:
\begin{equation}\label{d:taun}
\tau_n \doteq \inf\{k \ge 0: X_k \in \partial A_n\}, n=0,1,2,3,..
\end{equation}
\nomenclature{${\mathbb Z}_+^{2,o}$}{${\mathbb Z}_+^{2} -\partial_1 \cup \partial_2$}
\nomenclature{$\tau_n$}{First hitting time of $X$ to $\partial A_n$}
Then the buffer overflow probability described above is
\begin{equation}\label{d:pn}
p_n(x,m) \doteq {\mathbb P}_{(x,m)} ( \tau_n < \tau_0).
\end{equation}
\nomenclature{$p_n(x,m)$}{${\mathbb P}_{(x,m)}(\tau_n < \tau_0)$}
\nomenclature{$\tau_0$}{First hitting time of $X$ to origin} 
The Markov property of $(X,M)$ implies that $p_n$ is $(X,M)$-harmonic
i.e., it satisfies
\[
p_n(x,m) = {\mathbb E}_{(x,m)}[p_n(x + \pi(x,I_1),M_1)], x \in A_n -\partial A_n,~~~ p_n(x,m) =1, x \in \partial A_n.
\]
This is a system of equations satisfied by $p_n$, where the number
of unknowns is in the order of $|{\mathcal M}|n^2$. 
More generally, for a $d$ dimensional
system the number of unknowns grows like $|{\mathcal M}|n^d$, 
making the computation
of $p_n$ via a direct solution of the linear system resource intensive
even for moderate values of $n$. This justifies the development of
approximations of $p_n$ and
the main goal of the present work is to find easily computable and accurate
approximations of $p_n$.
Stability and the bounded increments of $X$ suggest 
that when $x$ is away from the exit boundary $\partial A_n$,
$p_n$ decays exponentially in $n$, making the buffer overflow event
rare. 
The approximation of $p_n$, even when there is no modulation turns out to be 
a nontrivial problem.
There are two sources of difficulty: multidimensionality, and the 
discontinuous dynamics of the problem on the constraining boundaries. 
Asymptotically optimal importance sampling algorithms for the non-modulated setup
were constructed in \cite{DSW}, which
proposed a dynamic importance sampling algorithm based on 
subsolutions of a related Hamilton Jacobi Bellman  (HJB)
and its boundary conditions.
The approach of \cite{DSW} is tightly connected to the large deviations
analysis of $p_n$, which identifies the exponential decay rate of $p_n.$
Large deviations analysis is based on transforming 
$p_n$ to $V_n=-(1/n)\log p_n$, scaling space by $1/n$ and
\nomenclature{$V_n$}{$-1/n \log p_n$}
taking limits; the limit $V$ of $V_n$ satisfies the HJB equation mentioned
\nomenclature{$V$}{Limit of $V_n$}
above. 
The works \cite{sezer2015exit, sezer2018approximation} 
obtained sharp estimates of $p_n$ for the non-modulated two dimensional tandem 
walk using an affine transformation of the process $X$;
see Figure \ref{f:Tn2} and the summary below.
Another goal of the present work is to show that this affine
transformation approach can be extended to the analysis of $p_n$
of the Markov modulated constrained random walk.
As the present article shows, this extension turns out to be possible
but
Markov modulation complicates almost every aspect of the problem: the underlying functions, the geometry of the characteristic
surfaces, the limit analysis, etc. 
A detailed comparison with the non-modulated case is given in Section 
\ref{s:comparison}.

To the best our knowledge, there is very limited research on the analysis of 
the overflow probability $p_n$ for Markov modulated constrained
random walks; we are only aware of the article 
\cite{sezer2009importance} which develops asymptotically optimal importance sampling algorithms for the approximation of $p_n$ for the $(X,M)$ process studied in the present work.
In doing this, a necessary step is also to compute the large deviation decay rate of $p_n$; this was also done for $x=0$ in 
\cite{sezer2009importance}.
The analysis in this work is based on the sub and supersolutions of a limit HJB equation.
 Next is a summary of our analysis and main results.
\subsection{Summary of analysis and main results}
The starting point of our analysis is transforming $X$ to another
process $Y^n$
by an affine transformation
moving the origin to the point $(n,0)$ on the exit boundary;
as $n$ goes to infinity, $Y^n$ converges to the limit process $Y$ 
constrained only on $\partial_2$; 
 Figure \ref{f:Tn2} shows these transformations. 
\nomenclature{$(Y, M)$}{Markov modulated limit process}

\begin{figure}[h]
\begin{center}
\scalebox{0.8}{
\centerline{\input{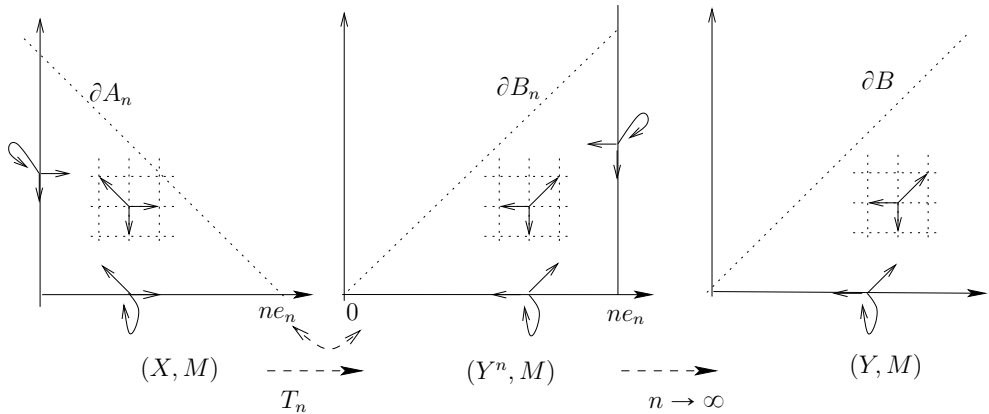}}}
\end{center}

\vspace{-0.65cm}
\caption{\hspace{0.25cm}The transformation of  $(X, M)$\label{f:Tn2}}
\end{figure}

The formal definition of the limit process $Y$ is as follows: define
\[
{\bm I}_2 \doteq \left( \begin{matrix}  -1 & 0 \\
0  & 1
\end{matrix}
\right).
\]
Define the constraining map
\[
\pi_1(y,v) = 
\begin{cases} v, &\text{ if } y + v \in {\mathbb Z}\times {\mathbb Z}_+,\\
0, &\text{otherwise.}
\end{cases}
\]
\nomenclature{$\pi_1$}{Constraining map for $Y$}
Then the limit process $Y$ is the $M$-modulated constrained random walk on 
${\mathbb Z} \times {\mathbb Z}_+$ 
with increments 
\begin{equation}\label{d:Jk}
J_k\doteq {\bm I}_2I_k:
\end{equation}
\[
Y_{k+1} = Y_k  + \pi_{1}(Y_k, J_k).
\]
\nomenclature{$Y$}{Limit process}
\nomenclature{$J_k$}{Increments of $Y$}
Define the region 
\[
B \doteq \left\{y \in {\mathbb Z} \times {\mathbb Z}_+: y(1) \ge y(2) \right\} 
\]
and the exit boundary
\[
\partial B\doteq 
\left\{y \in {\mathbb Z} \times {\mathbb Z}_+: y(1) = y(2) \right\}.
\]
Let $\tau$ be the hitting time
\[
\tau\doteq \inf \left\{k\ge 0: Y_k \in \partial B \right\}.
\]
$Y$ is a process constrained to ${\mathbb Z} \times {\mathbb Z}_+$
with the constraining boundary $\partial_2$; 
we will denote the interior ${\mathbb Z}\times{\mathbb Z}_+ -\partial_2$
of this set by
${\mathbb Z}\times {\mathbb Z}_+^o$.
Define the affine transformations
\[
T_n = n e_1 + {\bm I}_2
\]
\nomenclature{$T_n$}{Affine transformation}
where $(e_1,e_2)$ is the standard basis for ${\mathbb R}^2.$
Our main approximation result is the following:
\begin{theorem*}[Theorem \ref{t:mainapprox}]
For any
$x \in {\mathbb R}_+^2$, 
$x(1) + x(2) < 1$, $x(1) > 0$,
and $m \in {\mathcal M}$
there exist  constants $c>0$, $\rho \in (0,1)$ and $N > 0$ such that	
\begin{equation}\label{e:approximationerror}
\frac{ |{\mathbb P}_{(x_n,m)}(\tau_n < \tau_0) - 
{\mathbb P}_{(T_n(x_n),m)}(\tau <  \infty)|}{
{\mathbb P}_{(x_n,m)}(\tau_n < \tau_0)} < \rho^{c n},
\end{equation}
for $n > N$, where $x_n = \lfloor n x  \rfloor$.
\end{theorem*}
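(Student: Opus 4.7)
The plan is to couple $X$ and $Y$ through the affine transformation $T_n$, derive an exact identity for the gap between $p_n \doteq {\mathbb P}_{(x_n,m)}(\tau_n < \tau_0)$ and $q_n \doteq {\mathbb P}_{(T_n(x_n),m)}(\tau < \infty)$, and control that gap via (sub/super)harmonic bounds built from the characteristic surface. Set $\tilde{X}_k \doteq T_n(X_k)$ and consider the synchronous coupling in which $(X,M)$ and $(Y,M)$ share the same modulating chain and driving increment sequence $I_k$; then $\tilde{X}$ and $Y$, both started at $T_n(x_n)$, coincide for as long as $X$ avoids $\partial_1$, since the reflection at $\partial_2$ is preserved by $T_n$ while the reflection at $\partial_1$ becomes free passage through the line $\{y(1)=n\}$ for $Y$. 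Introduce $\sigma^X \doteq \inf\{k: X_k \in \partial_1\}$ and $\sigma \doteq \inf\{k: Y_k(1)=n\}$; under the coupling these stopping times are equal, and $\{\tau_n < \sigma^X\}$ corresponds to $\{\tau < \sigma\}$. Since $x_n(1)>0$ and $0 \in \partial_1$, reaching $0$ requires a prior visit to $\partial_1$, so $\{\sigma^X > \tau_n\} \subseteq \{\tau_n < \tau_0\}$, and partitioning $p_n$ along $\sigma^X > \tau_n$ and $q_n$ along $\sigma > \tau$ yields the identity
\begin{equation*}
p_n - q_n = {\mathbb P}_{(x_n,m)}(\tau_n < \tau_0,\ \sigma^X \le \tau_n) - {\mathbb P}_{(T_n(x_n),m)}(\sigma \le \tau < \infty),
\end{equation*}
so that $|p_n-q_n|$ is dominated by the sum of these two boundary contributions.

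Next I would bound both contributions using superharmonic functions associated with conjugate points on the characteristic surface. Applying the strong Markov property at $\sigma$,
\begin{equation*}
{\mathbb P}_{(T_n(x_n),m)}(\sigma \le \tau < \infty) = {\mathbb E}_{(T_n(x_n),m)}\bigl[ 1_{\{\sigma < \infty\}}\, {\mathbb P}_{(Y_\sigma, M_\sigma)}(\tau < \infty) \bigr],
\end{equation*}
where $Y_\sigma(1)=n$. Stability \eqref{e:stabilityas} gives a strictly positive drift of $Y(1)-Y(2)$ under $\bm\pi$, so $Y$ is transient away from $\partial B$; the characteristic-surface machinery then yields a positive $(Y,M)$-superharmonic function of the form $h(y,m) = \rho_1^{y(1)-y(2)} v_1(m)$ with $\rho_1 \in (0,1)$ and strictly positive eigenvector $v_1$ that majorizes $y \mapsto {\mathbb P}_{(y,m)}(\tau<\infty)$ and evaluates at $Y_\sigma$ to an upper bound of order $\rho_1^n$ uniformly in the exit state. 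For the first contribution, the strong Markov property at $\sigma^X$ reduces it to a sum over landing points $(0,k)\in \partial_1$ of the probability that $X$, restricted to the $\partial_1$-influenced region, subsequently reaches $\partial A_n$ before the origin; on $\partial_1$ the quantity $X(1)+X(2)$ performs a modulated one-dimensional walk with negative stationary drift, so an analogous one-dimensional modulated superharmonic function bounds this contribution by $\rho_2^n$ for some $\rho_2 \in (0,1)$.

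The last step is an exponential lower bound $p_n \ge c_0 \rho_0^n$ with $\rho_0 \in (\max(\rho_1,\rho_2), 1)$, obtained from a nonnegative $(X,M)$-subharmonic function built from the same characteristic machinery and tuned so that (i) its value at $(x_n,m)$ is of order $\rho_0^n$, (ii) it vanishes at the origin, and (iii) it is bounded above by $1$ on $\partial A_n$; an optional stopping argument at $\tau_n \wedge \tau_0$ then yields the desired lower bound. The strict inequality $\rho_0 > \max(\rho_1,\rho_2)$ is the large deviations statement that, starting from $x$ with $x(1) > 0$, the cheapest path to $\partial A_n$ is cheaper than any path that first diverts to $\partial_1$ (equivalently, to $\{y(1)=n\}$) before reaching the target boundary. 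Combining the three estimates gives $|p_n-q_n|/p_n \le \rho^{cn}$ for suitable $\rho \in (0,1)$ and $c>0$.

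The hard part will be the construction and verification of the Markov-modulated (sub/super)harmonic functions that feed both the upper bounds on the error terms and the lower bound on $p_n$. In the unmodulated case these reduce to pure exponentials $\rho^{y(1)-y(2)}$, but modulation forces the form $\rho^{y(1)-y(2)} v(m)$, with $(\rho,v)$ determined by an eigenvalue-eigenvector equation for a characteristic matrix whose entries depend on $\rho$ and on the per-state jump probabilities. One must show, under the paper's assumptions, that the required conjugate points on the characteristic surface exist, that the associated eigenvector $v$ is strictly positive so the bounds are uniform over the modulating state, and that the resulting candidate functions remain super- or subharmonic across the constraining boundary $\partial_2$; this boundary correction, rather than the interior calculation, is where most of the technical work is expected to concentrate.
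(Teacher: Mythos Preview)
Your decomposition at $\sigma^X$ (first visit of $X$ to $\partial_1$) is too coarse, and the two residual terms you isolate are not exponentially smaller than $p_n$. First, your specific bounds are wrong: after the strong Markov step at $\sigma$ you have $Y_\sigma(1)=n$, but $Y_\sigma(2)$ is generically of order $n$ (the increment $(1,1)$ drives both coordinates up together), so your majorant $\rho_1^{y(1)-y(2)}v_1(m)$ evaluates to $\rho_1^{\,n-Y_\sigma(2)}$, not $\rho_1^n$; the same issue afflicts the $X$-term at the landing point $(0,k)$, where the escape probability is of order $\rho^{\,n-k}$ with $k$ possibly of order $n$. More fundamentally, ${\mathbb P}_{(x_n,m)}(\sigma^X\le\tau_n,\ \tau_n<\tau_0)=p_n-{\mathbb P}_{(x_n,m)}(\tau_n<\sigma^X)$, and under stability a visit to $\partial_1$ prior to $\tau_n$ is not exponentially unlikely even on $\{\tau_n<\tau_0\}$; this term is therefore of the same exponential order as $p_n$ itself. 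Bounding the two residuals separately and adding throws away the cancellation between them, and your asserted large-deviations inequality (``the cheapest path to $\partial A_n$ avoids $\partial_1$'') is false for the tandem walk.

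The missing idea is specific to tandem geometry: the increment $(-1,1)$ that is suppressed on $\partial_1$ has zero coordinate sum, so the $\partial_1$ constraint leaves $X_k(1)+X_k(2)$ unchanged. Writing $\bar X_k=T_n(Y_k)$ for the process with the $\partial_1$ constraint removed, the identity $X_k(1)+X_k(2)=\bar X_k(1)+\bar X_k(2)$ therefore persists past $\sigma_1$, all the way to $\sigma_{1,2}\doteq\inf\{k\ge\sigma_1:X_k\in\partial_2\}$ (Lemma~\ref{l:sumsequal}). Since $\tau_n$, $\tau_0$ and $\bar\tau_n=\tau$ are hitting times of level sets of this common sum, the decompositions of $\{\tau_n<\tau_0\}$ and $\{\bar\tau_n<\infty\}$ along $\sigma_1$ and $\sigma_{1,2}$ match on the first two pieces, and the discrepancy is governed by the much rarer events $\{\sigma_1<\sigma_{1,2}<\tau_n<\tau_0\}$ and $\{\sigma_1<\sigma_{1,2}<\bar\tau_n<\infty\}$. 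These are shown to be at most $\rho^{\,n(1-\varepsilon)}$ with $\rho=\rho_1\vee\rho_2$ (Propositions~\ref{p:upperboundforXrare} and~\ref{p:upperboundforX}), while the subharmonic lower bound (Proposition~\ref{p:lowerbound}) gives $p_n\gtrsim\rho_2^{\,n(1-x(1)-x(2))}\vee\rho_1^{\,n(1-x(1))}$; since $x(1)+x(2)<1$ and $x(1)>0$, the ratio decays exponentially.
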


Theorem \ref{t:mainapprox} states that, as $n$ increases, ${\mathbb P}_{(T_n(x_n),m)}(\tau < \infty)$ gives a 
very good approximation of ${\mathbb P}_{(x_n,m)}(\tau_n < \tau_0)$. 
Parallel to the non-modulated case treated in \cite{ sezer2018approximation},
the proof of Theorem \ref{t:mainapprox} consists of the following steps
1) the difference between the events
$\{ \tau_n < \tau_0\}$ and  $\{\tau < \infty\}$ can be characterized by the event
``$X$ first hits $\partial_1$ then $\partial_2$ and then $\partial A_n$''~ 2) the probability of this detailed event
is very small compared to the probabilities of the events
$\{ \tau_n < \tau_0\}$ and  $\{\tau < \infty\}$. 
The challenges arise from the implementation of these
steps in the Markov modulated framework.

To bound the probabilities appearing in \eqref{e:approximationerror} 
we will use $(Y,M)$-(super)harmonic functions constructed
from single and conjugate points on a characteristic surface ${\mathcal H}$ (see \eqref{d:Hfull})
associated 
with $(Y,M).$ 
The characteristic surface is the $0$-level set of the characteristic polynomial of the characteristic
matrix ${\bm A}$ (see \eqref{d:defA}) defined
in terms of the transition matrix ${\bm P}$ and the jump probabilities
$\lambda(\cdot)$, $\mu_1(\cdot)$ and $\mu_2(\cdot).$ The characteristic polynomial
is of degree $3|{\mathcal M}|$ and therefore
the characteristic curve doesn't have a simple algebraic parametrization;
for this reason, in the modulated case, the identification of points on the characteristic surface relies on
the decomposition of the
the surface
into $|{\mathcal M}|$ components, by an eigenvalue analysis of ${\bm A}$
and the implicit function theorem.
The decomposition is given in subsection \ref{ss:geometry} and
the points relevant for our analysis are identified in
Propositions \ref{p:point1}, \ref{p:point2} and \ref{p:point2conj}.
These points all lie on the innermost
component corresponding to the largest eigenvalue of ${\bm A}.$

In the presence of a modulating Markov chain, harmonic functions are
constructed in general from $|{\mathcal M}|+1$ points on 
the characteristic surface, which makes
analysis based on them more complex. For this reason, we will switch
to superharmonic functions whenever we can, which can be constructed from
just two points.
An
upper bound for ${\mathbb P}_{(y, m)}(\tau<\infty)$ using these functions is given in Section \ref{s:ub1}.
Section \ref{s:ub2} constructs
an upper bound for the detailed event described above characterizing the difference 
of the events
$\{ \tau_n < \tau_0\}$ and  $\{\tau < \infty\}$. 
A lower bound for ${\mathbb P}_{(x, m)}(\tau_n < \tau_0)$ based on subharmonic functions
constructed from the functions of Section \ref{s:Yharmonic} is given  in Section \ref{s:lb}.
These elements are combined in Section \ref{s:together} to prove our main approximation theorem, Theorem \ref{t:mainapprox}.

With Theorem \ref{t:mainapprox} we know that 
${\mathbb P}_{(x, m)}(\tau_n < \tau_0)$ can be approximated very  well
with ${\mathbb P}_{(T_n(x), m)}(\tau<\infty)$. In the non-modulated case,
a linear combination of two $Y$-harmonic functions constructed from points on the characteristic surface
gives an exact formula for
${\mathbb P}_y(\tau < \infty)$.
This is no longer possible when there is modulation;
Sections \ref{s:computation} and \ref{s:improveapp} 
develops increasingly accurate approximate formulas for 
${\mathbb P}_{(y, m)}(\tau<\infty)$
using 
$(Y,M)$-harmonic functions constructed from further points
on the characteristic surface
under further linear independence assumptions 
(see \eqref{as:mu1neqmu2s2}, \eqref{as:bm1}),
see Propositions \ref{p:hrho2c} and Propositions \ref{p:hrho2star}
for the $(Y,M)$-harmonic functions constructed in these sections.
As opposed to the limit analysis which uses points only on the innermost 
component of the characteristic surface, the construction of harmonic functions uses points on
all components of the characteristic surface.
Propositions \ref{p:firstrelerror} and \ref{p:boundonrelerror} find
bounds on the relative error of the 
approximations of ${\mathbb P}_{(y,m)}(\tau < \infty)$ provided
by these functions based on the values they take on $\partial B.$
Section \ref{s:numeric} gives a numerical example 
showing the effectiveness of the resulting approximations.
Section \ref{s:comparison} compares the analysis of the current work with the non-modulated tandem walk treated in \cite{sezer2015exit, sezer2018approximation} 
and the non-modulated parallel walk treated in \cite{unlu2018excessive}.
Section \ref{s:conclusion} comments on future work.

\section{(sub/super)Harmonic functions of $(Y,M)$}\label{s:Yharmonic}

A function $h$ on ${\mathbb Z}\times{\mathbb Z}_+ \times {\mathcal M}$ 
is said to be $(Y,M)$-harmonic if
\begin{equation}\label{e:harmonicity}
{\mathbb E}_{(y,m)}[h(Y_1,M_1)]= h(y,m), 
(y,m)  \in {\mathbb Z} \times {\mathbb Z}_+ \times {\mathcal M};
\end{equation}
if we replace $=$ with $\ge$ [$\le$], $h$ is said to be $(Y,M)$-subharmonic
[superharmonic].

For the case $|{\mathcal M}| =1$ (i.e., no modulation), \cite{sezer2015exit, sezer2018approximation} use $Y$-harmonic functions which
are linear combinations of exponential functions
\begin{equation}\label{e:buildingblock0}
y \mapsto [(\beta,\alpha), y] = 
\beta^{y(1)-y(2)} \alpha^{y(2)}, (\beta,\alpha) \in {\mathbb C},
\end{equation}
and $(\beta,\alpha)$ lies on a characteristic surface associated with
the process.
Markov modulation introduces an additional state variable $m$, which
leads to the following generalization of  \eqref{e:buildingblock0}
\begin{equation}\label{e:buildingblock1}
(y,m) \mapsto \beta^{y(1)-y(2)} \alpha^{y(2)}{\bm d}(m),
\end{equation}
where ${\bm d} : {\mathcal M} \mapsto {\mathbb C}$ is an arbitrary function
on ${\mathcal M}.$
Let $[(\beta,\alpha,{\bm d}), \cdot]$ denote the function
given \eqref{e:buildingblock1}.
We would like to choose $(\beta,\alpha,{\bm d})$ so that 
$[(\beta,\alpha,{\bm d}), \cdot]$ is $(Y,M)$-harmonic at least
over the interior ${\mathbb Z} \times {\mathbb Z}_+^o.$
To this end, introduce the local characteristic polynomial
for the modulating state $m \in {\mathcal M}$:
\begin{equation}\label{d:defcharp}
{\bm p}(\beta,\alpha,m) \doteq \lambda(m) \frac{1}{\beta} +
\mu_1(m) \alpha + \mu_2(m) \frac{\beta}{\alpha};
\end{equation}
To define the global characteristic polynomial introduce the
$|{\mathcal M}| \times |{\mathcal M}|$ matrix ${\bm A}$:
\[
{\bm A}(\beta,\alpha)_{m_1,m_2} \doteq
\begin{cases}{\bm P}(m_1,m_2), &~~m_1 \neq m_2, \\
{\bm P}(m_1,m_1)  {\bm p}(\beta,\alpha,m), &~~m_1 = m_2,
\end{cases}
\]
$(m_1,m_2) \in {\mathcal M}^2.$
Let ${\bm I}$ denote the $|{\mathcal M}| \times |{\mathcal M}|$ identity
matrix.
Attempting to find functions of the form $[(\beta,\alpha,{\bm d}),\cdot]$
that satisfy \eqref{e:harmonicity} leads to the following characteristic
equation:
\begin{equation}\label{d:defA}
{\bm A}(\beta,\alpha){\bm d} = {\bm d},
\end{equation}
i.e,
\begin{equation}\label{d:charpoly}
{\bm p}(\beta,\alpha) \doteq \det({\bm I} - {\bm A}(\beta,\alpha)) = 0,
\end{equation}
\nomenclature{${\bm p}(\beta,\alpha)$}{global 
characteristic polynomial}
and ${\bm d}$ is an eigenvector of ${\bm A}(\beta,\alpha)$ for the eigenvalue $1$.
The ${\bm p}(\cdot,\cdot)$ of \eqref{d:charpoly} is the global characteristic
polynomial for the modulated process $(Y,M).$ Define the characteristic surface for the interior:
\begin{equation}\label{d:Hfull}
{\mathcal H} \doteq \left\{ (\beta,\alpha,{\bm d}) \in {\mathbb C}^{2 + |{\mathcal M}|}:
{\bm A}(\beta,\alpha){\bm d} = {\bm d},~ {\bm d} \neq 0 \right\}.
\end{equation}
\nomenclature{$I$}{$|{\mathcal M}| \times |{\mathcal M}|$
 identity matrix}
Points on ${\mathcal H}$ give us $(Y,M)$-harmonic functions on
$ {\mathbb Z} \times {\mathbb Z}_+^o.$
\begin{proposition}\label{p:interiorharm}
If $(\beta,\alpha,{\bm d}) \in {\mathcal H}$ then $[(\beta,\alpha,{\bm d}),\cdot]$
satisfies \eqref{e:harmonicity} for 
$y \in {\mathbb Z} \times {\mathbb Z}_+^o.$
\end{proposition}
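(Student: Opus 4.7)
The statement is essentially a direct verification that the operator encoded by $\bm{A}(\beta,\alpha)$ is precisely the infinitesimal action of $(Y,M)$ on exponential test functions of the form \eqref{e:buildingblock1}, evaluated on the interior where no constraining occurs. So my plan is to unfold both sides of \eqref{e:harmonicity} for $h(y,m) \doteq [(\beta,\alpha,\bm{d}),(y,m)] = \beta^{y(1)-y(2)}\alpha^{y(2)}\bm{d}(m)$ and match terms.

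First I would note that for $y \in \mathbb{Z}\times \mathbb{Z}_+^o$ we have $y(2) \ge 1$, so $\pi_1(y,J_1) = J_1$ and hence $Y_1 = y + J_1$ deterministically as a function of $I_1$. Then I would condition on $M_1$: writing $\mathbb{E}_{(y,m)}[h(Y_1,M_1)] = \sum_{m'\in \mathcal{M}} \bm{P}(m,m')\,\mathbb{E}[h(y+J_1,m')\mid M_0=m, M_1=m']$, I split into the two cases dictated by the distribution of $I_k$ given $\mathcal{F}_{k-1}$. For $m' \neq m$, the conditional law of $I_1$ is concentrated at $(0,0)$, so $J_1=0$ and the contribution is simply $\bm{P}(m,m')h(y,m') = \bm{P}(m,m')\beta^{y(1)-y(2)}\alpha^{y(2)}\bm{d}(m')$. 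For $m'=m$, the three nontrivial jumps $J_1 \in \{(-1,0),(1,1),(0,-1)\}$ (coming from $I_1 \in \{(1,0),(-1,1),(0,-1)\}$ via $\bm{I}_2$) have probabilities $\lambda(m),\mu_1(m),\mu_2(m)$, and a short computation with the exponent $y(1)-y(2)$ and $y(2)$ shows that
\[
h(y+(-1,0),m) = \beta^{-1}h(y,m),\quad h(y+(1,1),m) = \alpha\, h(y,m),\quad h(y+(0,-1),m) = \tfrac{\beta}{\alpha} h(y,m).
\]
Summing with the given probabilities produces exactly $\bm{p}(\beta,\alpha,m)\, h(y,m)$ by \eqref{d:defcharp}.

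Collecting the two cases, I get
\[
\mathbb{E}_{(y,m)}[h(Y_1,M_1)] = \beta^{y(1)-y(2)}\alpha^{y(2)} \Bigl[\sum_{m'\neq m} \bm{P}(m,m')\bm{d}(m') + \bm{P}(m,m)\bm{p}(\beta,\alpha,m)\bm{d}(m)\Bigr],
\]
and the bracketed expression is, by the very definition of $\bm{A}$, the $m$-th coordinate $[\bm{A}(\beta,\alpha)\bm{d}](m)$. The hypothesis $(\beta,\alpha,\bm{d}) \in \mathcal{H}$ then gives $[\bm{A}(\beta,\alpha)\bm{d}](m) = \bm{d}(m)$, so the right-hand side collapses to $h(y,m)$, which is exactly \eqref{e:harmonicity}.

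There is no serious obstacle here; the only thing to be careful about is the bookkeeping around the two probability regimes $\{M_1 = M_0\}$ and $\{M_1 \neq M_0\}$, and the fact that it is precisely this split that explains the asymmetric definition of $\bm{A}$ (diagonal entries carrying the extra $\bm{p}(\beta,\alpha,m)$ factor, off-diagonal entries being bare transition probabilities). The restriction to $y(2)\ge 1$ is used exactly once — to ensure $Y_1 = y+J_1$ without any boundary correction by $\pi_1$ — which is why the conclusion is stated on the interior $\mathbb{Z}\times \mathbb{Z}_+^o$ only.
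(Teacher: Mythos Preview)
Your proof is correct and follows essentially the same approach as the paper's: both unfold $\mathbb{E}_{(y,m)}[h(Y_1,M_1)]$ by splitting on whether $M_1=m$ or $M_1\neq m$, identify the diagonal contribution as $\bm{P}(m,m)\bm{p}(\beta,\alpha,m)h(y,m)$, recognize the resulting sum as $\beta^{y(1)-y(2)}\alpha^{y(2)}[\bm{A}(\beta,\alpha)\bm{d}](m)$, and invoke the eigenvector condition. Your version is slightly more explicit about why the constraining map $\pi_1$ is inactive on the interior, but otherwise the arguments coincide.
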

\begin{proof}
By definition	
\begin{align*}
&{\mathbb E}_{(y,m)}\left[ (\beta,\alpha,{\bm d}), (Y_1, M_1) \right]\\
&~~~= \sum_{n \in {\mathcal M},n \neq m}  {\bm P}(m,n) [(\beta,\alpha,{\bm d}), (y,n)] \\
&~~~~~+ {\bm P}(m,m) ( \lambda(m)  [(\beta,\alpha,{\bm d}), (y + (-1,0), m)] + \mu_1(m) [(\beta,\alpha,{\bm d}), (y+(1,1), m)] \\
&~~~~~~+ \mu_2(m) [(\beta,\alpha,{\bm d}), (y+(0,-1), m)])
\intertext{Expand $[(\beta,\alpha,{\bm d}),((y+v), m)]$ terms:}
&~~~=\sum_{n \in {\mathcal M},n \neq m}  {\bm P}(m,n) [(\beta,\alpha,{\bm d}), (y,n)] \\
&~~~~~+ {\bm P}(m,m) ( \lambda(m) \beta^{y(1)-y(2)-1}\alpha^{y(2)} {\bm d}(m) +   \mu_1(m) \beta^{y(1)-y(2)} \alpha^{y(2)+1} {\bm d}(m)\\
&~~~~~~+ \mu_2(m) \beta^{y(1)-y(2) + 1} \alpha^{y(2)-1} {\bm d}(m) )
\intertext{Factor out $[(\beta,\alpha,{\bm d}),(y,m)]$ from the last three terms:}
&~~~=\sum_{n \in {\mathcal M},n \neq m}  {\bm P}(m,n) [(\beta,\alpha,{\bm d}),(y,n)] + {\bm P}(m,m)[(\beta,\alpha,{\bm d}),(y,m)] {\bm p}(\beta,\alpha, m)\\
&~~~= \beta^{y(1) - y(2)}\alpha^{y(2)} 
\left(
\sum_{n \in {\mathcal M},n \neq m}  {\bm P}(m,n) {\bm d}(n) + {\bm P}(m,m){\bm d}(m){\bm p}(\beta,\alpha, m) \right).
\intertext{
The expression in parenthesis equals the $m^{th}$ term of the vector ${\bm A}(\beta,\alpha){\bm d}$, which
	equals ${\bm d}(m)$ because $(\beta,\alpha,{\bm d}) \in {\mathcal H}$ means ${\bm A}(\beta,\alpha){\bm d} = {\bm d}.$ Therefore,}
&~~~= \beta^{y(1)-y(2)} \alpha^{y(2)} {\bm d}(m) = [(\beta,\alpha,{\bm d}), (y,m)].
\end{align*}	
This proves the claim of the proposition.
\end{proof}

The previous proposition gives us $(Y,M)$-harmonic functions 
on ${\mathbb Z}\times {\mathbb Z}_+^o.$
we next study the geometry of ${\mathcal H}$, this will be useful
in defining $(Y,M)$-(super/sub) harmonic functions over all
of ${\mathbb Z}\times{\mathbb Z}_+.$

\subsection{Geometry of the characteristic surface}\label{ss:geometry}
Define ${\mathcal H}^{\beta\alpha}$, 
the projection of ${\mathcal H}$ onto its first two dimensions:
\[
{\mathcal H}^{\beta\alpha} \doteq \{(\beta,\alpha) \in {\mathbb C}^2: {\bm p}(\beta,\alpha) = 0 \};
\]
we will to refer to
${\mathcal H}^{\beta\alpha}$ as the characteristic surface for the
interior as well, which is justified by the next lemma; its proof
follows from basic linear algebra.
\begin{lemma}\label{l:projection}
For each $(\beta,\alpha) \in {\mathcal H}^{\beta\alpha}$ there is at least one parameter family of points \\
$\{(\beta,\alpha,c{\bm d}),~ c \in {\mathbb C}-\{0\} \} \subset {\mathcal H}$, for some ${\bm d} \in {\mathbb C}^{|{\mathcal M}|}-\{0\}$. Conversely, for each $(\beta,\alpha,{\bm d}) \in {\mathcal H}$, we have $(\beta,\alpha) \in {\mathcal H}^{\beta\alpha}.$
Furthermore, all points on ${\mathcal H}$ can be obtained from those on ${\mathcal H}^{\beta\alpha}.$
\end{lemma}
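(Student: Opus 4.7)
The plan is to unpack the two definitions and translate everything into the statement that $1$ is an eigenvalue of the matrix ${\bm A}(\beta,\alpha)$. Recall that ${\mathcal H}^{\beta\alpha}$ consists of those $(\beta,\alpha)$ for which ${\bm p}(\beta,\alpha)=\det({\bm I}-{\bm A}(\beta,\alpha))=0$, while ${\mathcal H}$ consists of triples $(\beta,\alpha,{\bm d})$ with ${\bm d}\neq 0$ satisfying ${\bm A}(\beta,\alpha){\bm d}={\bm d}$. So the common bridge between the two sets is the statement that $1$ lies in the spectrum of ${\bm A}(\beta,\alpha)$, and everything else is bookkeeping.

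First I would handle the forward direction: given $(\beta,\alpha)\in{\mathcal H}^{\beta\alpha}$, vanishing of the determinant $\det({\bm I}-{\bm A}(\beta,\alpha))=0$ means the matrix ${\bm I}-{\bm A}(\beta,\alpha)$ is singular and therefore has a nontrivial kernel. Pick any nonzero vector ${\bm d}$ in this kernel; then by construction ${\bm A}(\beta,\alpha){\bm d}={\bm d}$, so $(\beta,\alpha,{\bm d})\in{\mathcal H}$, and since the kernel is closed under (nonzero) scalar multiplication, the whole family $\{(\beta,\alpha,c{\bm d}): c\in{\mathbb C}\setminus\{0\}\}$ lies in ${\mathcal H}$, yielding the claimed one-parameter family.

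Next I would do the reverse direction: if $(\beta,\alpha,{\bm d})\in{\mathcal H}$, then ${\bm d}\neq 0$ is an eigenvector of ${\bm A}(\beta,\alpha)$ with eigenvalue $1$, hence $1$ is an eigenvalue, hence $\det({\bm I}-{\bm A}(\beta,\alpha))=0$, i.e.\ $(\beta,\alpha)\in{\mathcal H}^{\beta\alpha}$. This shows the projection indeed lands in ${\mathcal H}^{\beta\alpha}$. For the final ``furthermore'' clause, I would simply note that the forward construction exhibits, for each $(\beta,\alpha)\in{\mathcal H}^{\beta\alpha}$, the entire set of triples over $(\beta,\alpha)$ as $\{(\beta,\alpha,{\bm d}): {\bm d}\in\ker({\bm I}-{\bm A}(\beta,\alpha))\setminus\{0\}\}$, so every point of ${\mathcal H}$ is recovered from its projection by this eigenspace computation.

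There is really no substantive obstacle here; the only mildly subtle issue is that the eigenspace of ${\bm A}(\beta,\alpha)$ for eigenvalue $1$ could in principle have dimension greater than one, in which case the full fiber over $(\beta,\alpha)$ is larger than a single one-parameter family. This does not affect the lemma, which only asserts the existence of \emph{at least one} such family and the recoverability of every ${\bm d}$ from the eigenvalue equation, so I would simply point this out in passing and not dwell on it.
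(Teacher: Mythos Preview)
Your proposal is correct and is precisely the ``basic linear algebra'' argument the paper invokes without spelling out: the paper's own proof is the single sentence that the lemma ``follows from basic linear algebra.'' Your write-up just makes explicit the equivalence between $\det({\bm I}-{\bm A}(\beta,\alpha))=0$ and the existence of a nonzero eigenvector for the eigenvalue $1$, which is exactly what the authors had in mind.
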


$\beta^{|{\mathcal M}|}\alpha^{|{\mathcal M}|}{\bm p}$ is a polynomial of degree $3|{\mathcal M}|$ in
$(\beta,\alpha)$, which makes, in general, the analysis of the geometry
of ${\mathcal H}^{\beta\alpha}$ nontrivial. A natural
approach to the study of the geometry of this curve is through
the eigenvalues of ${\bm A}(\beta,\alpha)$. 
The next two propositions show that
the curve 
${\mathcal H}^{\beta\alpha}$ decomposes into $|{\mathcal M}|$ distinct
pieces over any region $D$
where ${\bm A}(\beta,\alpha)$ has simple eigenvalues.
\begin{proposition}\label{p:eigfun}
Let $D \subset{\mathbb C}^2$ or $D \subset {\mathbb R}^2$ 
be open and simply connected and
suppose ${\bm A}(\beta,\alpha)$ has simple eigenvalues for all
$(\beta,\alpha) \in D.$ Then the eigenvalues of ${\bm A}$ can be written
as $|{\mathcal M}|$ distinct smooth functions $\Lambda_j(\beta,\alpha)$ on $D$.
\end{proposition}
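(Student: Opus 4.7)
The plan is to exploit the fact that each simple eigenvalue of ${\bm A}(\beta,\alpha)$ can be locally pinned down by the implicit function theorem applied to the characteristic polynomial, and then to use the simple connectedness of $D$ to glue the local branches into global smooth functions.

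First I would fix a base point $(\beta_0,\alpha_0) \in D$ and enumerate its $|{\mathcal M}|$ simple eigenvalues as $\Lambda_1^0,\dots,\Lambda_{|{\mathcal M}|}^0$. Since each entry of ${\bm A}(\beta,\alpha)$ is a rational function of $(\beta,\alpha)$ with no singularities on $D$ (the $\alpha=0$ and $\beta=0$ loci are not in $D$ because eigenvalues would otherwise blow up, contradicting simplicity at nearby points — one should note this preliminary point), the function $F(\lambda,\beta,\alpha)\doteq \det(\lambda {\bm I} - {\bm A}(\beta,\alpha))$ is smooth in all three arguments. At $(\Lambda_j^0,\beta_0,\alpha_0)$ the derivative
\[
\partial_\lambda F(\Lambda_j^0,\beta_0,\alpha_0) = \prod_{k\neq j}(\Lambda_j^0-\Lambda_k^0)
\]
is nonzero by the simplicity assumption. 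The implicit function theorem then produces a smooth local solution $\Lambda_j(\beta,\alpha)$ of $F(\Lambda_j,\beta,\alpha)=0$ on a neighborhood of $(\beta_0,\alpha_0)$ with $\Lambda_j(\beta_0,\alpha_0) = \Lambda_j^0$.

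Next I would extend these local branches globally by analytic/smooth continuation along paths. Given any $(\beta,\alpha)\in D$, choose a path $\gamma:[0,1]\to D$ from $(\beta_0,\alpha_0)$ to $(\beta,\alpha)$. At each $t$, the simple-eigenvalue assumption guarantees a local IFT solution around $\gamma(t)$, and by compactness of $\gamma([0,1])$ we can cover it by finitely many such neighborhoods. Chaining the uniquely determined local branches along the cover produces a continuation of $\Lambda_j$ to $(\beta,\alpha)$. The key issue — the potential source of trouble — is path independence: different choices of $\gamma$ could in principle permute the eigenvalue labels (monodromy). This is precisely where simple connectedness enters. Any two paths from $(\beta_0,\alpha_0)$ to $(\beta,\alpha)$ are homotopic inside $D$; along a homotopy, eigenvalues remain simple, so the local continuation procedure applies uniformly on a tube around the homotopy, and standard uniqueness of analytic continuation (or, in the smooth case, continuity of the root of $F$ determined by its value at one endpoint) shows the endpoint value is invariant under homotopy. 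Hence $\Lambda_j$ is single valued on $D$.

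The resulting $\Lambda_1,\dots,\Lambda_{|{\mathcal M}|}$ are smooth by the IFT and pairwise distinct at every point of $D$ since they agree at each $(\beta,\alpha)$ with the $|{\mathcal M}|$ distinct eigenvalues of ${\bm A}(\beta,\alpha)$. The main obstacle, as indicated, is really just the monodromy/global-labeling argument; everything else is a direct application of the implicit function theorem, and the algebraic fact that a simple root of a polynomial depends smoothly on its coefficients is well documented. In the complex setting one may alternatively invoke the Kato selection theorem for holomorphic families of matrices with disjoint spectral parts, which yields the same conclusion without explicit path-tracking, but the IFT-plus-simple-connectedness argument above is the most self-contained route.
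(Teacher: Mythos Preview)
Your proposal is correct and follows essentially the same strategy as the paper: local smooth selection of simple eigenvalues (the paper cites \cite[Theorem 5.3]{serrematrices} where you invoke the implicit function theorem on the characteristic polynomial, which is the content of that result), followed by continuous extension along paths with simple connectedness of $D$ ruling out monodromy. The paper compresses the global step into a single sentence, while you spell out the path-continuation and homotopy argument in detail; the substance is the same.
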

\nomenclature{$\Lambda_j$}{$j^{th}$ eigenvalue of ${\bm A}(\beta,\alpha)$}
\begin{proof}
The argument is the same for both real and complex variables.
That 
the eigenvalues
$\Lambda_j$ can be defined smoothly in a neighborhood of any
$(\beta,\alpha) \in D$ follows from \cite[Theorem 5.3]{serrematrices} 
and the assumption
that they are distinct. Once defined locally, one extends
them to all of $D$ through continuous extension, which is possible
because $D$ is simply connected.
\end{proof}
Most of our analysis will be based on 
$(\beta,\alpha) \in D= {\mathbb R}_+^{2,o} \doteq {\mathbb R}_+^2 - \partial_1 \cup \partial_2$. 
For $(\beta,\alpha) \in {\mathbb R}_+^{2,o}$, ${\bm A}(\beta,\alpha)$ is
an irreducible matrix with positive entries. Perron-Frobenius Theorem implies
that ${\bm A}(\beta,\alpha)$ has a simple positive eigenvalue dominating all of the
other eigenvalues in absolute value with an eigenvector with strictly positive entries. 
$\Lambda_1(\beta,\alpha)$ will always denote this largest eigenvalue.
Furthermore, if
${\bm A}(\beta,\alpha)$ has distinct real eigenvalues for $ (\beta,\alpha)
\in {\mathbb R}_+^{2,o}$,
we will label them so that
\[
\Lambda_j(\beta,\alpha) >  \Lambda_i(\beta,\alpha), \text{ for } j < i,
\]
i.e., the eigenvalues are assumed to be sorted in descending order. 

For $D$ and $\Lambda_j$ as in Proposition \ref{p:eigfun} define
\[
{\mathcal L}_j^D \doteq \{ (\beta,\alpha) \in D: \Lambda_j(\beta,\alpha)
=1\}.
\]
\nomenclature{${\mathcal L}_j^D$}{Level curve of the $j^{th}$ eigenvalue of 
${\bm A}$ lying in $D$}
The last proposition implies
\begin{proposition}\label{p:decomposition1}
Let $D$ and $\Lambda_j$, $j=1,2,3,..., |{\mathcal M}|$, be as in Proposition
\ref{p:eigfun}. Then
\begin{equation}\label{e:decompositionHba}
D \cap {\mathcal H}^{\beta\alpha} = \sqcup_{j=1}^{|{\mathcal M}|} 
{\mathcal L}_j^D
\end{equation}
where $\sqcup$ denotes disjoint union.
\end{proposition}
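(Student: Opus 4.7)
The plan is to derive Proposition \ref{p:decomposition1} as a direct corollary of Proposition \ref{p:eigfun} together with the algebraic reformulation of membership in ${\mathcal H}^{\beta\alpha}$. The key observation is that by \eqref{d:charpoly}, a point $(\beta,\alpha) \in D$ lies on ${\mathcal H}^{\beta\alpha}$ if and only if $\det({\bm I} - {\bm A}(\beta,\alpha)) = 0$, which is equivalent to saying that $1$ is an eigenvalue of ${\bm A}(\beta,\alpha)$. So membership in the characteristic surface is recast as an eigenvalue condition, which is exactly what Proposition \ref{p:eigfun} allows us to parametrize smoothly on $D$.

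First I would invoke Proposition \ref{p:eigfun} to write the eigenvalues of ${\bm A}(\beta,\alpha)$ on $D$ as $|{\mathcal M}|$ smooth, everywhere distinct functions $\Lambda_1,\ldots,\Lambda_{|{\mathcal M}|}$. Given $(\beta,\alpha) \in D$, the spectrum of ${\bm A}(\beta,\alpha)$ is exactly $\{\Lambda_1(\beta,\alpha),\ldots,\Lambda_{|{\mathcal M}|}(\beta,\alpha)\}$, so $1$ is an eigenvalue precisely when $\Lambda_j(\beta,\alpha) = 1$ for some $j$. Translating this back through the definitions yields
\[
D \cap {\mathcal H}^{\beta\alpha} \;=\; \bigcup_{j=1}^{|{\mathcal M}|} \{(\beta,\alpha) \in D : \Lambda_j(\beta,\alpha) = 1\} \;=\; \bigcup_{j=1}^{|{\mathcal M}|} {\mathcal L}_j^D,
\]
which gives the set-theoretic equality (as an ordinary union).

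To upgrade the union to a disjoint union, I would argue by contradiction: if $(\beta,\alpha) \in {\mathcal L}_i^D \cap {\mathcal L}_j^D$ for some indices $i \neq j$, then $\Lambda_i(\beta,\alpha) = 1 = \Lambda_j(\beta,\alpha)$, contradicting the hypothesis (via Proposition \ref{p:eigfun}) that ${\bm A}(\beta,\alpha)$ has simple eigenvalues and hence that the $\Lambda_j$'s take pairwise distinct values at every point of $D$. This disjointness step is the only piece of real content beyond Proposition \ref{p:eigfun}, and it is immediate.

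There is no substantive obstacle here; the work has already been done in establishing Proposition \ref{p:eigfun} (i.e., the global smooth labeling of eigenvalues on a simply connected $D$ via \cite[Theorem 5.3]{serrematrices} plus continuous extension). The present proposition merely packages that labeling in the form of a decomposition of the characteristic surface, and the proof reduces to an application of the definition of ${\mathcal H}^{\beta\alpha}$ in terms of the characteristic polynomial together with the simplicity hypothesis that forbids overlap between the level curves ${\mathcal L}_j^D$.
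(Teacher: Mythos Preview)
Your proof is correct and takes essentially the same approach as the paper, which simply states that ``the proof follows from the definitions involved.'' You have merely spelled out those definitions: membership in ${\mathcal H}^{\beta\alpha}$ means $1$ is an eigenvalue of ${\bm A}(\beta,\alpha)$, Proposition \ref{p:eigfun} labels the eigenvalues as distinct functions $\Lambda_j$ on $D$, and simplicity forces the level sets ${\mathcal L}_j^D$ to be disjoint.
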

The proof follows from the definitions involved. 
\nomenclature{${\mathbb R}_+^{2,o}$}{${\mathbb R}_+^2 - \partial_1 \cup \partial_2$}
For $D = {\mathbb R}_+^{2,o}$ we will omit the superscript
$D$ and write ${\mathcal L}_j$ instead of ${\mathcal L}_j^{{\mathbb R}_+^{2,o}}.$
\nomenclature{${\mathcal L}_j$}{ ${\mathcal L}_j^{{\mathbb R}_+^{2,o}}$}

If ${\bm A}(\beta,\alpha)$ has simple real eigenvalues for
$(\beta,\alpha) \in {\mathbb R}_+^{2,o}$ we can define
\[
{\mathcal R}_j \doteq \{ (\beta,\alpha) \in {\mathbb R}_+^{2,o}: 
\Lambda_j(\beta,\alpha) \le 1\}.
\]
The continuity of $\Lambda_j$ implies
${\mathcal L}_j = \partial {\mathcal R}_j.$
\nomenclature{${\mathcal R}_j$}{ Subset of ${\mathbb R}_+^{2,o}$ where $\Lambda_j(\beta,\alpha) \le 1$}

\begin{proposition}\label{p:componentsofH}
Suppose ${\bm A}(\beta,\alpha)$ has simple real eigenvalues for
$(\beta,\alpha) \in {\mathbb R}_+^{2,o}.$
Then the curve ${\mathcal L}_{j}$ is strictly contained
inside the curve ${\mathcal L}_{j+1}$ for $j=1,2,...,|{\mathcal M}|-1.$
\end{proposition}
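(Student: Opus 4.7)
The plan is to reduce the statement to the pointwise strict ordering of the eigenvalue functions $\Lambda_1 > \Lambda_2 > \cdots > \Lambda_{|\mathcal{M}|}$, which already holds on $\mathbb{R}_+^{2,o}$ by the hypothesis of simple real eigenvalues together with the descending-order labelling convention stated just after Proposition \ref{p:eigfun}. Since the $\Lambda_j$ are smooth and distinct pointwise on $\mathbb{R}_+^{2,o}$, the strict inequalities hold at every point and everything that follows is a direct consequence.

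First I would establish the set-theoretic nesting. Pick any $(\beta,\alpha) \in \mathcal{L}_j$, so $\Lambda_j(\beta,\alpha)=1$. The strict ordering gives $\Lambda_{j+1}(\beta,\alpha) < 1$, which places $(\beta,\alpha)$ in the open set $\{(\beta',\alpha') \in \mathbb{R}_+^{2,o} : \Lambda_{j+1}(\beta',\alpha') < 1\}$, and hence in the interior of $\mathcal{R}_{j+1}$ (using $\mathcal{L}_{j+1}=\partial \mathcal{R}_{j+1}$). This already yields both $\mathcal{L}_j \subset \mathcal{R}_{j+1} \setminus \mathcal{L}_{j+1}$ and $\mathcal{L}_j \cap \mathcal{L}_{j+1} = \emptyset$. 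Running the same argument in reverse at any point of $\mathcal{L}_{j+1}$ gives $\Lambda_j > 1$ there, so $\mathcal{L}_{j+1}$ lies in the complement of $\mathcal{R}_j$. The combination is exactly the strict nesting of the sublevel sets and disjointness of their boundaries.

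The main subtlety I anticipate is the planar-topological reading of ``strictly contained inside''. The inclusion $\mathcal{L}_j \subset \mathcal{R}_{j+1} \setminus \mathcal{L}_{j+1}$ is set-theoretically unambiguous, but upgrading it to ``$\mathcal{L}_j$ lies in the bounded planar component cut out by $\mathcal{L}_{j+1}$'' requires that each $\mathcal{L}_j$ be a (Jordan) simple closed curve separating $\mathbb{R}_+^{2,o}$ into an inside and an outside. This is orthogonal to the eigenvalue-ordering argument; I expect it to follow from the smoothness of $\Lambda_j$ (Proposition \ref{p:eigfun}) together with non-vanishing of $\nabla \Lambda_j$ on $\mathcal{L}_j$ via the implicit function theorem, or to be invoked from a separate geometric lemma supplied elsewhere in the paper. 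The substantive content of Proposition \ref{p:componentsofH} is the strict ordering of the $\Lambda_j$; the rest is topological bookkeeping.
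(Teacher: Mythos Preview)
Your core argument---strict pointwise ordering $\Lambda_{j+1} < \Lambda_j$ forces $\mathcal{L}_j \subset \{\Lambda_{j+1} < 1\} = \mathrm{int}\,\mathcal{R}_{j+1}$ and $\mathcal{L}_j \cap \mathcal{L}_{j+1} = \emptyset$---is exactly what the paper does. The one substantive ingredient you are missing is compactness of the $\mathcal{R}_j$: the paper invokes Gershgorin's theorem to show that the diagonal entries of ${\bm A}(\beta,\alpha)$, and hence every $\Lambda_j(\beta,\alpha)$, blow up as $(\beta,\alpha) \to \partial\mathbb{R}_+^{2,o}$, which forces each $\mathcal{R}_j$ to be a compact subset of $\mathbb{R}_+^{2,o}$. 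Compactness of $\mathcal{L}_j = \partial\mathcal{R}_j$ combined with your open-set inclusion then gives strictly positive distance from $\mathcal{L}_{j+1}$, which is how the paper reads ``strictly contained inside.''

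This also resolves your topological worry, but differently than you anticipated. The paper never establishes that the $\mathcal{L}_j$ are Jordan curves, never appeals to the implicit function theorem or to non-vanishing of $\nabla\Lambda_j$, and never speaks of bounded planar components. ``Inside'' is interpreted throughout via the sublevel sets $\mathcal{R}_j$: the content of the proposition is simply that $\mathcal{L}_j$ sits in the interior of the compact set $\mathcal{R}_{j+1}$ at positive distance from its boundary $\mathcal{L}_{j+1}$. So the ``topological bookkeeping'' you flagged is handled by the Gershgorin/compactness step rather than by any Jordan-curve argument.
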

\begin{proof}
All diagonal entries of ${\bm A}(\beta,\alpha)$ tend to $\infty$
when $(\beta,\alpha) \rightarrow \partial {\mathbb R}_+^{2,o}$.
This and
Gershgorin's Theorem \cite[Appendix 7]{lax1996linear}, imply 
$\Lambda_j(\beta,\alpha) \rightarrow \infty$ for
$(\beta,\alpha) \rightarrow \partial {\mathbb R}_+^{2,o}$.
This implies in particular that ${\mathcal R}_j$ is a compact subset
of ${\mathbb R}_+^{2,o}$. Secondly,
$\Lambda_{j+1} < \Lambda_j$ implies 
${\mathcal R}_{j} \subset {\mathcal R}_{j+1}$; the compactness
of these sets,
the strictness of the inequality $\Lambda_{j+1}(\beta,\alpha) < \Lambda_j(\beta,\alpha)$ imply that $\partial {\mathcal R}_j = {\mathcal L}_j$ lies
strictly within ${\mathcal R}_{j+1}$ with strictly positive distance from the
boundary ${\mathcal L}_{j+1}$ of ${\mathcal R}_{j+1}$; this proves
the claim of the proposition.
\end{proof}

In Sections \ref{s:computation} and \ref{s:improveapp} we will employ the following assumption and
the above decomposition of ${\mathcal H}^{\beta\alpha}$ to identify
points on ${\mathcal H}^{\beta\alpha}$ to be used
in the construction of $(Y,M)$-harmonic functions:
\begin{assumption}\label{as:simpleeig}
${\bm A}(\beta,\alpha)$ has real distinct eigenvalues for 
$(\beta,\alpha) \in {\mathbb R}_+^{2,o}.$
\end{assumption}

To show that
Assumption \ref{as:simpleeig}
is not vacuous,
we now give a class of matrices ${\bm A}$ that satisfies it.
The following definitions are from \cite[page 57]{berman1994nonnegative}:
a matrix is said to be totally nonnegative (totally positive) if all of its minors of any degree are 
nonnegative (positive). A  totally nonnegative matrix is said to be {\em oscillatory} if some positive integer power
of the matrix is totally positive.
If ${\bm A}$ is oscillatory, Assumption \ref{as:simpleeig} holds:

\begin{proposition}
Suppose ${\bm A}(\beta,\alpha)$ is an oscillatory matrix for
all $(\beta,\alpha) \in {\mathbb R}_+^{2,o}$, then 
${\bm A}(\beta,\alpha)$ has $|{\mathcal M}|$ distinct eigenvalues over ${\mathbb R}_+^{2,o}.$
\end{proposition}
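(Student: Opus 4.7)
The proof should be essentially a one-line invocation of the classical Gantmacher--Krein spectral theorem for oscillatory matrices. That theorem asserts: if $B$ is an $n\times n$ oscillatory matrix, then $B$ has $n$ simple (i.e.\ distinct) positive real eigenvalues, arranged as $\lambda_1 > \lambda_2 > \cdots > \lambda_n > 0$, with corresponding eigenvectors whose sign-change patterns are prescribed. A convenient modern reference is \cite[Chapter 2]{berman1994nonnegative}, which is already cited just before the proposition for the definitions of totally (non)negative and oscillatory matrices.

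Given this tool, my plan is essentially trivial. Fix an arbitrary $(\beta,\alpha) \in {\mathbb R}_+^{2,o}$. By the hypothesis of the proposition, the matrix ${\bm A}(\beta,\alpha)$ is oscillatory. The Gantmacher--Krein theorem, applied to this single matrix of size $|{\mathcal M}| \times |{\mathcal M}|$, produces $|{\mathcal M}|$ distinct positive real eigenvalues. Since the point $(\beta,\alpha) \in {\mathbb R}_+^{2,o}$ was arbitrary, this proves the proposition.

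I do not anticipate any genuine obstacle. The only bookkeeping step is to cite the precise form of Gantmacher--Krein that gives \emph{simplicity} of the spectrum (rather than merely positivity and reality), since it is the simplicity that is the substantive content of the proposition (and which is needed to apply Proposition \ref{p:eigfun} and Proposition \ref{p:decomposition1} later on). Both statements are standard in the totally positive matrix literature, so this amounts to picking the right theorem number in \cite{berman1994nonnegative} or, alternatively, deriving simplicity directly: $B$ oscillatory means $B^k$ is totally positive for some $k$, so $B^k$ has simple positive eigenvalues by the total positivity version of the theorem, and hence $B$ itself has simple eigenvalues since distinct $k$-th powers force distinct bases.
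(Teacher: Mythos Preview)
Your proposal is correct and matches the paper's approach exactly: the paper states that the proposition ``is a basic fact on oscillatory matrices \cite[(6.28)]{berman1994nonnegative}'' and gives no further argument. Your write-up is, if anything, more detailed than the paper's, since you spell out the pointwise application and the Gantmacher--Krein simplicity statement explicitly.
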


This proposition is a basic fact on oscillatory
matrices \cite[(6.28)]{berman1994nonnegative}. \cite[(6.26)]{berman1994nonnegative} identifies a particularly simple
class of oscillatory matrices:

\begin{proposition}\label{p:conditionforsimple}
Suppose ${\bm G}(1,1)$, ${\bm G}(1,2)$, ${\bm G}(|{\mathcal M}|-1,|{\mathcal M}|)$, ${\bm G}(|{\mathcal M}|,|{\mathcal M}|)$ and
${\bm G}(j,j-1)$, ${\bm G}(j,j)$, ${\bm G}(j,j+1)$, $j=2,3,...,|{\mathcal M}|-1$ are all strictly
positive and the rest of the components of ${\bm G}$ are all zero, i.e.,
${\bm G}$ is tridiagonal with strictly positive entries.
Then ${\bm G}$ is an oscillatory matrix. 
\end{proposition}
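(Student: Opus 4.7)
The plan is to establish the two defining conditions of an oscillatory matrix for ${\bm G}$ in sequence: first total nonnegativity, then strict total positivity of some integer power of ${\bm G}$.

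\textbf{Step 1: total nonnegativity.} The tridiagonal support is the key. For sorted row/column index sets $i_1 < \cdots < i_k$ and $j_1 < \cdots < j_k$, every monomial in the Laplace expansion of $\det {\bm G}[i_1,\dots,i_k;j_1,\dots,j_k]$ picks one entry from each row, and ${\bm G}(i_\ell, j_{\sigma(\ell)})$ vanishes unless $|i_\ell - j_{\sigma(\ell)}| \le 1$; a short combinatorial argument shows the minor vanishes outside a ``near-diagonal'' index configuration and that surviving minors factor into a product of principal minors of contiguous tridiagonal blocks and of isolated off-diagonal entries. It therefore suffices to check nonnegativity of the principal minors of ${\bm G}$, which one obtains by induction on the block size via the classical three-term recursion
\[
D_n = {\bm G}(n,n) D_{n-1} - {\bm G}(n-1,n){\bm G}(n,n-1) D_{n-2},
\]
provided the compatibility inequality ${\bm G}(i,i){\bm G}(i+1,i+1) \ge {\bm G}(i,i+1){\bm G}(i+1,i)$ is in hand at each step.

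\textbf{Step 2: total positivity of a power.} Since all three diagonals of ${\bm G}$ are strictly positive, ${\bm G}$ is irreducible with a self-loop at every state, so the directed graph of ${\bm G}$ is strongly connected with period one and ${\bm G}^k$ has strictly positive entries for every sufficiently large $k$. To upgrade entrywise positivity to strict positivity of every minor of ${\bm G}^k$, one applies the Cauchy--Binet identity
\[
\det {\bm G}^k[I;J] = \sum_{K_1,\dots,K_{k-1}} \det {\bm G}[I;K_1] \det {\bm G}[K_1;K_2] \cdots \det {\bm G}[K_{k-1};J];
\]
every summand is nonnegative by Step 1, and the Lindstr\"om--Gessel--Viennot reading of each factor as a weighted non-crossing path sum on the bipartite graph of ${\bm G}$ allows one, for any target pair $(I,J)$, to exhibit an intermediate chain $K_1,\dots,K_{k-1}$ along which every factor is strictly positive, by moving each index one step at a time along the tridiagonal band.

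The main obstacle is Step 1: bare positivity of the three diagonals is not by itself enough to force total nonnegativity, because the compatibility inequality ${\bm G}(i,i){\bm G}(i+1,i+1) \ge {\bm G}(i,i+1){\bm G}(i+1,i)$ can fail in general ($2\times 2$ counterexamples are immediate). So the proof must draw on structural features of ${\bm G}$ beyond mere positivity; in the application to ${\bm A}(\beta,\alpha)$ these features come from the product form ${\bm P}(m,m){\bm p}(\beta,\alpha,m)$ of the diagonal entries together with the stochastic normalization of ${\bm P}$, which together yield the required $2\times 2$ minor inequalities. Once these are secured, the remainder — propagation through the tridiagonal recursion and the Cauchy--Binet step above — is routine.
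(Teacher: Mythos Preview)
You have correctly spotted a genuine problem, but your attempted repair does not work, and in fact the proposition as stated is \emph{false}.

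The paper does not give a proof; it simply cites \cite[(6.26)]{berman1994nonnegative}. The classical Gantmacher--Krein criterion that reference records says: a \emph{totally nonnegative} nonsingular matrix with all immediate super- and sub-diagonal entries strictly positive is oscillatory. Total nonnegativity is a \emph{hypothesis} there, not a conclusion. Strip it away and the statement collapses: the $2\times 2$ matrix
\[
{\bm G}=\begin{pmatrix}1&10\\10&1\end{pmatrix}
\]
is strictly tridiagonal with positive entries but has $\det{\bm G}=-99<0$, so it is not totally nonnegative and hence not oscillatory. You identified exactly this obstruction in your ``main obstacle'' paragraph.

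Your proposed rescue --- invoking the product form ${\bm P}(m,m){\bm p}(\beta,\alpha,m)$ of the diagonal of ${\bm A}(\beta,\alpha)$ and the stochasticity of ${\bm P}$ --- does not save the day either. At $(\beta,\alpha)=(1,1)$ one has ${\bm p}(1,1,m)=1$ and ${\bm A}(1,1)={\bm P}$, and a tridiagonal stochastic matrix need not satisfy ${\bm P}(i,i){\bm P}(i+1,i+1)\ge{\bm P}(i,i+1){\bm P}(i+1,i)$: take ${\bm P}=\begin{pmatrix}0.1&0.9\\0.9&0.1\end{pmatrix}$. So even restricted to the intended application, the $2\times 2$ minor inequality you need can fail, and with it Step~1. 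Moreover, even if the patch worked for ${\bm A}(\beta,\alpha)$, it would not prove the proposition, which is about an arbitrary strictly tridiagonal ${\bm G}$.

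What the paper actually \emph{uses} is only that ${\bm A}(\beta,\alpha)$ has simple real eigenvalues when ${\bm P}$ is strictly tridiagonal. That follows without any appeal to oscillatory matrices: a tridiagonal matrix ${\bm G}$ with ${\bm G}(i,i+1){\bm G}(i+1,i)>0$ for all $i$ is similar, via a positive diagonal conjugation, to a real symmetric tridiagonal matrix with nonzero off-diagonal entries, and such matrices are well known to have simple (real) spectrum. This elementary route gives everything the paper needs and sidesteps the incorrect proposition entirely.
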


We will call any tridiagonal matrix with strictly positive entries
on the three diagonals ``strictly tridiagonal.'' By the above
proposition any strictly tridiagonal matrix is oscillatory.
In particular, if the transition matrix ${\bm P}$ is strictly tridiagonal, ${\bm A}(\beta, \alpha)$ will also be of the same form for all $(\beta,\alpha)\in {\mathbb R}^2_+$; therefore, for such ${\bm P}$ Assumption \ref{as:simpleeig} 
holds.

The decomposition of ${\mathcal H}^{\beta\alpha} \cap {\mathbb R}_+^{2,o}$ 
into ${\mathcal L}_j$ is
shown in Figure \ref{f:levelcurves2} for 
the transition matrix
\begin{equation}\label{e:Pexample}
{\bm P} = \left( \begin{matrix}
0.6 & 0.4 &0\\ 0.1 & 0.4 & 0.5\\ 0 & 0.2 & 0.8\end{matrix}
\right).
\end{equation}

The matrix ${\bm P}$ of \eqref{e:Pexample}
is strictly tridiagonal; therefore, Proposition  \ref{p:conditionforsimple}
applies and ${\bm A}(\beta,\alpha)$ has distinct real eigenvalues for
all $(\beta,\alpha) \in {\mathbb R}_+^{2,o}$ and we have the decomposition 
\eqref{e:decompositionHba}
of
${\mathcal H}^{\beta\alpha} \cap {\mathbb R}_+^{2,o}$ 
given by Propositions \ref{p:decomposition1} and \ref{p:componentsofH}; Figure \ref{f:levelcurves2}
shows ${\mathcal H}^{\beta\alpha}$ and its components
${\mathcal L}_j$; the jump probabilities for this example are
\begin{equation}\label{e:Ratesexample}
\left(
\begin{matrix} 0.1 & 0.4 & 0.5\\ 0.12 &0.41 &0.47\\ 0.09 & 0.39 & 0.52
\end{matrix}
\right)
\end{equation}
where the $i^{th}$ row equals $(\lambda(i),\mu_1(i),\mu_2(i))$.
\begin{figure}[h]
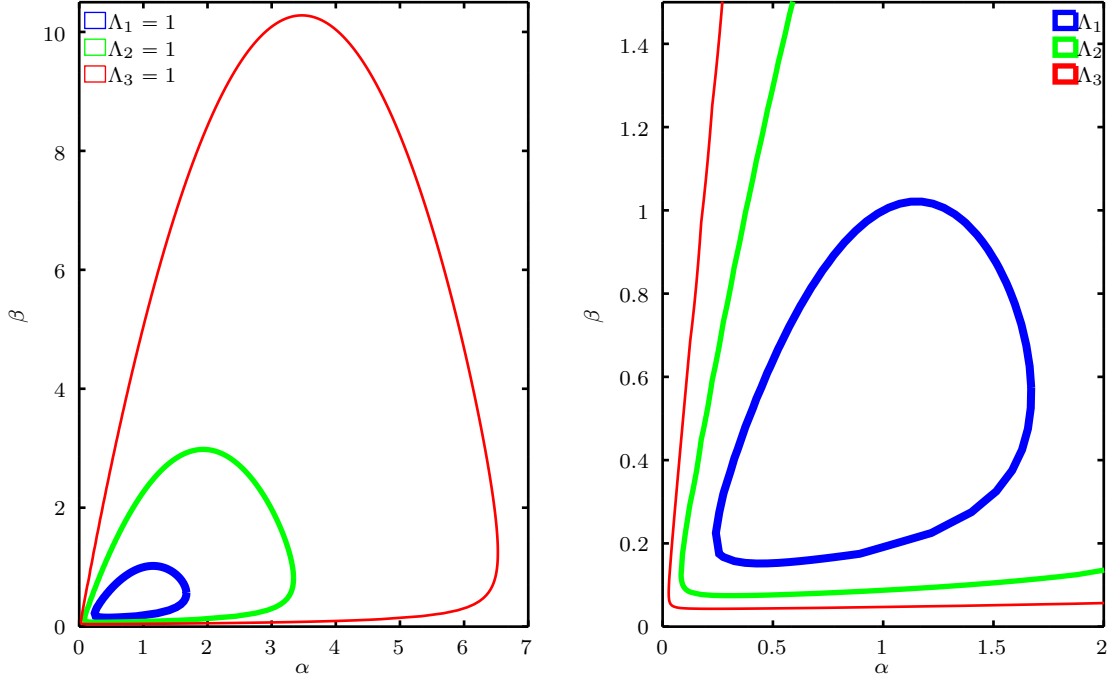

\hspace{-0.5cm}
\begin{subfigure}{0.48\textwidth}
\input{levelcurve}
\end{subfigure}
\begin{subfigure}{0.48\textwidth}
\input{levelcurvedetail}
\end{subfigure}
\caption{Real section of the characteristic surface
${\mathcal H}^{\beta\alpha} = \cup_{j=1}^3 {\mathcal L}_j$
for the parameter values given in \eqref{e:Pexample} and \eqref{e:Ratesexample}.
On the right: detailed graph around the origin}\label{f:levelcurves2}
\end{figure}
\FloatBarrier
By Proposition  \ref{p:interiorharm} and Lemma \ref{l:projection}
each point on any of the curves depicted in Figure \ref{f:levelcurves2}
gives a $Y$-harmonic function on ${\mathbb Z} \times {\mathbb Z}_+^o.$
Most of our analysis will be based on points on the innermost curve
${\mathcal L}_1$, the $1$-level curve of the largest eigenvalue
$\Lambda_1$; before identifying the relevant points,
let us look at two different methods of constructing
$(Y,M)$-(super)harmonic functions from points on ${\mathcal H}^{\beta\alpha}.$
\nomenclature{${\mathcal L}_1$}{Innermost component of ${\mathcal H}^{\beta\alpha}$, the 1-level curve of the largest eigenvalue $\Lambda_1$}

\subsection{Construction of $(Y,M)$-harmonic and superharmonic functions}\label{ss:YMharm}
We can proceed in two ways
to get functions that satisfy \\
${\mathbb E}_{(y,m)}[h(Y_1,M_1)]= h(y,m)$ 
or
${\mathbb E}_{(y,m)}[h(Y_1,M_1)]\le h(y,m)$
for $ y \in \partial_2$
as well as the interior.
The first is by defining
the characteristic polynomial ${\bm p}_2$, the
\nomenclature{${\bm p}_2$}{Characteristic polynomial for $\partial_2$}
boundary matrix ${\bm A}_2$, and the boundary surface ${\mathcal H}_2$ associated with $\partial_2$
and using points on ${\mathcal H} \cap {\mathcal H}_2$:
\nomenclature{${\bm A}_2$}{Characteristic matrix for $\partial_2$}
\begin{align}
{\bm p}_2(\beta,\alpha,m) &\doteq \lambda(m) \frac{1}{\beta} +
\mu_1(m) \alpha + \mu_2(m),~~m \in {\mathcal M}, \notag \\
{\bm A}_2(\beta,\alpha)_{m_1,m_2} &\doteq
\begin{cases} {\bm P}(m_1,m_2), &~~~m_1 \neq m_2 \\
{\bm P}(m_1,m_1)  {\bm p}_2(\beta,\alpha,m), &~~m_1 = m_2,
\end{cases},
(m_1,m_2) \in {\mathcal M}^2, \notag
\\
{\mathcal H}_2 &\doteq \left\{ (\beta,\alpha,{\bm d}) \in {\mathbb C}^{2+|{\mathcal M}|}:
{\bm A}_2(\beta,\alpha)){\bm d} = {\bm d}, {\bm d}\neq 0 \right\}.
\label{d:H2}
\end{align}
\nomenclature{${\mathcal H}_2$}{Characteristic surface for $\partial_2$}
Define $\Lambda_{2,1}(\beta,\alpha)$ to be the largest eigenvalue of ${\bm A}_2(\beta,\alpha)$.
\nomenclature{$\Lambda_{2,1}$}{Largest eigenvalue of ${\bm A}_2$ when $(\beta,\alpha) \in {\mathbb R}_+^2$}
Parallel to the interior case, define 
\begin{align*}
{\mathcal H}^{\beta\alpha}_2 &\doteq \{(\beta,\alpha) \in {\mathbb C}^2: {\bm p}_2(\beta,\alpha) = 0 \},\\
{\mathcal L}_{2,1} &\doteq
\{(\beta,\alpha) \in {\mathbb R}^2_+: \Lambda_{2,1}(\beta,\alpha) = 1 \}.
\end{align*}
\nomenclature{${\mathcal L}_{2,1}$}{Level curve of $\Lambda_{2,1} $ in ${\mathbb R}_+^2$}
\nomenclature{${\mathcal H}^{\beta\alpha}_2$}{Projection of ${\mathcal H}_2$ onto its first two coordinates}
\begin{proposition}\label{p:pointonboth}
$[(\beta,\alpha,{\bm d}),\cdot]$ is $(Y,M)$-harmonic if 
$(\beta,\alpha,{\bm d}) \in {\mathcal H} \cap {\mathcal H}_2.$
\end{proposition}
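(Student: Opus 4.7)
The plan is to split the verification of \eqref{e:harmonicity} into two cases. For interior points $y \in {\mathbb Z}\times{\mathbb Z}_+^o$ the identity is already furnished by Proposition \ref{p:interiorharm}, which uses only the hypothesis $(\beta,\alpha,{\bm d}) \in {\mathcal H}$. Thus the only remaining work is to establish the identity for $y \in \partial_2$, and the whole point of introducing the boundary objects ${\bm p}_2$, ${\bm A}_2$, ${\mathcal H}_2$ is to make exactly this boundary calculation close up.

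For $y \in \partial_2$ one has $y(2)=0$, so $\pi_1(y,(0,-1))=(0,0)$ while the other two increments of $Y$ pass through unconstrained (the first coordinate is unconstrained by $\pi_1$). Writing out the conditional expectation in the same style as the proof of Proposition \ref{p:interiorharm},
\begin{align*}
{\mathbb E}_{(y,m)}[h(Y_1,M_1)]
&= \sum_{n\neq m}{\bm P}(m,n)\,h(y,n) + {\bm P}(m,m)\bigl(\lambda(m)\, h(y+(-1,0),m) \\
&\quad + \mu_1(m)\,h(y+(1,1),m) + \mu_2(m)\,h(y,m)\bigr),
\end{align*}
the only structural difference from the interior computation being that the last summand carries $h(y,m)$ in place of $h(y+(0,-1),m)$. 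This is the single place where the constraint enters.

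Expanding $h(y+v,m)=\beta^{y(1)+v(1)-v(2)}\alpha^{v(2)}{\bm d}(m)$ using $y(2)=0$ and factoring out the common $\beta^{y(1)}$ reduces the bracket multiplying ${\bm P}(m,m){\bm d}(m)$ to
\[
\lambda(m)\frac{1}{\beta} + \mu_1(m)\alpha + \mu_2(m) = {\bm p}_2(\beta,\alpha,m),
\]
which is exactly the definition of ${\bm p}_2$. Hence
\[
{\mathbb E}_{(y,m)}[h(Y_1,M_1)] = \beta^{y(1)}\Bigl(\sum_{n\neq m}{\bm P}(m,n){\bm d}(n) + {\bm P}(m,m){\bm p}_2(\beta,\alpha,m){\bm d}(m)\Bigr),
\]
and the parenthesised quantity is precisely the $m$-th component of ${\bm A}_2(\beta,\alpha){\bm d}$. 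The hypothesis $(\beta,\alpha,{\bm d})\in{\mathcal H}_2$ gives ${\bm A}_2(\beta,\alpha){\bm d}={\bm d}$, so this component equals ${\bm d}(m)$ and the whole expression collapses to $\beta^{y(1)}{\bm d}(m) = h(y,m)$, as required.

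I expect no real obstacle here: the objects ${\bm p}_2$ and ${\bm A}_2$ are engineered exactly so that this boundary identity holds, mirroring the role of ${\bm p}$ and ${\bm A}$ in the interior argument. The only thing to be careful about is the bookkeeping — tracking that it is the $(0,-1)$ increment, and only this one, that is blocked on $\partial_2$; using $y(2)=0$ to simplify the exponent of $\alpha$; and correctly recognising the resulting linear combination as the $m$-th row of ${\bm A}_2{\bm d}$.
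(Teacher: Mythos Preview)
Your proof is correct and follows essentially the same approach as the paper: invoke Proposition \ref{p:interiorharm} for the interior, then on $\partial_2$ expand the one-step expectation, factor out $\beta^{y(1)}$ to reveal ${\bm p}_2(\beta,\alpha,m)$, recognise the resulting sum as the $m$-th entry of ${\bm A}_2(\beta,\alpha){\bm d}$, and conclude via ${\bm A}_2(\beta,\alpha){\bm d}={\bm d}$.
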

\begin{proof}
Proposition \ref{p:interiorharm} says that for $(\beta,\alpha,{\bm d}) \in {\mathcal H}$, $[(\beta,\alpha,{\bm d}),\cdot]$ satisfies the harmonicity condition when $y \in {\mathbb Z} \times {\mathbb Z}_+ -\partial_2$. Similar to the proof of Proposition \ref{p:interiorharm}, we would like to show that $[(\beta,\alpha,{\bm d}),\cdot]$ is $(Y,M)$-harmonic on $\partial_2$ when $(\beta,\alpha,{\bm d}) \in {\mathcal H}_2.$
By definition
\begin{align*}
&{\mathbb E}_{(y,m)}\left[ (\beta,\alpha,{\bm d}), (Y_1, M_1) \right]\\
&~~~= \sum_{n \in {\mathcal M},n \neq m}  {\bm P}(m,n) [(\beta,\alpha,{\bm d}), (y,n)] \\
&~~~~~+ {\bm P}(m,m) ( \lambda(m)  [(\beta,\alpha,{\bm d}), (y + (-1,0),m)] + \mu_1(m) [(\beta,\alpha,{\bm d}), (y+(1,1),m] \\
    &~~~~~+ \mu_2(m) [(\beta,\alpha,{\bm d}), (y,m)])\\
&~~~=\sum_{n \in {\mathcal M},n \neq m}  {\bm P}(m,n) [(\beta,\alpha,{\bm d}), (y,n)] \\
&~~~~~+ {\bm P}(m,m) ( \lambda(m) \beta^{y(1)-1} {\bm d}(m) +   \mu_1(m) \beta^{y(1)} \alpha {\bm d}(m) + \mu_2(m) \beta^{y(1)} {\bm d}(m) )\\
&~~~=\sum_{n \in {\mathcal M},n \neq m}  {\bm P}(m,n) [(\beta,\alpha,{\bm d}), (y,n)] + {\bm P}(m,m)[(\beta,\alpha,{\bm d}), (y,m)] {\bm p}_2(\beta,\alpha, m)\\
&~~~= \beta^{y(1)} 
\left(
\sum_{n \in {\mathcal M},n \neq m}  {\bm P}(m,n) {\bm d}(n) + {\bm P}(m,m){\bm d}(m){\bm p}_2(\beta,\alpha, m) \right).
\intertext{
	The expression in parenthesis equals the $m^{th}$ term of the vector ${\bm A}_2(\beta,\alpha)){\bm d}$, which
	equals ${\bm d}(m)$ because $(\beta,\alpha,{\bm d}) \in {\mathcal H}_2$ means ${\bm A}_2(\beta,\alpha)){\bm d} = {\bm d}.$ Therefore,}
&~~~= \beta^{y(1)} {\bm d}(m) = [(\beta,\alpha,{\bm d}), (y,m)].
\end{align*}
This argument and Proposition \ref{p:interiorharm} prove the claim of the proposition.
\end{proof}

The real sections of
${\mathcal H}^{\beta\alpha}$ and ${\mathcal H}_2^{\beta\alpha}$ are $1$ dimensional curves
and their intersection will in general consist of finitely many points. In the analysis of the tandem walk with no modulation, these points can easily be identified
explicitly. There turn out to be three of them, 
of which only one is nontrivial (i.e., different from $0$ and $1$).
In the present
case, there will in general be $3|{\mathcal M}|-2$ nontrivial points
on ${\mathcal H}^{\beta\alpha} \cap {\mathcal H}^{\beta\alpha}_2$;
one of these which lies on ${\mathcal L}_1 \cap {\mathcal L}_{2,1}$
can be identified using the implicit function theorem
and the stability assumption \eqref{e:stabilityas};
this point and the $(Y,M)$-harmonic function it defines are given in 
Proposition \ref{p:point1} and \ref{p:hrho1} below. 
For the argument we need two auxiliary linear algebra results,
Lemmas \ref{l:positivity} and \ref{l:deteigenvec} given
in the appendix.

\begin{proposition}\label{p:point1}
Under the stability assumption (\ref{e:stabilityas}) there exists unique $ 0 < \rho_1 <1$ such that
$(\rho_1,\rho_1) \in 
{\mathcal L}_1 \cap {\mathcal L}_{2,1} \subset
{\mathcal H}^{\beta\alpha} \cap {\mathcal H}_2^{\beta\alpha}$,
i.e., $1$ is the largest eigenvalue of ${\bm A}(\rho_1,\rho_1)$ and ${\bm A}_2(\rho_1,\rho_1).$ 
\end{proposition}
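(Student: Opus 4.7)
My plan is to reduce the two-curve intersection to a one-variable Perron-root problem on the diagonal $\beta = \alpha$. A direct comparison of the two local characteristic polynomials shows
\[
{\bm p}(\beta,\beta,m) = \lambda(m)\beta^{-1} + \mu_1(m)\beta + \mu_2(m) = {\bm p}_2(\beta,\beta,m),
\]
so ${\bm A}(\beta,\beta) = {\bm A}_2(\beta,\beta)$ and $\Lambda_1(\beta,\beta) = \Lambda_{2,1}(\beta,\beta)$. Hence the proposition reduces to producing a unique $\rho_1 \in (0,1)$ with $f(\rho_1) = 1$, where $f(\beta) \doteq \Lambda_1(\beta,\beta)$.

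For existence I would collect three facts about $f$. At $\beta = 1$, $\lambda(m)+\mu_1(m)+\mu_2(m) = 1$ gives ${\bm A}(1,1) = {\bm P}$, which is a stochastic matrix, so $f(1) = 1$. As $\beta \to 0^+$, the $\lambda(m)\beta^{-1}$ contribution blows up the diagonal of ${\bm A}(\beta,\beta)$, and Gershgorin's theorem (as applied in Proposition \ref{p:componentsofH}) forces $f(\beta) \to \infty$. Finally, first-order perturbation theory applied to the simple Perron eigenvalue $1$ of ${\bm A}(1,1) = {\bm P}$, which has right eigenvector $\mathbf{1}$ and left eigenvector $\bm\pi$, yields
\[
f'(1) \;=\; \sum_{m \in {\mathcal M}} \bm\pi(m)\,{\bm P}(m,m)\,\bigl(\mu_1(m) - \lambda(m)\bigr) \;>\; 0
\]
by the stability assumption \eqref{e:stabilityas} with $i=1$. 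Consequently $f$ dips strictly below $1$ just to the left of $\beta = 1$ while diverging near $0$, and the intermediate value theorem delivers some $\rho_1 \in (0,1)$ with $f(\rho_1) = 1$.

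For uniqueness I would pass to the logarithmic variable $s = \log\beta$ and invoke Kingman's log-convexity theorem for Perron roots. The entries of ${\bm A}(e^s,e^s)$ are either constants or sums of exponentials of the form ${\bm P}(m,m)(\lambda(m)e^{-s} + \mu_1(m)e^s + \mu_2(m))$; all such entries are log-convex in $s$, so Kingman's theorem makes $g(s) \doteq f(e^s)$ log-convex, and $\log g$ convex on ${\mathbb R}$. Since $\log g(0) = 0$, $(\log g)'(0) = f'(1) > 0$, and $\log g(s) \to \infty$ as $s \to -\infty$, convexity forces exactly one additional zero $s_1 < 0$, and $\rho_1 \doteq e^{s_1} \in (0,1)$ is the unique point claimed.

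The main obstacle is the uniqueness step: Kingman's theorem handles it cleanly, but one must verify irreducibility of ${\bm A}(e^s,e^s)$ for $s \in {\mathbb R}$ (which follows from irreducibility of ${\bm P}$ together with positivity of the diagonal multipliers on ${\mathbb R}_+^{2,o}$, as noted after Proposition \ref{p:eigfun}) and nonconstancy of $\log g$, which is immediate from $f'(1) \neq 0$. The auxiliary Lemmas \ref{l:positivity} and \ref{l:deteigenvec} of the appendix presumably supply the required linear-algebra facts about positivity of the Perron eigenvector and the derivative formula used above.
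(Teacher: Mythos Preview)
Your argument is correct and follows essentially the same route as the paper: reduce to the diagonal $\beta=\alpha$, show $\Lambda_1(1,1)=1$ with strictly positive derivative there, use Gershgorin as $\beta\to 0^+$, and invoke log-convexity of the Perron root (the paper cites \cite[Lemmas~4.2,~4.3]{sezer2009importance} for the Hamiltonian $H$, you invoke Kingman) to get uniqueness. The only notable difference is in the derivative step: the paper computes $\frac{d}{dr}\Lambda_1(e^r,e^r)\big|_{r=0}$ via the implicit function theorem applied to $\det(\Lambda\bm I-\bm A(e^r,e^r))$ together with Lemma~\ref{l:deteigenvec}, whereas you obtain the same sign directly from the eigenvalue perturbation formula $f'(1)=\bm\pi\,\bm A'(1,1)\,\mathbf 1$; your observation that $\bm A(\beta,\beta)=\bm A_2(\beta,\beta)$, which handles the $\mathcal L_{2,1}$ part, is left implicit in the paper's proof.
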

\nomenclature{$\rho_1$}{$(\rho_1,\rho_1)$, $\rho_1 \in (0,1)$ 
is a point on identified on ${\mathcal L}_1 \cap {\mathcal L}_{2,1}$}
\begin{proof}
For $q  \in {\mathbb R}^2$ define
\begin{equation}\label{d:H}
H(q) \doteq -\log \Lambda_1\left(e^{q(1)},e^{q(2)} \right).
\end{equation}
\nomenclature{$H$}{The hamiltonian function}
By \cite[Lemma 4.2, 4.3]{sezer2009importance}, $H$ is convex in $q$.
Proceeding parallel to \cite[Proof of Lemma 4.4, page 515]{sezer2009importance}
define $f(\Lambda,r) \doteq \det(\Lambda \bm I - {\bm A}(e^r,e^r)).$
We know that $f(\Lambda_1(e^r,e^r),r) = 0$ for $r \in {\mathbb R}.$
To prove our proposition, we will apply the implicit function theorem to 
$f$ at $(1,0)$ to prove that $r \mapsto \Lambda_1(e^r,e^r)$ is strictly increasing
at $ r=0.$
Differentiating $f$ at $(1,0)$ with respect to $r$
gives
\begin{align*}
\left.\frac{\partial f}{\partial r}\right|_{(1,0)} 
&=
\sum_{m \in {\mathcal M}} (\lambda(m) - \mu_1(m)) {\bm P}(m,m) \det({\bm I} - 
{\bm P})^{m,m},
\intertext{which equals, by Lemma \ref{l:deteigenvec}, 
for some constant $c > 0$,}
&= c \sum_{m \in {\mathcal M}} (\lambda(m) - \mu_1(m)) {\bm P}(m,m) \bm\pi(m) \\
&< 0
\end{align*}
where the last inequality follows from the stability assumption
(\ref{e:stabilityas}). Similarly, differentiation of $f$ at $(1,0)$ 
with respect to $\Lambda$ gives:
\[
\left.\frac{\partial f}{\partial \Lambda}\right|_{(1,0)}  = 1.
\]
This implies that the implicit function theorem is applicable to $f$;
the last two display give:
\[
\frac{d}{dr} \Lambda_1(e^{r},e^r)|_{(0,0)}  > 0.
\]
On the other hand, 
Gershgorin's Theorem implies $\Lambda_1(e^r,e^r) \rightarrow \infty$
as $r\rightarrow -\infty$ (because of the $\lambda(m)/\beta$ term 
appearing in the diagonal terms of ${\bm A}$, tending
to $+\infty$ with $\beta= e^r$). 
To sum up: we have that $\Lambda_1(e^r,e^r)$ is strictly monotone 
at $r=0$ (decreases when $r$ decreases)
and it tends to infinity as $r\rightarrow -\infty.$ 
Then, by the continuity of $\Lambda_1$, there must exist at least one point 
in $(-\infty,0)$ where $\Lambda_1(e^r,e^r)$ takes the value $1$;
the convexity of $H$ implies that such a point is unique, i.e.,
there is a unique point $r^* < 0$ such that $\Lambda_1(e^{r^*},e^{r^*}) = 1.$
Setting $\rho_1 =e^{r^*}$ proves the proposition.
\end{proof}
Let ${\bm d}_1$ be an eigenvector of ${\bm A}(\rho_1,\rho_1)$ corresponding
to the eigenvalue $1$; because $1$ is the largest eigenvalue
of ${\bm A}(\rho_1,\rho_1)$ and because ${\bm A}(\rho_1,\rho_1)$ 
is irreducible and aperiodic,
we can choose ${\bm d}_1$ so that all of its components are strictly
positive.  
The point $(\rho_1,\rho_1, {\bm d}_1) \in {\mathcal H}\cap {\mathcal H}_2$
and Proposition \ref{p:pointonboth} 
give us our first $(Y,M)$-harmonic function:
\begin{proposition}\label{p:hrho1}
\begin{equation}\label{d:hrho1}
h_{\rho_1} \doteq [ (\rho_1,\rho_1, {\bm d}_1),\cdot]
\end{equation}
is $(Y,M)$-harmonic.
\end{proposition}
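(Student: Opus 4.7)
The plan is to verify the hypotheses of Proposition \ref{p:pointonboth} by showing that the single triple $(\rho_1,\rho_1,{\bm d}_1)$ belongs to both ${\mathcal H}$ and ${\mathcal H}_2$; the harmonicity of $h_{\rho_1}$ then follows immediately.

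The crucial (and essentially trivial) observation I would make first is that the interior local characteristic polynomial and its boundary analogue coincide when $\beta=\alpha$: indeed, for any $m \in {\mathcal M}$,
\[
{\bm p}(\rho_1,\rho_1,m) \;=\; \lambda(m)\tfrac{1}{\rho_1} + \mu_1(m)\rho_1 + \mu_2(m)\tfrac{\rho_1}{\rho_1} \;=\; \lambda(m)\tfrac{1}{\rho_1} + \mu_1(m)\rho_1 + \mu_2(m) \;=\; {\bm p}_2(\rho_1,\rho_1,m),
\]
since the ratio $\rho_1/\rho_1=1$. Because ${\bm A}$ and ${\bm A}_2$ have identical off-diagonal entries (they are both equal to ${\bm P}(m_1,m_2)$) and their diagonal entries are ${\bm P}(m_1,m_1){\bm p}$ and ${\bm P}(m_1,m_1){\bm p}_2$ respectively, the equality above yields the matrix identity ${\bm A}(\rho_1,\rho_1) = {\bm A}_2(\rho_1,\rho_1)$.

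Next I would invoke Proposition \ref{p:point1}: $(\rho_1,\rho_1) \in {\mathcal L}_1 \cap {\mathcal L}_{2,1}$ tells us that $1$ is an eigenvalue of both matrices, consistent with the fact that they coincide. Since ${\bm d}_1$ is defined as an eigenvector of ${\bm A}(\rho_1,\rho_1)$ for eigenvalue $1$, and ${\bm A}_2(\rho_1,\rho_1)$ is literally the same matrix, ${\bm d}_1$ is also an eigenvector of ${\bm A}_2(\rho_1,\rho_1)$ for eigenvalue $1$. Therefore
\[
(\rho_1,\rho_1,{\bm d}_1) \in {\mathcal H} \cap {\mathcal H}_2.
\]

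Finally, Proposition \ref{p:pointonboth} applied to this triple gives that $[(\rho_1,\rho_1,{\bm d}_1),\cdot] = h_{\rho_1}$ is $(Y,M)$-harmonic, completing the proof. There is no real obstacle here: all of the substance has already been absorbed into Proposition \ref{p:point1}, and what remains is the happy algebraic coincidence that ${\bm p}$ and ${\bm p}_2$ agree on the diagonal $\beta=\alpha$, which is precisely why the point identified in Proposition \ref{p:point1} was sought on that diagonal.
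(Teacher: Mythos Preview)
Your proof is correct and follows the same route as the paper: the paper simply notes that $(\rho_1,\rho_1,{\bm d}_1)\in{\mathcal H}\cap{\mathcal H}_2$ (by Proposition \ref{p:point1}) and invokes Proposition \ref{p:pointonboth}. You have supplied the one line the paper leaves implicit, namely ${\bm A}(\rho_1,\rho_1)={\bm A}_2(\rho_1,\rho_1)$ via ${\bm p}(\beta,\beta,m)={\bm p}_2(\beta,\beta,m)$, which is exactly the right justification.
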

\nomenclature{$h_{\rho_1}$}{ $[(\rho_1,\rho_1,{\bm d}_1,\cdot]$}

The second way of obtaining $(Y,M)$-harmonic functions is through 
conjugate points on ${\mathcal H}^{\beta\alpha}.$ The function
$\alpha^{|{\mathcal M}|}{\bm p}$ is a polynomial of degree $2|{\mathcal M}|$
in $\alpha.$
By the fundamental of theorem of algebra, 
$\alpha^{|{\mathcal M}|}{\bm p}$ 
has $2|{\mathcal M}|$ roots,
${\bm \alpha}_1(\beta)$, ..., ${\bm \alpha}_2(\beta)$,..., ${\bm \alpha}_{2|{\mathcal M}|}(\beta)$,
in ${\mathbb C}$ for each fixed $\beta \in {\mathbb C}$; points $(\beta,{\bm \alpha}_i) \in {\mathcal H}^{\beta\alpha}$, 
$i=1,2,...,2|{\mathcal M}|$
are said to be conjugate points.
In the non-modulated case, i.e., when $|{\mathcal M}| = 1$,  $\alpha {\bm p}$ is only of second order, therefore, the conjugate points come in pairs,
and given one of the points in the pair, the other can be computed easily; in the modulated case, there are obviously no simple
formulas to obtain all of the conjugate points given one among them, because computation of conjugate points involves finding
the roots of a polynomial of degree $2|{\mathcal M}|$.

For $(\beta,\alpha,{\bm d}) \in {\mathcal H}$ define
\begin{equation}\label{d:C}
{\bm c}(\beta,\alpha,{\bm d})  \in {\mathbb C}^{{\mathcal M}},~ 
{\bm c}(\beta,\alpha,{\bm d})(m)  \doteq {\bm P}(m,m)\mu_2(m) {\bm d}(m)\left( 1 - \frac{\beta}{\alpha}\right).
\end{equation}
\nomenclature{${\bm c}(\beta,\alpha,{\bm d})$}{function of $(\beta,\alpha,{\bm d})$ taking values in ${\mathbb C}^{\mathcal M}$; determines
how much $[(\beta,\alpha,{\bm d}),\cdot]$ deviates from harmonicity on $\partial_2$,  \pageref{l:roleofC}}
One can take linear combinations of functions defined by 
conjugate points to define $(Y,M)$-harmonic functions.
This is based on the following lemma

\begin{lemma}\label{l:roleofC}
Suppose $(\beta,\alpha,{\bm d}) \in {\mathcal H}.$ Then, for $(y,m) \in \partial_2 \times {\mathcal M}$,
\begin{equation}\label{e:l1}
{\mathbb E}_{(y,m)}\left[(\beta,\alpha,{\bm d}),(Y_1, M_1)\right] - [(\beta,\alpha,{\bm d}), (y,m)] = \beta^{y(1)} {\bm c}(\beta,\alpha,{\bm d})(m),
\end{equation}	
where ${\bm c}$ is defined as in \eqref{d:C}.
\end{lemma}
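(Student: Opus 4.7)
The plan is a short direct calculation that mirrors the argument in the proof of Proposition \ref{p:interiorharm}, tracking exactly how the constraining map $\pi_1$ modifies the dynamics on $\partial_2$. The essential point is that for $y \in \partial_2$ (so $y(2)=0$) the would-be jump $(0,-1)$ is blocked by $\pi_1$ and replaced by ``stay in place''; the lemma quantifies the resulting defect from harmonicity as $\beta^{y(1)}{\bm c}(\beta,\alpha,{\bm d})(m)$.

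First I would expand ${\mathbb E}_{(y,m)}[(\beta,\alpha,{\bm d}),(Y_1,M_1)]$ exactly as in Proposition \ref{p:interiorharm}, except that the $\mu_2$-weighted summand is $[(\beta,\alpha,{\bm d}),(y,m)]$ rather than $[(\beta,\alpha,{\bm d}),(y+(0,-1),m)]$, reflecting the blocked jump. Using $y(2)=0$ this produces
\[
\sum_{n \neq m} {\bm P}(m,n)\beta^{y(1)}{\bm d}(n) + {\bm P}(m,m)\bigl(\lambda(m)\beta^{y(1)-1}+\mu_1(m)\beta^{y(1)}\alpha+\mu_2(m)\beta^{y(1)}\bigr){\bm d}(m).
\]

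Next I would add and subtract the ``interior'' $\mu_2$ contribution $\mu_2(m)\beta^{y(1)+1}\alpha^{-1}{\bm d}(m)$ inside the bracket. The terms with $+\mu_2(m)\beta^{y(1)+1}\alpha^{-1}{\bm d}(m)$ now assemble into exactly the interior expression handled in Proposition \ref{p:interiorharm}; that algebraic identity is purely formal in the exponents of $\beta$ and $\alpha$ and uses only $(\beta,\alpha,{\bm d}) \in {\mathcal H}$, so it applies verbatim even though $y(2)=0$. It therefore collapses to $\beta^{y(1)}{\bm d}(m)=[(\beta,\alpha,{\bm d}),(y,m)]$.

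What remains is the correction
\[
{\bm P}(m,m)\mu_2(m)\bigl[\beta^{y(1)} - \beta^{y(1)+1}\alpha^{-1}\bigr]{\bm d}(m)
= \beta^{y(1)}{\bm P}(m,m)\mu_2(m)\Bigl(1-\frac{\beta}{\alpha}\Bigr){\bm d}(m),
\]
which by the definition \eqref{d:C} of ${\bm c}$ is precisely $\beta^{y(1)}{\bm c}(\beta,\alpha,{\bm d})(m)$. Rearranging gives \eqref{e:l1}. The computation is routine; the only ``conceptual'' step is recognizing that the harmonicity defect on $\partial_2$ is exactly the difference between the blocked and the unblocked $\mu_2$-transition, so there is no substantive obstacle here.
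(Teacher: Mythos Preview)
Your proof is correct and follows essentially the same route as the paper: compute the one-step expectation on $\partial_2$ with the blocked $\mu_2$-jump, then use $(\beta,\alpha,{\bm d})\in{\mathcal H}$ to collapse the interior-type expression to $[(\beta,\alpha,{\bm d}),(y,m)]$. The only cosmetic difference is that the paper packages the boundary dynamics via ${\bm p}_2(\beta,\alpha,m)$ and subtracts the representation \eqref{e:sumrep} directly, whereas you do the equivalent add-and-subtract of the unblocked $\mu_2$-term.
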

\begin{proof}
The computation in the proof of Proposition \ref{p:interiorharm} gives
\begin{align}\label{e:onesteponp2}
&{\mathbb E}_{(y,m)}\left[ (\beta,\alpha,{\bm d}), (Y_1, M_1) \right]\\
&\notag=
\beta^{y(1)}\left(
\sum_{n \in {\mathcal M},n \neq m}  {\bm P}(m,n) {\bm d}(n) + {\bm P}(m,m){\bm d}(m){\bm p}_2(\beta,\alpha, m) \right).
\end{align}	
On the other hand, $(\beta,\alpha,{\bm d}) \in {\mathcal H}$ means	
\begin{align}\label{e:sumrep}
&[(\beta,\alpha,{\bm d}), (y,m)]\\ 
&\notag= \beta^{y(1)}  {\bm d}(m) = \beta^{y(1)} \left( \sum_{n \in {\mathcal M},n \neq m}  {\bm P}(m,n) {\bm d}(n) + {\bm P}(m,m) {\bm d}(m){\bm p}(\beta,\alpha,m) \right).
\end{align}	
Subtracting the last display from \eqref{e:onesteponp2} gives	
\begin{align*}
&{\mathbb E}_{(y,m)}[h(Y_1,M_1)] - h(y,m)\\& = 
\beta^{y(1)}  {\bm P}(m,m)\mu_2(m){\bm d}(m) \left(1 - \frac{\beta}{\alpha} \right) = \beta^{y(1)}{\bm c}(\beta,\alpha,{\bm d})(m),
\end{align*}	
which proves \eqref{e:l1}.
\end{proof}

We now identify a family of $(Y,M)$-harmonic functions constructed from
conjugate points on ${\mathcal H}$:
\begin{proposition}\label{p:harconjugate}
For $\beta \in {\mathbb C}$ 
let $(\beta,\alpha_i,d_i)$ $i=1,2,...,l$ $ \le 2|{\mathcal M}|$ 
be distinct conjugate
points on ${\mathcal H}$. Take any subcollection $\{i_1,i_2,...,i_k\}$, $k \le l$ such that
${\bm c}(\beta,\alpha_{i_j},d_{i_j})$  are linearly dependent, i.e.,
there exists $b \in {\mathbb C}^k$, $b \neq 0$, such that	
\begin{equation}\label{e:linearcombC}
\sum_{j=1}^k b(j){\bm c}(\beta,\alpha_{i_j},d_{i_j}) = 0.
\end{equation}
Then
\begin{equation}\label{e:linearhar}
h(y,m) = \sum_{j=1}^{k} b(j) [(\beta,\alpha_{i_j},d_{i_j}),\cdot]
\end{equation}
is $(Y,M)$-harmonic.
\end{proposition}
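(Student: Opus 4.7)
The proof decomposes naturally according to the two regions where harmonicity must be checked: the interior $\mathbb{Z} \times \mathbb{Z}_+^o$ and the boundary $\partial_2$. The plan is to use linearity of expectation, verify harmonicity on the interior directly from Proposition \ref{p:interiorharm}, and then use Lemma \ref{l:roleofC} together with the hypothesis \eqref{e:linearcombC} to handle $\partial_2$.

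First I would observe that for each $j$, the point $(\beta, \alpha_{i_j}, d_{i_j}) \in \mathcal{H}$, so by Proposition \ref{p:interiorharm} the function $[(\beta, \alpha_{i_j}, d_{i_j}), \cdot]$ is $(Y, M)$-harmonic on $\mathbb{Z} \times \mathbb{Z}_+^o$. Since harmonicity is preserved under linear combinations (by linearity of conditional expectation), the function $h$ defined in \eqref{e:linearhar} is $(Y,M)$-harmonic on $\mathbb{Z} \times \mathbb{Z}_+^o$ as well.

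Next I would address the boundary $\partial_2$. Applied to each building block, Lemma \ref{l:roleofC} gives
\[
\mathbb{E}_{(y,m)}\bigl[[(\beta,\alpha_{i_j},d_{i_j}),(Y_1,M_1)]\bigr] - [(\beta,\alpha_{i_j},d_{i_j}),(y,m)] = \beta^{y(1)} \bm{c}(\beta,\alpha_{i_j},d_{i_j})(m)
\]
for every $(y,m) \in \partial_2 \times \mathcal{M}$. Multiplying by $b(j)$ and summing over $j = 1, \dots, k$, and again using linearity of expectation, I obtain
\[
\mathbb{E}_{(y,m)}[h(Y_1,M_1)] - h(y,m) = \beta^{y(1)} \sum_{j=1}^k b(j)\, \bm{c}(\beta,\alpha_{i_j},d_{i_j})(m).
\]
The right-hand side is zero by hypothesis \eqref{e:linearcombC}, so $h$ is harmonic on $\partial_2 \times \mathcal{M}$ as well. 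Combining with the interior case completes the proof.

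There is no real obstacle here: the proposition is essentially a bookkeeping statement combining Proposition \ref{p:interiorharm} (interior harmonicity of each summand) with Lemma \ref{l:roleofC} (the precise form of the boundary defect). The only thing to double-check is that the distinctness of the conjugate points is not needed for the argument — and indeed it is not, since the harmonicity calculation only uses membership of each $(\beta,\alpha_{i_j},d_{i_j})$ in $\mathcal{H}$ and the linear relation \eqref{e:linearcombC} among the $\bm{c}$'s; distinctness is only relevant when one wants the resulting function $h$ to be a genuinely new (non-trivial) harmonic function rather than a rescaling of a single building block.
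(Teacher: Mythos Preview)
Your proposal is correct and follows essentially the same approach as the paper's own proof: both invoke Proposition~\ref{p:interiorharm} to get harmonicity of each summand (hence of $h$) on the interior, and then apply Lemma~\ref{l:roleofC} termwise on $\partial_2$, summing with weights $b(j)$ so that the boundary defect $\beta^{y(1)}\sum_j b(j)\,{\bm c}(\beta,\alpha_{i_j},d_{i_j})$ vanishes by \eqref{e:linearcombC}. Your closing observation that distinctness of the conjugate points is not used in the argument is also accurate.
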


\begin{proof}
We already know from Proposition \ref{p:interiorharm}, 
harmonic functions of the form\\ $ [(\beta,\alpha_i,d_i),\cdot]$ 
are $(Y,M)$-harmonic in the interior
${\mathbb Z} \times{\mathbb Z}_+ - \partial_2$. 
So, their linear combinations are also $(Y,M)$-harmonic in the interior
and we need to check the harmonicity for $y\in\partial_2$.
By Lemma \ref{l:roleofC}
\begin{equation*}
{\mathbb E}_{(y,\cdot)}\left[ (\beta,\alpha_i,d_i), (Y_1,M_1) \right]
-[ (\beta,\alpha_i,d_i), (y,\cdot)]
= 
\beta^{y(1)}{\bm c}(\beta,\alpha_i,d_i).
\end{equation*}	
Taking linear combinations of these with weight vector $b$ gives:
\[
{\mathbb E}_{(y,\cdot)}\left[ h(Y_1,M_1)\right] -h(y,\cdot)=
\beta^{y(1)} \left(
\sum_{j=1}^k b(j) {\bm c}(\beta,\alpha_{i_j}, d_{i_j}) \right)
\]
which equals $0 \in {\mathbb R}^{|{\mathcal M}|}$ by \eqref{e:linearcombC}.
This proves that $h$ is $(Y,M)$-harmonic on $\partial_2.$
\end{proof}

For any $\beta \in {\mathbb C}$ 
such that ${\bm p}(\beta,\alpha)=0$ has distinct roots,
$\alpha_1$, $\alpha_2$,...,$\alpha_{2|{\mathcal M}|}$,
all different from $\beta$,
we have, by definition,
${\bm c}(\beta,\alpha_j,d_j) \neq 0$ for all $j=1,2,...,2|{\mathcal M}|$.  Therefore,
for such $\beta$, and for any subcollection $\alpha_{j_1},\alpha_{j_2},...,
\alpha_{j_k}$, with $k \ge |{\mathcal M}| +1$,
we can find a nonzero vector $b$ satisfying \eqref{e:linearcombC}.

We will call a $(Y,M)$-harmonic function $\partial B$-determined
if it of the form,
\[
(y,m) \mapsto {\mathbb E}_{(y,m)}[ f(Y_\tau,M_\tau) 1_{\{\tau<\infty\}}]
\]
for some function $f$. 
The function $(y,m)\mapsto {\mathbb P}_{(y,m)}(\tau < \infty)$ is the unique $\partial B$-determined $(Y,M)$-harmonic function
taking the value $1$ on $\partial B$. Among the functions of the form $[(\beta,\alpha, {\bm d}), \cdot]$, the closest
we get to this type of behavior is when $\alpha=1$: for $\alpha=1$,
 $[(\beta,1,{\bm d}),(y,m)]$ depends only on $m$ for $y \in \partial B$.
Therefore, $\alpha=1$ play a key role in computing/approximating 
${\mathbb P}_{(y,m)}(\tau < \infty)$.
The next proposition identifies a point on ${\mathcal L}_1$
of the form $(\rho_2,\alpha=1)$ with $0 < \rho_2 < 1$.

\begin{proposition}\label{p:point2}
Under assumption (\ref{e:stabilityas}) there exists $ 0 < \rho_2 <1$ such that
$(\rho_2,1) \in {\mathcal L}_1 \subset {\mathcal H}^{\beta\alpha}$;
i.e., $1$ is the largest eigenvalue of  ${\bm A}(\rho_2,1)$.
\end{proposition}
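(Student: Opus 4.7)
The plan is to mirror the proof of Proposition \ref{p:point1}, replacing the diagonal line $\beta = \alpha$ with the horizontal line $\alpha = 1$, and invoking the $i = 2$ (rather than $i = 1$) case of the stability assumption.

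The starting observation is that $\mathbf{p}(1, 1, m) = \lambda(m) + \mu_1(m) + \mu_2(m) = 1$ for every $m \in \mathcal{M}$, whence $\mathbf{A}(1, 1) = \mathbf{P}$. Since $\mathbf{P}$ is stochastic, irreducible and aperiodic, its Perron eigenvalue equals $1$ and is simple; consequently $\Lambda_1(1, 1) = 1$, i.e., $(1, 1) \in \mathcal{L}_1$. The task therefore reduces to producing a second point of the form $(\rho_2, 1) \in \mathcal{L}_1$ with $\rho_2 \in (0, 1)$.

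Next I would apply the implicit function theorem to
\[
f(\Lambda, \beta) \doteq \det(\Lambda \mathbf{I} - \mathbf{A}(\beta, 1))
\]
at $(\Lambda, \beta) = (1, 1)$. Only the diagonal entries of $\mathbf{A}(\beta, 1)$ depend on $\beta$, with $\partial_\beta \mathbf{A}(\beta, 1)_{m, m}|_{\beta = 1} = \mathbf{P}(m, m)(\mu_2(m) - \lambda(m))$. Jacobi's formula together with Lemma \ref{l:deteigenvec} should give, in direct analogy with the proof of Proposition \ref{p:point1},
\[
\frac{\partial f}{\partial \beta}\bigg|_{(1, 1)} = c \sum_{m \in \mathcal{M}} (\lambda(m) - \mu_2(m))\,\mathbf{P}(m, m)\,\boldsymbol{\pi}(m) < 0,
\]
for some $c > 0$ supplied by Lemma \ref{l:positivity}, with the inequality being the $i = 2$ case of \eqref{e:stabilityas}. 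The analogous cofactor calculation gives $\partial f/\partial \Lambda|_{(1, 1)} = c > 0$, so the implicit function theorem yields $(d/d\beta)\Lambda_1(\beta, 1)|_{\beta = 1} > 0$, and hence $\Lambda_1(\beta, 1) < 1$ for $\beta$ slightly below $1$.

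Finally, as $\beta \to 0^+$ the diagonal entry $\mathbf{A}(\beta, 1)_{m, m} \geq \mathbf{P}(m, m)\lambda(m)/\beta$ diverges to $+\infty$ while the off-diagonal row sums of $\mathbf{A}(\beta, 1)$ remain bounded by $1$, so Gershgorin's theorem forces $\Lambda_1(\beta, 1) \to +\infty$. Continuity along the line $\alpha = 1$ then produces at least one $\rho_2 \in (0, 1)$ with $\Lambda_1(\rho_2, 1) = 1$. Uniqueness follows, exactly as in Proposition \ref{p:point1}, from the convexity in $r$ of $\log \Lambda_1(e^r, 1)$ (i.e., of $-H(r, 0)$ in the notation of \eqref{d:H}), which forces $\{\beta > 0 : \Lambda_1(\beta, 1) \leq 1\}$ to be a connected interval whose boundary contains at most two points; $\beta = 1$ is already one such boundary point, so $\rho_2$ is the unique other one. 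No genuinely new obstacle appears: the argument is almost line-for-line identical to that of Proposition \ref{p:point1}, the only substantive differences being that the relevant stability case is $i = 2$ (because the $\beta$-derivative of $\mathbf{p}$ along $\alpha = 1$ produces $\mu_2$ rather than $\mu_1$) and that the divergence at the left end of the line now comes solely from the $\lambda(m)/\beta$ term as $\beta \to 0^+$.
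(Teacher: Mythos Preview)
Your proposal is correct and follows essentially the same approach as the paper's own proof: apply the implicit function theorem at $(\Lambda,\beta)=(1,1)$, invoke the $i=2$ case of \eqref{e:stabilityas} via Lemma~\ref{l:deteigenvec} to get the sign of the derivative, use Gershgorin for the blow-up as $\beta\to 0^+$, and conclude uniqueness from convexity of $r\mapsto\log\Lambda_1(e^r,1)$. The only cosmetic difference is that the paper works with the variable $r=\log\beta$ rather than $\beta$ itself.
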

\nomenclature{$\rho_2$}{ $0 < \rho_2 < 1$, the $\beta$ component of a point of the form $(\rho_2,1)$ identified
on ${\mathcal L}_1$}
\begin{proof}
The proof is parallel to that of Proposition \ref{p:point1}. We now define
$f(\Lambda,r) = \det(\Lambda {\bm I} - {\bm A}(e^r,1))$ and observe, by assumption
(\ref{e:stabilityas}) and Lemma \ref{l:deteigenvec},
\begin{align*}
\left.\frac{\partial f}{\partial r}\right|_{(1,0)} 
&=
\sum_{m \in {\mathcal M}} (\lambda(m) - \mu_2(m)) {\bm P}(m,m) \det({\bm I} - {\bm P})^{m,m}\\
&=c\sum_{m \in {\mathcal M}} (\lambda(m) - \mu_2(m)) {\bm P}(m,m) \bm\pi(m) < 0
\end{align*}
for some constant $c > 0.$
The rest of the proof proceeds as in the proof of 
Proposition \ref{p:point1}.
\end{proof}

Recall that $(\rho_2,1) \in {\mathcal L}_1$, i.e., 
$1$ is the largest eigenvalue of ${\bm A}(\rho_2,1)$; the irreducibility of ${\bm A}$
implies that the eigenvectors corresponding to $1$ 
have strictly negative or positive
components; 
let ${\bm d}_2$ denote a right eigenvector of ${\bm A}(\rho_2,1)$ corresponding
to the eigenvalue $1$ with strictly positive components. Proposition \ref{p:interiorharm} and the previous proposition imply that
$[(\rho_2,1,{\bm d}_2),\cdot]$ is $(Y,M)$-harmonic on ${\mathbb Z} \times {\mathbb Z}_+ -\partial_2.$
All of the prior works (\cite{sezer2015exit, sezer2018approximation, unlu2018excessive}),
use a conjugate point of $(\rho_2,1)$ to construct
a $Y$-harmonic function. 
In the present case, in general, $(\rho_2,1)$ will have
$2|{\mathcal M}|-1$ conjugate points. Figure \ref{f:levelcurves2} suggests
that
only one of these conjugate points lies on ${\mathcal L}_1$; we will use $(\rho_2,1)$ along with this conjugate to define a $(Y,M)$-{\em super}harmonic function. 
This will be in
two steps. Proposition \ref{p:point2conj} identifies the 
relevant conjugate point;
Proposition \ref{p:hrho2} constructs the superharmonic function. 
We will use the superharmonic function in Sections \ref{s:ub1} and \ref{s:ub2} below in our 
analysis of the relative error \eqref{e:approximationerror}.
\nomenclature{${\bm d}_2$}{A right eigenvector of ${\bm A}(\rho_2, 1)$ corresponding to the eigenvalue 1 with all positive components}

The identification of the conjugate point 
requires the following assumption:
\begin{equation}\label{as:conj}
\sum_{m \in {\mathcal M}} (\rho_2 \mu_2(m) -\mu_1(m)) {\bm P}(m,m) \det({\bm I} - {\bm A}(\rho_2,1))^{m,m} < 0.
\end{equation}

Remark \ref{r:asconj} comments on this assumption and Proposition
\ref{p:whenasconj} gives simple conditions under which \eqref{as:conj} holds.

\begin{proposition}\label{p:point2conj}
Let $(\rho_2,1)$, $\rho_2 \in (0,1)$,
be the point on ${\mathcal L}_1$ identified
in Proposition \ref{p:point2}. Then there exists a unique
point $(\rho_2, \alpha^*_1) \in {\mathcal L}_1$, $\alpha^*_1 \in (0,1)$ if
\eqref{as:conj} holds.
\end{proposition}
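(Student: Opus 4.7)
My plan is to follow the template established in the proofs of Propositions \ref{p:point1} and \ref{p:point2}: apply the implicit function theorem to $f(\Lambda,r) \doteq \det(\Lambda \bm I - \bm A(\rho_2, e^r))$ at the point $(\Lambda, r) = (1, 0)$ in order to track the largest eigenvalue $\Lambda_1(\rho_2, e^r)$ as $r$ moves away from $0$, and then combine this local information with Gershgorin's theorem at $\alpha \to 0$ and convexity of the Hamiltonian $H$ defined in \eqref{d:H} to extract a unique root in $(0,1)$.

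First I would compute the two partial derivatives of $f$ at $(1, 0)$. For $\partial f / \partial \Lambda$ at $(1, 0)$, I use the fact that $\Lambda_1(\rho_2, 1) = 1$ is a simple root of $f(\cdot, 0)$ (by Perron-Frobenius applied to the irreducible positive matrix $\bm A(\rho_2, 1)$) and is the largest root. Since $f(\Lambda, 0) \to +\infty$ as $\Lambda \to \infty$, this forces $\partial f / \partial \Lambda |_{(1,0)} > 0$. For $\partial f / \partial r$, only the diagonal entries of $\bm A(\rho_2, e^r)$ depend on $r$, through $\bm p(\rho_2, e^r, m) = \lambda(m)/\rho_2 + \mu_1(m) e^r + \mu_2(m) \rho_2 e^{-r}$. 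Differentiating at $r = 0$ yields $\mu_1(m) - \mu_2(m)\rho_2$, and expanding the determinant along the diagonal gives
\[
\left.\frac{\partial f}{\partial r}\right|_{(1,0)} = \sum_{m \in \mathcal{M}} \bm P(m,m)\bigl(\rho_2\mu_2(m) - \mu_1(m)\bigr)\det\bigl(\bm I - \bm A(\rho_2, 1)\bigr)^{m,m},
\]
which is strictly negative by assumption \eqref{as:conj}. The implicit function theorem therefore produces a smooth curve $r \mapsto \Lambda_1(\rho_2, e^r)$ near $r = 0$ with strictly positive derivative at $r = 0$; consequently $\Lambda_1(\rho_2, e^r) < 1$ for all $r$ slightly less than $0$.

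Next I would extend this local picture globally. Because the diagonal entries of $\bm A(\rho_2, e^r)$ contain the term $\bm P(m,m)\mu_2(m)\rho_2 e^{-r}$, which tends to $+\infty$ as $r \to -\infty$, Gershgorin's theorem forces $\Lambda_1(\rho_2, e^r) \to \infty$ as $r \to -\infty$. Combined with the strict inequality $\Lambda_1(\rho_2, e^r) < 1$ just below $r = 0$ and continuity of $\Lambda_1$ on $\mathbb{R}_+^{2,o}$ (from Proposition \ref{p:eigfun}), the intermediate value theorem produces at least one $r^* < 0$ with $\Lambda_1(\rho_2, e^{r^*}) = 1$; setting $\alpha_1^* = e^{r^*}$ gives a point $(\rho_2, \alpha_1^*) \in \mathcal{L}_1$ with $\alpha_1^* \in (0, 1)$.

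For uniqueness I would invoke the convexity of $H(q) = -\log \Lambda_1(e^{q(1)}, e^{q(2)})$ established in \cite[Lemma 4.2, 4.3]{sezer2009importance}: the slice $s \mapsto H(\log\rho_2, s)$ is a convex function of $s$ that diverges to $+\infty$ at $\pm\infty$, so the equation $\Lambda_1(\rho_2, e^s) = 1$, equivalent to $H(\log\rho_2, s) = 0$, has at most two solutions in $s$. One such solution is $s = 0$ (corresponding to $(\rho_2, 1)$ from Proposition \ref{p:point2}) and the argument above produces a second one in $(-\infty, 0)$; convexity rules out any further solution in $(-\infty, 0)$, proving uniqueness of $\alpha_1^*$ in $(0,1)$. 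The main technical subtlety is the sign bookkeeping in the derivative computation, which is precisely what assumption \eqref{as:conj} is designed to control; the rest is a direct adaptation of the reasoning already carried out in Propositions \ref{p:point1} and \ref{p:point2}.
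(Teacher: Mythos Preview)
Your proof is correct and follows essentially the same approach as the paper's, which explicitly says ``the rest of the proof goes as that of Proposition \ref{p:point1}'' after recording the same derivative computation you give; you have simply filled in those details (the sign of $\partial f/\partial\Lambda$, the Gershgorin step as $\alpha\to 0$, and the uniqueness via convexity of $H$). One small slip: as $s\to\pm\infty$ the diagonal entries of $\bm A(\rho_2,e^{s})$ blow up, so $\Lambda_1\to\infty$ and hence $H(\log\rho_2,s)\to -\infty$, not $+\infty$; this does not affect your uniqueness argument, since a convex (or concave) one-variable function with nonzero derivative at a root still has at most two zeros.
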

\nomenclature{$\alpha^*_1$}{ $\alpha^*_1 \in (0,1)$,$\alpha$ component of  a point of the form $(\rho_2, \alpha^*_1)$, which is the conjugate point of $(\rho_2, 1)$}

\begin{proof}
Set $r_2 = \log(\rho_2).$
Proof is parallel to those of Propositions \ref{p:point1} and
\ref{p:point2} and is based on the analysis of the function
$H$ of \eqref{d:H} at the point $(r_2,0)$ via the implicit function theorem.
Define $f(\Lambda,r) = \det(\Lambda {\bm I} - {\bm A}(\rho_2,e^r))$
and observe
\[
\left.\frac{\partial f}{\partial r}\right|_{(r_2,0)}
=
\sum_{m \in {\mathcal M}} (\rho_2\mu_2(m) - \mu_1(m)) {\bm P}(m,m) \det({\bm I} - {\bm A}(\rho_2,1))^{m,m},
\]
which, by assumption \eqref{as:conj}, is strictly less than $0$. The
rest of the proof goes as that of Proposition \ref{p:point1}.
\end{proof}
\nomenclature{$r_2$}{$\log(\rho_2)$}
\begin{remark}\label{r:asconj}
Assumption (\ref{as:conj}) ensures that $(\rho_2,1)$ has a conjugate
point on the principal characteristic surface ${\mathcal L}_1$  with
$\alpha$ component less than $1$. There is no corresponding assumption
in the non-modulated tandem case, because, in that setup, the conjugate
of $(\rho_2,1)$ is $(\rho_2,\rho_1)$ whose $\alpha$ component $\rho_1$
is always less than $1$ by the stability assumption. 
In the simple constrained random walk case (treated in \cite{unlu2018excessive})
the corresponding assumption is $r^2 < \rho_1 \rho_2$ (see \cite[Display
(14)]{unlu2018excessive}).
	
The condition $\alpha^*_1 < 1$ is needed for the superharmonic function
constructed in Proposition \ref{p:hrho2} 
to be bounded on $\partial B$, see Proposition \ref{p:upperfortau}.
\end{remark}

\begin{proposition}\label{p:whenasconj}
Each of the following conditions is sufficient for \eqref{as:conj} to hold:	
\begin{enumerate}
	\item  $\lambda(m)/\mu_2(m) < 1$,
	$\lambda(m) < \mu_1(m)$ for all $m \in {\mathcal M}$
	and the ratio $\lambda(m)/\mu_2(m)$ does not depend on $m$,
	\item $\mu_2(m) < \mu_1(m)$ for all $m \in {\mathcal M}$.
\end{enumerate}
\end{proposition}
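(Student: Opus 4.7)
\emph{Proof plan.} My plan is to reduce the inequality \eqref{as:conj} to a sign analysis of each summand after extracting a common positive factor from the diagonal cofactors $\det({\bm I} - {\bm A}(\rho_2, 1))^{m,m}$. Since $(\rho_2, 1) \in {\mathcal L}_1$, the value $1$ is the simple Perron eigenvalue of ${\bm A}(\rho_2, 1)$, and this matrix is primitive because it inherits the support pattern of the primitive matrix ${\bm P}$. Applying Lemma \ref{l:deteigenvec} exactly as in the proof of Proposition \ref{p:point1}, I would write
\[
\det({\bm I} - {\bm A}(\rho_2, 1))^{m,m} = c\, \bm v(m)
\]
for a strictly positive left Perron eigenvector $\bm v$ of ${\bm A}(\rho_2,1)$ and a scalar $c$ whose positivity is secured by the standard identity
\[
\sum_{m \in {\mathcal M}} \det({\bm I} - {\bm A}(\rho_2,1))^{m,m} = \prod_{j \ge 2}\bigl(1 - \Lambda_j(\rho_2, 1)\bigr),
\]
whose right-hand side is strictly positive since $|\Lambda_j(\rho_2,1)| < 1$ for $j \ge 2$ by primitivity (real non-Perron eigenvalues contribute strictly positive factors and complex conjugate pairs contribute factors $|1 - \Lambda_j|^2 > 0$).

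For condition $2$, combining $\rho_2 < 1$ from Proposition \ref{p:point2} with $\mu_2(m) < \mu_1(m)$ yields $\rho_2 \mu_2(m) - \mu_1(m) < \mu_2(m) - \mu_1(m) < 0$ for every $m$; since the remaining factors in each summand (${\bm P}(m,m)$, $c$, and $\bm v(m)$) are strictly positive, every term of the sum in \eqref{as:conj} is strictly negative and the inequality follows.

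For condition $1$, let $r \doteq \lambda(m)/\mu_2(m)$ denote the common ratio. Direct substitution gives ${\bm p}(r, 1, m) = \lambda(m)/r + \mu_1(m) + \mu_2(m)r = \mu_2(m) + \mu_1(m) + \lambda(m) = 1$, so ${\bm A}(r, 1) = {\bm P}$ has Perron eigenvalue $1$; the uniqueness clause of Proposition \ref{p:point2} together with $r < 1$ forces $\rho_2 = r$ and identifies $\bm v$ with the stationary measure $\bm\pi$. Substituting $\rho_2 \mu_2(m) = \lambda(m)$ rewrites the sum in \eqref{as:conj} as $c \sum_{m} (\lambda(m) - \mu_1(m)) {\bm P}(m,m) \bm\pi(m)$, which is negative by the $i = 1$ case of the stability assumption \eqref{e:stabilityas}. (The pointwise hypothesis $\lambda(m) < \mu_1(m)$ is in fact stronger than needed here; it would alternatively permit a purely termwise argument along the lines of condition $2$.)

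The main obstacle I anticipate is pinning down the sign of the proportionality constant $c$ in the first step: once the diagonal cofactors are known to be a common positive multiple of the Perron eigenvector, both conditions collapse either to a termwise sign comparison or to an immediate invocation of stability.
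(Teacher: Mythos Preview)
Your strategy---reduce \eqref{as:conj} to a termwise sign check once the diagonal cofactors $\det({\bm I}-{\bm A}(\rho_2,1))^{m,m}$ are known to be strictly positive---is exactly the paper's. The paper, however, obtains that positivity in one line from Lemma~\ref{l:positivity} (applied to the primitive nonnegative matrix ${\bm A}(\rho_2,1)$, whose Perron eigenvalue is $1$ by construction of $\rho_2$), and then argues each summand is negative just as you do for condition~2.

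Your route to the same positivity has a small but genuine gap. Lemma~\ref{l:deteigenvec} is stated for \emph{transition} matrices: there the right Perron eigenvector is ${\bm 1}$, which is what forces the diagonal entries of the adjugate of ${\bm I}-{\bm G}$ to be proportional to the \emph{left} Perron eigenvector alone. Under condition~2, ${\bm A}(\rho_2,1)$ is not stochastic and its right Perron eigenvector ${\bm d}_2$ is not constant; the rank-one adjugate of ${\bm I}-{\bm A}(\rho_2,1)$ then equals $c'\,{\bm d}_2\,{\bm v}^{\!\top}$, so the diagonal cofactors are $c'\,{\bm d}_2(m)\,{\bm v}(m)$, not $c\,{\bm v}(m)$. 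Your trace identity $\sum_m \det({\bm I}-{\bm A}(\rho_2,1))^{m,m}=\prod_{j\ge 2}(1-\Lambda_j)>0$ is correct, but by itself it only controls the \emph{sum} of the cofactors, not each one, so the termwise step for condition~2 does not go through as written. The fix is immediate: either cite Lemma~\ref{l:positivity} directly, or combine the corrected representation $c'\,{\bm d}_2(m)\,{\bm v}(m)$ (both factors strictly positive by Perron--Frobenius) with your trace identity to force $c'>0$.

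For condition~1 your argument is fully valid, since there ${\bm A}(\rho_2,1)={\bm P}$ \emph{is} stochastic and Lemma~\ref{l:deteigenvec} applies verbatim to identify the cofactors with $c\,\bm\pi(m)$. Your observation that the averaged stability inequality \eqref{e:stabilityas} (case $i=1$) then already yields the conclusion---so that the pointwise hypothesis $\lambda(m)<\mu_1(m)$ is stronger than necessary---is a mild sharpening of the paper, which instead uses that pointwise hypothesis together with Lemma~\ref{l:positivity} in a purely termwise argument.
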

\begin{proof}
If $\lambda(m)/\mu_2(m) < 1$ does not depend on $m$ we can denote
the common ratio by $\rho_2' < 1$.
Substituting $(\beta,\alpha) = (\rho_2',1)$ 
we see that ${\bm A}(\rho_2',1) = {\bm P}.$ This implies that the root $\rho_2$
identified in Proposition \ref{p:point2} must equal $\rho_2'.$
Setting $\rho_2 = \rho_2'$ on the left side of \eqref{as:conj}
gives
\begin{align*}
&\sum_{m \in {\mathcal M}} (\rho_2\mu_2(m) -\mu_1(m)) {\bm P}(m,m) \det({\bm I} - {\bm A}(\rho_2,1))^{m,m} \\
&~~~=
\sum_{m \in {\mathcal M}} (\rho_2'\mu_2(m) -\mu_1(m)) {\bm P}(m,m) \det({\bm I} - {\bm A}(\rho_2,1))^{m,m} \\
&~~~
=\sum_{m \in {\mathcal M}} (\lambda(m) -\mu_1(m)) {\bm P}(m,m) \det({\bm I} - {\bm A}(\rho_2,1))^{m,m} 
\intertext{
	$\det({\bm I} - {\bm A}(\rho_2,1))^{m,m} > 0$
	by Lemma \ref{l:positivity}, and $\lambda(m) < \mu_1(m)$ by assumption;
	these and the last line imply \eqref{as:conj}:}
&~~~~~~~~ < 0.
\end{align*}	
That the condition $\mu_2(m) < \mu_1(m)$ for all
$m \in {\mathcal M}$ implies \eqref{as:conj}
follows from a similar argument.
\end{proof}

\begin{remark}
	The argument used in the proof above can be used to prove
	that the conjugate point $(\rho_2,\alpha^*_1)$ satisfies $\alpha^*_1 > 1$
	if one replaces $<$ with $>$ in \eqref{as:conj}.
\end{remark}

For the rest of our analysis we will need a further assumption:
\begin{equation}\label{as:mu1neqmu2}
\rho_1 \neq \rho_2,
\end{equation}
where $\rho_1$ is the first (or the second) component
of the point on ${\mathcal L}_1 \cap {\mathcal L}_{2,1}$ identified
in Proposition \ref{p:point1}
and
$\rho_2$ is the $\beta$ component of the point on 
${\mathcal L}_1$ identified in Proposition \ref{p:point2}.
Assumption \eqref{as:mu1neqmu2} generalizes the assumption
$\mu_1 \neq \mu_2$ in \cite{sezer2015exit,sezer2018approximation,unlu2018excessive}.The following lemma identifies sufficient conditions
for \eqref{as:mu1neqmu2} to hold.

\begin{lemma}
If $\mu_1(m) > \mu_2(m)$ for all $m \in {\mathcal M}$, or $\mu_1(m) < \mu_2(m)$ for all $m \in {\mathcal M}$,
then
\eqref{as:mu1neqmu2} holds.
\end{lemma}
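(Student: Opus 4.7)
The plan is to argue by contradiction. Suppose $\rho_1 = \rho_2 = \rho$; by Propositions \ref{p:point1} and \ref{p:point2} we have $\rho \in (0,1)$, and $1$ is the Perron--Frobenius eigenvalue of both ${\bm A}(\rho,\rho)$ and ${\bm A}(\rho,1)$. These two matrices coincide off the diagonal, where both entries equal ${\bm P}(m_1,m_2)$. On the diagonals, a direct computation from \eqref{d:defcharp} gives
\[
{\bm p}(\rho,\rho,m) - {\bm p}(\rho,1,m) = (\mu_1(m) - \mu_2(m))(\rho-1),
\]
so the $(m,m)$ entry of ${\bm A}(\rho,\rho) - {\bm A}(\rho,1)$ equals ${\bm P}(m,m)(\mu_1(m) - \mu_2(m))(\rho-1)$.

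Assume first that $\mu_1(m) > \mu_2(m)$ for all $m$. Since $\rho < 1$, each of the above differences is non-positive, and strictly negative whenever ${\bm P}(m,m) > 0$. The stability assumption \eqref{e:stabilityas} forces ${\bm P}(m,m) > 0$ for at least one $m$, for otherwise the sum in \eqref{e:stabilityas} would vanish rather than be strictly negative. Hence ${\bm A}(\rho,\rho) \le {\bm A}(\rho,1)$ componentwise with strict inequality at some entry, and both matrices are irreducible because their off-diagonal pattern coincides with that of the irreducible ${\bm P}$.

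I would then appeal to the strict monotonicity of the Perron eigenvalue for non-negative irreducible matrices: if $0 \le B \le C$, $B \ne C$ and $C$ is irreducible, then their Perron eigenvalues satisfy $\Lambda_1(B) < \Lambda_1(C)$. Applied to $B = {\bm A}(\rho,\rho)$ and $C = {\bm A}(\rho,1)$ this yields $1 = \Lambda_1(\rho,\rho) < \Lambda_1(\rho,1) = 1$, a contradiction. The case $\mu_1(m) < \mu_2(m)$ for all $m$ is symmetric: every inequality reverses and one instead obtains $\Lambda_1(\rho,1) < \Lambda_1(\rho,\rho)$, again contradicting that both equal $1$.

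The only delicate point, and where I expect the main obstacle to sit, is justifying the non-degeneracy required for the strict Perron comparison: namely that ${\bm A}(\rho,\alpha)$ inherits irreducibility from ${\bm P}$ (immediate, since off-diagonals coincide) and that at least one diagonal entry ${\bm P}(m,m)$ is strictly positive (extracted from stability as above). Once these are in place, the comparison principle does all of the work and no further computation is needed.
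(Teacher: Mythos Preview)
Your proof is correct and rests on the same Perron--Frobenius comparison as the paper's: both compare ${\bm A}(\rho,\rho)$ and ${\bm A}(\rho,1)$, which differ only on the diagonal by the quantity you compute. The paper argues directly rather than by contradiction: it applies ${\bm A}(\rho_2,\rho_2)$ to the Perron eigenvector ${\bm d}_2$ of ${\bm A}(\rho_2,1)$, obtains ${\bm A}(\rho_2,\rho_2){\bm d}_2 > (1+\epsilon){\bm d}_2$, and invokes the sup-characterization of the Perron root (\cite[Chapter 16]{lax1996linear}) to conclude $\Lambda_1(\rho_2,\rho_2) > 1$, whence $\rho_2 < \rho_1$ (and symmetrically $\rho_2 > \rho_1$ in the other case). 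Your version packages the same comparison into the abstract strict-monotonicity principle for the Perron eigenvalue; this is equally valid and arguably cleaner, and your attention to the possibility that some ${\bm P}(m,m)$ vanish is more careful than the paper's presentation. The one thing you lose is that the paper's direct route also yields the \emph{sign} of $\rho_1 - \rho_2$ in each case, information that is implicitly used later (cf.\ Lemma \ref{l:orderofalphastar} and the case split in Section \ref{s:ub2}).
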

\begin{proof}
The matrix 
${\bm D} = {\bm A}(\rho_2,\rho_2) - {\bm A}(\rho_2,1)$ is a diagonal matrix
whose $m^{th}$ entry equals $(1-\rho_2)(\mu_2(m) - \mu_1(m)).$
Suppose $\mu_2(m) > \mu_1(m)$ for all $m \in {\mathcal M}$;
then $\rho_2 \in (0,1)$ implies that ${\bm D}$ has strictly positive entries.

We have then:
\begin{align}\label{e:Ar2r2ineq}
{\bm A}(\rho_2,\rho_2) {\bm d}_2 &= {\bm A}(\rho_2,1) {\bm d}_2 + {\bm D} {\bm d}_2 \notag \\
			   &= {\bm d}_2 + {\bm D} {\bm d}_2 \notag \\
			   &> (1+\epsilon) {\bm d}_2
\end{align}
for some $\epsilon >0$;
here we have used 1) ${\bm d}_2$ is an eigenvector
of ${\bm A}(\rho_2,1)$ corresponding to the eigenvalue $1$ and 2) ${\bm D}$
has strictly positive entries. 
We know by  \cite[Proof of Theorem 1, Chapter 16]{lax1996linear} that
\begin{equation}\label{e:charl1}
\Lambda_1({\bm A}(\rho_2,\rho_2)) = 
\sup\{ c: \exists x \in {\mathbb R}^{|{\mathcal M}|}_+,
{\bm A}(\rho_2,\rho_2)x  \ge cx \}.
\end{equation}
This and \eqref{e:Ar2r2ineq} imply that
the largest eigenvalue of ${\bm A}(\rho_2,\rho_2)$ is strictly greater than $1$.
This implies $\rho_2 < \rho_1.$
That $\mu_1(m) > \mu_2(m)$ for all $m \in {\mathcal M}$ implies $\rho_2>\rho_1$ follows from the same argument applied to ${\bm A}(\rho_2,1) {\bm d}_{2,1}.$
\end{proof}
\nomenclature{${\bm d }_{2,1}$}{A right eigenvector of ${\bm A}(\rho_2, \alpha_1^*)$ with strictly positive entries}
\begin{figure}[h]
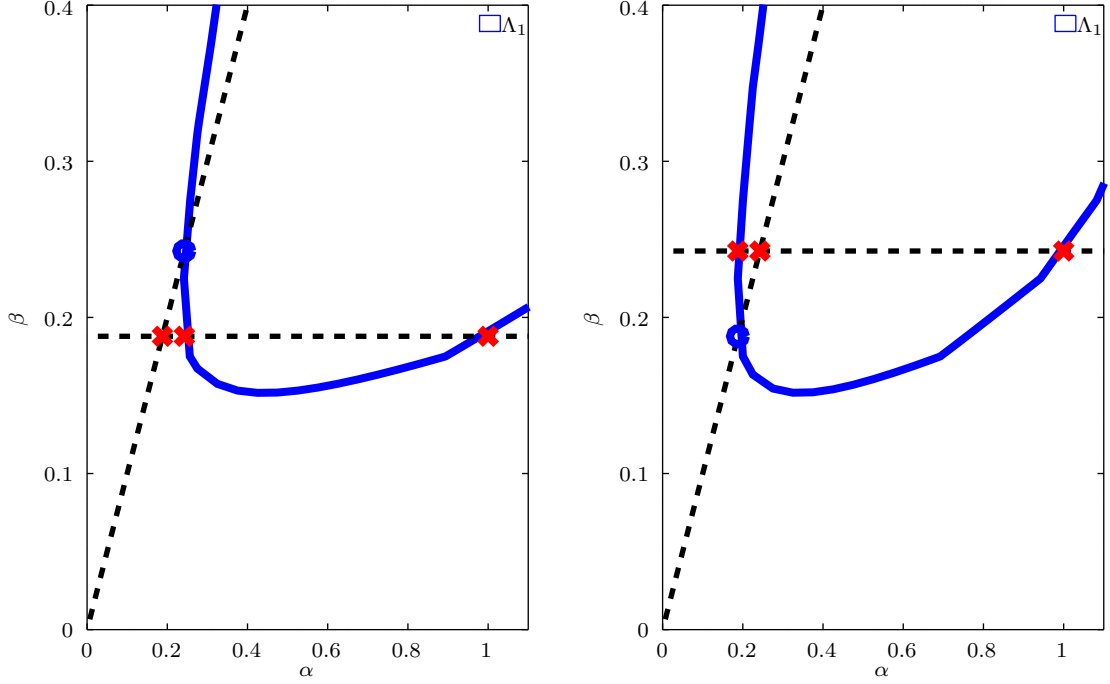

\hspace{-0.5cm}
\begin{subfigure}{0.48\textwidth}
	\input{rho1grho2}
\end{subfigure}
\begin{subfigure}{0.48\textwidth}
	\input{rho2grho1}
\end{subfigure}
\caption{$\rho_1-\rho_2$ and $\alpha^*_1-\rho_2$ have the same sign (Lemma \ref{l:orderofalphastar}); 
	the points marked with 'x' are $(\rho_2,\rho_2)$, $(\rho_2,\alpha^*_1)$ and $(1,\rho_2)$;
	the point marked with 'o' is $(\rho_1,\rho_1)$ }\label{f:roots}
\end{figure}
\FloatBarrier
\begin{lemma}\label{l:orderofalphastar}
Let $(\rho_2,\alpha^*_1)$ be the conjugate point of
$(\rho_2,1)$ on ${\mathcal L}_1$ identified
in Proposition \ref{p:point2conj}. 
Then
$\rho_1 > \rho_2$ implies $\alpha^*_1 > \rho_2$ and
$\rho_1 < \rho_2$ implies $\alpha^*_1 < \rho_2.$
\end{lemma}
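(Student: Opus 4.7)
The plan is to locate the diagonal point $(\rho_2,\rho_2)$ relative to ${\mathcal L}_1$ using two different one-dimensional sections, and to read off the sign of $\alpha_1^* - \rho_2$ from the sign of $\Lambda_1(\rho_2,\rho_2) - 1$.

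Two ingredients drive the argument. First, $(1,1) \in {\mathcal L}_1$: substituting $\beta=\alpha=1$ into ${\bm p}$ gives ${\bm p}(1,1,m) = \lambda(m)+\mu_1(m)+\mu_2(m) = 1$, hence ${\bm A}(1,1) = {\bm P}$, whose Perron root is $1$. Second, the one-variable convexity argument already used in the proofs of Propositions \ref{p:point1}, \ref{p:point2}, and \ref{p:point2conj}: along any line in the $q = (\log\beta,\log\alpha)$-plane, $q \mapsto \log \Lambda_1(e^{q(1)}, e^{q(2)})$ is convex and blows up on the boundary of ${\mathbb R}_+^{2,o}$ (Gershgorin applied to the diagonal entries of ${\bm A}$), so $\{\Lambda_1 \le 1\}$ meets any such line in a closed interval bounded by the at most two crossings of the value $1$.

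Applied to the diagonal $q(1)=q(2)=r$, the crossings are $r=\log \rho_1$ (Proposition \ref{p:point1}) and $r=0$ (by the first ingredient), yielding
\[
\Lambda_1(\rho_2,\rho_2) < 1 \iff \rho_1 < \rho_2 < 1, \qquad \Lambda_1(\rho_2,\rho_2) > 1 \iff 0 < \rho_2 < \rho_1.
\]
Applied to the vertical line $q(1)=\log \rho_2$, the crossings in $q(2)$ are $\log \alpha_1^*$ (Proposition \ref{p:point2conj}) and $0$ (Proposition \ref{p:point2}), yielding
\[
\Lambda_1(\rho_2,\rho_2) < 1 \iff \alpha_1^* < \rho_2 < 1, \qquad \Lambda_1(\rho_2,\rho_2) > 1 \iff 0 < \rho_2 < \alpha_1^*.
\]
Combining these two displays (and using $\rho_2, \alpha_1^* \in (0,1)$) gives the two implications stated in the lemma.

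Strictness is free from assumption \eqref{as:mu1neqmu2}, which forbids $\Lambda_1(\rho_2,\rho_2) = 1$ and therefore sharpens each of the comparisons above. I expect no real obstacle: the lemma is essentially a repackaging of material already developed in Propositions \ref{p:point1}--\ref{p:point2conj}, and the only step not explicit earlier is $(1,1) \in {\mathcal L}_1$, a one-line check.
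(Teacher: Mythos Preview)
Your argument is correct and follows essentially the same route as the paper: both compare $\Lambda_1(\rho_2,\rho_2)$ to $1$ via the diagonal section (using the crossing at $\rho_1$) and then read off the position of $\rho_2$ relative to $\alpha_1^*$ via the vertical section $\beta=\rho_2$ (using the crossings at $\alpha_1^*$ and $1$). Your version is in fact more explicit than the paper's, spelling out the role of $(1,1)\in{\mathcal L}_1$ and the convexity along each section; the final remark on assumption \eqref{as:mu1neqmu2} is harmless but unnecessary, since the lemma's hypotheses already supply the strict inequalities.
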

Figure \ref{f:roots} illustrates this lemma.
\begin{proof}
By definition $\rho_1$ is the unique positive number strictly less than $1$
satisfying $\Lambda_1(\rho_1,\rho_1) = 1$;
$\rho_2 < \rho_1$ implies $\Lambda_1(\rho_2,\rho_2)  > 1.$ 
But $\alpha^*_1$ satisfies 
$\Lambda_1(\rho_2,\alpha^*_1) =1$ and $\Lambda_1(\rho_2,\rho) \le 1$
for  $\rho \in (\alpha^*_1,\rho_2]$. It follows that $\rho_2 < \alpha^*_1.$
The argument for the opposite implication is similar.
\end{proof}

\begin{remark}
By the previous lemma the assumption \eqref{as:mu1neqmu2} is
equivalent to 
\begin{equation}\label{as:mu1neqmu2alt}
\alpha^*_1 \neq \rho_2.
\end{equation}
\end{remark}

\begin{remark}
$\rho_1$ is the unique solution of $\Lambda_1(\beta,\beta)=1$
on $(0,1)$; similarly $\rho_2$ is the unique solution of
$\Lambda_1(\beta,1) =1$ on $(0,1)$. That $\Lambda_1$ is the largest
eigenvalue of ${\bm A}(\beta,\alpha)$ and the above facts imply that
$\rho_1$ [$\rho_2$] 
is the largest root of 
${\bm p}(\beta,\beta)$ [${\bm p}(\beta,1)$] 
on $(0,1)$. Therefore, one can state the assumption
\eqref{as:mu1neqmu2} also as follows: ``the largest roots of
${\bm p}(\beta,\beta)$ and ${\bm p}(\beta,1)$ on $(0,1)$ differ.''
\end{remark}

By definition, $1$ is the largest eigenvalue of
${\bm A}(\rho_2,\alpha^*_1)$; let
${\bm d}_{2,1}$ denote a right eigenvector of this matrix
with strictly positive entries.
Next proposition  constructs a
$(Y,M)$-superharmonic function that we will use to find upper bounds
on approximation errors;
this is one of the key steps of our argument.
\begin{proposition}\label{p:hrho2c0}
\label{p:hrho2}
Under assumption \eqref{as:mu1neqmu2} one can choose a constant
$c_0 \in {\mathbb R}$ 
\\(
$c_0 > 0$ for $\alpha^*_1 < \rho_2$ and
$c_0 < 0$ for $\alpha^*_1 > \rho_2$)
so that	
\begin{equation}\label{d:hrho2}
h_{\rho_2} \doteq [(\rho_2,1,{\bm d}_2),\cdot] + c_0[(\rho_2,\alpha^*_1, {\bm d}_{2,1},\cdot],
\end{equation}	
is a $(Y,M)$-superharmonic function.
\end{proposition}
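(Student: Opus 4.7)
The plan is to split the harmonicity check into interior and boundary parts. Since $(\rho_2,1,{\bm d}_2)\in\mathcal{H}$ by Proposition \ref{p:point2} and $(\rho_2,\alpha_1^*,{\bm d}_{2,1})\in\mathcal{H}$ by Proposition \ref{p:point2conj}, Proposition \ref{p:interiorharm} immediately gives that both building blocks $[(\rho_2,1,{\bm d}_2),\cdot]$ and $[(\rho_2,\alpha_1^*,{\bm d}_{2,1}),\cdot]$ are $(Y,M)$-harmonic on $\mathbb{Z}\times\mathbb{Z}_+^o$. Hence $h_{\rho_2}$ is $(Y,M)$-harmonic in the interior for every choice of $c_0\in\mathbb{R}$, and the entire task reduces to arranging the superharmonicity inequality on $\partial_2$.

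For this, apply Lemma \ref{l:roleofC} termwise and use linearity of expectation: for $(y,m)\in\partial_2\times\mathcal{M}$,
\[
\mathbb{E}_{(y,m)}[h_{\rho_2}(Y_1,M_1)] - h_{\rho_2}(y,m)
= \rho_2^{y(1)}\bigl({\bm c}(\rho_2,1,{\bm d}_2)(m) + c_0\,{\bm c}(\rho_2,\alpha_1^*,{\bm d}_{2,1})(m)\bigr).
\]
Since $\rho_2>0$, superharmonicity on $\partial_2$ is equivalent to
\[
{\bm c}(\rho_2,1,{\bm d}_2)(m) + c_0\,{\bm c}(\rho_2,\alpha_1^*,{\bm d}_{2,1})(m)\le 0\quad\text{for every }m\in\mathcal{M}.
\]
Using the definition \eqref{d:C} and the fact that $\rho_2<1$, $\mu_2(m)>0$, and ${\bm d}_2(m),{\bm d}_{2,1}(m)>0$ (by Perron--Frobenius applied to ${\bm A}(\rho_2,1)$ and ${\bm A}(\rho_2,\alpha_1^*)$ respectively), the first term equals ${\bm P}(m,m)\mu_2(m){\bm d}_2(m)(1-\rho_2)\ge 0$, and the second has the sign of ${\bm P}(m,m)(1-\rho_2/\alpha_1^*)$.

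The key ingredient is that assumption \eqref{as:mu1neqmu2}, via Lemma \ref{l:orderofalphastar}, rules out $\alpha_1^*=\rho_2$, so exactly one of two cases occurs. If $\alpha_1^*>\rho_2$, then $1-\rho_2/\alpha_1^*>0$ and the second term is $\ge 0$ everywhere and $>0$ wherever ${\bm P}(m,m)>0$; setting
\[
c_0 = -\max_{m:\,{\bm P}(m,m)>0}\frac{{\bm c}(\rho_2,1,{\bm d}_2)(m)}{{\bm c}(\rho_2,\alpha_1^*,{\bm d}_{2,1})(m)} < 0
\]
makes the required inequality tight at the maximizing $m$ and strict elsewhere. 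If $\alpha_1^*<\rho_2$, the second term is $\le 0$ with strict inequality wherever ${\bm P}(m,m)>0$, and taking
\[
c_0 = \max_{m:\,{\bm P}(m,m)>0}\frac{{\bm c}(\rho_2,1,{\bm d}_2)(m)}{|{\bm c}(\rho_2,\alpha_1^*,{\bm d}_{2,1})(m)|} > 0
\]
works analogously. At states with ${\bm P}(m,m)=0$ both entries of the ${\bm c}$ vectors vanish, so the inequality is trivial there. The only real ``obstacle'' is the bookkeeping of signs in these two cases, and that is precisely what \eqref{as:mu1neqmu2} and Lemma \ref{l:orderofalphastar} handle; no further estimates are needed.
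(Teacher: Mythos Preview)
Your proof is correct and follows essentially the same route as the paper: interior harmonicity from Proposition~\ref{p:interiorharm}, reduction of the boundary check to the sign of ${\bm c}(\rho_2,1,{\bm d}_2)(m)+c_0\,{\bm c}(\rho_2,\alpha_1^*,{\bm d}_{2,1})(m)$ via Lemma~\ref{l:roleofC}, and then a case split on the sign of $1-\rho_2/\alpha_1^*$. The only cosmetic difference is that you pin down an explicit $c_0$ making the inequality tight at one $m$, whereas the paper just asks for $c_0$ large enough (in absolute value) to make the inequality strict at every $m$; both choices give superharmonicity.
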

\begin{proof}
By their construction, the conjugate points $(\rho_2,1)$ and
$(\rho_2,\alpha^*_1)$ lie on ${\mathcal L}_1$.
This and Proposition \ref{p:interiorharm} imply that
the functions
$[(\rho_2,1,{\bm d}_2),\cdot]$ and  $[(\rho_2,\alpha^*_1, {\bm d}_{2,1}), \cdot]$
are $(Y,M)$-harmonic on ${\mathbb Z} \times{ \mathbb Z}_+ - \partial_2.$ This implies the same for their linear combination $h_{\rho_2}.$
Therefore, to prove that $h_{\rho_2}$ is $(Y,M)$-superharmonic, it suffices
to check this on $\partial_2.$

By definition $h_{\rho_2}$ is superharmonic on $\partial_2$ if
\[
{\mathbb E}_{(y,m)}[ h_{\rho_2}(Y_1,M_1)] \le h_{\rho_2}(y,m)
\]	
for $y = (k,0)$ and $m \in {\mathcal M}.$
\nomenclature{$h_{\rho_2}$}{$(Y, M)$-superharmonic function constructed
from points $(\rho_2,1)$ and $(\rho_2,\alpha_1^*)$}
By Lemma \ref{l:roleofC},
\begin{align*}
{\mathbb E}_{(y,m)}[(\rho_2,1,{\bm d}_2),(Y_1,M_1)] - 
[(\rho_2,1,{\bm d}_2),(y,m)] &= \rho_2^k {\bm c}(\rho_2,1,{\bm d}_2)(m), \\
{\mathbb E}_{(y,m)}[(\rho_2,\alpha^*_1,{\bm d}_{2,1}),(Y_1,M_1)] - 
[(\rho_2,\alpha^*_1,{\bm d}_{2,1}),(y,m)] &= \rho_2^k {\bm c}(\rho_2,\alpha^*_1,{\bm d}_{2,1})(m),
\end{align*}	
where ${\bm c}(\cdot,\cdot,\cdot)$ is defined as in \eqref{d:C}.
The last two lines give	
\begin{equation}\label{e:diffforhrho2}
{\mathbb E}_{(y,m)}[ h_{\rho_2}(Y_1,M_1)] - h_{\rho_2}(y,m)
=
\rho_2^k \left( {\bm c}(\rho_2,1,{\bm d}_2)(m) +c_0 {\bm c}(\rho_2,\alpha^*_1,{\bm d}_{2,1})(m)\right).
\end{equation}	
For $h_{\rho_2}$ to be superharmonic, the right side of the last
display must be negative. The sign of this expression is determined by
\begin{equation}\label{e:signdiff}
{\bm c}(\rho_2,1,{\bm d}_2)(m) +c_0 {\bm c}(\rho_2,\alpha^*_1,{\bm d}_{2,1})(m).
\end{equation}	
The definition \eqref{d:C} of ${\bm c}$ and $\rho_2 < 1$ and ${\bm d}_2(m) > 0$ for
all $m \in {\mathcal M}$ imply that the first term is strictly
positive for all $m \in {\mathcal M}.$ Define
\[
d_{\max} \doteq \max_{m \in {\mathcal M}} {\bm c}(\rho_2,1, {\bm d}_2)(m) > 0.
\]	

The sign of the second term in \eqref{e:signdiff} depends
on whether $\alpha^*_1 < \rho_2$ or $\alpha^*_1 > \rho_2.$
For $\alpha^*_1 < \rho_2$, the definition
\eqref{d:C} of ${\bm c}$ and ${\bm d}_{2,1}(m) > 0$ for all $m \in {\mathcal M}$
imply that the ${\bm c}$ term in \eqref{e:signdiff} is strictly negative for
all $m$.
Define	
\begin{equation}\label{d:dmaxstar}
d_{\max}^* \doteq \max_{m \in {\mathcal M}} {\bm c}(\rho_2,\alpha^*_1,{\bm d}_{2,1})(m) < 0.
\end{equation}

If we choose $c_0 > 0$ so that
\begin{equation}\label{e:condforc0}
d_{\max} + c_0 d_{\max}^* < 0,
\end{equation}	
\eqref{e:signdiff} will be strictly less than $0$ for all $m$.
This and \eqref{e:diffforhrho2} imply
that $h_{\rho_2}$ is superharmonic 
for any $c_0$ satisfying \eqref{e:condforc0}.

For $\alpha^*_1 > \rho_2$ the argument remains the same except that
we replace the $\max$ in \eqref{d:dmaxstar} with $\min$ and
$c_0 < 0$.
\end{proof}
In the next section we will use $h_{\rho_2}$ to find bounds on
the approximation error \eqref{e:approximationerror}.

\section{Upper bound for ${\mathbb P}_{(y, m)}(\tau <\infty)$}\label{s:ub1}

As we saw in Proposition
\ref{p:hrho2} above, $(Y,M)$-superharmonic functions can be constructed
from just two conjugate points on 
${\mathcal L}_{1} \subset {\mathcal H}^{\beta\alpha}.$

We will need an upper bound on ${\mathbb P}_{(y,m)}(\tau < \infty)$ in our analysis
of the relative error \eqref{e:approximationerror}; in the non-modulated tandem
walk treated in \cite{sezer2015exit, sezer2018approximation}, this probability can be represented
exactly using the harmonic functions constructed from points on the
characteristic surface, which also obviously serves as an upper bound.
In the present case, we will construct an upper bound for
${\mathbb P}_{(y,m)}(\tau < \infty)$ from 
$(Y,M)$-harmonic and superharmonic functions constructed in 
Propositions \ref{p:harconjugate} and \ref{p:hrho2}. The next proposition
constructs the necessary function the one following it derives
the upper bound.

\begin{proposition}\label{p:boundarylowerbound}
Let $h_{\rho_1} = [(\rho_1,\rho_1,{\bm d}_1),\cdot]$ be as in \eqref{d:hrho1} and
$h_{\rho_2}$ be as in \eqref{d:hrho2}.
One can choose $c_1 \ge 0$ so that	
\begin{equation}\label{e:boundarylowerbound}
c_2 \doteq \min_{y \in \partial B,m \in {\mathcal M}} 
h_{\rho_2}(y,m) + c_1 h_{\rho_1}(y,m) > 0;
\end{equation}	
for $\alpha^*_1 < \rho_2$ one can choose $c_1 = 0.$
\end{proposition}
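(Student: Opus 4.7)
The plan is to evaluate $h_{\rho_1}$ and $h_{\rho_2}$ explicitly on $\partial B$ and then argue positivity by cases. Any $y \in \partial B$ has $y(1)=y(2)=k$ for some $k \in {\mathbb Z}_+$, so the factor $\beta^{y(1)-y(2)}$ in the building block \eqref{e:buildingblock1} collapses to $1$, and from the definitions \eqref{d:hrho1} and \eqref{d:hrho2} we get the simple formulas
\begin{equation*}
h_{\rho_1}(y,m) = \rho_1^{k}\,{\bm d}_1(m), \qquad
h_{\rho_2}(y,m) = {\bm d}_2(m) + c_0\,(\alpha^*_1)^{k}\,{\bm d}_{2,1}(m).
\end{equation*}
Recall that ${\bm d}_1,{\bm d}_2,{\bm d}_{2,1}$ are Perron--Frobenius eigenvectors with strictly positive entries, $\rho_1,\rho_2,\alpha^*_1 \in (0,1)$, and the sign of $c_0$ is fixed by the sign of $\alpha^*_1-\rho_2$ as recorded in Proposition \ref{p:hrho2c0}.

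First I would handle the easy case $\alpha^*_1 < \rho_2$, where $c_0>0$. Then every term in the expression for $h_{\rho_2}(y,m)$ is strictly positive, so
\[
h_{\rho_2}(y,m) \ge \min_{m \in {\mathcal M}} {\bm d}_2(m) > 0
\]
uniformly on $\partial B \times {\mathcal M}$. Thus $c_1 = 0$ works and one may take $c_2 = \min_m {\bm d}_2(m)$.

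For the case $\alpha^*_1 > \rho_2$ (where $c_0 < 0$) the boundary values of $h_{\rho_2}$ can be negative for small $k$, but the key structural observation is that $\alpha^*_1 < 1$ by Proposition \ref{p:point2conj}, so $(\alpha^*_1)^{k} \to 0$ as $k \to \infty$ and $h_{\rho_2}(y,m) \to {\bm d}_2(m) > 0$ uniformly in $m$. I would therefore choose $K$ large enough that $h_{\rho_2}(y,m) \ge \tfrac{1}{2}\min_m {\bm d}_2(m)$ for all $k \ge K$; this handles the tail for any $c_1 \ge 0$ because $h_{\rho_1} \ge 0$ on $\partial B$. For the remaining finitely many pairs $\{(k,m): 0 \le k < K,\ m \in {\mathcal M}\}$ one has $h_{\rho_1}(y,m) = \rho_1^{k}{\bm d}_1(m) \ge \rho_1^{K-1}\min_m {\bm d}_1(m) > 0$ and $h_{\rho_2}(y,m)$ is bounded in absolute value, so a finite $c_1 \ge 0$ large enough will force $h_{\rho_2}+c_1 h_{\rho_1}$ to exceed some positive constant on that finite set. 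Taking $c_2$ to be the minimum of the two lower bounds completes the proof.

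The step that requires the most care is the case $\alpha^*_1 > \rho_2$, but it is not truly an obstacle: the work is essentially bookkeeping, leaning on $\alpha^*_1 < 1$ to give the limiting value ${\bm d}_2(m)>0$ at infinity on $\partial B$ and on the strict positivity of $h_{\rho_1}$ to absorb finitely many small-$k$ contributions. The only genuine input beyond the formulas above is the sign information in Propositions \ref{p:point2conj} and \ref{p:hrho2c0}, together with the Perron--Frobenius positivity of ${\bm d}_1,{\bm d}_2,{\bm d}_{2,1}$.
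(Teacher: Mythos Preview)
Your proof is correct and follows essentially the same route as the paper's: you evaluate $h_{\rho_1}$ and $h_{\rho_2}$ on $\partial B$, dispose of the case $\alpha^*_1<\rho_2$ by sign, and in the case $\alpha^*_1>\rho_2$ use $\alpha^*_1<1$ to control the tail and strict positivity of $h_{\rho_1}$ to handle finitely many remaining indices. The only cosmetic difference is that you write out the explicit lower bound $\rho_1^{K-1}\min_m {\bm d}_1(m)$ for $h_{\rho_1}$ on the finite set, which the paper leaves implicit.
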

\begin{proof}
By its definition, 	
\begin{equation}\label{e:hrho2onpartialB}
h_{\rho_2}(y,m) = {\bm d}_2(m) + c_0(\alpha^*_1)^{y(2)} {\bm d}_{2,1}(m),
\end{equation}	
for $y \in \partial B.$ We know by Proposition \ref{p:hrho2} that
$c_0>0$ for $\alpha^*_1 <\rho_2$. This, $\alpha^*_1 > 0$, ${\bm d}_{2,1}(m) > 0$
imply	
\[
\min_{y \in \partial B }h_{\rho_2}(y,m) \ge \min_{m \in {\mathcal M}} 
{\bm d}_2(m) > 0,
\]
which implies \eqref{e:boundarylowerbound} with $c_1 = 0.$
	
For $\alpha^*_1 > \rho_2$, $c_0 < 0$ and \eqref{e:hrho2onpartialB} 
can take negative values for small $y(2)$.
But $0 < \alpha^*_1 < 1$ implies that
there exists $k_0 > 0$ such that	
\begin{equation}\label{e:tailok}
h_{\rho_2}(y,m) \ge 
\min_{m \in {\mathcal M}}{\bm d}_2(m) /2
 > 0,~~~ y \in \partial B,~
y(2) \ge k_0.
\end{equation}	
On the other hand, ${\bm d}_1(m) > 0$ for all $m \in {\mathcal M}$
and $\rho_1 > 0$ imply that $h_{\rho_1}(y,m) > 0$ for all $y \in \partial B$,
$m \in {\mathcal M}.$ Then one can choose $c_1 > 0$ so that	
\begin{equation}\label{e:liftup}
c_1 {\bm d}_1(m) \rho_1^{y(2)}
+{\bm d}_2(m) + c_0(\alpha^*_1)^{y(2)} {\bm d}_{2,1}(m) > 
\min_{m \in {\mathcal M}}{\bm d}_2(m) /2,~~y \in \partial B,~ y(2) \le k_0,
\end{equation}
since this inequality concerns only finitely many $y \in \partial B$.
$c_1$ chosen thus, \eqref{e:tailok}
and \eqref{e:liftup} imply \eqref{e:boundarylowerbound}.
\end{proof}

\begin{proposition}\label{p:upperfortau}
Let $c_1 \ge 0$, $c_2 > 0$ be as in Proposition \ref{p:boundarylowerbound}
\begin{equation}\label{e:boundPytau}
{\mathbb P}_{(y,m)}(\tau < \infty) \le 
\frac{1}{c_2} \left( h_{\rho_2}(y,m) + c_1 h_{\rho_1}(y,m) \right).
\end{equation}	
\end{proposition}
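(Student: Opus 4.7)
The plan is to use the supermartingale/optional-stopping method with $g \doteq h_{\rho_2} + c_1 h_{\rho_1}$. Since $h_{\rho_1}$ is $(Y,M)$-harmonic (Proposition \ref{p:hrho1}), $h_{\rho_2}$ is $(Y,M)$-superharmonic (Proposition \ref{p:hrho2}), and $c_1 \ge 0$, $g$ is $(Y,M)$-superharmonic, so $\{g(Y_k,M_k)\}_{k\ge 0}$ is a supermartingale under ${\mathbb P}_{(y,m)}$. The explicit forms $h_{\rho_1}(y,m)=\rho_1^{y(1)}{\bm d}_1(m)$ and $h_{\rho_2}(y,m)=\rho_2^{y(1)-y(2)}({\bm d}_2(m)+c_0(\alpha^*_1)^{y(2)}{\bm d}_{2,1}(m))$, combined with $y(1)\ge y(2)\ge 0$ on $B$ and $\rho_1,\rho_2,\alpha^*_1\in(0,1)$, yield a uniform bound $|g|\le M$ and, more usefully, the geometric decay
\[
|g(y,m)|\le C\,\rho^{y(1)-y(2)},\qquad y\in B,\; m\in{\mathcal M},
\]
for some $C>0$ and $\rho\doteq\max(\rho_1,\rho_2)\in(0,1)$.

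For the stopping argument I introduce the auxiliary stopping time $\sigma_N\doteq\inf\{k\ge 0:Y_k(1)-Y_k(2)\ge N\}$ and apply the supermartingale inequality at the bounded stopping time $\tau\wedge\sigma_N\wedge k$:
\[
g(y,m)\ge {\mathbb E}_{(y,m)}\bigl[g(Y_{\tau\wedge\sigma_N\wedge k},M_{\tau\wedge\sigma_N\wedge k})\bigr].
\]
Splitting the right side according to which of $\tau$, $\sigma_N$, $k$ attains the minimum and using (i) $g(Y_\tau,M_\tau)\ge c_2$ on $\partial B\times{\mathcal M}$ by Proposition \ref{p:boundarylowerbound}, (ii) $|g(Y_{\sigma_N},M_{\sigma_N})|\le C\rho^N$ by the geometric bound, and (iii) $|g|\le M$ on the remaining piece, gives
\[
g(y,m)\ge c_2\,{\mathbb P}_{(y,m)}(\tau\le\sigma_N\wedge k)-C\rho^N-M\,{\mathbb P}_{(y,m)}(k<\tau\wedge\sigma_N).
\]

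To conclude I first send $k\to\infty$ with $N$ fixed, then $N\to\infty$. Since $Y(1)-Y(2)$ has unit-bounded increments, $\sigma_N\ge N-(y(1)-y(2))$, so $\sigma_N\to\infty$ almost surely and $\{\tau\le\sigma_N\}\uparrow\{\tau<\infty\}$. The remaining ingredient is that $\tau\wedge\sigma_N<\infty$ almost surely for each fixed $N$, so that the last term vanishes as $k\to\infty$. This is a recurrence claim for the modulated walk on the infinite strip $V_N\doteq\{y\in B:1\le y(1)-y(2)<N\}$: on $\{Y(2)>0\}$ the projected component $Z_k\doteq Y_k(1)-Y_k(2)$ has strictly positive averaged drift by stability \eqref{e:stabilityas}, forcing an exit through $\sigma_N$; on $\{Y(2)=0\}$ the allowed states in $V_N$ form a finite set $\{(j,0):1\le j\le N-1\}$ from which $\partial B$ is hit in a bounded number of steps with strictly positive probability. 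Granting this recurrence, passage to the limit gives $c_2\,{\mathbb P}_{(y,m)}(\tau<\infty)\le g(y,m)$, which rearranges to \eqref{e:boundPytau}.

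The main obstacle is the recurrence step $\tau\wedge\sigma_N<\infty$ almost surely. In the non-modulated case \cite{sezer2018approximation} the corresponding statement is elementary because $Z_k$ is itself a reflected random walk with a known positive drift away from $\partial_2$; with modulation one only controls the averaged drift through \eqref{e:stabilityas}, so one must exhibit a Foster--Lyapunov function for the joint chain $(Y,M)$ restricted to $V_N\times{\mathcal M}$ (for instance a linear function of $Z_k$ with a bounded correction depending on $M_k$) to certify the required drift inequality off a compact set.
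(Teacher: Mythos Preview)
Your argument is correct, but it is considerably more elaborate than the paper's and the ``main obstacle'' you flag is not really an obstacle. The paper simply observes that $f\doteq h_{\rho_2}+c_1h_{\rho_1}$ is bounded on $B$ (since $\rho_1,\rho_2,\alpha_1^*\in(0,1)$) and $(Y,M)$-superharmonic, so $k\mapsto f(Y_{k\wedge\tau},M_{k\wedge\tau})$ is a bounded supermartingale; optional sampling then gives ${\mathbb E}_{(y,m)}[f(Y_\tau,M_\tau)1_{\{\tau<\infty\}}]\le f(y,m)$ directly, without any truncation by $\sigma_N$. The only point one must check for this one-line argument is that the almost-sure limit of $f(Y_k,M_k)$ on $\{\tau=\infty\}$ is nonnegative, and in fact $f>0$ on all of $B$: when $\alpha_1^*<\rho_2$ one has $c_0>0$, $c_1=0$ and every factor in $h_{\rho_2}$ is positive; when $\alpha_1^*>\rho_2$ (so $\rho_1>\rho_2$, $c_0<0$, $c_1>0$) one has, writing $j=y(1)-y(2)\ge 0$,
\[
f(y,m)\ge \rho_2^{j}\bigl({\bm d}_2(m)+c_0(\alpha_1^*)^{y(2)}{\bm d}_{2,1}(m)+c_1\rho_1^{y(2)}{\bm d}_1(m)\bigr)=\rho_2^{j}\,f((y(2),y(2)),m)\ge \rho_2^{j}c_2>0,
\]
using $\rho_1^{j}\ge\rho_2^{j}$ on the (positive) $h_{\rho_1}$ term. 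This positivity makes the $\sigma_N$ device unnecessary.

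Even if you keep your truncation, the recurrence $\tau\wedge\sigma_N<\infty$ a.s.\ does not require a Foster--Lyapunov construction. The elementary argument already used in the proof of Lemma~\ref{l:partialBdetermined} suffices: from any $(y,m)$ with $0<y(1)-y(2)<N$, the path consisting of $y(1)-y(2)\le N$ consecutive increments $J=(-1,0)$ (each occurring with probability at least $\min_{m'}\lambda(m'){\bm P}(m',m')>0$) reaches $\partial B$, so ${\mathbb P}_{(y,m)}(\tau\le N)\ge\varepsilon$ uniformly in the strip; by the Markov property ${\mathbb P}_{(y,m)}(\tau\wedge\sigma_N>kN)\le(1-\varepsilon)^k\to 0$. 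No drift analysis of $Z_k$ is needed.
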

\begin{proof}
For ease of notation set
\[
f= h_{\rho_2} + c_1 h_{\rho_1};
\]
$\rho_1,\rho_2, \alpha^*_1 \in (0,1)$ implies 
\[
\sup_{y \in B, m \in {\mathcal M}} | f(y,m)| < \infty.
\]	
Furthermore, by Propositions \ref{p:point1} and \ref{p:hrho2} $f$
is $(Y,M)$-superharmonic. These imply that 
$k\mapsto f(Y_{k\wedge \tau},M_{k\wedge \tau})$
is a bounded supermartingale. Then by the optional sampling theorem
(\cite[Theorem 5.7.6]{durrett2010probability}) 
\[
{\mathbb E}_{(y,m)}[ f(Y_\tau,M_\tau) 1_{\{\tau < \infty\}} ] \le f(y,m);
\]
this, $Y_\tau \in \partial B$ when $\tau < \infty$
and \eqref{e:boundarylowerbound} imply
\[
c_2 {\mathbb P}_{(y,m)}(\tau < \infty) \le f(y,m),
\]
which gives \eqref{e:boundPytau}.
\end{proof}
\section{Upper bound for ${\mathbb P}_{(x, m)}( \sigma_1 <\sigma_{1,2}<  \tau_n < \tau_0)$}\label{s:ub2}
Define
\begin{equation}\label{d:defsigma12}
\sigma_{i}\doteq \inf\{k \ge 0:X_k \in \partial_i\}; i=1,2,
\end{equation}
and
\begin{equation}\label{d:defsigma12}
\sigma_{1,2}\doteq \inf\{k \ge 0:X_k \in \partial_2,~k\ge\sigma_1\};
\end{equation}
$\sigma_i$ is the first time $X$ hits $\partial_i$ and
$\sigma_{1,2}$ is the first time $X$ hits $\partial_2$ after hitting $\partial_1.$
\nomenclature{$\sigma_1$}{first time when $X$ hits $\partial_1$}
\nomenclature{$\sigma_{1,2}$}{first time $X$ hits  $\partial_2$ after $\partial_1$}
In the next section we find an upper bound on the probability
${\mathbb P}_{(x, m)}( \sigma_1 <\sigma_{1,2}<  \tau_n < \tau_0)$,
we will use this bound in the analysis of the approximation error
in the proof of Theorem \ref{t:mainapprox}.
Define
\begin{equation}\label{d:defrho}
\rho \doteq \rho_1 \vee \rho_2.
\end{equation}
\nomenclature{$\rho$}{$\rho_1 \vee \rho_2$}
The goal of the section is to prove

\begin{proposition}\label{p:upperboundforXrare}
For any $\epsilon > 0$ there exists $n_0 > 0 $ such that	
\begin{equation}\label{e:upperboundforXrare}
{\mathbb P}_{(x,m)}( \sigma_1 < \sigma_{1,2} < \tau_n < \tau_0)
\le \rho^{n(1-\epsilon)}
\end{equation}	
for $n \geq n_0$ and $(x,m) \in A_n$.
\end{proposition}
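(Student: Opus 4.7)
The plan is to decompose the event at the stopping time $\sigma_{1,2}$ via the strong Markov property, bound the two resulting factors using the superharmonic apparatus of Section~\ref{s:Yharmonic} together with the affine transformation $T_n$, and combine the estimates with a two-regime split on where $X$ first hits $\partial_2$ after $\partial_1$.

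By the strong Markov property at $\sigma_{1,2}$,
\[
{\mathbb P}_{(x,m)}(\sigma_1 < \sigma_{1,2} < \tau_n < \tau_0)
= \sum_{j=0}^{n-1}\sum_{m'\in{\mathcal M}} q(j,m')\,r(j,m'),
\]
where $q(j,m') = {\mathbb P}_{(x,m)}(\sigma_1 < \sigma_{1,2} < \tau_n\wedge\tau_0,\, X_{\sigma_{1,2}}=(j,0),\, M_{\sigma_{1,2}}=m')$ and $r(j,m') = {\mathbb P}_{((j,0),m')}(\tau_n < \tau_0)$. The first factor captures the cost of routing through $\partial_1$ and landing on $\partial_2$ at the point $(j,0)$, while the second is the climb from $(j,0)$ up to $\partial A_n$.

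To control $r(j,m')$ I would pass to $Y^n = T_n(X)$: the starting point becomes $(n-j,0)$, and the event $\{\tau_n<\tau_0\}$ corresponds to $Y^n$ hitting $\partial B$ before $(n,0)=T_n(0)$. Since $y(2)=0$ at the starting point, both building blocks of $h_{\rho_2}$ as well as $h_{\rho_1}$ collapse to pure powers of $\beta$: Proposition~\ref{p:upperfortau} applied to $Y$ gives
\[
{\mathbb P}^Y_{((n-j,0),m')}(\tau<\infty) \le \frac{1}{c_2}\bigl(\rho_2^{n-j}({\bm d}_2(m')+c_0{\bm d}_{2,1}(m')) + c_1\rho_1^{n-j}{\bm d}_1(m')\bigr) \le \bar C\,\rho^{n-j}.
\]
To transfer this bound from $Y$ to the truncated walk $Y^n$, I would verify that $h_{\rho_2}+c_1 h_{\rho_1}$ remains $(Y^n,M)$-superharmonic at the extra boundary $\{y_1=n\}$ (which corresponds to $X\in\partial_1$). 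The defect generated by blocking the $(1,1)$ jump there equals ${\bm P}(m,m)\mu_1(m)\bigl[h(y,m)-h(y+(1,1),m)\bigr]$, whose sign is opposite to that of $c_0$ and which can be absorbed by enlarging $c_1$ if necessary. This yields $r(j,m')\le\bar C\,\rho^{n-j}$ uniformly in $j$ and $m'$.

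I then split the sum at $j_\star=\lfloor(1-\delta)n\rfloor$ for a small $\delta>0$. For $j\le j_\star$, the climb bound gives $r(j,m')\le\bar C\rho^{\delta n}$, and summing against $\sum_{j,m'}q(j,m')\le 1$ yields a contribution of order $n\,\rho^{\delta n}$. For $j>j_\star$ I need separate control of the hitting tail of $\partial_2$, which I would obtain by optional sampling of an $(X,M)$-superharmonic function built from the two $X$-side conjugate points obtained by applying the involution $\beta_X=1/\beta_Y,\ \alpha_X=\alpha_Y/\beta_Y$ to the $Y$-conjugate pair $\{(\rho_2,1),(\rho_2,\alpha^*_1)\}\subset{\mathcal L}_1$; this gives an $X$-side analogue of $h_{\rho_2}$ of the form $\rho_2^{-x^{(1)}-x^{(2)}}\bigl[{\bm d}_2(m)+c_0(\alpha^*_1)^{x^{(2)}}{\bm d}_{2,1}(m)\bigr]$, bounding $\sum_{j>j_\star,m'}q(j,m')$ by a constant times $\rho_2^{(1-\delta)n}$ and contributing a further exponentially small term. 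Combining the two regimes and choosing $\delta$ small enough in terms of $\epsilon$ makes the total at most $\rho^{n(1-\epsilon)}$ for $n\ge n_0$.

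The principal obstacle is the boundary verification in the climb step: $h_{\rho_2}+c_1 h_{\rho_1}$ is not automatically $(Y^n,M)$-superharmonic at $\{y_1=n\}$ (equivalently, $h_{\rho_2}\circ T_n+c_1 h_{\rho_1}\circ T_n$ is not automatically $(X,M)$-superharmonic on $\partial_1$), and a careful choice of $c_1$ (or, if the defect cannot be absorbed, an auxiliary iterated strong-Markov argument over excursions between successive visits to $\partial_1$) is needed. A secondary complication is that the analogous $X$-side superharmonic function used for the tail of $q$ exhibits the same boundary defect on $\partial_1$ and may require its own adjustment term.
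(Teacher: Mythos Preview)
Your decomposition via the strong Markov property at $\sigma_{1,2}$ is natural, but the two--regime split cannot deliver the rate $\rho^{n(1-\epsilon)}$. With $j_\star=\lfloor(1-\delta)n\rfloor$ and only $\sum_{j,m'}q(j,m')\le 1$ available, the regime $j\le j_\star$ contributes at best $\bar C\rho^{\delta n}$; to make this $\le \rho^{n(1-\epsilon)}$ you would need $\delta\ge 1-\epsilon$. But then in the second regime your claimed bound $\sum_{j>j_\star}q(j,m')\le C\rho_2^{(1-\delta)n}$ is at best $\rho_2^{\epsilon n}$, which is far larger than $\rho^{n(1-\epsilon)}$ (and when $\rho=\rho_2$ the two constraints on $\delta$ are outright contradictory for $\epsilon<1/2$). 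The missing piece is a nontrivial bound on $q(j,m')$ for \emph{every} $j$: what is really needed is a moment bound of the type ${\mathbb E}_{(x,m)}\bigl[\rho^{-X_{\sigma_{1,2}}(1)}1_{\{\sigma_1<\sigma_{1,2}<\tau_n\wedge\tau_0\}}\bigr]\le C$, so that $\sum_j q(j,m')\rho^{n-j}\le C\rho^n$. Your proposed ``$X$-side analogue of $h_{\rho_2}$'' of the form $\rho_2^{-x(1)-x(2)}[\cdots]$ does not provide this; it grows in $x(1)+x(2)$ and is not the right supermartingale for controlling the distribution of $X_{\sigma_{1,2}}(1)$.

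There is a second, independent gap in the climb step. On $\partial_1$ the function $h_{\rho_1}(T_n(\cdot),\cdot)$ is strictly \emph{sub}harmonic with defect ${\bm d}_1(m)\mu_1(m)(1-\rho_1)\rho_1^n>0$ (see \eqref{e:errortermforrho1}), while $h_{\rho_2}(T_n(\cdot),\cdot)$ is superharmonic on $\partial_1$ only when $\rho_1>\rho_2$, and even then its negative defect is of order $\rho_2^{n-x(2)}(\alpha^*_1)^{x(2)}$, dominated by $\rho_1^n$ near $x(2)=0$. Enlarging $c_1$ therefore makes the total defect of $h_{\rho_2}+c_1 h_{\rho_1}$ on $\partial_1$ \emph{worse}, not better: $c_1$ multiplies the subharmonic part. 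The paper handles both issues simultaneously by building a single three--stage supermartingale $S_k=S'_k-c_5k\rho^n$, where $S'_k$ applies successively $h_1$ (a constant $c_4\rho^n$), $h_2\propto h_{\rho_1}(T_n(\cdot),\cdot)$, and $h_3=h_{\rho_2}(T_n(\cdot),\cdot)+c_1 h_{\rho_1}(T_n(\cdot),\cdot)$ in the three phases, with $h_1\ge h_2$ on $\partial_1$ and $h_2\ge h_3$ on $\partial_2$; the linear term $-c_5k\rho^n$ absorbs the $\partial_1$ defect, and a separate time--truncation estimate ${\mathbb P}(\tau_n\wedge\tau_0>c_6n)\le\rho^{2n}$ keeps $k$ from spoiling the bound. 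The second--stage function $h_2=c_3\rho^{n-x(1)}{\bm d}(m)$ is exactly the supermartingale that encodes the cost ${\mathbb E}[\rho^{-X_{\sigma_{1,2}}(1)}]$ you are missing.
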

We split the proof into cases $\rho_1 > \rho_2$ and $\rho_2 > \rho_1.$
The first subsection below
treats the first case $\rho_1 > \rho_2$, the next gives the
changes needed for the latter.

Let ${\bm A}_1$ denote the characteristic matrix for $\partial_1$:
\begin{align*}
{\bm p}_1(\beta,\alpha,m) &\doteq \lambda(m) \frac{1}{\beta} +
\mu_1(m) + \mu_2(m)\frac{\beta}{\alpha},~~m \in {\mathcal M}, \notag \\
{\bm A}_1(\beta,\alpha)_{m_1,m_2} &\doteq
\begin{cases} {\bm P}(m_1,m_2), &~~~m_1 \neq m_2 \\
{\bm P}(m_1,m_1)  {\bm p}_1(\beta,\alpha,m), &~~m_1 = m_2,
\end{cases},
(m_1,m_2) \in {\mathcal M}^2.
\end{align*}
We will use the following fact several times in our analysis.
\begin{lemma}\label{l:rho2harXM}
The function	
\begin{equation}\label{e:rho2simplefun}
(x,m) \mapsto [(\rho_2,1, {\bm d}_2), (T_n(x),m)] = \rho_2^{n-(x(1)+x(2))} {\bm d}_2(m)
\end{equation}	
is $(X,M)$-harmonic on ${\mathbb Z}_+^2 - \partial_2.$
\end{lemma}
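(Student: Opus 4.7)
The plan is to verify the defining identity ${\mathbb E}_{(x,m)}[h(X_1, M_1)] = h(x, m)$ directly, where $h(x,m) \doteq \rho_2^{n-x(1)-x(2)} {\bm d}_2(m)$, treating the interior ${\mathbb Z}_+^{2,o}$ and the boundary $\partial_1$ separately. First I would note that the explicit formula in the statement is immediate: plugging $T_n(x) = (n - x(1), x(2))$ and $(\beta,\alpha) = (\rho_2, 1)$ into the definition of $[(\beta,\alpha,{\bm d}),\cdot]$ gives $\rho_2^{(n-x(1)) - x(2)} \cdot 1^{x(2)} \cdot {\bm d}_2(m)$, as claimed.

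The central structural observation is that $h$ depends on $x$ only through the scalar $s(x) \doteq x(1) + x(2)$, and that the three admissible increments $(1,0)$, $(-1,1)$, $(0,-1)$ of $X$ change $s$ by $+1$, $0$, $-1$ respectively. For $x$ in the interior, no jump is suppressed by $\pi$, so conditioning on $M_1$ yields
\[
{\mathbb E}_{(x,m)}[h(X_1, M_1)] = \sum_{m' \neq m} {\bm P}(m, m') h(x, m') + {\bm P}(m, m)\, h(x, m) \bigl(\lambda(m)\rho_2^{-1} + \mu_1(m) + \mu_2(m) \rho_2\bigr).
\]
The bracketed factor is exactly ${\bm p}(\rho_2, 1, m)$, so the right-hand side equals $\rho_2^{n-s(x)}$ times the $m$-th entry of ${\bm A}(\rho_2, 1){\bm d}_2$. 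Since $(\rho_2, 1, {\bm d}_2) \in {\mathcal H}$ by Proposition \ref{p:point2}, that entry equals ${\bm d}_2(m)$, and harmonicity on the interior follows.

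The boundary case $x \in \partial_1$ is handled by the same computation once one observes that the only increment actually suppressed by $\pi$ there is $(-1, 1)$, since it is the unique admissible increment with a negative first coordinate. But $(-1, 1)$ leaves $s(x)$ unchanged, so replacing it by the null increment $(0, 0)$ produces the identical contribution to $h(X_1, M_1)$. Hence the conditional expectation on $\partial_1$ agrees term-by-term with the interior calculation, and harmonicity persists.

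I do not anticipate any real obstacle in carrying this out; the argument is a direct verification that reduces to the $(Y,M)$-harmonicity of $[(\rho_2,1,{\bm d}_2),\cdot]$ established earlier. The one point worth flagging is why the lemma must exclude $\partial_2$: there the suppressed increment is $(0, -1)$, which \emph{does} change $s$, so the effective one-step generator produces ${\bm p}_2(\rho_2, 1, m)$ in place of ${\bm p}(\rho_2, 1, m)$, and harmonicity would require the stronger condition $(\rho_2, 1, {\bm d}_2) \in {\mathcal H}_2$, which we have no reason to expect.
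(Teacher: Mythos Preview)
Your proposal is correct and follows essentially the same approach as the paper. The paper's proof is terser: it cites Proposition~\ref{p:interiorharm} for the interior and handles $\partial_1$ by the one-line observation ${\bm A}_1(\rho_2,1) = {\bm A}(\rho_2,1)$, which is exactly your point that the suppressed increment $(-1,1)$ leaves $s(x)$ unchanged when $\alpha=1$.
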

\begin{proof}
We know by Proposition \ref{p:interiorharm} and $(\rho_2,1, {\bm d}_2) 
\in {\mathcal H}$ that
$[(\rho_2,1, {\bm d}_2), \cdot]$ is $(Y,M)$-harmonic on 
${\mathbb Z} \times {\mathbb Z}_+^o$, which
implies that \eqref{e:rho2simplefun} is $(X,M)$-harmonic on
${\mathbb Z}_+^{2,o}$;
this and
${\bm A}_1(\rho_2,1) = {\bm A}(\rho_2,1)$ imply the $(X,M)$-harmonicity of
\eqref{e:rho2simplefun} on $\partial_1$.
\end{proof} 

\subsection{$\rho_1 > \rho_2$}

To prove \eqref{e:upperboundforXrare} we will construct a
corresponding supermartingale; applying the optional sampling
theorem to the supermartingale will give our desired bound. The event $\{\sigma_1 < \sigma_{1,2} < \tau_n < \tau_0\}$
consists of three stages: $X$ first hits $\partial_1$
then $\partial_2$ and finally $\partial A_n$ without ever hitting
$0$.
If $h$ is an $(X,M)$-superharmonic function, it follows
from the definitions that $h(X,M)$ is a supermartingale. We will
construct our supermartingale by applying three
functions
(one for each of the above stages)
to $(X,M)$: the function for the first stage is the constant
$\rho_1^n$, which is trivially superharmonic.
The function for the second stage will be a constant multiple of
$(x,m)\mapsto h_{\rho_1}(T_n(x),m)$.
By Proposition \ref{p:point1}, $(x,m)\mapsto h_{\rho_1}(T_n(x),m)$ 
is $(X,M)$-harmonic
on ${\mathbb Z}_+^2 - \partial_1.$ One can check directly that
it is in fact subharmonic on $\partial_1.$ The definition of
the supermartingale $S$ will involve terms to compensate for this.
The function for the third stage is
\begin{align}\label{e:funforstage3}
&h_3: (x,m) \mapsto 
h_{\rho_2}(T_n(x),m) + c_1 h_{\rho_1}(T_n(x),m), ~x \in A_n,~ m \in {\mathcal M}, \notag \\
&= h_{\rho_2}((n-x(1),x(2)),m) + c_1 h_{\rho_1}((n-x(1),x(2)),m), \notag\\
&= \rho_2^{n-(x(1)+x(2))} \left( {\bm d}_2(m) + c_0 {\alpha^*_1}^{x(2)} {\bm d}_{2,1}(m) \right)
+ c_1 \rho_1^{n-x(1)} {\bm d}_1(m),
\end{align}
where $c_1 \ge 0$ is chosen as in Proposition \ref{p:boundarylowerbound}
and $c_0$ is as in Proposition \ref{p:hrho2}. 
The next two propositions imply that $h_3$ is $(X,M)$-superharmonic
on ${\mathbb Z}_+^2 -\partial_1.$

\begin{proposition}\label{p:superharmeverywhere}
For $\rho_1 > \rho_2$,~ $h_{\rho_2}(T_n(\cdot),\cdot)$ 
is superharmonic on all of ${\mathbb Z}_+^2.$ 
\end{proposition}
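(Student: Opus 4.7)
The plan is to establish the one-step inequality $\mathbb E_{(x,m)}[h_{\rho_2}(T_n(X_1),M_1)]\le h_{\rho_2}(T_n(x),m)$ by case analysis on $x\in\mathbb Z_+^2$ according to which constraining boundaries of $X$ are active. For each case I would write down the $X$-dynamics at $x$, push them through $T_n$, and compare to the $Y$-dynamics at $T_n(x)$; wherever these agree, the already-proved $(Y,M)$-superharmonicity of $h_{\rho_2}$ (Proposition~\ref{p:hrho2}) finishes the case immediately.

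Three of the four configurations are of this ``matching'' type. On the true interior $x(1),x(2)\ge 1$, neither constraint is active and ${\bm I}_2$ maps the $X$-increments bijectively onto the $Y$-increments, so $\mathbb E_{(x,m)}[h_{\rho_2}(T_n(X_1),M_1)]=\mathbb E_{(T_n(x),m)}[h_{\rho_2}(Y_1,M_1)]$, and $h_{\rho_2}$ is $(Y,M)$-harmonic there by Proposition~\ref{p:interiorharm}. On $\partial_2\setminus\{0\}$ (so $x(1)\ge 1$, $x(2)=0$), both $X$ at $x$ and $Y$ at $T_n(x)=(n-x(1),0)$ have their $(0,-1)$ increment constrained, so the two expectations agree again, and $(Y,M)$-superharmonicity of $h_{\rho_2}$ on $\partial_2$ (built into its definition) closes the case. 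The origin needs only a marginal comment: at $(0,0)$ the $\mu_2$-branch is constrained both for $X$ and for $Y$ at $(n,0)\in\partial_2$, so it still matches.

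The only non-matching behavior occurs in the $\mu_1$-branch on $\partial_1$: at $x=(0,k)$, $k\ge 0$, the increment $(-1,1)$ is constrained and keeps $X$ at $(0,k)$, whereas the corresponding $Y$-transition at $T_n(x)=(n,k)$ is unconstrained and sends $Y$ to $(n+1,k+1)$. I would then compute
\[
\mathbb E_{(0,k),m}[h_{\rho_2}(T_n(X_1),M_1)] - \mathbb E_{(n,k),m}[h_{\rho_2}(Y_1,M_1)] = {\bm P}(m,m)\mu_1(m)\bigl[h_{\rho_2}((n,k),m)-h_{\rho_2}((n{+}1,k{+}1),m)\bigr]
\]
and simplify the right-hand side using the explicit formula \eqref{d:hrho2} to
\[
{\bm P}(m,m)\,\mu_1(m)\,c_0\,\rho_2^{\,n-k}(\alpha^*_1)^{k}(1-\alpha^*_1)\,{\bm d}_{2,1}(m).
\]
This is the only substantive step. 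Under the hypothesis $\rho_1>\rho_2$, Lemma~\ref{l:orderofalphastar} gives $\alpha^*_1>\rho_2$, hence $c_0<0$ by the sign convention in Proposition~\ref{p:hrho2}; Proposition~\ref{p:point2conj} guarantees $\alpha^*_1\in(0,1)$, and ${\bm P}(m,m),\mu_1(m),{\bm d}_{2,1}(m)$ are strictly positive, so the correction is $\le 0$. Adding this non-positive correction to the $(Y,M)$-superharmonicity bound $\mathbb E_{(n,k),m}[h_{\rho_2}(Y_1,M_1)]\le h_{\rho_2}((n,k),m)=h_{\rho_2}(T_n(0,k),m)$ delivers superharmonicity at every point of $\partial_1$, completing the proof. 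The only obstacle is sign bookkeeping, and it is precisely what forces the separate treatment of $\rho_1<\rho_2$: the opposite sign convention on $c_0$ makes the same correction positive in that regime, which is why the proof of Proposition~\ref{p:upperboundforXrare} is split into two cases.
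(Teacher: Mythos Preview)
Your proof is correct and follows essentially the same route as the paper's: off $\partial_1$ you transport the $(Y,M)$-superharmonicity of $h_{\rho_2}$ through $T_n$, and on $\partial_1$ you isolate the $\mu_1$-branch mismatch, observe that the $[(\rho_2,1,{\bm d}_2),\cdot]$ summand contributes nothing (the paper phrases this as Lemma~\ref{l:rho2harXM}), and use $c_0<0$ (from $\rho_1>\rho_2\Rightarrow\alpha_1^*>\rho_2$) to get the sign. The only cosmetic difference is that the paper splits $h_{\rho_2}$ into its two summands and treats each separately on $\partial_1$, whereas you compute the single correction term for $h_{\rho_2}$ as a whole; the underlying computation and sign argument are identical.
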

\begin{proof}
That $h_{\rho_2}(T_n(\cdot),\cdot)$ is $(X,M)$-superharmonic
on ${\mathbb Z}_+^2-\partial_1$ follows from 
Proposition \ref{p:hrho2} (i.e., from the fact that
$h_{\rho_2}(\cdot,\cdot)$ is $(Y,M)$-harmonic). Therefore, 
it suffices to prove that 
$h_{\rho_2}(T_n(\cdot),\cdot)$ is superharmonic on $\partial_1.$
$h_{\rho_2}(T_n(\cdot),\cdot)$  is a
sum of two functions:	
\begin{equation}\label{e:sumoftwoterms}
h_{\rho_2}(T_n(\cdot),\cdot) = 
[(\rho_2,1,{\bm d}_2),(T_n(\cdot),\cdot)] + c_0
[ (\rho_2,\alpha^*_1, {\bm d}_{2,1}), (T_n(\cdot),\cdot)].
\end{equation}	
Let us show that each of these summands is $(X,M)$- superharmonic on 
$\partial_1$. The first
summand is $(X,M)$-harmonic (and therefore, superharmonic)
on $\partial_1$ by Lemma \ref{l:rho2harXM}.
To treat the second term in \eqref{e:sumoftwoterms} recall
the following: $\rho_2 < \rho_1$ implies $\rho_2 < \alpha^*_1$ (Lemma \ref{l:orderofalphastar});
then, by Proposition \ref{p:hrho2}, $c_0 < 0$. 
Therefore, if we can show that
$[ (\rho_2,\alpha^*_1, {\bm d}_{2,1}), (T_n(\cdot),\cdot)]$ is
$(X,M)$-subharmonic on $\partial_1$ we will be done.
Let us now see that this
is indeed the case.

For ease of notation set	 
\[
 h(x,m) = [ (\rho_2,\alpha^*_1, {\bm d}_{2,1}), (T_n(x),m)] =
\rho_2^{n-(x(1) + x(2))} {\alpha^*_1}^{x(2)} {\bm d}_{2,1}(m).
\]	
A calculation parallel to the proof of Proposition 
\ref{p:interiorharm} shows	
\begin{equation}\label{e:subharmonicconjugate}
{\mathbb E}_{(x,m)}\left[ h(X_1,M_1) \right]
- h(x,m)
= {\bm d}_{2,1}(m)\mu_1(m) (1 - \alpha^*_1) \rho_2^{n-x(2)} > 0,
\end{equation}	
for $x \in \partial_1$, i.e., $h$ is $(X,M)$-subharmonic on
$\partial_1$. This completes the proof of this proposition.
\end{proof}

\begin{proposition}\label{p:hrho1harmwhere}
$h_{\rho_1}(T_n(\cdot),\cdot)$ is harmonic (and therefore superharmonic)
on ${\mathbb Z}_+^2 -\partial_1.$ 
It is subharmonic on $\partial_1$ where it satisfies	
\begin{equation}\label{e:errortermforrho1}
{\mathbb E}_{(x,m)} [h_{\rho_1}(T_n(X_1),M_1)] - h_{\rho_1}(T_n(x),m)
= {\bm d}_1(m)\mu_1(m) (1 - \rho_1) \rho_1^{n} > 0.
\end{equation}	
\end{proposition}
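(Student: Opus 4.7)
The plan is to leverage the fact, established in Proposition \ref{p:hrho1}, that $h_{\rho_1}$ is $(Y,M)$-harmonic on all of ${\mathbb Z}\times{\mathbb Z}_+$---both in the interior and on $Y$'s constraining boundary $\partial_2$---and then to isolate the single place where the constraining maps of $X$ and $Y$ fail to line up under the affine map $T_n$. A useful structural simplification is that at $\beta=\alpha=\rho_1$ the $\alpha$-exponent cancels, so $h_{\rho_1}(y,m)=\rho_1^{y(1)}{\bm d}_1(m)$ and hence $h_{\rho_1}(T_n(x),m)=\rho_1^{\,n-x(1)}{\bm d}_1(m)$; this keeps the one-step computations very short.

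For the first claim I would split ${\mathbb Z}_+^2-\partial_1$ into the interior ${\mathbb Z}_+^{2,o}$ and the boundary piece $\partial_2-\partial_1$. On the interior all three within-regime jumps of $X$ are unconstrained and the map $I_k\mapsto J_k={\bm I}_2 I_k$ identifies them with $Y$'s within-regime jumps at $T_n(x)\in{\mathbb Z}\times{\mathbb Z}_+^o$; Proposition \ref{p:interiorharm} then supplies the required harmonicity. On $\partial_2-\partial_1$ we have $T_n(x)=(n-x(1),0)\in\partial_2$ of $Y$, and the increment $(0,-1)$ is blocked for both $X$ and $Y$ in perfect lockstep while the other jumps move both processes in matched fashion; since $(\rho_1,\rho_1,{\bm d}_1)\in{\mathcal H}\cap{\mathcal H}_2$, Proposition \ref{p:pointonboth} gives the $(Y,M)$-harmonicity of $h_{\rho_1}$ at $T_n(x)$, which transfers to the $(X,M)$-harmonicity of $h_{\rho_1}\circ T_n$.

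For the boundary $\partial_1$, pick $x=(0,k)$. Here the $X$-increment $(-1,1)$ is blocked, so $X_1=x$ and $T_n(X_1)=(n,k)$, whereas the corresponding $Y$-increment $(1,1)$ at $T_n(x)=(n,k)$ is unconstrained (since $Y$ has no constraint on its first coordinate) and would send $Y$ to $(n+1,k+1)$. All remaining one-step transitions of $X$ under $T_n$ agree exactly with those of $Y$ at $T_n(x)$, including the regime switches. Subtracting ${\mathbb E}_{(T_n(x),m)}[h_{\rho_1}(Y_1,M_1)]$---which equals $h_{\rho_1}(T_n(x),m)$ by $(Y,M)$-harmonicity at $T_n(x)$---from ${\mathbb E}_{(x,m)}[h_{\rho_1}(T_n(X_1),M_1)]$ leaves only the $\mu_1$-term discrepancy, proportional to $h_{\rho_1}((n,k),m)-h_{\rho_1}((n+1,k+1),m)=\rho_1^n(1-\rho_1){\bm d}_1(m)$; this yields \eqref{e:errortermforrho1}, and strict positivity is immediate from $\rho_1\in(0,1)$ together with $\mu_1(m),{\bm d}_1(m)>0$.

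There is essentially no obstacle here; the argument is a short bookkeeping exercise. The only care needed is in identifying which boundary of $X$ creates a mismatch under $T_n$: because $T_n$ sends $X$'s $\partial_1$ to a subset of $Y$'s interior rather than onto $Y$'s only constraining boundary $\partial_2$, the discrepancy occurs precisely on $\partial_1$ and is caused exclusively by the blocked $(-1,1)$-jump.
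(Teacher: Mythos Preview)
Your argument is correct and is precisely the direct one-step computation the paper has in mind (the paper omits the proof, pointing to the calculation in Proposition \ref{p:interiorharm}); your use of the already-established $(Y,M)$-harmonicity of $h_{\rho_1}$ to isolate the single mismatched $\mu_1$-jump on $\partial_1$ is a clean way to organize that same computation. One tiny bookkeeping point: the proportionality constant in your last step is ${\bm P}(m,m)\mu_1(m)$, so the exact right-hand side of \eqref{e:errortermforrho1} should carry a ${\bm P}(m,m)$ factor (the paper's displayed formula drops it, as it also does in \eqref{e:subharmonicconjugate}), but this affects neither the sign nor any subsequent use.
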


The proof is parallel to the computation given in the proof of 
Proposition \ref{p:interiorharm} and is omitted.
We can now define the supermartingale that we will use to prove
\eqref{e:upperboundforXrare}:
\begin{align*}
S'_k &\doteq \begin{cases} h_1, &k \le \sigma_1, \\
h_2(X_k,M_k), & \sigma_1 < k \le \sigma_{1,2},\\ 
h_3(X_k,M_k), & k > \sigma_{1,2},
\end{cases}\\
S_k  &\doteq S'_k - c_5k\rho_1^n,
\end{align*}
where
\begin{align}
c_3 &\doteq 
\frac{\max_{m \in {\mathcal M}} {\bm d}_2(m)  + c_1\max_{m \in {\mathcal M}}{\bm d}_1(m)}{ \min_{m \in {\mathcal M}} {\bm d}_1(m)} > 0
 \label{d:c3},\\
h_1 &\doteq  c_4\rho_1^n,~~~~ c_4 \doteq c_3 \max_{m \in {\mathcal M}}{\bm d}_1(m) > 0\notag, \\
h_2 &\doteq c_3 h_{\rho_1}(T_n(\cdot), \cdot) =  
c_3 [(\rho_1,\rho_1, {\bm d}_1), (T_n(\cdot),\cdot)] = c_3 \rho_1^{n-x(1)}{\bm d}_1(\cdot) ,
\label{d:h2}\\
c_5 &\doteq c_3 (1 - \rho_1) \max_{m \in {\mathcal M}} {\bm d}_1(m)\mu_1(m)\label{d:c4}.
\end{align}
Two comments: 
$h_1$ is a constant function, independent of $x$ and $m$,
and  $h_1 \ge h_2$ on $\partial_1$.

\begin{proposition}\label{p:supermartingalc1}
$S$ is a supermartingale.
\end{proposition}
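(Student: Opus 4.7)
The plan is to verify ${\mathbb E}[S_{k+1}\mid\mathscr F_k]\le S_k$, equivalently ${\mathbb E}[S'_{k+1}\mid\mathscr F_k]\le S'_k+c_5\rho_1^n$, by a case analysis on the $\mathscr F_k$-measurable partition $\{k<\sigma_1\}$, $\{k=\sigma_1\}$, $\{\sigma_1<k<\sigma_{1,2}\}$, $\{k=\sigma_{1,2}\}$, $\{k>\sigma_{1,2}\}$. On each piece I identify which of $h_1,h_2,h_3$ appears in $S'_k$ and $S'_{k+1}$ and invoke the relevant (sub/super)harmonicity, together with a direct pointwise comparison at the two regime-transition times.

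The three ``stationary'' regimes are essentially free. For $k<\sigma_1$ one has $S'_k=h_1$ and $S'_{k+1}\le h_1$ pathwise, since either $X_{k+1}\notin\partial_1$ (still $h_1$) or $X_{k+1}\in\partial_1$, in which case $h_2(x,m)=c_3\rho_1^n{\bm d}_1(m)\le c_4\rho_1^n=h_1$ by the definition of $c_4$. For $\sigma_1<k<\sigma_{1,2}$ both $S'_k$ and $S'_{k+1}$ involve $h_2$; Proposition~\ref{p:hrho1harmwhere} scaled by $c_3$ gives ${\mathbb E}[h_2(X_{k+1},M_{k+1})\mid\mathscr F_k]-h_2(X_k,M_k)$ equal to $0$ off $\partial_1$ and to $c_3{\bm d}_1(M_k)\mu_1(M_k)(1-\rho_1)\rho_1^n\le c_5\rho_1^n$ on $\partial_1$, and this last bound is precisely the definition of $c_5$. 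For $k>\sigma_{1,2}$ both terms are $h_3$; Proposition~\ref{p:superharmeverywhere} gives superharmonicity of the $h_{\rho_2}(T_n(\cdot),\cdot)$ summand everywhere, and Proposition~\ref{p:hrho1harmwhere} gives harmonicity of the $c_1 h_{\rho_1}(T_n(\cdot),\cdot)$ summand off $\partial_1$ and subharmonic error $c_1{\bm d}_1(m)\mu_1(m)(1-\rho_1)\rho_1^n\le c_5\rho_1^n$ on $\partial_1$ (here one uses $c_1\le c_3$, which is immediate from \eqref{d:c3}). The first transition $k=\sigma_1$ is equally easy: $X_k\in\partial_1$ so $h_2(X_k,M_k)\le h_1=S'_k$ by the $c_4$-computation above, and the residual error in ${\mathbb E}[h_2(X_{k+1},M_{k+1})\mid\mathscr F_k]-h_2(X_k,M_k)$ is again bounded by $c_5\rho_1^n$.

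The case I expect to be the main obstacle is the second transition $k=\sigma_{1,2}$, where $X_k\in\partial_2$, $S'_k=h_2(X_k,M_k)$, $S'_{k+1}=h_3(X_{k+1},M_{k+1})$, and the function switches entirely. Since $h_3$ is $(X,M)$-superharmonic on ${\mathbb Z}_+^2-\partial_1\supset\partial_2-\{0\}$, it suffices to prove the pointwise comparison $h_3(x,m)\le h_2(x,m)$ for $x\in\partial_2\cap A_n$. Writing both functions out with $x(2)=0$, and using that in the present $\rho_1>\rho_2$ regime Lemma~\ref{l:orderofalphastar} gives $\alpha_1^*>\rho_2$ and hence $c_0<0$ by Proposition~\ref{p:hrho2}, the desired inequality reduces to
\[
\rho_2^{n-x(1)}{\bm d}_2(m)\le(c_3-c_1)\rho_1^{n-x(1)}{\bm d}_1(m).
\]
The worst case is $x(1)=n$, which demands $(c_3-c_1)\min_m{\bm d}_1(m)\ge\max_m{\bm d}_2(m)$, and this is exactly what \eqref{d:c3} delivers: $(c_3-c_1)\min_m{\bm d}_1(m)=\max_m{\bm d}_2(m)+c_1(\max_m{\bm d}_1(m)-\min_m{\bm d}_1(m))\ge\max_m{\bm d}_2(m)$. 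Monotonicity in $x(1)$ driven by $\rho_2<\rho_1$ then extends the inequality to all $x(1)\in\{0,\ldots,n\}$, closing this final case. The delicate point is that the precise constant $c_3$ in \eqref{d:c3} is engineered exactly for this one comparison, and it is the only place where the assumption $\rho_1>\rho_2$ of this subsection is used in an essential way.
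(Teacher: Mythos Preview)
Your proof is correct and follows essentially the same approach as the paper: both arguments rest on the superharmonicity of $h_1,h_2,h_3$ away from $\partial_1$, the pointwise comparisons $h_1\ge h_2$ on $\partial_1$ and $h_2\ge h_3$ on $\partial_2$, and the $c_5\rho_1^n$ drift term absorbing the subharmonic excess on $\partial_1$. The only structural difference is that you partition by time (relative to $\sigma_1,\sigma_{1,2}$) whereas the paper partitions by the position of $X_k$; you also spell out the $h_3\le h_2$ comparison on $\partial_2$ in more detail than the paper, which simply asserts it follows from the definitions, $\rho_2<\rho_1$, and $c_0<0$. One small slip: in your case $k<\sigma_1$, the dichotomy on $X_{k+1}$ is unnecessary since $k+1\le\sigma_1$ forces $S'_{k+1}=h_1$ regardless; but your conclusion there is of course unaffected.
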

\begin{proof}
The claim follows mostly from the fact that the functions
involved in the definition of $S'$ are $(X,M)$-superharmonic away
from $\partial_1.$ The term that breaks superharmonicity
on $\partial_1$ is
$[(\rho_1,\rho_1,{\bm d}_1), (T_n(X_k),M_k)]$;
the
$-c_5 k \rho_1^n$ term in the definition of $S$ is introduced to compensate
for this. The details are as follows.
	
The $(X,M)$-harmonicity of $h_1$,
$h_2$ and $h_3$ implies	
\[
{\mathbb E}_{(x,m)}[S'_{k+1} | {\mathscr F}_k] = S'_k
\]	
for $X_k \in {\mathbb Z}_+^2- \partial_1 \cup \partial_2$; i.e., $S'_k$ satisfies the martingale
equality condition 
for $X_k \in {\mathbb Z}_+^2 -\partial_1 \cup \partial_2$; this implies that $S_k$ satisfies
the supermartingale inequality condition over the same event.

$h_2$ and $h_3$ are  $(X,M)$-superharmonic on $\partial_2$
by Propositions \ref{p:superharmeverywhere}
and \ref{p:hrho1harmwhere} ($h_1$ is trivially so because it is constant); this implies	
\[
{\mathbb E}_{(x,m)}[S'_{k+1} | {\mathscr F}_k] \le S'_k
\]	
for $X_k \in \partial_2$ and $k \neq \sigma_{1,2}.$
For $k = \sigma_{1,2}$ we have 
$S'_{k+1} = h_3(X_{k+1},M_{k+1})$.
This, the $(X,M)$-superharmonicity of $h_3$ on $\partial_2$ implies
\begin{align}
{\mathbb E}_{(x,m)}[S'_{k+1} | {\mathscr F}_k] &=
{\mathbb E}_{(x,m)}[h_3(X_{k+1},M_{k+1})| {\mathscr F}_k] \notag\\
&\le h_3(X_k,M_k) \label{e:firststep}
\end{align}	
for $k = \sigma_{1,2}$.
On the other hand, 	
\begin{equation}\label{e:sprimek}
S'_k = h_2(X_k,M_k)\text{ for } k= \sigma_{1,2}.
\end{equation}
	The definitions of $c_3$, $h_2$ and $h_3$ in \eqref{d:c3},
\eqref{d:h2} and \eqref{e:funforstage3}, $\rho_2 < \rho_1$
and $c_0 < 0$ imply
\[
h_3(x,m) \le h_2(x,m)
\]
for $x\in \partial_2.$
This and \eqref{e:sprimek} imply	
\[
h_3(X_k,M_k) \le h_2(X_k,M_k) = S'_k
\]	
for $k =\sigma_{1,2}.$ The last display and \eqref{e:firststep}
imply	
\[
{\mathbb E}_{(x,m)}[S'_{k+1} | {\mathscr F}_k] \le S'_k,
\]	
i.e., $S'$ and $S$ are $(X,M)$-supermartingales for $k=\sigma_{1,2}$ as well.

It remains to prove 	
\begin{equation}\label{e:toprovepartial1}
{\mathbb E}_{(x,m)}[S_{k+1} | {\mathscr F}_k ] \le S_k,~ \text{when }X_k \in \partial_1.
\end{equation}
The cases to be treated here are:
$k=\sigma_1$, $\sigma_1 < k <  \sigma_{1,2}$ and $k > \sigma_{1,2}.$

For $k=\sigma_1$, we have $S'_k = h_1(X_k,M_k) = c_3 \rho_1^n{\bm d}_1(M_k)$
and $S'_{k+1} = h_2(X_{k+1},M_{k+1})$; these and $h_1 \ge  h_2$ on $\partial_1$ imply	
\begin{align}\label{e:argumentrho1}
&{\mathbb E}_{(x,m)} [S_{k+1}| {\mathscr F}_k] -S_k\\
&~~={\mathbb E}_{(x,m)} [c_3 h_{\rho_1}(T_n(X_{k+1}),M_{k+1})| {\mathscr F}_k] 
-
c_3 \rho_1^n {\bm d}_1(M_k)
-c_5 \rho_1^{n},\notag
\intertext{By (\ref{e:errortermforrho1}) and $\sigma_1 = k$, this equals}
&~~\le c_3 {\bm d}_1(M_k) \mu_1(M_k) (1-\rho_1)\rho_1^{n} -c_5 \rho_1^{n}.
\notag
\intertext{By the definition of $c_5$:}
&~~= \rho_1^n c_3(1-\rho_1) ( {\bm d}_1(M_k)\mu_1(M_k) - 
\max_{m\in {\mathcal M}} {\bm d}_1(m)\mu_1(m) )\le 0,\notag
\end{align}	
which proves \eqref{e:toprovepartial1} for $k=\sigma_1.$

For $\sigma_1 < k < \sigma_{1,2}$,~ $S_k' = h_2(X_k,M_k) = 
c_3 h_{\rho_1}( T_n(X_k),M_k)$; 
therefore the above argument applies
to this case as well (except for the last step which is not needed
here because $S_k'$ and $S'_{k+1}$ are defined by
applying the same function $h_2$ to $(X_{k+1},M_{k+1})$  and $(X_k,M_k)$).

Finally, to treat the case
$X_k \in \partial_1$ and $k > \sigma_{1,2}$ we start with
\begin{align*}
{\mathbb E}_{(x,m)}[ S_{k+1} | {\mathscr F}_k] - S_k &=
{\mathbb E}_{(x,m)}[ S'_{k+1} | {\mathscr F}_k] - S'_k -c_5 \rho_1^n, \\
\intertext{ $S_k' = h_3(X_k,M_k)$ for $k > \sigma_{1,2}.$
	Then by the definition of $h_3$:}
&=
{\mathbb E}_{(x,m)} 
[h_{\rho_2}(T_n(X_{k+1}),M_{k+1}) + 
c_1 h_{\rho_1}(T_n(X_{k+1}),M_{k+1}) | {\mathscr F}_k] \\
&~~~~~~~~~~~~- 
h_{\rho_2}(T_n(X_k),M_k) - c_1 h_{\rho_1}(T_n(X_k),M_k) 
-c_5 \rho_1^n, \\
&=
\left(
{\mathbb E}_{(x,m)} 
[h_{\rho_2}(T_n(X_{k+1}),M_{k+1})  |{\mathscr F}_k] -h_{\rho_2}(T_n(X_k),M_k) 
\right)\\
&+ 
{\mathbb E}[c_1 h_{\rho_1}(T_n(X_{k+1}),M_{k+1}) | {\mathscr F}_k] 
- c_1 h_{\rho_1}(T_n(X_k),M_k)  -c_5 \rho_1^n. 
\intertext{The $(X,M)$-superharmonicity of 
	$h_{\rho_2}(T_n(\cdot),\cdot)$  implies that
	the difference inside the parenthesis is negative, therefore:}
&\le
{\mathbb E}[c_1 h_{\rho_1}(T_n(X_{k+1}),M_{k+1}) | {\mathscr F}_k] 
- c_1 h_{\rho_1}(T_n(X_k),M_k)  -c_5 \rho_1^n. 
\intertext{
	Proposition \ref{p:hrho1harmwhere} (\eqref{e:errortermforrho1}) now gives}
&=
c_1{\bm d}_1(M_k)\mu_1(M_k) (1 - \rho_1) \rho_1^{n}  -c_5 \rho_1^n.
\intertext{By its definition \eqref{d:c4}, 
	$c_5 > c_1{\bm d}_1(m)\mu_1(m) (1 - \rho_1)$ for all $m \in {\mathcal M}$,
	which implies:}
&\le 0.
\end{align*}	
This proves \eqref{e:toprovepartial1} for $k >\sigma_{1,2}$
and completes the proof of this proposition.
\end{proof}

We are now ready to give a proof of Proposition \ref{p:upperboundforXrare}
for $\rho_1 > \rho_2$:

\begin{proof}[Proof of Proposition \ref{p:upperboundforXrare};
case $\rho_1 > \rho_2$]
By its definition (\ref{d:defrho}),
$\rho$ of \eqref{e:upperboundforXrare} equals $\rho_1$
for $\rho_1 > \rho_2$.
We begin by truncating time: \cite[Theorem A.2]{sezer2009importance}
implies that there exists $c_6 > 0$ and $N_0> 0$ such that
\[
{\mathbb P}_{(x,m)}( \tau_n \wedge \tau_0 > c_6 n) \le \rho_1^{2n},
\]
for $n > N_0.$ Then:
\begin{align}\label{e:truncatetime}
&{\mathbb P}_{(x,m)}( \sigma_1 < \sigma_{1,2} < \tau_n < \tau_0) \\
&~~~=\notag
{\mathbb P}_{(x,m)}( \sigma_1 < \sigma_{1,2} < \tau_n < \tau_0, \tau_n \wedge \tau_0 \le c_6 n)\\
&~~~+\notag {\mathbb P}_{(x,m)}( \sigma_1 < \sigma_{1,2} < \tau_n < \tau_0, \tau_n \wedge \tau_0 > c_6 n)\\
&~~~\le\notag
{\mathbb P}_{(x,m)}
( \sigma_1 < \sigma_{1,2} < \tau_n < \tau_0, \tau_n \wedge \tau_0 \le c_6 n) + \rho_1^{2n}
\end{align}
\nomenclature{$c_6$}{a constant used in proof of Proposition \ref{p:upperboundforXrare}}
for $n > N_0.$ Therefore, to prove \eqref{e:upperboundforXrare} it
suffices to bound the first term on the right side of the last inequality.
Now apply the optional sampling theorem to the supermartingale $S$
at the bounded stopping time $\tau = \tau_0 \wedge \tau_n \wedge c_6n$:	
\[
{\mathbb E}_{(x,m)}\left[ S_{\tau_0 \wedge \tau_n \wedge c_6 n}\right]
\le S_0 = c_4 \rho_1^n.
\]
By definition, $S_k = S_k' - c_5 k\rho_1^n$; substituting this
in the last display gives:
\begin{align*}
-c_5 c_6 n \rho_1^n + {\mathbb E}_{(x,m)}[ S_{\tau}'] &\le c_4 \rho_1^n\\
{\mathbb E}_{(x,m)}[ S_{\tau}'] &\le (c_4  + nc_5 c_6)\rho_1^n.
\intertext{By its definition, $S_k' > 0$, therefore restricting
	it to an event makes the last expectation smaller:}
{\mathbb E}_{(x,m)}[ S_\tau' 1_{\{\sigma_1 < \sigma_{1,2} < \tau_n < \tau_0 \le c_6n\}}] 
&\le(c_4 + nc_5c_6) \rho_1^n.
\end{align*}

On the set 
$\{\sigma_1 < \sigma_{1,2} < \tau_n < \tau_0 \le c_6n\}$,
we have $\tau= \tau_n$ and $S'_{\tau_n} = h_3(X_{\tau_n}, M_{\tau_n})$;
by definition $X_{\tau_n} \in \partial A_n.$
By definition of $h_3$ and by Proposition \ref{p:boundarylowerbound}
$h_3(x,m) \ge c_2 > 0$ for $x \in \partial A_n.$ These and the last
display imply
\[
c_2 {\mathbb P}_{(x,m)}(\sigma_1 < \sigma_{1,2} < \tau_n < \tau_0 \le c_6n) \le(c_4 + nc_5c_6) \rho_1^n.
\]	
Substitute this in \eqref{e:truncatetime} to get

\[
{\mathbb P}_{(x,m)}(\sigma_1 < \sigma_{1,2} < \tau_n < \tau_0 )
\le \rho_1^{n\left(1- \epsilon_n\right)}
\]
where
\[
\epsilon_n =\frac{1}{n}\log_{1/\rho_1}\left(\frac{c_4 + nc_5c_6}{c_2} \right);
\]

setting $n_0 \ge N_0$ so that $\epsilon_n < \epsilon$ for $n \geq n_0$
gives \eqref{e:upperboundforXrare}.	
\end{proof}

\subsection{$\rho_1 < \rho_2$}

The previous subsection gave a proof of
Proposition \ref{p:upperboundforXrare} for $\rho_2 < \rho_1.$
The only changes needed in this proof for
$\rho_1 < \rho_2$ concern the functions used in the definition
of the supermartingale $S$; the needed changes are:
\begin{enumerate}
\item Modify the function $h_2$ for the second stage,
\item The function $h_3$ is no longer superharmonic on $\partial_1$;
quantify how much it deviates from superharmonicity on $\partial_1$,
\item Modify the constants used in the definition of $S$ in accordance
with these changes.
\end{enumerate}

The next two propositions deal with the first two items above;
the definition of the supermartingale (taking also care of the
third item) is given after them.

The convexity of $q \mapsto -\log(\Lambda_1(e^q,e^q))$ and $\Lambda_1(\rho_1,\rho_1) =1$ imply
$\Lambda_1(\rho_2,\rho_2) < 1$ 
for $\rho_2 > \rho_1$.
Let ${\bm d}_{2}^+$ be a right eigenvector of ${\bm A}(\rho_2,\rho_2)$
with strictly positive entries.
\nomenclature{ ${\bm d}_{2}^+$ }{A right eigenvector of ${\bm A}(\rho_2,\rho_2)$ with strictly positive entries}
\begin{proposition}\label{p:guaranteed1}
The function
\[
f:(x,m) \mapsto [(\rho_2,\rho_2,{\bm d}_2^+), (T_n(x),m)]
\]
is superharmonic on ${\mathbb Z}_+^2 - \partial_1.$
On $\partial_1$ it satisfies
\begin{equation}\label{e:guaranteed1}
{\mathbb E}_{(x,m)} [f(X_1,M_1)] - f(x,m)
\le {\bm d}_2^+(m)\mu_1(m) (1 - \rho_2) \rho_2^{n}.
\end{equation}
\end{proposition}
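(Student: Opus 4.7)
The plan is to mimic the one-step calculation of Proposition \ref{p:interiorharm}, exploiting the fact that $(\rho_2,\rho_2)$ lies strictly \emph{inside} ${\mathcal L}_1$ (since $\rho_2>\rho_1$ and $H$ is convex, as noted just before the proposition), so that ${\bm A}(\rho_2,\rho_2){\bm d}_2^+=\Lambda_1(\rho_2,\rho_2){\bm d}_2^+$ with $\Lambda_1(\rho_2,\rho_2)<1$. This strict inequality converts the harmonicity identity of Proposition \ref{p:interiorharm} into strict superharmonicity on the interior, while on $\partial_1$ the replacement of ${\bm p}$ by the boundary polynomial ${\bm p}_1$ produces exactly the positive defect claimed in \eqref{e:guaranteed1}.

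First I would unwind the definitions: since $T_n(x)=(n-x(1),x(2))$,
\begin{equation*}
f(x,m)=\rho_2^{(n-x(1))-x(2)}\rho_2^{x(2)}{\bm d}_2^+(m)=\rho_2^{n-x(1)}{\bm d}_2^+(m),
\end{equation*}
so $f$ is independent of $x(2)$. For $x\in{\mathbb Z}_+^{2,o}$, the same expansion used in the proof of Proposition \ref{p:interiorharm}, together with the substitutions $(x+(1,0),x+(-1,1),x+(0,-1))\mapsto T_n(\cdot)$ producing factors $\rho_2^{-1},\rho_2,1$ respectively, yields
\begin{equation*}
{\mathbb E}_{(x,m)}[f(X_1,M_1)]=\rho_2^{n-x(1)}\bigl({\bm A}(\rho_2,\rho_2){\bm d}_2^+\bigr)(m)=\rho_2^{n-x(1)}\Lambda_1(\rho_2,\rho_2){\bm d}_2^+(m)\le f(x,m),
\end{equation*}
using ${\bm d}_2^+(m)>0$ and $\Lambda_1(\rho_2,\rho_2)<1$. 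For $x\in\partial_2\setminus\partial_1$ the $(0,-1)$ increment is blocked by $\pi$, but $f(x+(0,-1),m)=f(x,m)$ because $f$ does not depend on $x(2)$; hence the constraint has no effect on the one-step expectation and the above superharmonicity carries over verbatim to $\partial_2$.

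For $x\in\partial_1$ the $(-1,1)$ increment is blocked, so in the one-step expectation the term $\mu_1(m)\alpha$ of ${\bm p}(\rho_2,\rho_2,m)$ is replaced by $\mu_1(m)$, which is exactly what appears in ${\bm p}_1(\rho_2,\rho_2,m)=\lambda(m)/\rho_2+\mu_1(m)+\mu_2(m)$. Hence
\begin{equation*}
{\mathbb E}_{(x,m)}[f(X_1,M_1)]-f(x,m)=\rho_2^{n}\bigl(({\bm A}_1(\rho_2,\rho_2){\bm d}_2^+)(m)-{\bm d}_2^+(m)\bigr),
\end{equation*}
and decomposing
\begin{equation*}
{\bm A}_1(\rho_2,\rho_2){\bm d}_2^+-{\bm d}_2^+=\bigl({\bm A}_1(\rho_2,\rho_2)-{\bm A}(\rho_2,\rho_2)\bigr){\bm d}_2^++\bigl(\Lambda_1(\rho_2,\rho_2)-1\bigr){\bm d}_2^+,
\end{equation*}
the first summand has $m^{\text{th}}$ component ${\bm P}(m,m)\mu_1(m)(1-\rho_2){\bm d}_2^+(m)>0$ (the two matrices agree off the diagonal), while the second is nonpositive. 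Dropping the nonpositive second summand and bounding ${\bm P}(m,m)\le 1$ gives \eqref{e:guaranteed1}. I expect no genuine obstacle; the only subtle book-keeping is recognizing that $f$'s independence of $x(2)$ is what allows the interior calculation to extend unchanged to $\partial_2$, so that all nontriviality is concentrated on $\partial_1$.
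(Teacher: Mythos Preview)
Your proof is correct and follows essentially the same approach the paper indicates in its one-line sketch: the superharmonicity on ${\mathbb Z}_+^2-\partial_1$ comes from $\Lambda_1(\rho_2,\rho_2)<1$ together with ${\bm A}_2(\rho_2,\rho_2)={\bm A}(\rho_2,\rho_2)$ (your observation that $f$ is independent of $x(2)$ is exactly this matrix identity rewritten), and the $\partial_1$ defect is handled by comparing ${\bm A}_1$ with ${\bm A}$. Your derivation is in fact more careful than the paper's parallel displays \eqref{e:errortermforrho1} and \eqref{e:subharmonicconjugate}, which silently drop the factor ${\bm P}(m,m)\le 1$ that you make explicit when passing to the bound \eqref{e:guaranteed1}.
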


The proof is parallel to that of Proposition \ref{p:hrho1harmwhere}
and follows from $\Lambda_1(\rho_2,\rho_2)<1$,
${\bm A}_2(\rho_2,\rho_2)={\bm A}(\rho_2,\rho_2)$ and the definitions involved.

\begin{proposition}\label{p:guaranteed2}
Let $h_3$ be as in \eqref{e:funforstage3}; $h_3$ is $(X,M)$-superharmonic
on ${\mathbb Z}_+^2 -\partial_1$; on $\partial_1$ it satisfies	
\begin{equation}\label{e:hrho2onpartial1c2}
{\mathbb E}_{(x,m)}\left[ h_{3}(X_1,M_1) \right]
- h_{3}(x,m)
= c_0{\bm d}_{2,1}(m)\mu_1(m) (1 - \alpha^*_1) \rho_2^{n-x(2)} > 0.
\end{equation}	
\end{proposition}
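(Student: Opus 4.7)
The plan is to use the crucial simplification available in this subsection: since $\rho_1 < \rho_2$, Lemma \ref{l:orderofalphastar} gives $\alpha_1^* < \rho_2$, so Proposition \ref{p:boundarylowerbound} lets us take $c_1 = 0$ in \eqref{e:funforstage3}. Thus $h_3$ reduces to $h_{\rho_2}(T_n(\cdot), \cdot)$, and Proposition \ref{p:hrho2} gives $c_0 > 0$ in this regime. I would prove the two assertions separately: first the $(X,M)$-superharmonicity on ${\mathbb Z}_+^2 \setminus \partial_1$, then the correction identity on $\partial_1$.

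For the superharmonicity, the point is that the affine map $T_n(x) = (n - x(1), x(2))$ together with ${\bm I}_2$ intertwines $X$- and $Y$-increments at every unconstrained jump: whenever $X$ takes increment $v$, the image $T_n(X)$ takes ${\bm I}_2 v$, which is precisely a $Y$-increment. The set ${\mathbb Z}_+^2 \setminus \partial_1$ decomposes as the interior ${\mathbb Z}_+^{2,o}$ and the piece $\partial_2 \setminus \partial_1 = \{(k,0): k \geq 1\}$. On the interior, $T_n$ lands in ${\mathbb Z}\times{\mathbb Z}_+^o$ and neither process has a suppressed jump, so the $(Y,M)$-harmonicity of $h_{\rho_2}$ (Proposition \ref{p:interiorharm}) transfers directly to $(X,M)$-harmonicity of $h_3$. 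On $\{(k,0): k \geq 1\}$, a direct check shows that both $X$ and $Y$ suppress the $(0,-1)$ increment and agree on the other two, so the $(Y,M)$-superharmonicity of $h_{\rho_2}$ on $\partial_2$ (Proposition \ref{p:hrho2}) yields $(X,M)$-superharmonicity of $h_3$ there.

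For the identity on $\partial_1$, the only mismatch between the $X$-dynamics at $x = (0, x(2))$ and the $Y$-dynamics at $T_n(x) = (n, x(2))$ is that $X$'s increment $(-1, 1)$ is suppressed (since $(-1, x(2)+1) \notin {\mathbb Z}_+^2$) while the corresponding $Y$-increment $(1, 1)$ from $(n, x(2))$ is executed unconstrained. Isolating this single discrepancy gives
\[
{\mathbb E}_{(x,m)}[h_3(X_1,M_1)] - {\mathbb E}_{(T_n(x), m)}[h_{\rho_2}(Y_1, M_1)] = {\bm P}(m,m)\mu_1(m)\bigl(h_{\rho_2}(T_n(x), m) - h_{\rho_2}(T_n(x) + (1,1), m)\bigr).
\]
Expanding $h_{\rho_2}$ into its two building blocks shows the first (with $\alpha = 1$) is invariant under the shift by $(1,1)$, while the conjugate block contributes $c_0 \rho_2^{n-x(2)}(\alpha_1^*)^{x(2)}(1 - \alpha_1^*){\bm d}_{2,1}(m)$. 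For $x(2) > 0$ the right-hand $Y$-expectation equals $h_{\rho_2}(T_n(x), m) = h_3(x, m)$ by interior $(Y,M)$-harmonicity, and rearranging delivers the claimed formula; strict positivity follows from $c_0 > 0$, $\alpha_1^* \in (0,1)$, $\mu_1(m) > 0$ and ${\bm d}_{2,1}(m) > 0$.

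The delicate point will be the corner $x = (0,0) \in \partial_1 \cap \partial_2$, where $T_n(x) = (n, 0)$ sits on $\partial_2$ of $Y$: here $h_{\rho_2}$ is strictly $(Y,M)$-superharmonic rather than harmonic, and moreover $X$ suppresses both the $(-1,1)$ and $(0,-1)$ increments while $Y$ at $(n,0)$ suppresses only $(0,-1)$. Tracking both effects produces an extra strictly negative contribution from the boundary superharmonicity term of Proposition \ref{p:hrho2} at $(n,0)$. This only strengthens the superharmonicity claim on ${\mathbb Z}_+^2 \setminus \partial_1$ (which excludes $(0,0)$), and at the corner forces the stated identity to be read as ``$\leq$'' with a still strictly positive right-hand side — which is the form actually needed for the supermartingale construction in the sequel.
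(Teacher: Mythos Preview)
Your argument is correct and follows essentially the same route as the paper: reduce $h_3$ to $h_{\rho_2}(T_n(\cdot),\cdot)$ via $c_1=0$ (Lemma~\ref{l:orderofalphastar} and Proposition~\ref{p:boundarylowerbound}), transfer $(Y,M)$-superharmonicity of $h_{\rho_2}$ to $(X,M)$-superharmonicity of $h_3$ on ${\mathbb Z}_+^2-\partial_1$, and on $\partial_1$ use that the $[(\rho_2,1,{\bm d}_2),\cdot]$ block is $(X,M)$-harmonic (the paper cites Lemma~\ref{l:rho2harXM}; your observation that it is invariant under the $(1,1)$ shift is the same fact) while the conjugate block contributes the displayed error term via \eqref{e:subharmonicconjugate}.

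Two small remarks. First, your careful computation yields
\[
{\mathbb E}_{(x,m)}[h_3(X_1,M_1)]-h_3(x,m)=c_0\,{\bm P}(m,m)\,\mu_1(m)\,(1-\alpha_1^*)\,\rho_2^{\,n-x(2)}(\alpha_1^*)^{x(2)}\,{\bm d}_{2,1}(m),
\]
which carries extra factors ${\bm P}(m,m)$ and $(\alpha_1^*)^{x(2)}$ absent from the displayed \eqref{e:hrho2onpartial1c2} (and likewise from \eqref{e:subharmonicconjugate}). Your version is the correct one; the paper's display appears to drop these factors. This is harmless for the sequel, since only an upper bound of the form $C\rho_2^{\,n}$ is used in the definition of $c_5$. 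Second, your treatment of the corner $(0,0)$ is more careful than the paper's: you correctly note that there the equality weakens to an inequality with the same positive right-hand side, which is all that the supermartingale construction needs. The paper does not comment on this point.
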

\begin{proof}
Lemma \ref{l:orderofalphastar} and $\rho_2 > \rho_1$ imply
$\alpha^*_1  < \rho_2$; this and Proposition \ref{p:boundarylowerbound}
imply that $c_1$ in the definition of $h_3$ is $0$; i.e., 	
\[
h_3(x,m) = h_{\rho_2}(T_n(x),m) = 
\rho_2^{n-(x(1)+x(2))} 
\left( {\bm d}_2(m) + c_0 (\alpha^*_1)^{x(2)} {\bm d}_{2,1}(m) \right);
\]	
That $h_3$ is $(X,M)$-superharmonic on ${\mathbb Z}_+^2-\partial_1$
follows from the same property of $h_{\rho_2}$ (see
Proposition \ref{p:hrho2}). On the other hand, again by Proposition \ref{p:hrho2}, 
$\alpha^*_1 < \rho_2$ implies that $c_0$ in the
definition of $h_{\rho_2}$ satisfies $c_0 > 0$. 
By Lemma \ref{l:rho2harXM} $(x,m)\mapsto
[(\rho_2,1,{\bm d}_2),(T_n(x),m)]$
is $(X,M)$-harmonic on $\partial_1$; \eqref{e:hrho2onpartial1c2}
follows from these and \eqref{e:subharmonicconjugate}.
\end{proof}
$\rho_2 > \rho_1$ implies $\rho_2 > \alpha^*_1$ (Lemma \ref{l:orderofalphastar});
this and Proposition \ref{p:boundarylowerbound} imply $c_1 = 0$;
$\rho_2 > \alpha^*_1$ and Proposition \ref{p:hrho2c0} imply
$c_0 > 0$. That $c_0 > 0$ and $c_1 =0$ lead to the following
modifications in the definition of $S'$:
\begin{align*}
S'_k &\doteq \begin{cases} h_1,&k \le \sigma_1, \\
h_4(X_k,M_k), & \sigma_1 < k \le \sigma_{1,2},\\ 
h_3(X_k,M_k), & k > \sigma_{1,2},
\end{cases}\\
S_k  &\doteq S'_k - c_5 k\rho_2^n,
\end{align*}
where
\begin{align*}
c_3 &\doteq 
\frac{\max_{m \in {\mathcal M}} \left( {\bm d}_2(m)   +c_0 {\bm d}_{2,1}(m)\right)  }{\min_{m \in {\mathcal M}} {\bm d}_2^+(m)},\\
h_1 &\doteq  c_4\rho_2^n,~~~~ c_4\doteq c_3 \max_{m \in {\mathcal M}}{\bm d}_2^+(m),\notag \\
h_4 &\doteq c_3 [(\rho_2,\rho_2, {\bm d}_2^+), (T_n(\cdot),\cdot)] = c_3 \rho_2^{n-x(1)}{\bm d}_2^+(\cdot),\\
c_5 &\doteq 
c_3 (1 - \rho_2) \max_{m \in {\mathcal M}} {\bm d}_2^+(m)\mu_1(m)+
c_0 (1 - \alpha^*_1) \max_{m \in {\mathcal M}} {\bm d}_{2,1}(m)\mu_1(m).
\end{align*}
The modification in $c_3$ ensures $h_4 \ge h_3$ on $\partial_2$;
$c_0 > 0$ implies that $h_3$ is no longer superharmonic on
$\partial_1$; the second term in $c_5$ compensates for this.
\begin{proposition}
$S$ as defined above is a supermartingale for $\rho_2 > \rho_1.$
\end{proposition}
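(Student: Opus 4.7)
The plan is to verify the one-step supermartingale inequality $\mathbb{E}_{(x,m)}[S_{k+1}|\mathscr{F}_k]\le S_k$ by case analysis on where $X_k$ lies and which stage $k$ belongs to, following the template of Proposition \ref{p:supermartingalc1}. When $X_k\in \mathbb{Z}_+^2-\partial_1$ and $k\notin\{\sigma_1,\sigma_{1,2}\}$, each of $h_1$, $h_4$, $h_3$ is $(X,M)$-superharmonic on the appropriate region (trivially for the constant $h_1$, by Proposition \ref{p:guaranteed1} for $h_4$, and by Proposition \ref{p:guaranteed2} for $h_3$). This gives $\mathbb{E}_{(x,m)}[S'_{k+1}|\mathscr{F}_k]\le S'_k$ and hence $\mathbb{E}_{(x,m)}[S_{k+1}|\mathscr{F}_k]\le S_k - c_5\rho_2^n \le S_k$.

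At the stage transition $k=\sigma_1$ (with $X_k\in\partial_1$), the passage from $S'_k = h_1$ to $S'_{k+1}=h_4(X_{k+1},M_{k+1})$ would be handled by the pointwise bound $h_1\ge h_4$ on $\partial_1$: there $h_4(x,m)=c_3\rho_2^n {\bm d}_2^+(m)\le c_3\max_{m}{\bm d}_2^+(m)\rho_2^n = c_4\rho_2^n = h_1$. Combined with the $h_4$ deviation bound on $\partial_1$ from Proposition \ref{p:guaranteed1}, this excess is absorbed by the first summand of $c_5\rho_2^n$. At $k=\sigma_{1,2}$ (with $X_k\in\partial_2$, so $X_k(2)=0$) I would verify $h_3\le h_4$ on $\partial_2$: on this set $h_3(x,m)=\rho_2^{n-x(1)}({\bm d}_2(m)+c_0{\bm d}_{2,1}(m))$ while $h_4(x,m)=c_3\rho_2^{n-x(1)}{\bm d}_2^+(m)$, and the definition of $c_3$ guarantees $c_3{\bm d}_2^+(m)\ge {\bm d}_2(m)+c_0{\bm d}_{2,1}(m)$ for every $m$; together with the $(X,M)$-superharmonicity of $h_3$ on $\partial_2$ (Proposition \ref{p:guaranteed2}), this closes the transition step.

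The main obstacle is the remaining case $X_k\in\partial_1$ with $k>\sigma_1$, where neither $h_4$ nor $h_3$ is superharmonic and the positive deviations must be absorbed by the drift $-c_5\rho_2^n$. For $h_4$ the bound from Proposition \ref{p:guaranteed1} is absorbed by the first summand of $c_5$. For $h_3$ a direct computation parallel to \eqref{e:subharmonicconjugate} shows that the one-step deviation on $\partial_1$ is proportional to $c_0\rho_2^{n-X_k(2)}(\alpha_1^*)^{X_k(2)}{\bm d}_{2,1}(M_k)\mu_1(M_k)(1-\alpha_1^*)$; the critical observation specific to this regime is that Lemma \ref{l:orderofalphastar} together with $\rho_2>\rho_1$ gives $\alpha_1^*<\rho_2$, so $\rho_2^{n-X_k(2)}(\alpha_1^*)^{X_k(2)}=\rho_2^n(\alpha_1^*/\rho_2)^{X_k(2)}\le \rho_2^n$ uniformly in $X_k(2)$; the whole deviation is then dominated by $c_0(1-\alpha_1^*)\max_m {\bm d}_{2,1}(m)\mu_1(m)\rho_2^n$, which is precisely the second summand of $c_5$. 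Assembling all the cases yields $\mathbb{E}_{(x,m)}[S_{k+1}|\mathscr{F}_k]\le S_k$ at every $k$, establishing the supermartingale property. The step that requires the most care is exactly this uniform-in-$X_k(2)$ absorption of the $h_3$ deviation on $\partial_1$; it is feasible only because $\alpha_1^*<\rho_2$, and this is the structural reason why the two regimes $\rho_1>\rho_2$ and $\rho_2>\rho_1$ demand separate treatments.
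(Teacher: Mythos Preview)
Your proposal is correct and follows essentially the same route as the paper's proof, which is itself just a sketch pointing back to Proposition~\ref{p:supermartingalc1}: case analysis on the stage and the location of $X_k$, using $h_1\ge h_4$ on $\partial_1$, $h_4\ge h_3$ on $\partial_2$, the $(X,M)$-superharmonicity of $h_4$ and $h_3$ on ${\mathbb Z}_+^2-\partial_1$, and absorption of the $\partial_1$-defects by $-c_5\rho_2^n$. In fact you supply more detail than the paper does on the one genuinely delicate point, namely the $h_3$ deviation on $\partial_1$ for $k>\sigma_{1,2}$: you compute it as (a constant times) $c_0\rho_2^{n-X_k(2)}(\alpha_1^*)^{X_k(2)}{\bm d}_{2,1}(M_k)\mu_1(M_k)(1-\alpha_1^*)$ and then invoke $\alpha_1^*<\rho_2$ from Lemma~\ref{l:orderofalphastar} to bound $\rho_2^{n-X_k(2)}(\alpha_1^*)^{X_k(2)}\le\rho_2^n$ uniformly in $X_k(2)$, which is exactly what makes the second summand of $c_5$ suffice and is the structural reason the two regimes require separate arguments.
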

\begin{proof}
With the modifications made as above, the proof proceeds exactly
as in the case $\rho_1 > \rho_2$ (Proposition \ref{p:supermartingalc1})
and follow from the following facts:
$h_1 \ge h_4$ on $\partial _1$, $h_4\ge h_3$ on $\partial_2$
(these are guaranteed by the choices of the constants $c_4$, $c_3$);
$(X,M)$-superharmonicity of $h_4$ and $h_3$ on ${\mathbb Z}_+^2-\partial_1$
(guaranteed by Propositions \ref{p:guaranteed1} and \ref{p:guaranteed2}), the $-c_5k \rho_2^{n}$ term compensating
for the lack of $(X,M)$-superharmonicity of $h_3$ and 
$h_4$ on $\partial_1$ (guaranteed by \eqref{e:guaranteed1} and \eqref{e:hrho2onpartial1c2}
and the choice of the constant $c_5$).
\end{proof}

\begin{proof}[Proof of Proposition \ref{p:upperboundforXrare};
case $\rho_2 > \rho_1$]
With $S$ defined as above, the proof given for the case
$\rho_1 > \rho_2$ works without change.
\end{proof}

\section{Lower bound for ${\mathbb P}_{(x,m)}(\tau_n < \tau_0)$}\label{s:lb}

To get an upper bound on the relative error \eqref{e:approximationerror},
we need a lower bound on the probability ${\mathbb P}_{(x,m)}(\tau_n < \tau_0).$
We will get the desired bound by applying the optional sampling theorem,
this time to an $(X,M)$-submartingale. This we will do, following
\cite{unlu2018excessive}, by constructing
a suitable $(X,M)$-subharmonic function. As opposed to superharmonic
functions, subharmonic functions are simpler to construct.

\begin{proposition}
\begin{align}\label{e:funforlowerbound}
(x,m) &\mapsto 
[(\rho_2,1, {\bm d}_2), (T_n(x),m)] \vee
[(\rho_1,\rho_1, {\bm d}_1), (T_n(x),m)] \\
&= \rho_2^{n-(x(1)+x(2))} {\bm d}_2(m) \vee \rho_1^{n-x(1)} {\bm d}_1(m)
\notag
\end{align}	
is $(X,M)$-subharmonic on ${\mathbb Z}_+^2.$
\end{proposition}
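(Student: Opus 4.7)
The plan is to write the function as a pointwise maximum $f_1 \vee f_2$ with
\[
f_1(x,m) \doteq [(\rho_2,1,{\bm d}_2), (T_n(x),m)] = \rho_2^{n-x(1)-x(2)} {\bm d}_2(m),
\]
\[
f_2(x,m) \doteq [(\rho_1,\rho_1,{\bm d}_1), (T_n(x),m)] = \rho_1^{n-x(1)} {\bm d}_1(m),
\]
and to reduce the claim to showing that each $f_i$ is $(X,M)$-subharmonic on all of ${\mathbb Z}_+^2$. The reduction is the standard max-of-subharmonics argument: for any $(x,m)$ and either index $i=1,2$,
\[
{\mathbb E}_{(x,m)}\!\left[(f_1 \vee f_2)(X_1,M_1)\right] \ge {\mathbb E}_{(x,m)}\!\left[f_i(X_1,M_1)\right] \ge f_i(x,m),
\]
and taking the maximum on the right over $i$ yields subharmonicity of $f_1 \vee f_2$.

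For $f_1$, Lemma \ref{l:rho2harXM} already provides $(X,M)$-harmonicity on ${\mathbb Z}_+^2 - \partial_2$, so only $\partial_2$ requires attention. On $\partial_2$, the constrained dynamics replace the increment $(0,-1)$ by $(0,0)$; since $f_1(x+(0,-1),m) = \rho_2^{-1} f_1(x,m) \cdot \rho_2^2 = \rho_2 f_1(x,m)$, the $\mu_2$ contribution in the conditional expectation changes from $\mu_2(m)\rho_2 f_1(x,m)$ to $\mu_2(m) f_1(x,m)$. Because $\rho_2 \in (0,1)$, this is a strictly positive correction of ${\bm P}(m,m)\mu_2(m)(1-\rho_2) f_1(x,m)$, so $f_1$ is strictly subharmonic on $\partial_2$.

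For $f_2$, a direct computation parallel to the proof of Proposition \ref{p:interiorharm}, using $(\rho_1,\rho_1,{\bm d}_1) \in {\mathcal H}$, gives $(X,M)$-harmonicity on the interior ${\mathbb Z}_+^{2,o}$. Since $f_2$ does not depend on $x(2)$, the replacement of $(0,-1)$ by $(0,0)$ on $\partial_2$ leaves the conditional expectation unchanged, so $f_2$ remains harmonic on $\partial_2$. Finally, on $\partial_1$, Proposition \ref{p:hrho1harmwhere} yields strict subharmonicity of $f_2$ with positive deviation ${\bm d}_1(m)\mu_1(m)(1-\rho_1)\rho_1^n$. The three cases together cover ${\mathbb Z}_+^2$, so $f_2$ is $(X,M)$-subharmonic. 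No step in the argument is particularly delicate; the only real care needed is the sign bookkeeping on each constraining boundary, together with the observation that $f_2$'s independence of $x(2)$ automatically neutralizes the $\partial_2$ constraint and that the contribution of $f_1$'s $(0,-1)$-term flips from $\rho_2$ to $1$ on $\partial_2$, which is the ``correct'' direction for subharmonicity.
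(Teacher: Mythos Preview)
Your proof is correct and follows essentially the same approach as the paper: show that each of $f_1$ and $f_2$ is $(X,M)$-subharmonic on all of ${\mathbb Z}_+^2$ and then take the pointwise maximum. The paper cites Lemma~\ref{l:roleofC} for the $\partial_2$ correction to $f_1$ and Proposition~\ref{p:hrho1harmwhere} for $f_2$, while you carry out the $\partial_2$ computations directly; the content is the same. (Your intermediate expression $\rho_2^{-1} f_1(x,m)\cdot\rho_2^2$ is oddly written but yields the correct result $\rho_2 f_1(x,m)$.)
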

\begin{proof}
We know by Lemma \ref{l:roleofC} that
\begin{align*}
{\mathbb E}_{(x,m)} [(\rho_2,1, {\bm d}_2), (T_n(X_1),M_1)] 
&-
[(\rho_2,1, {\bm d}_2), (x,m)]\\ &= \rho_2^{n-x(1)}
{\bm P}(m,m) \mu_2(m) {\bm d}_2(m) (1-\rho_2) > 0,
\end{align*}
i.e, 
$(x,m) \mapsto [(\rho_2,1, {\bm d}_2), (x,m)]$ is $(X,M)$-subharmonic
on $\partial_2.$\\
That$(x,m) \mapsto [(\rho_2,1, {\bm d}_2), (T_n(x),m)]$ is 
$(X,M)$-subharmonic on ${\mathbb Z}_+^2-\partial_2$
follows from Lemma \ref{l:rho2harXM}. Then,
$(x,m) \mapsto [(\rho_2,1, {\bm d}_2), (x,m)]$ is $(X,M)$-subharmonic
on all of ${\mathbb Z}_+^2.$ 

Similarly, Proposition \ref{p:hrho1harmwhere} and \eqref{e:errortermforrho1}
imply that $(x,m) \mapsto [(\rho_1,\rho_1, {\bm d}_1), (x,m)]$
is $(X,M)$-subharmonic on all of ${\mathbb Z}_+^2$.

The maximum of two subharmonic functions is again subharmonic.
This and the above facts imply the $(X,M)$-subharmonicity of
\eqref{e:funforlowerbound}.
\end{proof}

\begin{proposition}\label{p:lowerbound}
\begin{align}\label{e:lowerbound}
&{\mathbb P}_{(x,m)}(\tau_n < \tau_0) \notag\\
&\ge
\left( \max_{m \in {\mathcal M}} ({\bm d}_2(m) \vee {\bm d}_1(m))\right)^{-1}
\\
&~~~~\times \left(
\rho_2^{n-(x(1)+x(2))} {\bm d}_2(m) \vee \rho_1^{n-x(1)} {\bm d}_1(m)\notag
-
\rho_2^{n} \max_{m \in {\mathcal M}}{\bm d}_2(m) 
\vee \rho_1^{n} \max_{m \in {\mathcal M}}{\bm d}_1(m)\right).
\end{align}
\end{proposition}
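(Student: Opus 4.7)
My plan is to apply the optional sampling theorem to the bounded $(X,M)$-submartingale obtained by evaluating the function
\[
h(x,m) \doteq \rho_2^{n-(x(1)+x(2))} {\bm d}_2(m) \vee \rho_1^{n-x(1)} {\bm d}_1(m)
\]
from the preceding proposition along the trajectory $(X,M)$ up to the stopping time $\tau_n \wedge \tau_0$. Since $h$ has already been shown to be $(X,M)$-subharmonic on all of ${\mathbb Z}_+^2$, the process $k \mapsto h(X_{k\wedge \tau_n \wedge \tau_0}, M_{k\wedge \tau_n \wedge \tau_0})$ is a submartingale. The key a priori observation is that for every $(x,m) \in A_n \times {\mathcal M}$, the fact $x(1)+x(2) \le n$ and $0<\rho_1,\rho_2<1$ gives the uniform upper bound
\[
h(x,m) \le \max_{m \in {\mathcal M}}({\bm d}_2(m) \vee {\bm d}_1(m)),
\]
so the submartingale is bounded, and bounded convergence lets us pass from bounded deterministic times to $\tau_n \wedge \tau_0$, which is finite almost surely by the stability of $(X,M)$.

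The optional sampling theorem then yields
\[
h(x,m) \le {\mathbb E}_{(x,m)}\bigl[h(X_{\tau_n\wedge\tau_0},M_{\tau_n\wedge\tau_0})\bigr] = {\mathbb E}_{(x,m)}\bigl[h(X_{\tau_n},M_{\tau_n})\mathbf 1_{\{\tau_n<\tau_0\}}\bigr] + {\mathbb E}_{(x,m)}\bigl[h(0,M_{\tau_0})\mathbf 1_{\{\tau_0<\tau_n\}}\bigr].
\]
On $\{\tau_n<\tau_0\}$ I use the global bound above, while on $\{\tau_0<\tau_n\}$ the state is $X_{\tau_0}=0$, so $h(0,M_{\tau_0})$ evaluates explicitly to $\rho_2^n{\bm d}_2(M_{\tau_0}) \vee \rho_1^n{\bm d}_1(M_{\tau_0})$, which is deterministically at most $\rho_2^n\max_m {\bm d}_2(m) \vee \rho_1^n\max_m {\bm d}_1(m)$. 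Substituting and rearranging isolates ${\mathbb P}_{(x,m)}(\tau_n<\tau_0)$ and produces exactly \eqref{e:lowerbound}.

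I do not expect a serious obstacle: the subharmonicity is already in hand from the previous proposition, and both the uniform bound and the explicit value of $h$ at the origin are immediate from the exponential form of $h$. The only point to be careful about is the justification for passing from finite truncations $k\wedge \tau_n\wedge\tau_0$ to $\tau_n\wedge\tau_0$ itself; this is standard once boundedness of $h$ on $A_n$ and almost-sure finiteness of $\tau_n\wedge\tau_0$ (from stability) are noted.
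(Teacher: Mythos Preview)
Your proposal is correct and follows essentially the same argument as the paper: apply optional sampling to the bounded submartingale $h(X_{k\wedge\tau_n\wedge\tau_0},M_{k\wedge\tau_n\wedge\tau_0})$, bound $h$ on $\partial A_n$ by $\max_m({\bm d}_2(m)\vee{\bm d}_1(m))$, and bound $h(0,\cdot)$ by $\rho_2^n\max_m{\bm d}_2(m)\vee\rho_1^n\max_m{\bm d}_1(m)$. Your justification for passing to the limit (boundedness on $A_n$ plus a.s.\ finiteness of $\tau_n\wedge\tau_0$ from stability) is exactly what the paper uses, stated a bit more carefully.
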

\begin{proof}
Set
\[
g(x,m) = \rho_2^{n-(x(1)+x(2))} {\bm d}_2(m) \vee \rho_1^{n-x(1)} {\bm d}_1(m);
\]	
by the previous proposition $g$ is $(X,M)$-subharmonic. By its definition,
$g$ is positive and bounded from above for $x\in {\mathbb Z}_+^2.$ It follows that
\[
s_k= g(X_{\tau_n \wedge \tau_0 \wedge k}, M_{\tau_n \wedge \tau_0 \wedge k})
\]
is a bounded positive submartingale.
By definition
\begin{equation}\label{e:identitysub}
\mathbb{E}[g(X_{\tau_n \wedge \tau_0 }, M_{\tau_n \wedge \tau_0} )]=\mathbb{E}[g(X_{\tau_n}, M_{\tau_n})1_{\{\tau_n<\tau_0\}}]+\mathbb{E}[g(X_{\tau_0}, M_{\tau_0})1_{\{\tau_0\le \tau_n\}}].
\end{equation}
That $X_{\tau_n} \in \partial A_n$ implies
$g(X_{\tau_n},M_{\tau_n}) = g(k,n-k)$ for some $k < n$;
then
\[
g(X_{\tau_n},M_{\tau_n}) \le \max_{m \in {\mathcal M }} ({\bm d}_2(m) \vee {\bm d}_1(m)).
\] This, \eqref{e:identitysub}
and the optional sampling theorem applied to $s$ at time $\tau_n\wedge \tau_0$ 
give
\[
{\mathbb P}_{(x,m)}(\tau_n<\tau_0)\left(
 \max_{m \in {\mathcal M}} ({\bm d}_2(m) \vee {\bm d}_1(m))\right)
+g(0, m){\mathbb P}_{(x,m)}(\tau_0\le \tau_n)
\ge g(x,m).
\]
${\mathbb P}_{(x,m)}(\tau_0\le\tau_n)\le 1$ implies
\[
\left( \max_{m \in {\mathcal M}} ({\bm d}_2(m) \vee {\bm d}_1(m))\right)
{\mathbb P}_{(x,m)}(\tau_n < \tau_0)
\geq g(x, m)-\max_{m \in {\mathcal M}}[g(0, m)];
\]
this and $\max \limits_{m \in {\mathcal M}}[g(0, m)]=\rho_2^{n} \max\limits_{m \in {\mathcal M}}{\bm d}_2(m) \vee \rho_1^{n} \max \limits_{m \in {\mathcal M}}{\bm d}_1(m)$ give \eqref{e:lowerbound}.
\end{proof}

\section{Completion of the limit analysis}\label{s:together}

This section puts together the results of the last two sections
to derive an exponentially decaying upper bound on the relative
error (\ref{e:approximationerror}). As in previous works
\cite{sezer2015exit, sezer2018approximation, unlu2018excessive}, this task is simplified if we express
the $Y$ process in the $x$ coordinates thus:
\[
\bar{X}_k \doteq T_n(Y_k);
\]
$\bar{X}$ has the same dynamics as $X$, except that it is not
constrained on $\partial_1.$
In this section we will set the initial condition using the scaled
coordinate $x \in {\mathbb R}_+^2$, $x(1) + x(2) < 1$,
the initial condition for the $X$ and $\bar{X}$
will be
\[
X_0 = \bar{X}_0 = \lfloor nx \rfloor.
\]
As in the non-modulated case, the following relation between
$\bar{X}$ and $X$ will be very useful:

\begin{lemma}\label{l:sumsequal}
Let $\sigma_{1,2}$ be as in \eqref{d:defsigma12}.
Then
\[
X_k(1) + X_k(2) = \bar{X}_k(1) + \bar{X}_k(2)
\]
for $ k \le \sigma_{1,2}.$
\end{lemma}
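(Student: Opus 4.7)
The plan is to prove the identity by induction on $k$, comparing the one-step sum increments of $X$ and $\bar{X}$ under each possible realization of $I_{k+1}$. The base case $k=0$ holds since $X_0=\bar{X}_0$. For the inductive step, I will use the fact that $\bar{X}_{k+1}-\bar{X}_k = \mathbf{I}_2\,\pi_1(Y_k,J_k)$ with $J_k=\mathbf{I}_2 I_k$ and $\mathbf{I}_2^2={\bm I}$, so that an unblocked $\bar{X}$-increment equals the corresponding unconstrained $X$-increment. Case by case on $I_{k+1}$: the value $(0,0)$ gives sum change $0$ for both; $(1,0)$ is never blocked for either walk and gives sum change $+1$ for both; $(-1,1)$ is unblocked for $\bar{X}$ (since $Y$ is not constrained on the first coordinate) and may be blocked for $X$ only when $X_k\in\partial_1$, but in both blocked and unblocked situations the sum increment is $0$ (since $(-1,1)$ and $(0,0)$ both have zero coordinate sum). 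The only delicate case is $I_{k+1}=(0,-1)$, which is blocked for $X$ iff $X_k(2)=0$ and for $\bar{X}$ iff $\bar{X}_k(2)=0$; the two walks produce the same sum increment precisely when these two blocking conditions coincide.

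The main task, therefore, is to show that $X_k(2)=0 \iff \bar{X}_k(2)=0$ for $k<\sigma_{1,2}$. I would do this in two stages. First, for $k\le\sigma_1$, $X_k(1)>0$ by definition of $\sigma_1$, so $X$'s $(-1,1)$ jump is never blocked, and since $X$ and $\bar{X}$ start equal and experience identical blocking in every other case (in particular both block $(0,-1)$ exactly on $\partial_2$), a routine induction gives $X_k=\bar{X}_k$ coordinatewise for all $k\le\sigma_1$. In particular the sum identity holds up to and including $\sigma_1$.

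Second, for $\sigma_1\le k<\sigma_{1,2}$, the definition of $\sigma_{1,2}$ forces $X_k(2)>0$, so $X$'s $(0,-1)$ jumps are unblocked in this range. It remains to show $\bar{X}_k(2)>0$, and for this I would establish an auxiliary monotonicity claim: $\bar{X}_k(2)\ge X_k(2)$ for all $\sigma_1\le k\le\sigma_{1,2}$. This claim is proved by another induction whose base case is the equality at $\sigma_1$, and whose step uses the observation that $X_k(2)$ and $\bar{X}_k(2)$ evolve identically except when $I_{k+1}=(-1,1)$ is blocked for $X$ (i.e.\ $X_k(1)=0$), in which case $X_k(2)$ remains fixed while $\bar{X}_k(2)$ increases by $1$; hence the gap $\bar{X}_k(2)-X_k(2)$ is nondecreasing in this range. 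Combining $\bar{X}_k(2)\ge X_k(2)>0$ with the case analysis of the previous paragraph shows that the sum increments of $X$ and $\bar{X}$ agree at every step $k+1\le\sigma_{1,2}$, closing the induction.

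The main obstacle is precisely the coupling of the two blocking events for the $(0,-1)$ increment after $\sigma_1$, when the two walks may have diverged in position; the auxiliary inequality $\bar{X}_k(2)\ge X_k(2)$ is the cleanest way to handle it, since beyond $\sigma_1$ the two processes are no longer pathwise equal and a more naive ``$X=\bar{X}$'' argument cannot be used.
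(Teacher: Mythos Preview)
Your argument is correct. The paper does not actually prove this lemma; it simply cites \cite[Proposition 7.2]{sezer2015exit} and observes that the proof carries over verbatim because the modulating chain plays no role. Your case-by-case coupling is exactly the standard argument one expects that reference to contain: the sum increment is unaffected by the $\partial_1$-constraint (the $(-1,1)$ move has zero coordinate sum whether blocked or not), so the only possible divergence is in the blocking of $(0,-1)$ on $\partial_2$, and your two-stage induction (pathwise equality up to $\sigma_1$, then $\bar X_k(2)\ge X_k(2)>0$ on $[\sigma_1,\sigma_{1,2})$) handles that. One small point of presentation: in the auxiliary induction on $\bar X_k(2)\ge X_k(2)$, when $I_{k+1}=(0,-1)$ you need $\bar X_k(2)>0$ to conclude the gap is preserved, and this comes from the inductive hypothesis itself ($\bar X_k(2)\ge X_k(2)>0$); your phrasing ``evolve identically except when $(-1,1)$ is blocked'' slightly hides this dependence, but the induction is set up correctly and the logic is sound.
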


This lemma is the analog of \cite[Proposition 7.2]{sezer2015exit},
which expresses the same fact for the non-modulated two dimensional
tandem walk; the proof is unchanged because it
does not depend on the modulating process. 
Example sample paths of
$X$ and $\bar{X}$ up to time $\sigma_{1,2}$ demonstrating Lemma 
\ref{l:sumsequal} are shown in Figure \ref{f:exp}.

\begin{figure}[h]
\begin{center}
\scalebox{1}{
\centerline{\input{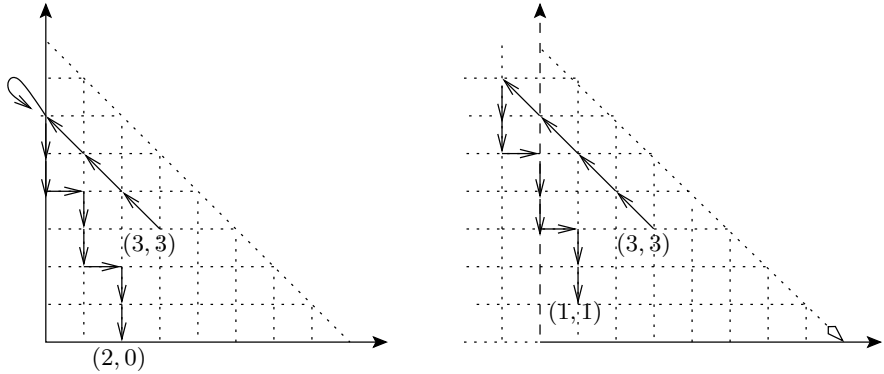}}}
\end{center}

\vspace{-0.65cm}
\caption{\hspace{0.25cm}A sample path of $X_k$(left) and $\bar{X}_k$(right) \label{f:exp}}
\end{figure}

Define 
\begin{align*}
\bar{\tau}_n &\doteq \inf\{k > 0: \bar{X}_k \in \partial A_n\},\\
\bar{\sigma}_{1,2} &\doteq \inf\{k > 0: \bar{X}_k(1) + \bar{X}_k(2) = 0,~
k \ge \sigma_1\}.
\end{align*}

$X$ and $\bar{X}$ have identical dynamics upto time
$\sigma_1$;
$\bar{\sigma}_{1,2}$ is the first time after ($\sigma_1$, i.e.,
the first time $X$ and $\bar{X}$ hit $\partial_1$) that the sum
of the components of $\bar{X}$ equals $0$. 
By the definitions of $\bar{X}$ and $Y$, $\bar{\tau}_n = \tau.$

What follows is 
an upper bound similar to \eqref{e:upperboundforXrare} for the $\bar{X}$
process. This is a generalization of \cite[Proposition 7.5]{sezer2015exit} to the present setup:

\begin{proposition}\label{p:upperboundforX}
For any $\epsilon > 0$ there exists $n_0 > 0 $ such that
\begin{equation}\label{e:upperboundforX}
{\mathbb P}_{(x,m)}( \sigma_1 < \sigma_{1,2} < \bar{\tau}_n < \infty)
\le \rho^{n(1-\epsilon)}
\end{equation}
for $n > n_0$ and $(x,m) \in A_n$.
\end{proposition}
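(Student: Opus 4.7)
The plan is to mirror the three-stage supermartingale argument of Proposition \ref{p:upperboundforXrare}, but with the third stage now evaluated on the unconstrained process $\bar{X}$ rather than $X$. For the case $\rho_1>\rho_2$ I would define
\[
S'_k \doteq \begin{cases} h_1,& k\le\sigma_1,\\ h_2(X_k,M_k),& \sigma_1<k\le\sigma_{1,2},\\ h_3(\bar X_k,M_k),& k>\sigma_{1,2},\end{cases}\qquad S_k\doteq S'_k - c_5 k\rho^n,
\]
with $h_1,h_2,h_3,c_3,c_4,c_5$ exactly as in \eqref{d:c3}--\eqref{d:c4}; for $\rho_2>\rho_1$ I would swap $h_2$ for $h_4$ as in the second subsection of Section \ref{s:ub2}. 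Two observations motivate this choice. First, because $\bar{X}=T_n(Y)$ inherits the $(Y,M)$-dynamics and has no $\partial_1$ constraint, $h_3$ is automatically $(\bar X,M)$-superharmonic on all of $\mathbb{Z}\times\mathbb{Z}_+$ (from the $(Y,M)$-(super)harmonicity of $h_{\rho_1},h_{\rho_2}$ together with $T_n^2=\mathrm{id}$), so no additional compensator is needed on stage 3. Second, stages 1--2 live on $X$, so their supermartingale verification and the compensator $-c_5k\rho^n$ (which accounts for the subharmonicity of $h_2$ on $\partial_1$) transfer verbatim from the proof of Proposition \ref{p:supermartingalc1}.

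The genuinely new step is the transition inequality $h_3(\bar X_{\sigma_{1,2}},m)\le h_2(X_{\sigma_{1,2}},m)$ at $k=\sigma_{1,2}$, and this is where Lemma \ref{l:sumsequal} is decisive: it gives $\bar X_{\sigma_{1,2}}(1)+\bar X_{\sigma_{1,2}}(2)=X_{\sigma_{1,2}}(1)$, and combining with $\bar X_{\sigma_{1,2}}(2)\ge 0$ yields $\bar X_{\sigma_{1,2}}(1)\le X_{\sigma_{1,2}}(1)$. Using $\rho_1,\rho_2,\alpha^*_1\in(0,1)$ and the sign $c_0<0$ (for $\rho_1>\rho_2$, so that $c_0(\alpha^*_1)^{\bar X_{\sigma_{1,2}}(2)}{\bm d}_{2,1}(m)\le 0$), each summand of $h_3(\bar X_{\sigma_{1,2}},m)$ can then be bounded above by the value the corresponding summand would take at $X_{\sigma_{1,2}}$ on $\partial_2$; that value is in turn dominated by $h_2(X_{\sigma_{1,2}},m)$ through the defining choice of $c_3$, exactly as in Proposition \ref{p:supermartingalc1}. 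The symmetric case $\rho_2>\rho_1$ is handled by the same argument with the sign adjustments of the second subsection of Section \ref{s:ub2} and ${\bm d}_2^+$ in place of ${\bm d}_1$.

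With the supermartingale verified, I would conclude as in Proposition \ref{p:upperboundforXrare}. Applying optional sampling to $S$ at $\bar\tau_n\wedge c_6 n$ and using $h_3(\bar X_{\bar\tau_n},M_{\bar\tau_n})\ge c_2>0$ on $\partial A_n$ (Proposition \ref{p:boundarylowerbound}) would give ${\mathbb P}(\sigma_1<\sigma_{1,2}<\bar\tau_n\le c_6 n)\le(c_4+c_5c_6 n)\rho^n/c_2$. For the residual tail $\{\bar\tau_n>c_6 n,\sigma_1<\sigma_{1,2}<\bar\tau_n<\infty\}$ I would condition on $\mathscr F_{\sigma_{1,2}}$ and use Proposition \ref{p:upperfortau} to get ${\mathbb P}_{(\bar X_{\sigma_{1,2}},M_{\sigma_{1,2}})}(\bar\tau_n<\infty)\le(1/c_2)h_3(\bar X_{\sigma_{1,2}},M_{\sigma_{1,2}})$, then apply the transition inequality and a two-stage version of the same supermartingale on $X$ stopped at $\sigma_{1,2}\wedge c_6 n$; the remaining tail $\{\sigma_{1,2}>c_6 n\}$ is absorbed via $\sigma_{1,2}\le\tau_0$ together with \cite[Theorem A.2]{sezer2009importance}. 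Taking $n_0$ large enough to absorb the polynomial factor $c_4+c_5c_6 n$ into $\rho^{-\epsilon n}$ then yields $\rho^{n(1-\epsilon)}$.

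The main obstacle will be the careful bookkeeping at the stage-2-to-stage-3 transition: keeping track of whether each stage's function is evaluated on $X$ or $\bar X$, and verifying $h_3(\bar X_{\sigma_{1,2}})\le h_2(X_{\sigma_{1,2}})$ separately in the two sign cases for $c_0$. Once that inequality is in hand, every other ingredient is a direct transcription of arguments already developed in Section \ref{s:ub2}.
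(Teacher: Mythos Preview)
Your approach is genuinely different from the paper's and, while it can be made to work, it is considerably more delicate. The paper does \emph{not} build a new mixed $X/\bar X$ supermartingale at all. Instead it splits
\[
\{\sigma_1<\sigma_{1,2}<\bar\tau_n<\infty\}
=\{\sigma_1<\sigma_{1,2}<\bar\tau_n<\bar\sigma_{1,2}<\infty\}\cup\{\sigma_1<\sigma_{1,2}<\bar\sigma_{1,2}<\bar\tau_n<\infty\},
\]
where $\bar\sigma_{1,2}$ is the first time after $\sigma_1$ that $\bar X(1)+\bar X(2)=0$. For the first piece, Lemma \ref{l:sumsequal} gives the pathwise inclusion $\{\sigma_1<\sigma_{1,2}<\bar\tau_n<\bar\sigma_{1,2}\}\subset\{\sigma_1<\sigma_{1,2}<\tau_n<\tau_0\}$ (the constrained walk hits $\partial A_n$ no later and $0$ no sooner than $\bar X$ hits the corresponding level lines), so Proposition \ref{p:upperboundforXrare} applies directly. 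For the second piece, at time $\bar\sigma_{1,2}$ the process $\bar X$ sits on $\{x(1)+x(2)=0\}$, and the strong Markov property plus Proposition \ref{p:upperfortau} bound the probability of subsequently reaching $\{x(1)+x(2)=n\}$ by $C\rho^n$. No new supermartingale, no time truncation, no transition inequality.

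Your route requires (i) verifying the stage-$2$-to-stage-$3$ inequality $h_3(\bar X_{\sigma_{1,2}},m)\le h_2(X_{\sigma_{1,2}},m)$, which as you note relies on Lemma \ref{l:sumsequal} and the sign of $c_0$, but only holds on $\{\sigma_{1,2}<\bar\tau_n\}$ (outside that event $n-X_{\sigma_{1,2}}(1)$ can be negative, breaking the $\rho_2<\rho_1$ comparison), so you must work with the \emph{stopped} process throughout; (ii) checking $S'_k>0$ in stage $3$ on all of $B$, which is not immediate when $c_0<0$; and (iii) a tail argument for $\{\bar\tau_n>c_6 n\}$ that is sketchier than you suggest: your claim ``$\{\sigma_{1,2}>c_6 n\}$ is absorbed via $\sigma_{1,2}\le\tau_0$ together with \cite[Theorem A.2]{sezer2009importance}'' overlooks that Theorem A.2 bounds $\{\tau_n\wedge\tau_0>c_6 n\}$, not $\{\tau_0>c_6 n\}$, so an extra step (e.g.\ bounding $\{\tau_n<\tau_0\}$) is needed. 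None of these is fatal, but together they make your argument longer and more fragile than the paper's two-line reduction to already-proved results.
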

\begin{proof}
As in \cite[Proposition 7.5]{sezer2015exit} we partition the event
$\{\sigma_1 < \sigma_{1,2} < \bar{\tau}_n < \infty\}$ into whether
$\bar{X}$ hits $\partial A_n$ before or after it hits 
$\{x \in {\mathbb Z}\times {\mathbb Z}_+: x(1) + x(2) = 0\}$:
\begin{align}\label{e:partbarsigma}
&{\mathbb P}_{(x,m)}( \sigma_1 < \sigma_{1,2} < \bar{\tau}_n < \infty)\\
&~~~=\notag
{\mathbb P}_{(x,m)}( \sigma_1 < \sigma_{1,2} < \bar{\tau}_n < \bar{\sigma}_{1,2} < \infty)+
{\mathbb P}_{(x,m)}( \sigma_1 < \sigma_{1,2} < \bar{\sigma}_{1,2} < \bar{\tau}_n < \infty)
\end{align}
Lemma \ref{l:sumsequal} implies
\[
\bar{X}_{\sigma_{1,2}}(1) +\bar{X}_{\sigma_{1,2}}(2)
=
X_{\sigma_{1,2}}(1) +X_{\sigma_{1,2}}(2)
\]
i.e., at time $\sigma_{1,2}$, $X$ and $\bar{X}$ will be on the same line 
$\{x \in {\mathbb Z} \times {\mathbb Z}_+: x(1) + x(2) = k \}$
for some $k \in \{1,2,...,n-1\}$.  Then 
for 
$\omega \in \{\sigma_1 < \sigma_{1,2} < \bar{\tau}_n\}$
the fully constrained sample
path $X(\omega)$ cannot hit $0$ before the
path
$\bar{X}(\omega)$ hits $\{x \in {\mathbb Z} \times {\mathbb Z}_+: x(1) +x(2) = 0\}$ and it cannot hit $\partial A_n$ after $\bar{X}$ hits
$\{x \in {\mathbb Z} \times {\mathbb Z}_+: x(1) +x(2) = n\}$ (intuitively:
more constraints on $X$ push it faster to $\partial A_n$ and slower
to $0$ than less constraints do the process $\bar{X}$):
these give
\[
\{ \sigma_1 < \sigma_{1,2} < \bar{\tau}_n < \bar{\sigma}_{1,2} < \infty\}
\subset
\{\sigma_1 < \sigma_{1,2} < \tau_n < \tau_0 \};
\]
the bound \eqref{e:upperboundforXrare} on the probability of the last
event and \eqref{e:partbarsigma} imply that there exists $n_1 > 0$
such that
\begin{equation}\label{e:propstep2}
{\mathbb P}_{(x,m)}( \sigma_1 < \sigma_{1,2} < \bar{\tau}_n < \infty)
\le
\rho^{n(1-\epsilon/2)}
+
{\mathbb P}_{(x,m)}( \sigma_1 < \sigma_{1,2} < \bar{\sigma}_{1,2} < \bar{\tau}_n < \infty)
\end{equation}	
for $n > n_1.$

To bound the last probability we observe that
$\bar{X}_{\bar\sigma_{1,2}}$ lies on 
$\{x \in {\mathbb Z}\times{\mathbb Z}_+: x(1) +x(2) = 0\};$
by Proposition \ref{p:upperfortau},
starting from this line, the probability of $\bar{X}$ ever  hitting 
$\{x \in {\mathbb Z}\times{\mathbb Z}_+: x(1) +x(2) = n\}$
is bounded from above by	
\begin{align*}
\frac{1}{c_2} ( h_{\rho_2}((n-x(1),x(2)),m) &+ 
c_1 h_{\rho_1}((n-x(1),x(2)),m) ) \\&\le
\frac{1}{c_2} \left( \rho_2^{n} {\bm d}_2(m) + 
c_1 \rho_1^{n} {\bm d}_1(m) \right);
\end{align*}	
this and the strong Markov property of $\bar{X}$ give:	
\[
{\mathbb P}_{(x,m)}( \sigma_1 < \sigma_{1,2} < \bar{\sigma}_{1,2} < \bar{\tau}_n < \infty)
\le c_7 \rho^n
\]	
where $c_7$ is a constant depending on ${\bm d}_1$, ${\bm d}_2$, $c_1$
and $c_2.$
\nomenclature{$c_7$}{a constant depending on ${\bm d}_1$, ${\bm d}_2$, $c_1$
and $c_2.$}
Substituting this in \eqref{e:propstep2} gives	
\[
{\mathbb P}_{(x,m)}( \sigma_1 < \sigma_{1,2} < \bar{\tau}_n < \infty)
\le
\rho^{n(1-\epsilon/2)} + c_7 \rho^n,
\]
for $n > n_1.$ This implies the statement of the proposition.	
\end{proof}

Finally, we state and prove our main theorem:

\begin{theorem}\label{t:mainapprox}
For any
$x \in {\mathbb R}_+^2$, 
$x(1) + x(2) < 1$, 
and $m \in {\mathcal M}$
(if $\rho_1 > \rho_2$ and $x(2) < 1- \log(\rho_1)/\log(\rho_2)$
we also require $x(1) > 0$ )
there exists $c_8 > 0$ and $N > 0$ such that
\begin{equation}\label{e:relativeerror}
\frac{ |{\mathbb P}_{(x_n,m)}(\tau_n < \tau_0) - 
	{\mathbb P}_{(T_n(x_n),m)}(\tau <  \infty)|}{
	{\mathbb P}_{(x_n,m)}(\tau_n < \tau_0)} < \rho^{c_8 n}
\end{equation}

for $n > N$, where $x_n = \lfloor n x \rfloor$.
\end{theorem}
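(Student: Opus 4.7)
The plan is to control the numerator and the denominator of the relative error in \eqref{e:relativeerror} separately, combining the upper bounds from Sections \ref{s:ub1}--\ref{s:ub2} with the lower bound from Section \ref{s:lb}. The essential coupling is to drive $X$ and $\bar X = T_n(Y)$ by the same increments from the common starting point $x_n = \lfloor nx \rfloor$; the two processes then agree up to time $\sigma_1$, and it is on this coupled probability space that I will compare the events $\{\tau_n < \tau_0\}$ and $\{\bar\tau_n < \infty\} = \{\tau<\infty\}$.

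The first step is to prove the set inclusions
\[
\{\tau_n < \tau_0\} \setminus \{\bar\tau_n < \infty\} \subset \{\sigma_1 < \sigma_{1,2} < \tau_n < \tau_0\},
\qquad
\{\bar\tau_n < \infty\} \setminus \{\tau_n < \tau_0\} \subset \{\sigma_1 < \sigma_{1,2} < \bar\tau_n < \infty\}.
\]
On $\{\tau_n < \sigma_1\}$ the two processes coincide up to $\tau_n$, so both events hold; on $\{\sigma_1 \le \tau_n < \sigma_{1,2}\} \cap \{\tau_n < \tau_0\}$ Lemma \ref{l:sumsequal} gives $\bar X_{\tau_n}(1) + \bar X_{\tau_n}(2) = X_{\tau_n}(1) + X_{\tau_n}(2) = n$, forcing $\bar\tau_n \le \tau_n < \infty$; symmetrically, if $\sigma_1 \le \bar\tau_n < \sigma_{1,2}$ then the same lemma gives $\tau_n \le \bar\tau_n$. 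The strict inequality $\sigma_1 < \sigma_{1,2}$ is automatic, since the only increment that can first place $X$ on $\partial_1$ is $(-1,1)$, which simultaneously increases the second coordinate, so $X_{\sigma_1}(2) \ge 1$. Taking probabilities in the inclusions above and applying Proposition \ref{p:upperboundforXrare} to the first event and Proposition \ref{p:upperboundforX} to the second, I obtain for any $\epsilon > 0$ and $n$ large enough
\[
|{\mathbb P}_{(x_n,m)}(\tau_n<\tau_0) - {\mathbb P}_{(T_n(x_n),m)}(\tau<\infty)| \le 2\rho^{n(1-\epsilon)}.
\]

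For the denominator, I apply Proposition \ref{p:lowerbound} with $x_n = \lfloor nx\rfloor$. Since $x(1) + x(2) < 1$, the dominant term in \eqref{e:lowerbound} is $\rho_2^{n(1-x(1)-x(2))} \vee \rho_1^{n(1-x(1))}$ up to a positive constant depending on ${\bm d}_1,{\bm d}_2,m$, while the subtracted $\rho_i^n$ terms are of strictly smaller exponential order and are absorbed into the constant for large $n$. This yields ${\mathbb P}_{(x_n,m)}(\tau_n<\tau_0) \ge c\, \rho^{n(1-\gamma(x))}$ for a constant $c > 0$ and an exponent $\gamma(x) \in (0,1)$ which is $1-x(1)$ when $\rho = \rho_1$ and $1-x(1)-x(2)$ when $\rho = \rho_2$; in either case $\gamma(x) \le 1 - x(1) < 1$ thanks to $x(1) > 0$.

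Combining the two bounds, the relative error is dominated by $(2/c)\,\rho^{n(1-\epsilon) - n(1-\gamma(x))} = (2/c)\,\rho^{n(\gamma^{-}-\epsilon)}$ where $\gamma^{-} = \min(x(1), x(1)+x(2)) = x(1) > 0$. Choosing $\epsilon = x(1)/2$ gives exponential decay with $c_8 = x(1)/2$ (after absorbing the prefactor into a slightly smaller $c_8$ for sufficiently large $N$). The main technical obstacle is the set inclusion in the first step: it requires the small but careful case analysis of the stopping times $\sigma_1, \sigma_{1,2}, \tau_n, \bar\tau_n$ on each sample path and the crucial application of Lemma \ref{l:sumsequal} to transfer the exit event between $X$ and $\bar X$; once this inclusion is established, the rest is a mechanical combination of the already-proved upper and lower bounds together with a choice of $\epsilon$ and $c_8$ matching the decay rates.
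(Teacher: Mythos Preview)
Your proposal is correct and follows essentially the same route as the paper: bound the numerator via Propositions~\ref{p:upperboundforXrare} and~\ref{p:upperboundforX} after using Lemma~\ref{l:sumsequal} to reduce the difference of the two events to the ``$\sigma_1 < \sigma_{1,2} < \cdots$'' pieces, and bound the denominator via Proposition~\ref{p:lowerbound}. The paper carries out the first step by partitioning each of $\{\tau_n<\tau_0\}$ and $\{\tau<\infty\}$ into three pieces and showing the first two pieces have equal probability, whereas you phrase it as a symmetric-difference inclusion; these are equivalent formulations.

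Two minor points are worth cleaning up. First, your definition of $\gamma(x)$ is mis-stated: with the lower bound written as $c\,\rho^{\,n(1-\gamma(x))}$ you want $\gamma(x)=x(1)$ when $\rho=\rho_1$ and $\gamma(x)=x(1)+x(2)$ when $\rho=\rho_2$ (your subsequent computation of $\gamma^-$ shows this is what you intended). Second, your final choice $c_8=x(1)/2$ forces $x(1)>0$ in all cases, which is slightly stronger than what the theorem assumes: when $\rho_1>\rho_2$ and $x(2)\ge 1-\log\rho_1/\log\rho_2$ (or when $\rho_2>\rho_1$), the statement allows $x(1)=0$. To cover those cases you should use the $\rho_2^{\,n(1-x(1)-x(2))}$ term of the maximum in \eqref{e:lowerbound} and compare it against $\rho^{\,n(1-\epsilon)}$ directly, which gives a positive exponent without needing $x(1)>0$.
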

\begin{proof}
Proposition \ref{p:lowerbound}, the choice of $x$
(i.e.,  $x(1) + x(2) < 1$ and 
$x(1) > 0$
and furthermore
$x(2) < 1-\log(\rho_1)/\log(\rho_2)$)
when $\rho_1 > \rho_2$)
imply the lower bound 
\begin{equation}\label{e:boundondenom}
\mathbb{P}_{(x,m)}(\tau_n<\tau_0)\geq \rho^{n(1-2c_8)}
\end{equation} 
for some constant $1/2 > c_8 > 0$ depending on $x$.

By definition $\bar{X}$ hits $\partial A_n$ exactly when
$Y$ hits $\partial B$, i.e., $\bar{\tau}_n = \tau$; therefore,
$\mathbb{P}_{(x_n, m)}({\bar \tau}_n <\infty)={\mathbb P}_{(T_n(x_n),m)}(\tau <  \infty)$ and
\begin{align}\label{e:ratioproof}
&\frac{ |{\mathbb P}_{(x_n,m)}(\tau_n < \tau_0) - 
	{\mathbb P}_{(T_n(x_n),m)}(\tau <  \infty)|}{
	{\mathbb P}_{(x_n,m)}(\tau_n < \tau_0)} \\
&~~= \frac{|\mathbb{P}_{(x_n, m)}(\tau_n<\tau_0)-
\mathbb{P}_{(x_n, m)}({\bar \tau}_n <\infty)|}{\mathbb{P}_{(x_n, m)}(\tau_n<\tau_0)} \notag
\end{align}

We partition the probabilities of events 
$\{\tau_n < \tau_0\}$ and $\{ \tau <  \infty\}$ as follows	
\begin{align}\label{e:part1}
\mathbb{P}_{(x_n, m)}(\tau_n <\tau_0) &\notag = \mathbb{P}_{(x_n, m)}(\tau_n<\sigma_1<\tau_0) + \mathbb{P}_{(x_n, m)}(\sigma_1<\tau_n\leq \sigma_{1, 2}\wedge \tau_0)\\ &+ \mathbb{P}_{(x_n, m)}(\sigma_1<\sigma_{1, 2}<\tau_n<\tau_0)
\end{align} 
\begin{align}\label{e:part2}
\mathbb{P}_{(T_n(x_n), m)}(\tau<\infty) &\notag= \mathbb{P}_{(T_n(x_n), m)}(\tau<\sigma_1) +\mathbb{P}_{(T_n(x_n), m)}(\sigma_1<\tau\leq \sigma_{1, 2})\\ &+\mathbb{P}_{(T_n(x_n), m)}(\sigma_1<\sigma_{1, 2}< \tau<\infty).
\end{align}

Lemma \ref{l:sumsequal} says the processes $X$ and $\bar{X}$ move together until they hit $\partial_1$, so 
\begin{equation*}
\mathbb{P}_{(x_n, m)}(\tau_n<\sigma_1<\tau_0)=\mathbb{P}_{(T_n(x_n), m)}(\tau<\sigma_1).
\end{equation*}
After hitting $\partial_1$, the sum of the components of $X$ and $\bar{X}$ are still equal until one of the processes hits $\partial_2$. Lemma \ref{l:sumsequal} now gives
\begin{equation*}
\mathbb{P}_{(x_n, m)}(\sigma_1<\tau_n\leq \sigma_{1, 2}\wedge \tau_0)=\mathbb{P}_{(T_n(x_n), m)}(\sigma_1<\tau\leq \sigma_{1, 2}).
\end{equation*}
The last two equalities, Propositions \ref{p:upperboundforXrare}, \ref{p:upperboundforX}, and partitions \eqref{e:part1}, \eqref{e:part2} imply that there
exists $n_0 > 0$ such that
\begin{equation}\label{e:boundnumerator}
\mid \mathbb{P}_{(x_n, m)}(\sigma_1<\sigma_{1,2}<\tau_n<\tau_0)-\mathbb{P}_{(T_n(x_n), m)}(\sigma_1<\sigma_{1, 2}<\tau<\infty) 
\mid \leq {\rho}^{n(1-c_8)}
\end{equation}
for $n > n_0.$
Substituting the last bound and \eqref{e:boundondenom}  in
\eqref{e:ratioproof} gives \eqref{e:relativeerror}.
\end{proof}

\section{Computation of ${\mathbb P}( \tau < \infty)$}\label{s:computation}

Theorem \ref{t:mainapprox} tells us that ${\mathbb P}_{(T_n(x_n),m)}(\tau < \infty)$
approximates ${\mathbb P}_{(x_n,m)}(\tau_n < \tau_0)$ very well. In this section
we develop approximate formulas for ${\mathbb P}_{(y,m)}(\tau < \infty).$ 
Recall that a $(Y,M)$-harmonic function is said to be $\partial B$-determined
if it of the form
\[
(y,m) \mapsto {\mathbb E}_{(y,m)}[ f(Y_\tau,M_\tau) 1_{\{\tau<\infty\}}]
\]
for some function $f$. 
The function
\begin{equation}\label{e:tauyfun}
(y,m) \mapsto {\mathbb P}_{(y,m)}(\tau < \infty)
\end{equation}
is $(Y,M)$-harmonic with $f=1$ on $\partial B$. Furthermore,
by definition it is $\partial B$-determined, 
(for \eqref{e:tauyfun}, $f$ is 
the function taking the constant value $1$ on $\partial B$). Our approach
to the approximation of ${\mathbb P}_{(y,m)}(\tau < \infty)$
is based on the classical superposition principle: take linear
combinations of
the $(Y,M)$-harmonic functions 
identified in Propositions \ref{p:pointonboth} and \ref{p:harconjugate}
to approximate the value $1$ on $\partial B$
as closely as possible. We need our $(Y,M)$-harmonic functions
to be $\partial B$-determined; the next lemma
 identifies a simple
condition for functions of the form \eqref{e:linearhar} to be 
$\partial B$-determined. 

\begin{lemma}\label{l:partialBdetermined}
Suppose $(\beta,\alpha_j,d_j)$ are points on ${\mathcal H}$ and
suppose
\begin{equation}\label{e:linearhar}
h(y,m) = \sum_{j=1}^{k} b(j) [(\beta,\alpha_j,d_j),\cdot],
\end{equation}
$k \ge 1$,
is $(Y,M)$-harmonic. If $|\beta | < 1$ and $|\alpha_j|\le 1$ then
$h$ is $\partial B$-determined.
\end{lemma}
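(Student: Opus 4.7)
The plan is to apply the optional sampling theorem to the martingale $k \mapsto h(Y_k, M_k)$ stopped at $\tau \wedge k$ and then pass to the limit $k \to \infty$. Since $h$ is $(Y,M)$-harmonic by hypothesis, the identity
\[
h(y,m) = \mathbb{E}_{(y,m)}\bigl[ h(Y_{\tau\wedge k}, M_{\tau\wedge k}) \bigr]
\]
holds for each $k \ge 0$. Taking $k\to\infty$ with a suitable convergence argument should yield
\[
h(y,m) = \mathbb{E}_{(y,m)}\bigl[ h(Y_\tau, M_\tau) 1_{\{\tau < \infty\}} \bigr],
\]
which is exactly the $\partial B$-determinedness of $h$ with the choice $f = h|_{\partial B \times \mathcal{M}}$.

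The first step is to establish uniform boundedness of the stopped process so that optional sampling and bounded convergence apply. For $(y,m) \in B \times \mathcal{M}$ the coordinates satisfy $y(1) - y(2) \ge 0$ and $y(2) \ge 0$, so the hypotheses $|\beta| < 1$ and $|\alpha_j| \le 1$ give
\[
\bigl|[(\beta,\alpha_j,d_j),(y,m)]\bigr| = |\beta|^{y(1)-y(2)} |\alpha_j|^{y(2)} |d_j(m)| \le \max_{m' \in \mathcal{M}} |d_j(m')|,
\]
so $h$ is uniformly bounded by some finite constant $C$ on $B \times \mathcal{M}$. Since $Y_{\tau\wedge k} \in B$ for every $k$, the stopped martingale is bounded; it therefore converges almost surely and in $L^1$ to some limit $H_\infty$, and the optional sampling identity becomes $h(y,m) = \mathbb{E}_{(y,m)}[H_\infty]$. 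On $\{\tau < \infty\}$ the stopped process is constant from time $\tau$ onward, so $H_\infty = h(Y_\tau, M_\tau)$. What remains is to show $H_\infty = 0$ on $\{\tau = \infty\}$.

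This last step is the main obstacle. On $\{\tau = \infty\}$ we have $Y_k \in B \setminus \partial B$ and hence $Z_k := Y_k(1) - Y_k(2) \ge 1$ for every $k$; since $|h(Y_k, M_k)| \le C |\beta|^{Z_k}$ with $|\beta| < 1$, it suffices to prove $Z_k \to \infty$ almost surely on this event. I would establish this via the auxiliary function $h_0 := [(\rho_2, 1, \bm{d}_2), \cdot]$, which is bounded on $B$ (as $\rho_2 \in (0,1)$ and $y(1) - y(2) \ge 0$) and is $(Y,M)$-subharmonic on all of $\mathbb{Z}\times\mathbb{Z}_+$: Proposition \ref{p:interiorharm} gives harmonicity on the interior, and Lemma \ref{l:roleofC} gives
\[
{\mathbb E}_{(y,m)}[h_0(Y_1, M_1)] - h_0(y,m) = \rho_2^{y(1)} {\bm P}(m,m) \mu_2(m) {\bm d}_2(m)(1-\rho_2) > 0
\]
on $\partial_2$. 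The bounded submartingale $h_0(Y_{k\wedge\tau}, M_{k\wedge\tau}) = \rho_2^{Z_{k\wedge\tau}}{\bm d}_2(M_{k\wedge\tau})$ therefore converges almost surely to some $L$. Since $M$ is irreducible and aperiodic and each ${\bm d}_2(m) > 0$, the sequence ${\bm d}_2(M_k)$ visits each of the finitely many strictly positive values $\{ {\bm d}_2(m) : m \in \mathcal{M}\}$ infinitely often on $\{\tau = \infty\}$; the only way the product $\rho_2^{Z_k}{\bm d}_2(M_k)$ can converge to a single limit is if $L = 0$, which forces $\rho_2^{Z_k} \to 0$ and hence $Z_k \to \infty$. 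This yields $H_\infty = 0$ on $\{\tau = \infty\}$ and completes the proof.
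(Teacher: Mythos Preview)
Your overall strategy is sound and differs from the paper's: you pass to the limit directly in the bounded martingale $h(Y_{\tau\wedge k},M_{\tau\wedge k})$ and then argue that the limit vanishes on $\{\tau=\infty\}$ by showing $Z_k:=Y_k(1)-Y_k(2)\to\infty$ there. The paper instead introduces an auxiliary stopping time $\upsilon_n=\inf\{k:Z_k=n\}$, proves $\tau\wedge\upsilon_n<\infty$ a.s.\ by a crude path argument (a run of $n$ arrivals, probability at least $\min_m(\lambda(m)\bm P(m,m))^n>0$, drives $Z$ down to $0$), applies optional sampling at $\tau\wedge\upsilon_n$, and kills the $\upsilon_n$-term via $|h|\le c|\beta|^n$ on $\{Z=n\}$ before letting $n\to\infty$. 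That route never needs to analyse $\{\tau=\infty\}$ directly.

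The gap in your proof is the last inference. You assert that because ${\bm d}_2(M_k)$ visits each of its finitely many positive values infinitely often, the product $\rho_2^{Z_k}{\bm d}_2(M_k)$ can converge only to $0$. This fails precisely when ${\bm d}_2$ is a constant vector, and the paper does not exclude that case: ${\bm d}_2\propto\bm 1$ exactly when $\lambda(m)/\mu_2(m)$ is independent of $m$ (this is even the hypothesis of Proposition~\ref{p:whenasconj}(1)). In that situation your oscillation argument gives nothing, and the submartingale limit $L$ could a priori be positive. Even when the ${\bm d}_2(m)$ are not all equal, the deduction needs an extra observation you do not make explicit: when $M$ jumps, $Y$ (hence $Z$) stays put, so the product changes by the fixed factor ${\bm d}_2(m')/{\bm d}_2(m)$, and convergence to $L>0$ would force all such ratios to equal $1$. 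To close the gap you can either (i) add a separate treatment of the constant-${\bm d}_2$ case, using that $Z_k$ is integer-valued so $\rho_2^{Z_k}\to L/d>0$ forces $Z_k$ to be eventually constant, which has probability zero since at each step with $M_{k}=M_{k-1}$ there is positive probability $\lambda(M_k)$ that $Z$ decreases; or (ii) bypass the submartingale entirely and prove $Z_k\to\infty$ on $\{\tau=\infty\}$ by the same Borel--Cantelli reasoning the paper uses for $\upsilon_n$: whenever $Z_k\le n_0$ there is probability at least $\min_m(\lambda(m)\bm P(m,m))^{n_0}>0$ of hitting $\partial B$ within $n_0$ steps, so $\{Z_k\le n_0\text{ i.o.}\}\cap\{\tau=\infty\}$ is null for every $n_0$.
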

This generalizes \cite [Proposition 2.2, 4.10]{sezer2015exit} 
to the Markov modulated setup.
\begin{proof}

Define the region 
$U=\{y\in \mathbb{Z}\times\mathbb{Z}_+:0\le y(1)-y(2) \le n\}$
and the boundaries of $U$
$\partial U_1=\{y\in \mathbb{Z}\times\mathbb{Z}_+:y(1)-y(2)= n\}$ and
 $\partial U_2 =\partial B$. Define $\upsilon_n \doteq \inf \{k: Y_k \in \partial U_1\}.$
We make the following claim:
starting from a point $y \in U$,
$(Y,M)$ hits $\partial U_1 \cup \partial U_2$ in finite time, 
i.e., $\upsilon_n \wedge \tau < \infty$ almost surely. Let us first prove this claim.
For each modulating state $m$, 
the sample path of $(Y,M)$ consisting only of increments
$(0,-1)$ hits $\partial U_2$ in at most $n$ steps and the probability of this 
path is $(\lambda(m){\bm P}(m,m))^n.$ Then if we set
\[\varepsilon=\min_{m \in {\mathcal M}} 
(\lambda(m){\bm P}(m,m))^n
\]
we have
\[\mathbb{P}_{(y,m)}(\tau \wedge \upsilon_n \ge n) \le (1-\varepsilon).
\]
An iteration of this inequality and the Markov property of $(Y,M)$ give
\[\mathbb{P}_{(y,m)}(\tau \wedge \upsilon_n \ge kn) \le (1-\varepsilon)^k.
\]
Letting $k\rightarrow \infty$ gives
\begin{equation}\label{e:tauvnfinite}
\mathbb{P}_{(y,m)}(\tau \wedge \upsilon_n = \infty ) = 0.
\end{equation}

Definition \eqref{e:linearhar}  and $|\alpha_j| \le 1$, $|\beta| < 1$
imply that $h$ is bounded on $B$. This and that $h$ is $(Y,M)$-harmonic
imply that 
\[
S_k=h(Y_{\tau \wedge \upsilon_n \wedge k}, M_{\tau \wedge \upsilon_n \wedge k})
\]
is a bounded martingale. 
The optional sampling theorem applied to this martingale and 
\eqref{e:tauvnfinite} imply
\begin{align}\label{e:prelimitch34}
h(y,m)&=\mathbb{E}_{(y,m)}[h(Y_{\tau \wedge \upsilon_n}, M_{\tau \wedge \upsilon_n})]\\
&=\mathbb{E}_{(y,m)}[h(Y_{\tau}, M_{\tau})1_{\{\tau < \upsilon_n\}}]+ \mathbb{E}_{(y,m)}[h(Y_{\upsilon_n}, M_{\upsilon_n})1_{\{\upsilon_n\le \tau\}}].\notag
\end{align}
That  $|\alpha_j|\le 1$ implies 
$|h(Y_{\upsilon_n}, M_{\upsilon_n})|\le c \beta^n$ for
some constant $c > 0$. Therefore,
\[
\lim_{n\rightarrow\infty}
\mathbb{E}_{(y,m)}[h(Y_{\upsilon_n}, M_{\upsilon_n})1_{\{\upsilon_n\le \tau\}}]\le 
c\lim_{n\rightarrow\infty}\beta^n=0. 
\] 
The last expression, that $\lim_{n\rightarrow\infty} \upsilon_n=\infty$ 
and letting $n \rightarrow\infty$ in \eqref{e:prelimitch34}
imply 
\[
h(y,m)=\mathbb{E}_{(y,m)}[h(Y_{\tau}, M_{\tau})1_{\{\tau < \infty\}}],
\]
i.e, $h(y,m)$ is $\partial B$-determined. 
\end{proof}

The last lemma and $0 < \rho_1 < 1$ imply
\begin{lemma}\label{l:rho1pB}
$h_{\rho_1}$ is $\partial B$-determined.
\end{lemma}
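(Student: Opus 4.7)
The plan is to recognize $h_{\rho_1}$ as a degenerate (single-term) instance of the linear combination appearing in Lemma \ref{l:partialBdetermined} and invoke that lemma directly.

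First I would unwind the definition: by \eqref{d:hrho1} and \eqref{e:buildingblock1},
\[
h_{\rho_1}(y,m) = [(\rho_1,\rho_1,{\bm d}_1),(y,m)] = \rho_1^{y(1)-y(2)}\rho_1^{y(2)}{\bm d}_1(m) = \rho_1^{y(1)}{\bm d}_1(m).
\]
This has exactly the shape of \eqref{e:linearhar} with $k=1$, $b(1)=1$, $\beta = \rho_1$, $\alpha_1 = \rho_1$, and $d_1 = {\bm d}_1$. By Proposition \ref{p:hrho1}, this function is $(Y,M)$-harmonic, so the harmonicity hypothesis of Lemma \ref{l:partialBdetermined} is met.

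Next I would verify the size conditions $|\beta|<1$ and $|\alpha_1|\le 1$. Both reduce to $\rho_1 \in (0,1)$, which is part of the conclusion of Proposition \ref{p:point1}. Applying Lemma \ref{l:partialBdetermined} then yields
\[
h_{\rho_1}(y,m) = {\mathbb E}_{(y,m)}\bigl[ h_{\rho_1}(Y_\tau,M_\tau) 1_{\{\tau<\infty\}}\bigr],
\]
which is exactly the $\partial B$-determined form (with $f$ equal to the restriction of $h_{\rho_1}$ to $\partial B\times{\mathcal M}$).

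There is no real obstacle here: the only nontrivial content of the statement is packaged inside Lemma \ref{l:partialBdetermined}, and the current lemma is a direct specialization. The single check worth emphasizing in the write-up is that the one-term sum in \eqref{e:linearhar} still falls under the scope of Lemma \ref{l:partialBdetermined} (its hypothesis allows $k\ge 1$), so no additional linear-dependence condition on ${\bm c}$ is needed.
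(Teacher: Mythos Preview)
Your proposal is correct and matches the paper's own argument: the paper simply notes that the lemma follows from Lemma~\ref{l:partialBdetermined} together with $0<\rho_1<1$. Your write-up just makes explicit the identification of $h_{\rho_1}$ with the $k=1$ case of \eqref{e:linearhar} and the verification of the size hypotheses, which is exactly the intended specialization.
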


Recall that we have constructed a $(Y,M)$-superharmonic function, 
$ h_{\rho_2}$ from the roots $(\rho_2,1), (\rho_2,\alpha^*_1) \in {\mathcal H}^{\beta\alpha}.$
We would like to strengthen this to a $(Y,M)$-harmonic function. This
requires the use of further conjugate points of $(\rho_2,1)$ (in
addition to $(\rho_2,\alpha^*_1)$). The next lemma shows that
under Assumptions \ref{as:simpleeig} and \eqref{as:conj} 
we have sufficient number of  conjugate points of $(\rho_2,1)$ to work with:
\begin{lemma}\label{l:point2conjrest}
Let $(\rho_2,\alpha_1^*)$ be the point conjugate to $(\rho_2,1)$
identified in Proposition \ref{p:point2conj}.
Under Assumptions \ref{as:simpleeig} and \eqref{as:conj}, there exists
$|{\mathcal M}|-1$ additional
conjugate points $(\rho_2,\alpha^*_j)$, $j=2,3,...,|{\mathcal M}|$,
of $(\rho_2,1)$ with $ 0 < \alpha^*_j < \alpha^*_1.$
\end{lemma}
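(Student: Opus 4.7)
The plan is to apply the intermediate value theorem to $\alpha \mapsto \Lambda_j(\rho_2,\alpha)$ on the interval $(0,\alpha^*_1]$, separately for each $j \in \{2,3,\ldots,|\mathcal{M}|\}$. By Assumption \ref{as:simpleeig} together with Proposition \ref{p:eigfun}, the eigenvalues $\Lambda_j(\rho_2,\alpha)$ are continuous (in fact smooth) in $\alpha > 0$ and strictly ordered $\Lambda_1 > \Lambda_2 > \cdots > \Lambda_{|\mathcal{M}|}$.

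At the right endpoint, $(\rho_2,\alpha^*_1) \in \mathcal{L}_1$ by Proposition \ref{p:point2conj}, so $\Lambda_1(\rho_2,\alpha^*_1)=1$; strict ordering then gives $\Lambda_j(\rho_2,\alpha^*_1)<1$ for every $j\ge 2$. At the left end, as $\alpha \to 0^+$ the point $(\rho_2,\alpha)$ approaches $\partial\mathbb{R}_+^{2,o}$, and the same Gershgorin argument already invoked in the proof of Proposition \ref{p:componentsofH} applies: the diagonal entries ${\bm P}(m,m){\bm p}(\rho_2,\alpha,m)$ blow up via the $\mu_2(m)\rho_2/\alpha$ term while the off-diagonal row sums stay bounded by $1-{\bm P}(m,m)$, forcing every eigenvalue $\Lambda_j(\rho_2,\alpha)\to\infty$. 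By continuity and the intermediate value theorem, for each $j\ge 2$ there exists $\alpha^*_j \in (0,\alpha^*_1)$ with $\Lambda_j(\rho_2,\alpha^*_j)=1$, which by the definition $\mathcal{H}^{\beta\alpha}=\{{\bm p}=0\}$ means $(\rho_2,\alpha^*_j)\in\mathcal{H}^{\beta\alpha}$ and hence is conjugate to $(\rho_2,1)$.

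It remains to check that the $\alpha^*_j$ produced this way are pairwise distinct. If $\alpha^*_j = \alpha^*_{j'}$ for some $j\neq j'$ in $\{2,\ldots,|\mathcal{M}|\}$, then at the point $(\rho_2,\alpha^*_j)\in\mathbb{R}_+^{2,o}$ the matrix ${\bm A}(\rho_2,\alpha^*_j)$ would have $1$ as an eigenvalue of multiplicity at least two, contradicting Assumption \ref{as:simpleeig}. Thus the $|\mathcal{M}|-1$ values $\alpha^*_2,\ldots,\alpha^*_{|\mathcal{M}|}$ are distinct and all lie in $(0,\alpha^*_1)$, producing the required additional conjugate points.

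No step is really the main obstacle here since the key analytic facts (smooth strictly ordered eigenvalue branches under Assumption \ref{as:simpleeig}, and the boundary blow-up of $\Lambda_j$ via Gershgorin) are already in place from Propositions \ref{p:eigfun} and \ref{p:componentsofH}; the only point requiring care is invoking simplicity of eigenvalues to keep the roots separated, which is exactly what Assumption \ref{as:simpleeig} delivers.
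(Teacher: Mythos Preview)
Your argument is correct and follows essentially the same route as the paper's proof: use $\Lambda_1(\rho_2,\alpha_1^*)=1$ together with the strict ordering of eigenvalues to get $\Lambda_j(\rho_2,\alpha_1^*)<1$ for $j\ge 2$, invoke Gershgorin's theorem for the blow-up as $\alpha\to 0^+$, and apply the intermediate value theorem. Your additional check that the $\alpha_j^*$ are pairwise distinct (via Assumption~\ref{as:simpleeig}) is a detail the paper leaves implicit.
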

\nomenclature{$\alpha_j^*$}{the $\alpha$ component of $|{\mathcal M}|-1$ conjugate points of $(\rho_2,1)$}
\begin{proof}
We know that $\Lambda_1(\rho_2,\alpha_1^*) = 1$; 
then $\Lambda_j(\rho_2,\alpha_1^*) < 1$
for $j=2,3,...,|{\mathcal M}|.$ On the other hand, Gershgorin's Theorem
implies 
$\lim_{\alpha\rightarrow 0} \Lambda_j(\rho_2,\alpha)  = \infty.$
These and the continuity of $\Lambda_j$ 
imply the existence of $\alpha_j^* \in (0,\alpha_1^*)$ such that
$\Lambda_j(\rho_2,\alpha_j^*) =1.$
\end{proof}

To construct our $(Y,M)$-harmonic functions from the points identified
in the previous lemma we need the following assumption:
\begin{equation}\label{as:mu1neqmu2s2}
{\bm c}(\rho_2,1,{\bm d}_2) \in 
\text{Span}\left(
{\bm c}(\rho_2,\alpha^*_{j},{\bm d}_{2,j}), j=1,2,...,|{\mathcal M}|\right).
\end{equation}

\begin{remark}\label{r:mu1neqmu2s2}
By definition,
${\bm c}(\rho_2,\alpha_j, {\bm d}_{2,j}) = 0$ if
$\alpha_j = \rho_2$. Therefore,
only those $j$ satisfying $\alpha_{j}^* \neq \rho_2$ have a role
in determining 
$\text{Span}\left(
{\bm c}(\rho_2,\alpha^*_{j},{\bm d}_{2,j}), j=1,2,...,|{\mathcal M}|\right).$
In this sense, assumption \eqref{as:mu1neqmu2s2} can be seen
as an extension of 
\eqref{as:mu1neqmu2alt}
(or, equivalently, of \eqref{as:mu1neqmu2}).
\end{remark}

\begin{remark}\label{r:mu1neqmu2s2}
The linear independence of 
${\bm c}(\rho_2,\alpha_j^*, {\bm d}_{2,j})$, $j=1,2,...,|{\mathcal M}|$, is
sufficient for \eqref{as:mu1neqmu2s2} to hold. That 
${\bm c}(\beta,\beta,{\bm d}) = 0$ 
implies that $\rho_2\neq \alpha_j^*$ for all $j=1,2,..,|{\mathcal M}|$
is a necessary condition for this independence.
\end{remark}

Now on to the $(Y,M)$-harmonic function:

\begin{proposition}\label{p:hrho2c}
Let $(\rho_2,\alpha_j^*)$ be the conjugate points of $(\rho_2,1)$
identified in Proposition \ref{p:point2conj} and Lemma 
\ref{l:point2conjrest}. Under the additional assumption 
\eqref{as:mu1neqmu2s2}, one can find a vector 
${\bm b}_{2,1} \in {\mathbb R}^{m_1}$  such that
\begin{equation}\label{def:hrho2c}
{\mathfrak h}_{\rho_2} \doteq
[(\rho_2,1, {\bm d}_2),\cdot] + 
\sum_{j=1}^{|{\mathcal M}|} {\bm b}_{2,1}(j) [ (\rho_2, \alpha^*_{j}, {\bm d}_{2,j}),\cdot]
\end{equation}
is $(Y,M)$-harmonic and $\partial B$-determined.
\end{proposition}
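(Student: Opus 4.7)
The plan is to verify the two properties of ${\mathfrak h}_{\rho_2}$ separately: $(Y,M)$-harmonicity on all of ${\mathbb Z} \times {\mathbb Z}_+$, and being $\partial B$-determined. Interior harmonicity is automatic from the construction of the basis functions and linearity, so the only nontrivial step in establishing harmonicity is on $\partial_2$, where assumption \eqref{as:mu1neqmu2s2} will be used decisively. Once harmonicity is in hand, the $\partial B$-determined property will be extracted directly from Lemma \ref{l:partialBdetermined}.

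For interior harmonicity, I would first note that the points $(\rho_2,1,{\bm d}_2)$ and $(\rho_2,\alpha^*_j,{\bm d}_{2,j})$ for $j = 1,\ldots,|{\mathcal M}|$ all lie on ${\mathcal H}$ by Propositions \ref{p:point2}, \ref{p:point2conj}, and Lemma \ref{l:point2conjrest}. Hence each basis function is $(Y,M)$-harmonic on ${\mathbb Z} \times {\mathbb Z}_+^o$ by Proposition \ref{p:interiorharm}, and the same holds for any linear combination, irrespective of the choice of ${\bm b}_{2,1}$. For harmonicity on $\partial_2$, I would apply Lemma \ref{l:roleofC} termwise to the defining combination to obtain, for $y \in \partial_2$ and $m \in {\mathcal M}$,
\[
{\mathbb E}_{(y,m)}[{\mathfrak h}_{\rho_2}(Y_1,M_1)] - {\mathfrak h}_{\rho_2}(y,m) = \rho_2^{y(1)} \left( {\bm c}(\rho_2,1,{\bm d}_2)(m) + \sum_{j=1}^{|{\mathcal M}|} {\bm b}_{2,1}(j)\,{\bm c}(\rho_2,\alpha^*_j,{\bm d}_{2,j})(m) \right).
\]
Assumption \eqref{as:mu1neqmu2s2} says exactly that ${\bm c}(\rho_2,1,{\bm d}_2)$ lies in the span of the vectors ${\bm c}(\rho_2,\alpha^*_j,{\bm d}_{2,j})$, so I can pick ${\bm b}_{2,1}$ satisfying ${\bm c}(\rho_2,1,{\bm d}_2) = -\sum_j {\bm b}_{2,1}(j)\,{\bm c}(\rho_2,\alpha^*_j,{\bm d}_{2,j})$, which makes the above difference vanish for every $m$. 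This fixes ${\bm b}_{2,1}$ and yields harmonicity on $\partial_2$.

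Finally, with this choice of ${\bm b}_{2,1}$ I would invoke Lemma \ref{l:partialBdetermined} to conclude that ${\mathfrak h}_{\rho_2}$ is $\partial B$-determined. Its hypotheses demand a common $\beta$ with $|\beta| < 1$ and $|\alpha| \le 1$ at every point used in the linear combination. Here $\beta = \rho_2 \in (0,1)$ by Proposition \ref{p:point2}; one of the $\alpha$'s equals $1$ and the remaining values $\alpha^*_1,\ldots,\alpha^*_{|{\mathcal M}|}$ all lie in $(0,1)$ by Proposition \ref{p:point2conj} and Lemma \ref{l:point2conjrest}, so the hypotheses are met. No individual step looks like a genuine obstacle; the only delicate point is confirming that the spanning assumption \eqref{as:mu1neqmu2s2} produces a real solvable system, which is transparent because every $\alpha^*_j$ is real and each eigenvector ${\bm d}_{2,j}$ can be taken real, so ${\bm b}_{2,1}$ can indeed be chosen in ${\mathbb R}^{|{\mathcal M}|}$ as stated.
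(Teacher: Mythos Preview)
Your argument is correct and matches the paper's approach essentially step for step: the paper invokes Proposition~\ref{p:harconjugate} (whose proof is precisely the Lemma~\ref{l:roleofC} computation you wrote out) to obtain harmonicity from the spanning assumption \eqref{as:mu1neqmu2s2}, and then appeals to Lemma~\ref{l:partialBdetermined} using $\rho_2 \in (0,1)$ and $\alpha^*_j \in (0,1]$ exactly as you do. Your explicit remark that the ${\bm c}$-vectors and hence ${\bm b}_{2,1}$ are real is a small clarification the paper leaves implicit.
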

\nomenclature{${\mathfrak h}_{\rho_2}$}{$(Y, M)$-harmonic function
constructed from points on ${\mathcal H}$ with $\beta=\rho_2$}
\begin{proof}
Assumption (\ref{as:mu1neqmu2s2}) implies that the collection
of vectors \\${\bm c}(\rho_2,1,{\bm d}_2)$, 
${\bm c}(\rho_2,\alpha^*_{j}, {\bm d}_{2,j})$,
$j=1,2,...,|{\mathcal M}|$ are linearly dependent. 
Therefore, by Proposition \ref{p:harconjugate}, 
there exists a vector $b' \in {\mathbb R}^{|{\mathcal M}| + 1}$
such that
\[
b'(0) [(\rho_2, 1, {\bm d}_2), \cdot] + 
\sum_{k=1}^{n_1} b'(j) [ (\rho_2,\alpha^*_{j_k}, {\bm d}_{2,j_k}),\cdot]
\]
is $(Y,M)$-harmonic. Assumption \eqref{as:mu1neqmu2s2} implies that
one can choose $b'$ so that $b'(0) \neq 0.$
Renormalizing the last display by $b'(0)$ gives 
\eqref{def:hrho2c}.
That ${\mathfrak h}_{\rho_2}$ is $\partial B$-determined follows from $0 < \alpha^*_j \le 1$, $\rho_2 < 1$
and Lemma \ref{l:partialBdetermined}.
\end{proof}
Next proposition constructs an
approximation of ${\mathbb P}_{(y,m)}(\tau <\infty)$ with
bounded relative error from functions ${\mathfrak h}_{\rho_2}$ and
$h_{\rho_1}$.
\begin{proposition}\label{p:firstrelerror}
There exist constants $c_9$, $c_{10}$ and $c_{11}$ such that
\begin{equation}\label{e:uplowbound}
{\mathbb P}_{(y,m)} ( \tau < \infty)  < h^{a,0}(y,m) < c_9 {\mathbb P}_{(y,m)}( \tau < \infty)
\end{equation}
where
\begin{equation}\label{d:ha0}
h^{a,0} \doteq c_{11} ({\mathfrak h}_{\rho_2}  + c_{10} h_{\rho_1}).
\end{equation}	
\end{proposition}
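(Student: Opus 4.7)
The plan is to exploit two facts already established: both $h_{\rho_1}$ and $\mathfrak{h}_{\rho_2}$ are $(Y,M)$-harmonic and $\partial B$-determined (Lemmas \ref{l:rho1pB} and Proposition \ref{p:hrho2c}), and the $\partial B$-determined representation
\[
h(y,m) = {\mathbb E}_{(y,m)}[h(Y_\tau,M_\tau)\,1_{\{\tau<\infty\}}]
\]
translates any pointwise two-sided bound $1 \le h(y,m) \le c_9$ on $\partial B \times \mathcal{M}$ into the claimed sandwiching of ${\mathbb P}_{(y,m)}(\tau<\infty)$. Thus the whole task reduces to choosing $c_{10},c_{11}$ so that $h^{a,0}$ satisfies such bounds on the one-dimensional boundary set $\partial B = \{y\in{\mathbb Z}\times{\mathbb Z}_+ : y(1)=y(2)\}$.

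First I would evaluate the building blocks on $\partial B$. For $y \in \partial B$ we have $y(1)-y(2)=0$, so
\[
h_{\rho_1}(y,m) = \rho_1^{y(2)}{\bm d}_1(m),\qquad
\mathfrak{h}_{\rho_2}(y,m) = {\bm d}_2(m) + \sum_{j=1}^{|\mathcal{M}|}{\bm b}_{2,1}(j)(\alpha_j^*)^{y(2)}{\bm d}_{2,j}(m).
\]
Since $\alpha_j^* \in (0,1)$ for every $j$ (Proposition \ref{p:point2conj} and Lemma \ref{l:point2conjrest}), the sum vanishes as $y(2)\to\infty$, so $\mathfrak{h}_{\rho_2}(y,m) \to {\bm d}_2(m) \ge \min_m {\bm d}_2(m) > 0$. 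Hence there exists $k_0$ with $\mathfrak{h}_{\rho_2}(y,m) \ge \tfrac12\min_m {\bm d}_2(m)$ whenever $y(2)\ge k_0$. On the finite remaining strip $y(2) < k_0$, $\mathfrak{h}_{\rho_2}$ is bounded (possibly negative, since the ${\bm b}_{2,1}(j)$ need not be positive), while $h_{\rho_1}(y,m) = \rho_1^{y(2)}{\bm d}_1(m) \ge \rho_1^{k_0}\min_m {\bm d}_1(m) > 0$ there. This is exactly the situation of Proposition \ref{p:boundarylowerbound}: one can pick $c_{10} \ge 0$ large enough that
\[
\mathfrak{h}_{\rho_2}(y,m) + c_{10}\, h_{\rho_1}(y,m) \ge a > 0
\]
uniformly over $\partial B\times\mathcal{M}$, for some constant $a>0$.

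Once positivity on $\partial B$ is secured, the upper bound is automatic: both summands are bounded on $\partial B$ (each exponential factor stays in $(0,1]$), so
\[
b \doteq \sup_{\partial B\times\mathcal{M}} \bigl(\mathfrak{h}_{\rho_2}+c_{10}h_{\rho_1}\bigr) < \infty.
\]
Set $c_{11} \doteq 1/a$ and $c_9 \doteq b/a$. Then $1 \le h^{a,0} \le c_9$ on $\partial B\times\mathcal{M}$. Since $h^{a,0}$ is a linear combination of $\partial B$-determined functions and bounded on $B$ (using $\rho_1,\rho_2,\alpha_j^* \in (0,1)$), $h^{a,0}$ itself is $\partial B$-determined. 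Applying the representation
\[
h^{a,0}(y,m) = {\mathbb E}_{(y,m)}\bigl[h^{a,0}(Y_\tau,M_\tau)\,1_{\{\tau<\infty\}}\bigr]
\]
and substituting the two-sided bound inside the expectation produces \eqref{e:uplowbound}.

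I expect the only real obstacle to be the positivity step, precisely because the coefficients ${\bm b}_{2,1}(j)$ from Proposition \ref{p:hrho2c} come from solving a linear dependence relation and their signs are not controlled a priori, so $\mathfrak{h}_{\rho_2}$ may genuinely dip below zero for small $y(2)$. The fix is the same two-regime argument used for $h_{\rho_2}$ in Proposition \ref{p:boundarylowerbound}: use the exponential decay of the $\alpha_j^*$ terms to isolate a ``tail'' region where $\mathfrak{h}_{\rho_2}$ is already safely positive, and use the strictly positive $h_{\rho_1}$ (with a sufficiently large multiplier $c_{10}$) to lift the finite head region. Everything else is a direct application of the optional sampling / $\partial B$-determined framework already developed in Section \ref{s:ub1}.
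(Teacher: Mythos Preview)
Your proposal is correct and follows essentially the same approach as the paper's own proof: both use the decay $\alpha_j^* \in (0,1)$ to make $\mathfrak{h}_{\rho_2}$ positive on the tail of $\partial B$, lift the finite head region with a large enough multiple $c_{10}$ of the strictly positive $h_{\rho_1}$, normalize by the resulting lower bound to get $h^{a,0}\ge 1$ on $\partial B$, observe that all terms are bounded on $\partial B$ to obtain a finite $c_9$, and then invoke the $\partial B$-determined representation to transfer the two-sided boundary bounds into \eqref{e:uplowbound}. The only cosmetic difference is that the paper fixes the explicit lower bound $\tfrac12\min_m {\bm d}_2(m)$ (and hence $c_{11}$) rather than an abstract constant $a$.
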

\nomenclature{$h^{a,0}$}{an approximation of 
${\mathbb P}_{(y,m)}(\tau <\infty)$ constructed from
a linear combination
of ${\mathfrak h}_{\rho_2}$ and $h_{\rho_1}.$}
\begin{proof}
The proof is similar to that of Proposition \ref{p:boundarylowerbound}.
That $0 < \alpha^*_j < 1$, $j=2,3,...,|{\mathcal M}|$ imply that	 
\begin{equation}\label{e:term1asymptotics}
[(\rho_2, \alpha^*_j, {\bm d}_{2,j}), (k,k,m)] =  (\alpha^*_j)^k {\bm d}_{2,j}(m) \rightarrow 0
\end{equation}	
as $k\rightarrow \infty.$ We further have	
\begin{equation}\label{e:term2asymtotics}
[(\rho_2, 1, {\bm d}_{2}), (k,k,m)] =  {\bm d}_{2}(m) >0,
\end{equation}
for all $k \ge 0$.
This and \eqref{e:term1asymptotics} imply that there exists $k_0 > 0$ such that
\begin{equation}\label{e:afterk0}
{\mathfrak h}_{\rho_2}(k,k,m) > \min_{m \in {\mathcal M}}{\bm d}_2(m)/2
\end{equation}
for all $k > k_0.$
On the other hand, 
\begin{equation}\label{e:term3asymptotics}
h_{\rho_1}(k,k,m) = [(\rho_1, \rho_1, {\bm d}_{1}), (k,k,m)] =  {\bm d}_{1}(m)
\rho_1^k >0,
\end{equation}
for all $k$. Then we can choose $c_{10}> 0$ large enough so that
\begin{equation}\label{e:lowerboundmbhrho2}
{\mathfrak h}_{\rho_2}(k,k,m) + c_{10}  h_{\rho_1}(k,k,m) \ge 
\min_{m \in {\mathcal M}}{\bm d}_2(m)/2
\end{equation}
for all $k \le k_0.$ The last display, \eqref{e:afterk0} and the positivity of
$c_{10} h_{\rho_1}$ imply that the last display holds for all $k$ and $m \in {\mathcal M}.$
Set 
\[
c_{11} \doteq \left(\min_{m \in {\mathcal M}}{\bm d}_2(m)/2\right)^{-1},
\]
and $h^{a,0}$ be as in \eqref{d:ha0}.
That \eqref{e:lowerboundmbhrho2} holds for $k \ge 0$ and $m \in {\mathcal M}$ implies
\[
h^{a,0} |_{\partial B} \ge 1.
\]
By Lemma \ref{l:rho1pB} and Proposition \ref{p:hrho2c} $h^{a,0}$ is $(Y,M)$-harmonic and $\partial B$-determined.
This and the last display imply,	
\begin{equation}\label{e:prooffirstbound}
h^{a,0}(y,m) = {\mathbb E}_{(y,m)}[ h^{a,0}(Y_\tau, M_\tau) 1_{\{\tau < \infty \}}] \ge {\mathbb P}_{(y,m)}(\tau < \infty).
\end{equation}
This proves the first inequality in \eqref{e:uplowbound}. To choose $c_9$ so that the second inequality in
\eqref{e:uplowbound} holds we note the following:  \eqref{e:term1asymptotics}, \eqref{e:term2asymtotics}
and \eqref{e:term3asymptotics} imply 
\[
c_9 \doteq \max_{k \ge 0, m \in {\mathcal M}} h^{a,0}(k, k, m) < \infty.
\]

Now the same argument giving \eqref{e:prooffirstbound} implies the second inequality in \eqref{e:uplowbound}.
\end{proof}

\begin{proposition}\label{p:boundedrelerrapp1}
Fix $m \in {\mathcal M}$  and
$x \in {\mathbb R}_+^2$, such that $0 < x(1) + x(2) < 1$; furthermore
assume $x(1) > 0$ if $\rho_1 > \rho_2$ and $x(2) \le 1- \log(\rho_1)/\log(\rho_2)$;
set $x_n = \lfloor nx \rfloor.$ Then $h^{a,0}(T_n(x_n))$ approximates
${\mathbb P}_{(x_n,m)}(\tau_n < \tau_0)$ with relative error whose $\limsup$ in $n$ is bounded by $|c_9 -1|$.
\end{proposition}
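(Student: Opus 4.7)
The plan is to combine Theorem \ref{t:mainapprox} and Proposition \ref{p:firstrelerror} through a triangle inequality. Specifically, I would introduce the intermediate quantity ${\mathbb P}_{(T_n(x_n),m)}(\tau < \infty)$ and write
\begin{align*}
&|h^{a,0}(T_n(x_n),m) - {\mathbb P}_{(x_n,m)}(\tau_n < \tau_0)| \\
&\quad \leq |h^{a,0}(T_n(x_n),m) - {\mathbb P}_{(T_n(x_n),m)}(\tau<\infty)| \\
&\qquad + |{\mathbb P}_{(T_n(x_n),m)}(\tau<\infty) - {\mathbb P}_{(x_n,m)}(\tau_n < \tau_0)|.
\end{align*}
The second summand, after dividing by ${\mathbb P}_{(x_n,m)}(\tau_n < \tau_0)$, is bounded by $\rho^{c_8 n}$ thanks to Theorem \ref{t:mainapprox}, and this tail term vanishes as $n \to \infty$, so it does not contribute to the $\limsup$.

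The first summand is handled by Proposition \ref{p:firstrelerror}: since ${\mathbb P}_{(y,m)}(\tau<\infty) < h^{a,0}(y,m) < c_9 {\mathbb P}_{(y,m)}(\tau<\infty)$ pointwise, we obtain
\[
|h^{a,0}(T_n(x_n),m) - {\mathbb P}_{(T_n(x_n),m)}(\tau<\infty)| \le (c_9 - 1) {\mathbb P}_{(T_n(x_n),m)}(\tau<\infty).
\]
Dividing by ${\mathbb P}_{(x_n,m)}(\tau_n<\tau_0)$ yields the factor $(c_9 - 1)\,{\mathbb P}_{(T_n(x_n),m)}(\tau<\infty)/{\mathbb P}_{(x_n,m)}(\tau_n<\tau_0)$. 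Theorem \ref{t:mainapprox} forces this ratio to tend to $1$ as $n \to \infty$ (since the relative error between numerator and denominator is $O(\rho^{c_8 n})$, which forces the ratio to $1$). Consequently this contribution has $\limsup$ equal to $c_9 - 1$, which coincides with $|c_9 - 1|$ as $c_9 \ge 1$ is forced by the first inequality of \eqref{e:uplowbound}.

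Putting the two bounds together and taking $\limsup_{n\to\infty}$ gives the desired conclusion. There is no real obstacle here: the proposition is essentially a bookkeeping step combining the exponential approximation result (Theorem \ref{t:mainapprox}) with the uniform two-sided bound (Proposition \ref{p:firstrelerror}); the only care needed is to track that the ``wrong'' normalization denominator ${\mathbb P}_{(x_n,m)}(\tau_n<\tau_0)$ can be swapped for ${\mathbb P}_{(T_n(x_n),m)}(\tau<\infty)$ at the cost of an exponentially small factor, which is exactly what Theorem \ref{t:mainapprox} supplies under the stated conditions on $x$.
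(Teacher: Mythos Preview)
Your proof is correct and follows essentially the same approach as the paper: combine Proposition~\ref{p:firstrelerror} (the uniform two-sided bound on $h^{a,0}$ relative to ${\mathbb P}_{(y,m)}(\tau<\infty)$) with Theorem~\ref{t:mainapprox} (the exponentially vanishing relative error between ${\mathbb P}_{(T_n(x_n),m)}(\tau<\infty)$ and ${\mathbb P}_{(x_n,m)}(\tau_n<\tau_0)$). The paper's own proof is a two-line sketch stating exactly this combination; your version simply spells out the triangle-inequality bookkeeping and the ratio-swap argument in more detail.
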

\begin{proof}
We know by the previous proposition that 
$h^{a,0}$ approximates ${\mathbb P}_{(y,m)}(\tau < \infty)$ with
relative error bounded by $|c_9-1|$; we also know by Theorem
\ref{t:mainapprox} that \\ ${\mathbb P}_{(T_n(x_n),m)}(\tau < \infty)$ approximates
${\mathbb P}_{(x_n,m)}(\tau_n < \tau_0)$ with vanishing relative error.
These two imply the statement of the proposition.
\end{proof}

\section{Improving the approximation}\label{s:improveapp}

Proposition \ref{p:boundedrelerrapp1} tells us that $h^{a,0}$ of \eqref{d:ha0} approximates
${\mathbb P}_{(y,m)}(\tau < \infty)$ and therefore ${\mathbb P}_{(x,m)}(\tau_n < \tau_0)$ with bounded relative error.
The works 
\cite{sezer2015exit,sezer2018approximation, unlu2018excessive} 
covering the non-modulated case
are able to construct progressively better approximations 
(i.e., reduction of the relative error)
by using more harmonic functions constructed from conjugate points 
(in the tandem case 
with no modulation, one is able
to construct an exact representation of ${\mathbb P}_y(\tau <\infty)$ so no reduction in relative error is necessary).
This is possible because the function in 
\cite{sezer2015exit, sezer2018approximation, unlu2018excessive} corresponding 
to ${\mathfrak h}_{\rho_2}$, takes the value
$1$ on $\partial B$ away from the origin. 
Thus, by and large, that single function provides an excellent
approximation of ${\mathbb P}_y(\tau < \infty)$ for points away from
 $\partial_2$.
Rest of the harmonic functions are added
to the approximation to improve the approximation along
$\partial_2.$

When a modulating chain is present, the situation is different. Note that \eqref{e:term1asymptotics}, \eqref{e:term2asymtotics} imply that 
the value
of ${\mathfrak h}_{\rho_2}$ on $\partial B$, away from the origin, 
is determined by the eigenvector ${\bm d}_2$ and in general, the components of
${\bm d}_2$ will change with $m$. We need to improve ${\mathfrak h}_{\rho_2}$
 itself so that we have a $(Y,M)$-harmonic function that 
is close to $1$ on $\partial B$ away from the origin.

How is this to be done? Remember that the 
construction of ${\mathfrak h}_{\rho_2}$
began with fixing $\alpha=1$
and solving 
\begin{equation}\label{e:solveforbeta}
\beta^{|{\mathcal M}|}{\bm p}(\beta,1) = 0;
\end{equation}
$\rho_2$ is the largest root of this equation in the interval $(0,1)$.
Then we fixed $\beta = \rho_2$ in
$\alpha^{|{\mathcal M}|}{\bm p}(\rho_2,\alpha) = 0$ and solved for $\alpha$
to find the conjugate points $(\rho_2,\alpha^*_j)$ of $(\rho_2,1)$; from these points we constructed
${\mathfrak h}_{\rho_2}$.
Now to get our $(Y,M)$-harmonic function that almost takes the value $1$ on $\partial B$ away from the origin we will
use the rest of the roots of \eqref{e:solveforbeta} in $(0,1)$.
The next lemma shows that under the stability assumption and the simpleness of all eigenvalues, 
$|{\mathcal M}|-1$ real $\beta$ roots exist that lies in the interval 
$(0,\rho_2).$
The proposition after that constructs the desired
$(Y,M)$-harmonic function from these roots.

\begin{lemma}\label{l:thees}
Under the stability assumption (\ref{e:stabilityas}),
and Assumption \ref{as:simpleeig} (
all eigenvalues of ${\bm A}(\beta,\alpha)$ are real and simple
for $(\beta,\alpha) \in {\mathbb R}_+^{2o}$)
there exist
$\rho_{2,j}$, $j=2,3,...,|{\mathcal M}|$, such that
$\rho_2 > \rho_{2,2} > \rho_{2,3} > \cdots > \rho_{2,|{\mathcal M}|} > 0$
and $\{{\bm e}_{2}\neq 0$, ${\bm e}_{3}\neq 0$,...,${\bm e}_{|{\mathcal M}|}\neq0\}
\subset {\mathbb R}^{{\mathcal M}}$ such that
\[
{\bm A}(\rho_{2,j},1) {\bm e}_j = {\bm e}_j,~ j =2,3,...,|{\mathcal M}|,
\]
holds.
\end{lemma}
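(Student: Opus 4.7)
The plan is to reduce the statement to an intermediate-value argument on the eigenvalue functions $\Lambda_j(\beta,1)$ restricted to the slice $\alpha = 1$, and then read off the eigenvectors from Assumption \ref{as:simpleeig}.

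First I would evaluate ${\bm A}$ at the boundary point $(\beta,\alpha)=(1,1)$. Since ${\bm p}(1,1,m)=\lambda(m)+\mu_1(m)+\mu_2(m)=1$, the diagonal entries of ${\bm A}(1,1)$ collapse to ${\bm P}(m,m)$ and we get ${\bm A}(1,1)={\bm P}$. By Perron--Frobenius, the largest eigenvalue of ${\bm P}$ equals $1$ and is simple; combined with Assumption \ref{as:simpleeig} (real and distinct eigenvalues throughout ${\mathbb R}_+^{2,o}$, extended by continuity to $(1,1)$), this gives $\Lambda_1(1,1)=1$ and $\Lambda_j(1,1)<1$ for $j=2,\ldots,|{\mathcal M}|$.

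Next I would show that every eigenvalue branch blows up at the left endpoint: as $\beta\to 0^+$, the diagonal entries of ${\bm A}(\beta,1)$ behave like $\lambda(m){\bm P}(m,m)/\beta$, while off-diagonal entries stay bounded. Rescaling, $\beta\,{\bm A}(\beta,1)$ converges to the diagonal matrix $\mathrm{diag}(\lambda(m){\bm P}(m,m))$, whose eigenvalues are all strictly positive, so $\Lambda_j(\beta,1)\to\infty$ as $\beta\to 0^+$ for every $j$ (this is the Gershgorin-style argument already used in the proof of Proposition \ref{p:componentsofH}, applied slice-wise). I would also recall from the proof of Proposition \ref{p:point2} (convexity of $H$ and the stability assumption) that $\Lambda_1(\beta,1)>1$ on $(0,\rho_2)$.

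Then I would construct the $\rho_{2,j}$ inductively. For the base case $j=2$: $\Lambda_2$ is continuous on $(0,\rho_2]$, satisfies $\Lambda_2(\rho_2,1)<\Lambda_1(\rho_2,1)=1$, and blows up at $0^+$; the intermediate value theorem then produces $\rho_{2,2}\in(0,\rho_2)$ with $\Lambda_2(\rho_{2,2},1)=1$. For the inductive step, assuming $\rho_{2,j}$ has been produced, the strict ordering $\Lambda_{j+1}<\Lambda_j$ from Assumption \ref{as:simpleeig} yields $\Lambda_{j+1}(\rho_{2,j},1)<\Lambda_j(\rho_{2,j},1)=1$, and combined with $\Lambda_{j+1}(\beta,1)\to\infty$ as $\beta\to 0^+$, the IVT delivers $\rho_{2,j+1}\in(0,\rho_{2,j})$ with $\Lambda_{j+1}(\rho_{2,j+1},1)=1$. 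This gives the chain $\rho_2>\rho_{2,2}>\cdots>\rho_{2,|{\mathcal M}|}>0$ automatically.

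Finally, each ${\bm e}_j$ is obtained as any nonzero element of the eigenspace of ${\bm A}(\rho_{2,j},1)$ for the eigenvalue $1$; because the eigenvalues at $(\rho_{2,j},1)\in{\mathbb R}_+^{2,o}$ are real and simple, this eigenspace is one-dimensional and real, so a nonzero real ${\bm e}_j$ exists. I expect the main obstacle to be the second step, verifying that \emph{every} branch $\Lambda_j(\beta,1)$ (not merely the Perron branch $\Lambda_1$) tends to $+\infty$ as $\beta\to 0^+$; the asymptotic diagonalization above handles this cleanly, so the rest of the argument is essentially bookkeeping with IVT.
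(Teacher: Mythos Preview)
Your proposal is correct and follows essentially the same approach as the paper: the paper's proof is a one-line sketch saying it is parallel to Lemma~\ref{l:point2conjrest}, using Gershgorin's Theorem for the blow-up $\Lambda_j(\beta,1)\to\infty$ as $\beta\to 0^+$ together with $\Lambda_j(\rho_2,1)<1$ and the intermediate value theorem. Your inductive construction (use $\Lambda_{j+1}(\rho_{2,j},1)<\Lambda_j(\rho_{2,j},1)=1$ to place $\rho_{2,j+1}\in(0,\rho_{2,j})$) is a nice refinement that actually delivers the claimed strict ordering $\rho_2>\rho_{2,2}>\cdots>\rho_{2,|\mathcal M|}$, which the paper's sketch leaves implicit; your opening discussion at $(1,1)$ and the remark that $\Lambda_1(\beta,1)>1$ on $(0,\rho_2)$ are correct but not needed for the argument.
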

\nomenclature{$\rho_{2,j}$}{ for $\alpha=1$, the $\beta$ roots less than $\rho_2$ in the interval $(0,1)$ }
\nomenclature{${\bm e}_j$}{A right eigenvector of ${\bm A}(\rho_{2,j},1)$}
The proof is parallel to that of Lemma \ref{l:point2conjrest} and is
based on Gershgorin's Theorem and the fact that
$\Lambda_j(\rho_2, 1) < 1$ for $j=2,3,...,|{\mathcal M}|.$

Each of the points $(\rho_{2,j},1)$ will in general have $2|{\mathcal M}|-1$
conjugate points. To get $\partial B$-determined $(Y,M)$-harmonic functions
from these we need the analog of \eqref{as:mu1neqmu2s2} for each 
$(\rho_{2,j},1)$:
\begin{assumption}\label{as:mu1neqmu2s3}
For each $j=2,3,...,|{\mathcal M|}$ there exists $m_j \le |{\mathcal M}|$
conjugate points $(\rho_{2,j}, \alpha^*_{j,l})$, $l=1,2,...,m_j$, of
$(\rho_{2,j},1)$
and eigenvectors $0\neq {\bm e}_{j,l} \in {\mathbb R}^{{\mathcal M}}$ such
that
\begin{align}
|\alpha^*_{j,l}| &< 1,~ l=1,2,...,m_j,\notag \\
{\bm A}(\rho_{2,j}, \alpha^*_{j,l}){\bm e}_{j,l} &= {\bm e}_{j,l}\notag \\
{\bm c}(\rho_{2,j}, 1, {\bm e}_j) &\in 
\text{Span}( {\bm c}(\rho_{2,j}, \alpha^*_{j,l},{\bm e}_{j,l}), l=1,2,..,m_j).
\label{e:lisinthespan}
\end{align}
\end{assumption}
\nomenclature{$\alpha^*_{j,l}$}{for $\beta=\rho_{2,j}$, the $\alpha$ values of the conjugate points of the form $(\rho_{2,j}, \alpha^*_{j,l})$ of $(\rho_{2,j},1)$}
\nomenclature{${\bm e}_{j,l}$}{A right eigenvector of ${\bm A}(\rho_{2,j}, \alpha^*_{j,l})$}
\begin{remark}
Similar to the comments made in Remark \ref{r:mu1neqmu2s2}, 
a set of sufficient conditions for \eqref{e:lisinthespan} is 
1) $m_j = |{\mathcal M}|$ and 2) ${\bm c}(\rho_{2,j},\alpha^*_{j,l},{\bm e}_{j,l})$,
$l=1,2,...,|{\mathcal M}|$ are linearly independent.
By ${\bm c}(\cdot,\cdot,\cdot)$'s definition, linear independence of these
vectors require $\alpha^*_{j,l} \neq \rho_{2,j}$, which is, yet another
generalization of the assumption $\rho_1 \neq \rho_2.$
\end{remark}

\begin{remark}
One can introduce assumptions similar to \eqref{as:conj} which imply, 
with an argument similar to the
proof of Lemma \ref{l:point2conjrest},
that $(\rho_{2,j},1)$ has $|{\mathcal M|}-j$ conjugate points in the interval
$(0,1)$. But in general, this number of conjugate points will not
suffice for \eqref{e:lisinthespan} to hold and 
when constructing $(Y,M)$-harmonic functions with $\beta=\rho_{2,j}$,
$j=2,3,...,|{\mathcal M}|$, we will use conjugate points
with complex or negative $\alpha$ components.
Instead of introducing even more assumptions similar to \eqref{as:conj},
we directly incorporate \eqref{e:lisinthespan} as an assumption.
\end{remark}

To get our $(Y,M)$-harmonic function converging to $1$ 
on the tail of $\partial B$ (see \eqref{e:hstarlimit} below for the precise
statement)
we need one more condition:
\begin{equation}\label{as:bm1}
{\bm 1} \in \text{Span}({\bm d}_2, {\bm e}_2, ..., {\bm e}_{|{\mathcal M}|}).
\end{equation}
A sufficient condition for \eqref{as:bm1} is that
the vectors listed on the right of this display are linearly
independent.

\begin{proposition}\label{p:hrho2star}
Let ${\bm e}_{j}$, $j=2,3,...,|{\mathcal M}|$ be as in Lemma \ref{l:thees}.
and let ${\bm d}_2$ be as in Proposition \ref{p:point2}. Under
Assumptions \ref{as:mu1neqmu2s3} and \eqref{as:bm1} there exist
vectors ${\bm b}_{2,j} \in {\mathbb R}^{m_j}$, $j=2,3,..,|{\mathcal M}|$
and $ {\bm b}_{2} \in {\mathbb R}^{|{\mathcal M}|}$
such that
\begin{align}\label{d:hrho2j}
{\mathfrak h}_{\rho_{_{2,j}}}(y,m) &\doteq [(\rho_{2,j},1, {\bm e}_{j}), (y,m)]\\
&~~~+\sum_{l=1}^{m_j} {\bm b}_{2,j}(l)  \notag
[(\rho_{2,j}, \alpha^*_{j,l}, {\bm e}_{j,l}), (y,m)],~ j=2,3,...,|{\mathcal M}|,
\end{align}
and
\begin{equation}\label{d:hrho2star}
{\mathfrak h}  \doteq 
{\bm b}_{2}(1) {\mathfrak h}_{\rho_2} + 
\sum_{j=2}^{|{\mathcal M}|} {\bm b}_{2}(j) {\mathfrak h}_{\rho_{_{2,j}}}
\end{equation}
are all $(Y,M)$-harmonic and $\partial B$-determined;  furthermore
\begin{equation}\label{e:hstarlimit}
\lim_{k\rightarrow \infty} {\mathfrak h}(k,k,m) \rightarrow 1
\end{equation}
for all $m \in {\mathcal M}.$
\end{proposition}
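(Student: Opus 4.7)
The plan is to mirror the construction of ${\mathfrak h}_{\rho_2}$ from Proposition \ref{p:hrho2c} for each of the remaining real roots $\rho_{2,j}$, $j=2,\dots,|{\mathcal M}|$, of $\beta^{|{\mathcal M}|}{\bm p}(\beta,1)=0$ produced by Lemma \ref{l:thees}, and then to take an outer linear combination of ${\mathfrak h}_{\rho_2}$ with the resulting ${\mathfrak h}_{\rho_{2,j}}$'s whose weights ${\bm b}_2$ are chosen to sum to ${\bm 1}$ on the diagonal $(k,k,\cdot)$ of $\partial B$.

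To build each ${\mathfrak h}_{\rho_{2,j}}$ I would repeat the argument of Proposition \ref{p:hrho2c} almost verbatim with $(\rho_2,1,{\bm d}_2)$ replaced by $(\rho_{2,j},1,{\bm e}_j)$ and the conjugate points $\{(\rho_2,\alpha^*_k,{\bm d}_{2,k})\}$ replaced by $\{(\rho_{2,j},\alpha^*_{j,l},{\bm e}_{j,l})\}_{l=1}^{m_j}$: Assumption \ref{as:mu1neqmu2s3} says ${\bm c}(\rho_{2,j},1,{\bm e}_j)$ lies in the span of the ${\bm c}(\rho_{2,j},\alpha^*_{j,l},{\bm e}_{j,l})$, which rearranges to a nontrivial linear dependence among the full list; Proposition \ref{p:harconjugate} then produces a $(Y,M)$-harmonic combination in which the coefficient of the $\alpha=1$ term can be normalized to $1$, defining the weight vector ${\bm b}_{2,j}$ and hence \eqref{d:hrho2j}. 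Since $\rho_{2,j}\in(0,\rho_2)\subset(0,1)$ and $|\alpha^*_{j,l}|<1$ by the same assumption, Lemma \ref{l:partialBdetermined} yields that each ${\mathfrak h}_{\rho_{2,j}}$ is $\partial B$-determined. Harmonicity and $\partial B$-determinedness of the outer sum ${\mathfrak h}$ in \eqref{d:hrho2star} are then immediate from linearity together with the corresponding properties of ${\mathfrak h}_{\rho_2}$ provided by Proposition \ref{p:hrho2c}.

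It remains to fix the outer weights ${\bm b}_2$ so that \eqref{e:hstarlimit} holds. On the diagonal $y=(k,k)$ one has $[(\beta,\alpha,{\bm d}),(k,k,m)] = \alpha^{k} {\bm d}(m)$, so every building block with $|\alpha|<1$ contributes $0$ in the limit; since all conjugates $\alpha^*_j$ and $\alpha^*_{j,l}$ lie strictly inside the unit disk, only the $\alpha=1$ pieces survive and
\begin{equation*}
\lim_{k\to\infty}{\mathfrak h}(k,k,m) = {\bm b}_2(1){\bm d}_2(m) + \sum_{j=2}^{|{\mathcal M}|}{\bm b}_2(j){\bm e}_j(m).
\end{equation*}
Assumption \eqref{as:bm1} then supplies a ${\bm b}_2$ making the right-hand side equal to $1$ for every $m\in{\mathcal M}$. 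The single non-routine point is the normalization inside the construction of each ${\mathfrak h}_{\rho_{2,j}}$: Assumption \ref{as:mu1neqmu2s3} is engineered precisely so that in the dependence relation among the ${\bm c}$ vectors the coefficient of the $\alpha=1$ point can be taken nonzero, as otherwise one would only obtain a combination of conjugate points whose diagonal values decay to zero and could not deliver the $m$-dependent limit required by \eqref{e:hstarlimit}.
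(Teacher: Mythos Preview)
Your proposal is correct and follows essentially the same approach as the paper's proof: both invoke the argument of Proposition~\ref{p:hrho2c} (via Assumption~\ref{as:mu1neqmu2s3} and Proposition~\ref{p:harconjugate}) to build each ${\mathfrak h}_{\rho_{2,j}}$, then use \eqref{as:bm1} to choose ${\bm b}_2$ so that the surviving $\alpha=1$ terms on the diagonal sum to ${\bm 1}$. Your write-up is in fact a bit more explicit than the paper's on the $\partial B$-determinedness (citing Lemma~\ref{l:partialBdetermined} and noting closure under linear combinations), but the underlying argument is identical.
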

\nomenclature{${\mathfrak h}_{\rho_{2,j}}$}{$(Y, M)$-harmonic function constructed from points on ${\mathcal H}$ with $\beta=\rho_{2,j}.$}
\begin{proof}
The existence of the vector ${\bm b}_{2,j}$, $j=2,3,...,|{\mathcal M}|$,
so that ${\mathfrak h}_{\rho_{2,j}}$
defined in \eqref{d:hrho2j} is $(Y,M)$-harmonic follows
from \eqref{as:mu1neqmu2s3} and the argument given in
the construction of ${\mathfrak h}_{\rho_2}$ (see the proof of
Proposition \ref{p:hrho2c}).
By  \eqref{as:bm1} there is a vector ${\bm b}_{2}$ such that
\[
{\bm b}_{2}(1) {\bm d}_2(m) + \sum_{j=2}^{|\mathcal M|} {\bm b}_{2}(j) {\bm e}_j(m) = 1
\]
for all $m \in {\mathcal M}.$ Then
${\mathfrak h}$ as defined in \eqref{d:hrho2star} satisfies
\[
{\mathfrak h}(k,k,m)=1+
{\bm b}_2(1) \sum_{j=1}^{|{\mathcal M}|} {\bm b}_{2,1}(j) (\alpha^*_j)^k {\bm d}_{2,j}(m) + 
\sum_{j=2}^{|\mathcal M|}{\bm b}_{2}(j)\sum_{l=2}^{m_j}{\bm b}_{2,j}(i) (\alpha^*_{j,l})^k{\bm e}_{j,l}(m);
\]
$|\alpha^*_j| < 1$, 
$|\alpha^*_{j,l}|<1$ imply that the last two sums go to $0$ with $k$. 
This gives \eqref{e:hstarlimit}.
\end{proof}
\nomenclature{${\mathfrak h}$}{$(Y,M)$-harmonic function constructed from a
linear combination of ${\mathfrak h}_{\rho_2}$ and ${\mathfrak h}_{\rho_{2,j}}$'s}
\nomenclature{$ {\bm b}_{2,1}$}{the weight vector used to construct ${\mathfrak h}$}
In Lemma \ref{l:thees} we found points
on $\{\alpha=1\}\cap {\mathcal H}^{\beta\alpha}$
in addition to $(\rho_2,1)$ identified in,
we used these points above
in the construction of ${\mathfrak h}.$ Similarly, one can go
along the line $\beta=\alpha$ to find points
on ${\mathcal H}^{\beta\alpha}$
other than
$(\rho_1,\rho_1)$ 
defining further simple $\partial B$-determined
$(Y,M)$-harmonic functions:
\begin{lemma}\label{l:additionaldiagonal}
Under 
the stability assumption (\ref{e:stabilityas}),
and
Assumption \ref{as:simpleeig} ($
{\bm A}(\beta,\alpha)$ has real distinct eigenvalues for
$(\beta,\alpha) \in {\mathbb R}_+^{2,o})$ 
there exist
$\rho_{1,k}$, $k=2,3,..,|{\mathcal M}|$, such that
$\rho_1 > \rho_{1,2} > \rho_{1,3} > \cdots > \rho_{1,|{\mathcal M}|} > 0$
and $\{{\bm f}_{2}\neq 0$, ${\bm f}_{3}\neq 0$,...,${\bm f}_{|{\mathcal M}|}\neq0\}
\subset {\mathbb R}^{|{\mathcal M}|}$ such that
\[
{\bm A}(\rho_{1,j},\rho_{1,j}) {\bm f}_j = {\bm f}_j, ~j =2,3,...,|{\mathcal M}|,
\]
holds.
\end{lemma}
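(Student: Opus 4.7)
The plan is to mirror the arguments of Proposition \ref{p:point1}, Lemma \ref{l:point2conjrest}, and Lemma \ref{l:thees}, this time tracing the eigenvalue functions $\Lambda_j(\beta,\beta)$ along the diagonal slice $\{(\beta,\beta): \beta \in (0,1)\}$ rather than along a horizontal or vertical slice of ${\mathbb R}_+^{2,o}$. The two analytic ingredients I would need are continuity of the $\Lambda_j$'s (supplied by Proposition \ref{p:eigfun} under Assumption \ref{as:simpleeig}) and a divergence estimate for the eigenvalues as $\beta \to 0^+$ (supplied by Gershgorin's Theorem). The strict ordering of eigenvalues then stratifies the diagonal into the required sequence of roots.

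First I would record that by Proposition \ref{p:point1}, $\Lambda_1(\rho_1,\rho_1) = 1$, and hence by the convention $\Lambda_1 > \Lambda_2 > \cdots > \Lambda_{|{\mathcal M}|}$ on ${\mathbb R}_+^{2,o}$ (available thanks to Assumption \ref{as:simpleeig}), $\Lambda_j(\rho_1,\rho_1) < 1$ for every $j \ge 2$. Next I would observe that the $m$-th diagonal entry of ${\bm A}(\beta,\beta)$ equals ${\bm P}(m,m)\bigl(\lambda(m)/\beta + \mu_1(m)\beta + \mu_2(m)\bigr)$, which diverges to $+\infty$ as $\beta \to 0^+$ because of the $\lambda(m)/\beta$ term; since the off-diagonal entries ${\bm P}(m,n)$ are bounded, Gershgorin's Theorem forces every eigenvalue $\Lambda_j(\beta,\beta)$ to tend to $+\infty$ as $\beta \to 0^+$, exactly as in the proof of Proposition \ref{p:point1}.

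Combining these two facts with continuity of $\Lambda_j$, the intermediate value theorem yields, for each $j \in \{2,3,\dots,|{\mathcal M}|\}$, at least one $\beta \in (0,\rho_1)$ such that $\Lambda_j(\beta,\beta) = 1$. I would define $\rho_{1,j}$ as the largest such root. The strict ordering $\rho_{1,2} > \rho_{1,3} > \cdots > \rho_{1,|{\mathcal M}|}$ follows by iterating this argument: at $\beta = \rho_{1,j}$, the strict inequality $\Lambda_{j+1} < \Lambda_j$ gives $\Lambda_{j+1}(\rho_{1,j},\rho_{1,j}) < 1$, and combined with $\Lambda_{j+1}(\beta,\beta) \to \infty$ as $\beta \to 0^+$, a second application of the IVT locates $\rho_{1,j+1}$ strictly inside $(0,\rho_{1,j})$.

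Finally, existence of a nonzero ${\bm f}_j$ with ${\bm A}(\rho_{1,j},\rho_{1,j}){\bm f}_j = {\bm f}_j$ is immediate: $1$ is then an eigenvalue of ${\bm A}(\rho_{1,j},\rho_{1,j})$, so any nonzero element of $\ker({\bm A}(\rho_{1,j},\rho_{1,j}) - {\bm I})$ works, and Assumption \ref{as:simpleeig} further guarantees this eigenvector is unique up to scalar. The argument is essentially routine given the eigenvalue decomposition already developed in Section \ref{ss:geometry}; the only mildly delicate point is the propagation of the strict inequalities $\rho_{1,j+1} < \rho_{1,j}$, which I expect to be the main bookkeeping obstacle but which follows cleanly from the strict ordering $\Lambda_{j+1} < \Lambda_j$ on all of ${\mathbb R}_+^{2,o}$.
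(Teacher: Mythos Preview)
Your proposal is correct and follows essentially the same approach as the paper: the paper's proof is a one-line sketch stating that the argument is parallel to that of Lemma~\ref{l:point2conjrest}, based on Gershgorin's Theorem and the fact that $\Lambda_j(\rho_1,\rho_1) < \Lambda_1(\rho_1,\rho_1) = 1$ for $j \ge 2$. Your write-up is simply a faithful expansion of that sketch, with the added benefit that you spell out the iterative step giving the strict ordering $\rho_{1,2} > \rho_{1,3} > \cdots > \rho_{1,|\mathcal M|}$, which the paper leaves implicit.
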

\nomenclature{$\rho_{1,j}$}{ $(\rho_{1,j},\rho_{1,j})$ are points on ${\mathcal H}^{\beta\alpha}\cap {\mathcal H}^{\beta\alpha}_2$}
\nomenclature{${\bm f}_j$}{A right eigenvector of ${\bm A}(\rho_{1,j},\rho_{1,j})$}
The proof is parallel to that of Lemma \ref{l:point2conjrest} and is
based on Gershgorin's Theorem and the fact that
$\Lambda_j(\rho_1, \rho_1) < \Lambda_1(\rho_1,\rho_1)=1$ for $j=2,3,...,|{\mathcal M}|.$

One can use the points identified in the previous lemma
to construct further $\partial B$-determined $(Y,M)$-harmonic functions.

\begin{lemma}\label{l:furthersimple}
Let $\rho_{1,j}$, ${\bm f}_j$, $j=2,3,...,|{\mathcal M}|$, be as
in Lemma \ref{l:additionaldiagonal}. Then
\[
[(\rho_{1,j},\rho_{1,j},{\bm f}_j), \cdot],~ j=2,3,...,|{\mathcal M}|,
\]
are $\partial B$-determined $(Y,M)$-harmonic.
\end{lemma}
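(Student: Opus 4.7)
The plan is to unfold Lemma \ref{l:furthersimple} into two verifications: first, that each $[(\rho_{1,j},\rho_{1,j},{\bm f}_j),\cdot]$ is $(Y,M)$-harmonic everywhere on $\mathbb{Z}\times\mathbb{Z}_+\times{\mathcal M}$ (interior together with the boundary $\partial_2$), and second, that it is $\partial B$-determined. Both verifications will reuse infrastructure already established earlier in the paper.

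For the harmonicity, I would first observe that the point $(\rho_{1,j},\rho_{1,j},{\bm f}_j)$ belongs to the characteristic surface $\mathcal{H}$ by Lemma \ref{l:additionaldiagonal} (which asserts ${\bm A}(\rho_{1,j},\rho_{1,j}){\bm f}_j={\bm f}_j$). Proposition \ref{p:interiorharm} then gives harmonicity on the interior $\mathbb{Z}\times\mathbb{Z}_+^o$. The key structural observation for the boundary is that setting $\alpha=\beta$ in the local characteristic polynomial for the interior and the one for $\partial_2$ yields the same expression:
\[
{\bm p}(\beta,\beta,m)=\frac{\lambda(m)}{\beta}+\mu_1(m)\beta+\mu_2(m)={\bm p}_2(\beta,\beta,m),
\]
so ${\bm A}(\beta,\beta)={\bm A}_2(\beta,\beta)$ identically in $\beta$. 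Consequently ${\bm A}_2(\rho_{1,j},\rho_{1,j}){\bm f}_j={\bm f}_j$, i.e., $(\rho_{1,j},\rho_{1,j},{\bm f}_j)\in\mathcal{H}_2$ as well. Proposition \ref{p:pointonboth} then upgrades harmonicity from the interior to all of $\mathbb{Z}\times\mathbb{Z}_+\times{\mathcal M}$.

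For the $\partial B$-determined property, I would appeal directly to Lemma \ref{l:partialBdetermined} in the degenerate case $k=1$ with $b(1)=1$, $\beta=\alpha_1=\rho_{1,j}$ and $d_1={\bm f}_j$. Lemma \ref{l:additionaldiagonal} guarantees $0<\rho_{1,j}<\rho_1<1$, so both hypotheses $|\beta|<1$ and $|\alpha_1|\le 1$ of Lemma \ref{l:partialBdetermined} are satisfied, and the conclusion follows.

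There is essentially no obstacle here beyond recognizing the identity ${\bm A}(\beta,\beta)={\bm A}_2(\beta,\beta)$; once this is pointed out, the statement reduces to a one-line application of Propositions \ref{p:interiorharm}, \ref{p:pointonboth} and Lemma \ref{l:partialBdetermined}. The proof is therefore short and parallel to the verification that $h_{\rho_1}$ of Proposition \ref{p:hrho1} is $(Y,M)$-harmonic, with $\rho_{1,j}$ playing the role of $\rho_1$ and ${\bm f}_j$ the role of ${\bm d}_1$.
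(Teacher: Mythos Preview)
Your proposal is correct and follows essentially the same route as the paper's proof: the key identity ${\bm A}(\beta,\beta)={\bm A}_2(\beta,\beta)$ (from ${\bm p}(\beta,\beta,m)={\bm p}_2(\beta,\beta,m)$) places $(\rho_{1,j},\rho_{1,j},{\bm f}_j)$ in $\mathcal{H}\cap\mathcal{H}_2$, Proposition~\ref{p:pointonboth} then gives $(Y,M)$-harmonicity, and Lemma~\ref{l:partialBdetermined} with $|\rho_{1,j}|<1$ gives $\partial B$-determinedness. Your separate invocation of Proposition~\ref{p:interiorharm} is harmless but redundant, since Proposition~\ref{p:pointonboth} already covers the interior.
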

\begin{proof}
By definition,
$(\rho_{1,j},\rho_{1,j}) \in {\mathcal H}^{\beta\alpha}$
and ${\bm A}(\rho_{1,j},\rho_{1,j}) {\bm f}_j = {\bm f}_j.$
Again,
${\bm A}(\beta,\beta) = {\bm A}_2(\beta,\beta)$
for all $\beta$ follows from 
and ${\bm p}_2(\beta,\beta,m) = {\bm p}(\beta,\beta)$ and
the definitions of ${\bm A}$ and ${\bm A}_2$.
Then 
${\bm A}(\rho_{1,j},\rho_{1,j}) {\bm f}_j =
{\bm A}_2(\rho_{1,j},\rho_{1,j}) {\bm f}_j =
{\bm f}_j$, i.e., $(\rho_{1,j},\rho_{1,j}, {\bm f}) \in {\mathcal H}_2$
(i.e., the characteristic surface of $\partial_2$, see \eqref{d:H2}).
This and Proposition \ref{p:pointonboth} imply that
$[(\rho_{1,j},\rho_{1,j},{\bm f}_j), \cdot]$ is $(Y,M)$-harmonic.
That it is $\partial B$-determined follows from
$|\rho_{1,j}| < 1$ and Lemma \ref{l:partialBdetermined}.
\end{proof}

The function $[(\beta,\alpha, d),\cdot]$ is complex valued
for any $(\beta,\alpha, d) \in {\mathcal H}$ with complex components and
such points and the functions they define
can also be used to improve the approximation; 
see the next section for an example.
The next proposition gives an upper bound
on the relative error of an approximation of 
${\mathbb P}_{(y,m)}( \tau < \infty)$
in terms of the values the approximation takes on the boundary $\partial B$; it covers
cases when complex valued $(\beta,\alpha, d) \in {\mathcal H}$  is used in the construction
of the approximation.
For any $z \in {\mathbb C}$, let $\Re(z)$ denote its real part.
\nomenclature{$\Re(\cdot)$}{Real part of a complex value}
\begin{proposition}\label{p:boundonrelerror}
Let $h:{\mathbb Z}\times {\mathbb Z}_+\mapsto {\mathbb C}$ 
be $\partial B$-determined and $(Y,M)$-harmonic. Then
\begin{equation}\label{e:boundonrelerRe}
\max_{(y,m) \in B \times {\mathcal M}} 
\frac{| \Re(h)(y,m) -{\mathbb P}_{(y,m)}(\tau < \infty)|}{{\mathbb P}_{(y,m)}(\tau < \infty)} \le c^*
\end{equation}
where
\begin{equation}\label{d:cstar}
c^* \doteq \max_{y \in \partial B, m \in {\mathcal M}} | h(y,m) - 1|.
\end{equation}
\end{proposition}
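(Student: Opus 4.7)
The plan is to use the defining property of $\partial B$-determined functions to represent $h$ as a boundary expectation, then compare it termwise with the analogous representation of ${\mathbb P}_{(y,m)}(\tau<\infty)$.

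First I would note that if $h$ is $(Y,M)$-harmonic and $\partial B$-determined, then the function $f$ appearing in its definition must coincide with $h$ on $\partial B$: for any $y_0 \in \partial B$ we have $\tau = 0$ and $Y_\tau = y_0$ ${\mathbb P}_{(y_0,m)}$-almost surely, so the defining formula gives $h(y_0,m) = f(y_0,m)$. Hence
\begin{equation*}
h(y,m) = {\mathbb E}_{(y,m)}\bigl[h(Y_\tau, M_\tau) 1_{\{\tau < \infty\}}\bigr]
\end{equation*}
for all $(y,m) \in B \times {\mathcal M}$, where the $h$ on the right side is evaluated at a point of $\partial B$.

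Next, I would subtract the trivial identity ${\mathbb P}_{(y,m)}(\tau<\infty) = {\mathbb E}_{(y,m)}[1_{\{\tau<\infty\}}]$ from the above display to obtain
\begin{equation*}
h(y,m) - {\mathbb P}_{(y,m)}(\tau<\infty) = {\mathbb E}_{(y,m)}\bigl[(h(Y_\tau,M_\tau) - 1) 1_{\{\tau<\infty\}}\bigr].
\end{equation*}
Taking real parts and using that ${\mathbb P}_{(y,m)}(\tau<\infty)$ is real gives
\begin{equation*}
\Re(h)(y,m) - {\mathbb P}_{(y,m)}(\tau<\infty) = {\mathbb E}_{(y,m)}\bigl[\Re\bigl(h(Y_\tau,M_\tau) - 1\bigr) 1_{\{\tau<\infty\}}\bigr].
\end{equation*}

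Finally, applying the triangle inequality for expectations, noting that $Y_\tau \in \partial B$ on the event $\{\tau<\infty\}$, and using the definition \eqref{d:cstar} of $c^*$, I obtain
\begin{equation*}
|\Re(h)(y,m) - {\mathbb P}_{(y,m)}(\tau<\infty)| \le {\mathbb E}_{(y,m)}\bigl[|h(Y_\tau,M_\tau) - 1| 1_{\{\tau<\infty\}}\bigr] \le c^* {\mathbb P}_{(y,m)}(\tau<\infty),
\end{equation*}
which yields \eqref{e:boundonrelerRe} after dividing through by ${\mathbb P}_{(y,m)}(\tau<\infty)$. There is no real obstacle here: the whole argument hinges on recognizing that $\partial B$-determinedness immediately upgrades the harmonic representation into a boundary-integral representation against the indicator of $\{\tau<\infty\}$, after which the stated bound is a one-line consequence of $|\cdot|$-domination.
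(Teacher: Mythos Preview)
Your proof is correct and follows essentially the same approach as the paper: both use the $\partial B$-determined representation $h(y,m) = {\mathbb E}_{(y,m)}[h(Y_\tau,M_\tau)1_{\{\tau<\infty\}}]$ and then bound the boundary values by $c^*$. The only cosmetic difference is that the paper first observes that $\Re(h)$ is itself $\partial B$-determined and then sandwiches $(1-c^*)1_{\{\tau<\infty\}}\le \Re(h)(Y_\tau,M_\tau)1_{\{\tau<\infty\}}\le(1+c^*)1_{\{\tau<\infty\}}$ before taking expectations, whereas you subtract first and take the real part afterward; the two routes are equivalent.
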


The proof is similar to that of Proposition \ref{p:firstrelerror}:
\begin{proof}
That $h$ is $\partial B$-determined $(Y,M)$-harmonic implies the
same for its real and imaginary parts.
For any complex number $z$ we have $|\Re(z) -1| \le |z-1|$;
these and \eqref{d:cstar} give
\[
\max_{y' \in \partial B, m \in {\mathcal M}} | \Re(h)(y',m) - 1| \le c^*.
\]
Then
\[
(1-c^*)1_{\{\tau < \infty\}} \le \Re(h)(Y_\tau,M_\tau)1_{\{\tau < \infty\}} 
\le (1+c^*) 1_{\{\tau < \infty\}}.
\]
Applying ${\mathbb E}_{(y,m)}[\cdot]$ to all terms above implies 
\eqref{e:boundonrelerRe}.
\end{proof}
\section{Numerical example} \label{s:numeric}
This section demonstrates the performance of our approximation
results on a numerical example. For parameter values ${\bm P}$, 
$\lambda(\cdot)$, $\mu_1(\cdot)$ and $\mu_2(\cdot)$
we take those listed in \eqref{e:Pexample} and \eqref{e:Ratesexample},
for which $|{\mathcal M}|= 3.$ We know by Proposition \ref{p:conditionforsimple}
that for ${\bm P}$ as in \eqref{e:Pexample}, ${\bm A}(\beta,\alpha)$ has distinct
positive eigenvalues for $(\beta,\alpha) \in {\mathbb R}_+^{2,o}.$
Furthermore, the rates \eqref{e:Ratesexample} satisfy
$\lambda(m) <\mu_1(m),\mu_2(m)$ for all $m \in {\mathcal M}$, therefore,
the stability assumption \eqref{e:stabilityas} is also satisfied.
Computing the right side of \eqref{as:conj} at
$(\rho_2,1)$ shows that the parameter values \eqref{e:Pexample} and
\eqref{e:Ratesexample} satisfy \eqref{as:conj}. Therefore:
\begin{enumerate}
\item
By Proposition \ref{p:hrho2c}, the function ${\mathfrak h}_{_{2,1}}$ is 
well defined and $\partial B$-determined and $(Y,M)$-harmonic.
Furthermore, we know by Lemma \ref{l:thees} that
there are $\rho_{2,j}$, $j=2,3,...,|{\mathcal M}|$, such that 
$0 < \rho_{2,j} < \rho_2$ and $(\rho_{2,j},1) \in {\mathcal H}^{\beta\alpha}$ for all $j$.
We solve
\[
{\bm p}(\rho_{2,j},\alpha) = 0
\]
for $\alpha$ for the parameter values assumed in the section
and verify that Assumption \ref{as:mu1neqmu2s3} holds 
with $m_j = |{\mathcal M}|$
for all $j$;
this and Proposition \ref{p:hrho2star} imply that 
the $(Y,M)$-harmonic
$\partial B$-determined function ${\mathfrak h}$ defined in 
\eqref{d:hrho2star} 
and satisfying \eqref{e:hstarlimit}
is well defined.
\item Propositions \ref{p:point1}, \ref{p:hrho1} and Lemma \ref{l:rho1pB}
apply and give the $\partial B$-determined $(Y,M)$-harmonic function
$h_{\rho_1} = [(\rho_1,\rho_1, {\bm d}_1),\cdot]$,
\item Lemmas \ref{l:additionaldiagonal} and \ref{l:furthersimple}
apply and give the  $\partial B$-determined $(Y,M)$-harmonic functions
$h_{\rho_{1,j}} = [(\rho_{1,j},\rho_{1,j}, {\bm f}_j),\cdot]$, $j=2,3,....|{\mathcal M}|$.
\end{enumerate}
In addition to these functions, we can fix an integer $K > 0$, 
and construct $K \cdot |{\mathcal M}|$ further $(Y,M)$-harmonic
functions of the form
\begin{equation}\label{e:defhkj}
{\mathfrak h}_{k,j} \doteq 
\sum_{l=0}^{|{\mathcal M}|} {\bm b}_{k,j}(l) 
[(\beta_{k,j}, \alpha_{k,j,l} , {\bm d}_{k,j,l}), \cdot], 
\end{equation}
for $k=1,2,...,K$, $\beta_{k,j}$ and $j=1,2,....,|{\mathcal  M}|$,
as follows:
 \begin{enumerate}
\item Set $\alpha_{k,j,0} = R~e^{i k \frac{2\pi}{K+1}}$ , 
$R \in (0,1)$ to be determined below;
note that $\alpha_{k,j,0}$ depends only on $k$; including $j$ as
an index simplifies notation in \eqref{e:defhkj} and below.
\item For each $k$, $\beta_{k,j}$, $j=1,2,....,|{\mathcal  M}|$,
are the  $\beta$-roots of
\begin{equation}\label{e:tosolvecirclebeta}
{\bm p}(\alpha_{k,j,0}, \beta) = 0 
\end{equation}
satisfying $|\beta| < 1$;
\item $\alpha_{k,j,l}$, $l=1,2,...,|{\mathcal M}|$,
are the $\alpha$-roots of
\begin{equation}\label{e:tosolvecirclealpha}
{\bm p}(\beta_{k,j},\alpha)=0;
\end{equation}
with $|\alpha|<1$ which are distinct from $\alpha_{k,j,0}$.
\item ${\bm d}_{k,j,l}$ 
is an eigenvector of ${\bm A}(\beta_{k,j},\alpha_{k,j,l})$
i,e., 
$(\beta_{k,j}, \alpha_{k,j,l}, {\bm d}_{k,j,l}) \in {\mathcal H}$,
\item for each $(k,j)$ the vector 
${\bm b}_{k,j}$ solves
\begin{equation}\label{e:condforharm}
\sum_{l=0}^{|{\mathcal M}|} {\bm b}_{k,j}(l) 
{\bm c}(\beta_{k,j},\alpha_{k,j,l}) = 0,
\end{equation}
\end{enumerate}
where ${\bm b}_{k,j}(l)$ is the $l^{th}$ component of
the vector ${\bm b}_{k,j}.$
For ${\mathfrak h}_{k,j}$, $k=1,2,...,K$ and $j=1,2,...|{\mathcal M}|$ 
to be well defined, $(Y,M)$-harmonic and $\partial B$-determined we need
1) for each $k$, the equation \eqref{e:tosolvecirclebeta}
needs to have at least $|{\mathcal M}|$ $\beta$-roots with absolute value
less than $1$;  2) for each $k$ and $j$, the equation
\eqref{e:tosolvecirclealpha} needs to have at least $|{\mathcal M}|$ solutions
different from $\alpha_{k,j,0}$
with absolute value less than $1$; 3) for each $k$ and $j$ the equation
\eqref{e:condforharm} needs to have a nontrivial solution ${\bm b}_{k,j}.$
Here we have two parameters to set: $K$ and $R$; for the purposes
of this numerical example we set $R=0.7$, and $K=5$. Upon 
solving \eqref{e:tosolvecirclebeta}, 
\eqref{e:tosolvecirclealpha} and \eqref{e:condforharm} with these
parameter values
we observe that they have sufficient number of solutions for
${\mathfrak h}_{k,j}$ 
to be well defined and $(Y,M)$-harmonic and $\partial B$-determined.

We have now $1 + 6|{\mathcal M}|$, $\partial B$-determined
$(Y,M)$-harmonic functions to construct our approximation of 
${\mathbb P}_{(y,m)}(\tau < \infty)$; the approximation will be of the form
\begin{equation}\label{d:ha}
h^{a,K} \doteq \Re(h^{a*,K}),~{h}^{a*,K} \doteq {\mathfrak h}  + \phi_1 h_{\rho_1} + \sum_{j=2}^{|{\mathcal M}|}
{\phi}_j [(\rho_{1,j},\rho_{1,j}, {\bm d}_{1,j}),\cdot]
+ \sum_{k=1,j=1}^{K,|{\mathcal M}|} {\phi}_{j,k} {\mathfrak h}_{j,k},
\end{equation}
\nomenclature{${h}^{a,K}$}{Approximation using $K$ additional $\alpha$ points
other than $\alpha=1$ and $\alpha=\rho_1.$}
\nomenclature{${\bm d}_{1,j}$}{A right eigenvector of ${\bm A}(\rho_{1,j},\rho_{1,j})$}
where ${\phi}_j$ and ${\phi}_{j,k}$ are ${\mathbb C}$ valued coefficients
to be chosen so that $ h^{a,K}|_{\partial B}$ is as close to $1$ as possible.
As in \cite[Section 8.2]{sezer2015exit},
one simple way to do this is to choose
these $(K+1)|{\mathcal M}|$ coefficients so that 
${ h}^{a,K}(y,y,m) = 1$ for $y =0,1,2,..,K$ and $m \in {\mathcal M}.$
This defines a 
$(K+1)|{\mathcal M}| \times (K+1)|{\mathcal M}|$ system; for our parameter
values ($K=5$ and $|{\mathcal M}| = 3$) this is an $18 \times 18$ system,
and it does turn out to have a unique solution. 
Once the ${\phi}_j$ and ${\phi}_{j,k}$ are determined through this
solution, an upper bound on the approximation relative error
can be computed via Proposition \ref{p:boundonrelerror}; it suffices
to compute $c^*$ of \eqref{d:cstar}; for ${ h}^{a*,K}$ of \eqref{d:ha}
it turns out to be
\[
c^* = 0.00367;
\]
therefore, by Proposition \ref{p:boundonrelerror}, 
${h}^{a,K}$ approximates
${\mathbb P}_{(y,m)}( \tau < \infty)$ with relative error bounded by this quantity.
By Theorem \ref{t:mainapprox} we know that 
${\mathbb P}_{(T_n(x_n),m)}( \tau < \infty)$ approximates 
${\mathbb P}_{(x_n,m)}(\tau_n < \tau_0)$ with vanishing relative error for $x_n = \lfloor nx \rfloor$,
$x(1) > 0$; it follows from these that
${h}^{a,K}(n-x_n(1),x_n(2))$ will approximate
${\mathbb P}_{(x_n,m)}(\tau_n < \tau_0)$ with relative error bounded
by   $c^*$ 
for $n$ large. Let us see how well this approximation works
in practice. Figure \ref{f:levelcurves} gives the level curves of
$-\log({h}^{a,K}(n-x(1),x(2),1))$ and $-\log {\mathbb P}_{(x,m)}(\tau_n <\tau_0)$;
${\mathbb P}_{(x,m)}(\tau_n < \tau_0)$ is computed by iterating the harmonic
equation satisfied by this probability; for $n=60$, this iteration
converges in less than $1000$ steps.

\begin{figure}[h]
\begin{center}
\scalebox{0.75}{
\centerline{\input{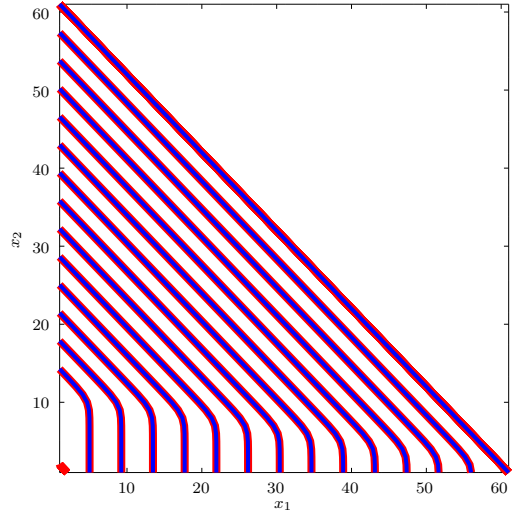}}}
\end{center}

\vspace{-0.65cm}
\caption{\hspace{0.25cm}Level curves of $-\log({ h}^{a,K}(n-x(1),x(2),1))$ and $-\log {\mathbb P}_{(x,m)}(\tau_n <\tau_0)$\label{f:levelcurves}}
\end{figure}
As can be seen, and agreeing with the analysis above, these lines
completely overlap except for a narrow region around the origin.

\begin{figure}
	\centering
	\scalebox{.95}{\input{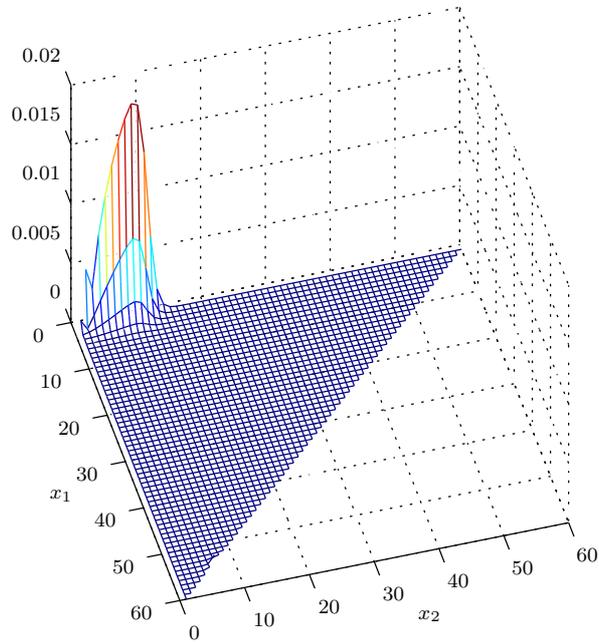}}
\caption{The relative error $\frac{ |\log({h}^{a,K}(n-x(1),x(2),1)-\log {\mathbb P}_{x}(\tau_n <\tau_0)|}{
			|\log {\mathbb P}_{x}(\tau_n <\tau_0)|}$ }
	\label{f:relerr}
\end{figure}

\FloatBarrier
Figure \ref{f:relerr} shows the relative error
\[
\frac{ |\log({h}^{a,K}(n-x(1),x(2),1))-\log {\mathbb P}_{(x,m)}(\tau_n <\tau_0)|}{
	|\log {\mathbb P}_{(x,m)}(\tau_n <\tau_0)|},
\]
we see that it is virtually $0$ except for the same region around $0$ where
it is bounded by $0.02.$ This narrow layer of where the relative error spikes corresponds to the region $1-x(2) < \log(\rho_2)/\log(\rho_1)$ identified in Theorem \ref{t:mainapprox}.

\section{Comparison with earlier works}\label{s:comparison}

The present work shows how one can approximate
the probability $\mathbb{P}_{(x,m)}(\tau_n<\tau_0)$ 
by\\ $\mathbb{P}_{(y,m)}(\tau<\infty)$ with exponentially
vanishing relative error and constructs analytical approximation formulas 
for the latter. This is done by extending the approach of 
\cite{sezer2015exit,sezer2018approximation} to Markov modulated dynamics. 
In this section, we compare the analysis of the modulated case treated in this work 
with the non-modulated two tandem case treated in \cite{sezer2015exit,sezer2018approximation}
and the non-modulated two dimensional simple random walk treated in \cite{unlu2018excessive}. 

\paragraph{Harmonic functions}
The nonmodulated analysis uses functions 
of the form\\ $y\mapsto[(\beta,\alpha), y] =\beta^{y(1)-y(2)}\alpha^{y(2)}$ 
where $(\beta, \alpha)$ are chosen from the roots of a characteristic polynomial of second order associated with the process $Y$. 
Markov modulation brings an additional state variable $m$, leading to 
functions of the form $(y,m)\mapsto[(\beta,\alpha, {\bm d}), (y,m)] =\beta^{(y(1)-y(2)}\alpha^{y(2)}{\bm d}(m)$. 
The characteristic surface is now defined in terms of eigenvalue and 
eigenvector equations of a characteristic matrix depending on
$(\beta,\alpha) \in {\mathbb C}^2$.
\paragraph{Geometry of the characteristic surface}
The characteristic surface in 
\cite{sezer2015exit,sezer2018approximation, unlu2018excessive} is
the $1$-level curve of a rational function 
which can be represented as a second degree polynomial
in each of the $\beta,\alpha$ variables; the projection of the
characteristic surface to ${\mathbb R}_+^2$ is a smooth closed curve
bounding a convex region.
Conjugate points on this curve come in pairs and have elementary formulas.
The characteristic curve in the modulated 
case is the $0$-level curve of the characteristic polynomial of a
characteristic matrix
and can be represented as a $2|{\mathcal M}|$ degree polynomial in
each of the variables; its projection to 
${\mathbb R}_+^2$ consists of  $|{\mathcal M}|$ components, one for each eigenvalue $\Lambda_j$ of the characteristic matrix. The error analysis is based on
the level curve of the largest eigenvalue while the computation of
${\mathbb P}_{(y,m)}(\tau < \infty)$ uses points on all components.
There are in general no simple formulas for the roots of a polynomial greater than degree $4$ and the formulas
for degree $4$ are fairly complex; therefore,
for $|{\mathcal M}| \ge 2$ (i.e., even for the simplest nontrivial Markov modulated constrained random walk with two modulating states) the points on these
curves no longer have simple formulas and identification of the relevant
points (Propositions \ref{p:point1}, \ref{p:point2} and \ref{p:point2conj},
Lemmas \ref{l:point2conjrest},  \ref{l:thees} and \ref{l:additionaldiagonal})  
requires matrix /
eigenvalue analysis
and the implicit function theorem.
\paragraph{Assumptions}
We use the point $(\rho_2,1)$ and its conjugate $(\rho_2, \alpha^*_1)$ lying on $\mathcal{L}_1$ to define $(Y,M)$-superharmonic functions to use in our limit analysis. The existence of $(\rho_2,1) \in {\mathcal L}_1$, follows
from the stability assumption \eqref{e:stabilityas}.
The identification of the conjugate point $(\rho_2, \alpha^*_1)$ requires 
the additional assumption \eqref{as:conj} ensuring $\alpha^*_1<1$. 
A similar assumption is not needed in the non-modulated tandem case, 
because when there is no modulation, i.e., when $|{\mathcal M}|=1$,
the conjugate of $(\rho_2,1)$ is $(\rho_2,\rho_1)$ and
$\rho_1 < 1$ by the stability assumption. 
For the constrained random walk representing two parallel queues 
treated in \cite{unlu2018excessive},
the assumption corresponding to \eqref{as:conj} is 
$r^2<\rho_2\rho_1$, where $r$ is utilization rate of the whole system.

The assumption $\rho_1\neq \rho_2$ (see \eqref{as:mu1neqmu2})
generalizes the assumption $\mu_1\neq\mu_2$ 
from the non-modulated tandem case and the parallel case treated
in \cite{sezer2018approximation, unlu2018excessive}.
The computation of $\mathbb{P}_{(y,m)}(\tau<\infty)$ needs progressively more general versions of this assumption
(see \eqref{as:mu1neqmu2s2}, Remark \ref{r:mu1neqmu2s2} and Assumption \ref{as:mu1neqmu2s3}).
\paragraph{Analysis}The approximation error analysis in the non-modulated case is based on the subsolutions of a limit HJB equation and $Y$-harmonic functions.
These works use these subsolutions to construct supermartingales which are then used to find upper bounds on error probabilities.
In this work we construct the supermartingales directly using $(Y,M)$-{\em super}harmonic functions constructed from points
on the characteristic surface. Because $Y$ has one less constraint compared to $X$, these functions can be subharmonic on the
boundary where $Y$ is not constrained. To overcome this, we introduce a decreasing term to the definition of the supermartingale.

In the tandem case there is an explicit formula for $P_y(\tau < \infty)$; 
this formula is used directly in the analysis of
the error probability. There is obviously no explicit formula for the corresponding probability in the Markov modulated case. Instead,
we derive an upper bound  on it in Section \ref{s:ub1} 
using again $(Y,M)$-superharmonic functions; this upperbound is used in
the error analysis of Section \ref{s:together}.
\paragraph{Computation of the limit probability}
In the non-modulated tandem case  treated in \cite{sezer2018approximation},
${\mathbb P}_y(\tau < \infty)$ 
can be represented exactly as a linear combination
of $h_{\rho_2}$ and $h_{\rho_1}$; so the computation of
${\mathbb P}_y(\tau < \infty)$ is trivial for the nonmodulated two
dimensional tandem walk.
In the parallel case treated in \cite{unlu2018excessive}, 
$P_y(\tau <\infty)$ can be represented exactly as a linear combination of 
$h_{\rho_1}$ and $h_r$  when $r^2 =\rho_1\rho_2$;
when this doesn't hold 
\cite{unlu2018excessive} develops approximations of 
$P_y(\tau <\infty)$ from harmonic functions constructed from conjugate
points on the characteristic surface, which is an application of
the principle of superposition.
For the modulated case we use the same principle but Markov
modulation complicates the construction of the functions
used in the approximation.
The identification of the points on the characteristic surface
requires the solution of $2|{\mathcal M}|$
degree polynomial equations 
(first the $\alpha$ component is fixed
to identify possible $\beta$ components; then for each of the identified
$\beta$'s,  the polynomial is solved in $\alpha$ to find the relevant
conjugate points).
Eigenvectors corresponding to these points are then computed and
finally
we solve a linear equation to find the coefficients of the
exponential functions (see, for example, the ${\bm b}_{k,j}$ vector in 
\eqref{e:defhkj} and \eqref{e:condforharm}). The corresponding process is trivial
when there is no modulation.
In \cite{sezer2018approximation} and \cite{unlu2018excessive}
the function $h_{\rho_2}$ plays a central role in the approximation
of ${\mathbb P}_y(\tau < \infty)$ because it equals approximately $1$
away from the origin; due to Markov modulation there can be in general
no 
function constructed from a single point and its conjugates that takes a fixed value
on $\partial B.$ To deal with this, we use an appropriate linear combination
of functions constructed from multiple points and their conjugates 
on the characteristic surface
so that the linear combination takes the value $1$ away from the
origin (Proposition \ref{p:hrho2star}).

\section{Conclusion}\label{s:conclusion}

The current work develops approximate 
formulas for the exit probability of the two dimensional
tandem walk with modulated dynamics. 
Our main approximation Theorem \ref{t:mainapprox} says that $\mathbb{P}_{(T_n(x_n), m)}(\tau<\infty)$ approximates $\mathbb{P}_{(x_n, m)}(\tau_n<\tau_0)$ with relative
error vanishing exponentially fast with $n$. 
To compute the exit probability, we first construct $\partial B$-determined $(Y,M)$-harmonic functions from single and conjugate points on the corresponding characteristic surface and then with their linear combinations, 
approximate the boundary value $1$ of the harmonic function $\mathbb{P}_{(y,m)}(\tau<\infty)$.
In the non-modulated tandem case treated in \cite{sezer2015exit}, 
the probability $\mathbb{P}_y(\tau<\infty)$ can be represented in any
dimension
exactly using harmonic functions constructed from points on the  characteristic
surface.
As is seen in the present work, even dimension two entails considerable difficulties. Whether an extension to higher
dimensions is possible is a question we would like to tackle in future work.

The work \cite{sezer2015exit} gives
a formula for ${\mathbb P}_y(\tau < \infty)$ 
for the non-modulated tandem walk
when $\rho_1=\rho_2$ 
based on harmonic functions with polynomial terms.
Whether similar computations can be carried out for
$\mathbb{P}_{(y, m)}(\tau<\infty)$ in the modulated case when $\rho_1=\rho_2$ is another question for future research.

The assumption \eqref{as:conj} plays a key role in our analysis;
it ensures that various functions such as $h_{\rho_2}$ whose construction involves the point $(\rho_2,\alpha_1^*)$ remain bounded on $\partial B.$
We think that new ideas will be needed to treat the case
when \eqref{as:conj} doesn't hold; this remains for future work.
 
 The computations and the error analysis in the present work depend on the dynamics of the process and the geometry of the exit boundary. 
A significant problem for future research is to extend these to 
other dynamics in two or higher dimensions and to other 
exit boundaries. The simple random walk dynamics  (i.e., increments $(1,0)$, $(-1,0)$, $(0,1)$ and $(0,-1)$) and the rectangular
exit boundary
 appear to be the most natural to study in immediate future work.

\appendix

\section{Two lemmas}
For a square matrix ${\bm G}$, let 
${\bm G}^{i,j}$ denote the matrix obtained by removing the
$i^{th}$ row and $j^{th}$ column of ${\bm G}$.

\begin{lemma}\label{l:positivity}
For $n_0 \in \{2,3,4,...\}$,
suppose ${\bm G}$ is an $n_0  \times n_0$ irreducible and aperiodic matrix
with nonnegative entries. 
Then
$\det\left((\Lambda_1({\bm G}) {\bm I} - {\bm G})^{i,i}\right) > 0$ 
for all $i \in \{1,2,...,n_0\}$, 
where ${\bm I}$ is the $n_0 \times n_0$ identity matrix.
\end{lemma}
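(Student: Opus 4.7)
The plan is to identify the minor in question as the determinant of a shifted principal submatrix of ${\bm G}$, and then exploit the Perron--Frobenius theory to show that the shift lies strictly above the spectral radius of that submatrix.

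First I would observe that
\[
(\Lambda_1({\bm G}){\bm I} - {\bm G})^{i,i} = \Lambda_1({\bm G}){\bm I}' - {\bm G}^{i,i},
\]
where ${\bm I}'$ denotes the $(n_0-1)\times(n_0-1)$ identity and ${\bm G}^{i,i}$ is the principal submatrix of ${\bm G}$ obtained by deleting the $i^{\text{th}}$ row and column. Since ${\bm G}$ is nonnegative, ${\bm G}^{i,i}$ is nonnegative as well; thus its spectral radius $\rho({\bm G}^{i,i})$ is itself an eigenvalue of ${\bm G}^{i,i}$ by the Perron--Frobenius theorem for nonnegative matrices.

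Next I would invoke the standard strict monotonicity of the spectral radius for irreducible nonnegative matrices: if ${\bm G}$ is irreducible and nonnegative and $n_0 \ge 2$, then every proper principal submatrix has spectral radius strictly smaller than $\rho({\bm G}) = \Lambda_1({\bm G})$ (see, e.g., Berman--Plemmons \cite{berman1994nonnegative}, Corollary 1.3.5 and surrounding results). Thus $\rho({\bm G}^{i,i}) < \Lambda_1({\bm G})$, and in particular every real eigenvalue $\lambda$ of ${\bm G}^{i,i}$ satisfies $\lambda \le \rho({\bm G}^{i,i}) < \Lambda_1({\bm G})$.

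Finally, to conclude, I would factor
\[
\det\bigl(\Lambda_1({\bm G}){\bm I}' - {\bm G}^{i,i}\bigr) = \prod_{j=1}^{n_0-1} \bigl(\Lambda_1({\bm G}) - \lambda_j({\bm G}^{i,i})\bigr),
\]
where $\lambda_j({\bm G}^{i,i})$ are the (complex) eigenvalues of ${\bm G}^{i,i}$. Real eigenvalues contribute a strictly positive factor by the inequality above. Nonreal eigenvalues come in complex-conjugate pairs $z, \bar z$, and the contribution of each such pair is
\[
(\Lambda_1({\bm G}) - z)(\Lambda_1({\bm G}) - \bar z) = |\Lambda_1({\bm G}) - z|^2 > 0,
\]
strictly positive because $|z| \le \rho({\bm G}^{i,i}) < \Lambda_1({\bm G})$ forces $z \ne \Lambda_1({\bm G})$. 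The product of strictly positive terms is strictly positive, which is the claim.

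The step I expect to be the only real point of substance is the strict inequality $\rho({\bm G}^{i,i}) < \Lambda_1({\bm G})$. This is a well-documented consequence of irreducibility (it follows, for instance, from the fact that adding a strictly positive entry to an irreducible nonnegative matrix strictly increases its Perron root, applied to embedding ${\bm G}^{i,i}$ back into ${\bm G}$). Once this is in place, the rest is immediate algebra on the characteristic polynomial.
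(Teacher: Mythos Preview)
Your proof is correct and takes a different, more direct route than the paper. You reduce the question to the standard Perron--Frobenius fact that a proper principal submatrix of an irreducible nonnegative matrix has strictly smaller spectral radius, then finish with the eigenvalue factorization of the characteristic polynomial, handling real and complex-conjugate eigenvalues separately. The paper instead argues by contradiction: assuming the minor is nonpositive, it locates a zero $u_0 \ge \Lambda_1({\bm G})$ of $u \mapsto \det((u{\bm I}-{\bm G})^{1,1})$, deduces $\Lambda_1({\bm G}^{1,1}) \ge \Lambda_1({\bm G})$, and then \emph{derives} the strict inequality $\Lambda_1(({\bm G}^{n_0})^{1,1}) < \Lambda_1({\bm G}^{n_0})$ from scratch using aperiodicity (so that ${\bm G}^{n_0}$ is strictly positive) and the variational characterization of the Perron root. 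In effect, the paper re-proves the submatrix spectral-radius inequality inside the argument, while you quote it. Your approach is shorter and, as a bonus, shows that aperiodicity is not actually needed for the lemma---irreducibility alone suffices.
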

\begin{proof}
The argument is the same for all 
$ i \in \{1,2,...,n_0\}$; so it suffices
to argue for $i=1.$ Suppose the claim is not true and 	
\begin{equation}\label{e:wrongas}
\det\left((\Lambda_1({\bm G}){\bm I} - {\bm G})^{1,1}\right) \le 0. 
\end{equation}	
Consider the function
$u \mapsto g(u) = \det\left(( u{\bm I} -{\bm G})^{1,1} \right)$, $u \ge 0.$
The multilinearity and continuity of $\det$ implies $\lim_{u\nearrow \infty}
g(u) = \infty.$ This implies that if \eqref{e:wrongas} is true
there must be $u_0 \ge \Lambda_1({\bm G})$ such that
\begin{equation}\label{e:step1}
\det\left( (u_0 {\bm I} - {\bm G})^{1,1} \right) = 0.
\end{equation}
The matrix
${\bm G}^{1,1}$ 
is nonnegative, therefore,
it has a largest eigenvalue $\Lambda_1({\bm G}^{1,1})$ with
an eigenvector ${\bm v}_1 \ge 0$.
The equality \eqref{e:step1} implies 
\begin{equation}\label{e:contineq}
\Lambda_1({\bm G}^{1,1}) \ge u_0 \ge \Lambda_1({\bm G}).
\end{equation}
That ${\bm G}$ is irreducible and aperiodic 
implies that ${\bm G}^{n_0}$ is strictly
positive; its largest eigenvalue is
\[
\Lambda_1({\bm G}^{n_0}) =
\Lambda_1({\bm G})^{n_0}.
\]
The matrix $({\bm G}^{n_0})^{1,1}$ has strictly positive entries
and therefore its largest eigenvalue $\Lambda_1( ({\bm G}^{n_0})^{1,1})$
has an eigenvalue ${\bm v}_2$ with strictly positive entries.
For two vectors $x,y \in {\mathbb R}^d$, let $x \ge y$ 
and $x > y$ denote componentwise
comparison.
The inequality
\[
({\bm G}^{n_0})^{1,1} \ge
({\bm G}^{1,1})^{n_0} 
\]
implies
\begin{equation}\label{e:step0}
({\bm G}^{n_0})^{1,1} {\bm v}_1 \ge  \Lambda_1({\bm G}^{1,1})^{n_0} {\bm v}_1.
\end{equation}
On the other hand
\begin{equation}\label{e:charl1}
\Lambda_1(({\bm G}^{n_0})^{1,1}) = 
\sup\{ c: \exists x \in {\mathbb R}^{n_0-1}_+,
({\bm G}^{n_0})^{1,1}x  \ge cx \},
\end{equation}
(see \cite[Proof of Theorem 1, Chapter 16]{lax1996linear}). This
and \eqref{e:step0} imply
\begin{equation}\label{e:contineq2}
\Lambda_1(({\bm G}^{n_0})^{1,1})  \ge  
\Lambda_1({\bm G}^{1,1})^{n_0}.
\end{equation}

Define ${\bm v}_3 = [ 1; {\bm v}_2] \in {\mathbb R}^{n_0}$;
it
follows from 
$({\bm G}^{n_0})^{1,1} {\bm v}_2 = \Lambda_1(({\bm G}^{n_0}){1,1}) {\bm v}_2$,
the strict
positivity of the components of ${\bm G}^{n_0}$ and ${\bm v}_2$ 
that one can choose $\delta > 0$ small enough so that	
\[
{\bm G}^{n_0} {\bm v}_3 > 
\left( \Lambda_1(({\bm G}^{n_0})^{1,1}+ \delta\right) {\bm v}_3;
\]
This and
\[
\Lambda_1({\bm G}^{n_0})=
\sup\{ c: \exists x \in {\mathbb R}^{n_0}_+, 
{\bm G}^{n_0}x  \ge cx \}
\]
imply
\[
\Lambda_1({\bm G}^{n_0})  
>
\Lambda_1\left(({\bm G}^{n_0})^{1,1}\right).
\]
The last inequality, \eqref{e:contineq2} and \eqref{e:contineq} 
imply
\[
\Lambda_1({\bm G})^{n_0}=
\Lambda_1({\bm G}^{n_0})  
>
\Lambda_1(({\bm G}^{n_0})^{1,1}) \ge
\Lambda_1({\bm G}^{1,1})^{n_0} \ge \Lambda_1({\bm G})^{n_0},
\]
which is a contradiction.
\end{proof}
In our analysis we need the
following fact from \cite{sezer2009importance};
its proof is elementary and follows from 
the multilinearity of the determinant function and the previous lemma.

\begin{lemma}\label{l:deteigenvec}
Let ${\bm G}$ be an aperiodic and
irreducible transition matrix. 
Then the row vector whose $i^{th}$ component equals
$\det\left( ({\bm I} - {\bm G})^{i,i}\right)$ 
is the unique (upto scaling by a positive
number) left eigenvector associated with the eigenvalue $1$ of ${\bm G}$.
\end{lemma}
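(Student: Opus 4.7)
The plan is to identify $v = (v_1,\dots,v_{n_0})$, where $v_i = \det(({\bm I}-{\bm G})^{i,i})$, as the diagonal of the classical adjugate of ${\bm M} \doteq {\bm I}-{\bm G}$, and then use the adjugate identities together with Perron--Frobenius to pin down its shape.

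First I would recall the cofactor structure: $\operatorname{adj}({\bm M})_{i,j} = (-1)^{i+j}\det({\bm M}^{j,i})$, so the diagonal entries of $\operatorname{adj}({\bm M})$ are exactly $v_1,\dots,v_{n_0}$. Since ${\bm G}$ is a row-stochastic irreducible aperiodic matrix, $1$ is an eigenvalue and hence $\det({\bm M})=0$, giving
\[
{\bm M}\operatorname{adj}({\bm M}) = \operatorname{adj}({\bm M}){\bm M} = 0.
\]
The second identity says every row $R_i$ of $\operatorname{adj}({\bm M})$ satisfies $R_i{\bm M}=0$, i.e., $R_i{\bm G}=R_i$, so each row is a left $1$-eigenvector of ${\bm G}$.

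Next I would invoke Perron--Frobenius: irreducibility and aperiodicity of ${\bm G}$ make the eigenvalue $1$ simple, so the left $1$-eigenspace is one dimensional and spanned by the strictly positive Perron vector $\bm\pi$. Hence each row must be of the form $R_i = c_i\bm\pi^{T}$, which I can package as the rank-one identity
\[
\operatorname{adj}({\bm M}) = {\bm c}\,\bm\pi^{T}
\]
for some column vector ${\bm c}=(c_1,\dots,c_{n_0})^{T}$. Now I feed this back into the other adjugate identity ${\bm M}\operatorname{adj}({\bm M})=0$: it becomes $({\bm M}{\bm c})\bm\pi^{T}=0$, which forces ${\bm M}{\bm c}=0$. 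The right nullspace of ${\bm M}={\bm I}-{\bm G}$ is spanned by ${\bm 1}$ (because ${\bm G}{\bm 1}={\bm 1}$, again a consequence of row-stochasticity plus simplicity of the eigenvalue $1$), so ${\bm c}=k{\bm 1}$ for a single scalar $k$. Reading off the diagonal entries of $\operatorname{adj}({\bm M})$ then gives $v_i = c_i\bm\pi(i) = k\,\bm\pi(i)$ for every $i$, which is precisely the claimed proportionality to the left Perron eigenvector.

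Finally, to fix the sign of $k$ (and thereby the "positive scaling" part of the uniqueness statement), I would cite Lemma \ref{l:positivity} applied with $\Lambda_1({\bm G})=1$, which gives $v_i>0$ for every $i$. Combined with the positivity of $\bm\pi$, this forces $k>0$. I do not foresee a real obstacle here; the one place that needs a bit of care is verifying that both right and left Perron eigenspaces are genuinely one dimensional for the transition matrix ${\bm G}$ under the stated irreducibility/aperiodicity hypothesis, since the whole argument hinges on collapsing $\operatorname{adj}({\bm M})$ to the rank-one object ${\bm c}\,\bm\pi^{T}$.
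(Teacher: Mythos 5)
Your proof is correct and is essentially the route the paper has in mind: the adjugate identities you use (the cofactor formula $\operatorname{adj}({\bm M})_{i,j}=(-1)^{i+j}\det({\bm M}^{j,i})$ and $\operatorname{adj}({\bm M}){\bm M}={\bm M}\operatorname{adj}({\bm M})=0$ when $\det{\bm M}=0$) are precisely the ``multilinearity of the determinant'' the paper alludes to, and like the paper you finish with Lemma~\ref{l:positivity} to pin down positivity. Your rank-one packaging $\operatorname{adj}({\bm M})={\bm c}\,\bm\pi^{T}$ with ${\bm c}=k{\bm 1}$ is a slightly tidier way to extract the diagonal than a direct component-wise cofactor computation, but the argument is the same in substance; one small imprecision worth noting is that irreducibility alone already gives simplicity of the Perron eigenvalue $1$ (aperiodicity only enters through Lemma~\ref{l:positivity}).
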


\bibliography{p7}

\end{document}